\renewenvironment{proof}{\noindent{\it Proof.}}{\hfill$\square$}
\definecolor{lightblue}{rgb}{0.22,0.45,0.70}
\definecolor{lightgreen}{rgb}{0.22,0.50,0.25}
\newtheorem{assumption}[theorem]{Assumption}
\numberwithin{equation}{section}
\numberwithin{figure}{section}
\numberwithin{table}{section}
\numberwithin{lemma}{section}
\numberwithin{corollary}{section}
\numberwithin{theorem}{section}
\numberwithin{remark}{section}
\newcommand{\rev}{}%[1]{{\leavevmode\color{blue}{#1}}}
\journalname{Arxiv}
\begin{document}
\title{Optimal error estimates of coupled and divergence-free virtual element methods for the  
Poisson--Nernst--Planck/Navier--Stokes equations}
\titlerunning{VEM for the Poisson--Nernst--Planck/Navier--Stokes equations}

\author{Mehdi Dehghan \and
  Zeinab Gharibi \and
  Ricardo Ruiz-Baier}
\authorrunning{Dehghan, Gharibi, Ruiz-Baier}
\institute{
Mehdi Dehghan (corresponding author),  Zeinab Gharibi \at  
Department of Applied Mathematics, Faculty of Mathematics and Computer Sciences, Amirkabir University of Technology (Tehran Polytechnic), No. 424, Hafez Ave.,
15914 Tehran, Iran\\
\email{mdehghan@aut.ac.ir,z90gharibi@aut.ac.ir}.
\and
Ricardo Ruiz-Baier \at
School of Mathematics, Monash University, 9 Rainforest Walk, Melbourne, VIC 3800, Australia;  World-Class Research Center ``Digital biodesign and personalized healthcare", Sechenov First Moscow State Medical University, Moscow, Russia; and Universidad Adventista de Chile, Casilla 7-D Chillan, Chile\\
\email{ricardo.ruizbaier@monash.edu}.
}

\date{24 January 2022}
\maketitle
%%%%%%%%%%%%%%%%%%%%%%%%%%%%%%%%%%%%%%%%%%%%%%%%%%%%%%%%%%%%%%%%%%%%%%

\begin{abstract}
In this article, we propose and analyze a fully coupled, nonlinear, and energy-stable  virtual element method (VEM) for solving  the coupled Poisson-Nernst-Planck (PNP) and Navier--Stokes (NS) equations modeling microfluidic and electrochemical systems (diffuse transport of charged species within incompressible fluids coupled through electrostatic forces).  A mixed VEM is employed to discretize the NS equations whereas classical  VEM in primal form is used to discretize the PNP equations. The stability, existence and uniqueness of solution of the associated VEM are proved by fixed point theory. Global mass conservation and electric energy decay of the scheme are also proved. Also, we obtain unconditionally optimal error estimates for both the electrostatic potential and ionic concentrations of PNP equations in the $H^{1}$-norm, as well as for the velocity and pressure of NS equations in the $\mathbf{H}^{1}$- and $L^{2}$-norms, respectively. Finally, several numerical experiments are presented to support the theoretical analysis of convergence and to illustrate the satisfactory  performance of the method in simulating the onset of electrokinetic instabilities in ionic fluids, and studying how they are  influenced by different values of ion concentration and applied voltage. These tests relate to applications in the desalination of water. 
\end{abstract}
\keywords{Coupled Poisson--Nernst--Planck/Navier--Stokes equations \and mixed virtual element method \and optimal convergence \and  charged species transport \and electrokinetic instability \and water desalination.}
\subclass{65L60 \and 82B24.} 

%%%%%%%%%%%%%%%%%%%%%%%%%%%%%%%%%%%%%%%%%%%%%%%%%%%%%%%%%%%%%%%%%%%%%

\section{Introduction and problem statement}%\label{s1}
\subsection{Scope}
The coupled Poisson--Nernst--Planck (PNP)/Navier--Stokes (NS) equations (also known as the electron fluid dynamics equations) serve to describe mathematically the dynamical properties of electrically charged fluids, the motion of ions and/or molecules, and to represent the interaction with electric fields and flow patterns of incompressible fluids within cellular environments and occurring at diverse spatial and temporal scales (see, e.g., \cite{Jerome11}). Ionic concentrations are described by the \rev{Nernst--Planck} equations   (a  convection--diffusion--reaction system), the diffusion of the electrostatic potential is described by a generalized Poisson equation, and the NS equations describe the dynamics of incompressible fluids, neglecting magnetic forces. A large number of dedicated applications are possible with this set of equations as for example semiconductors, electrokinetic flows in electrophysiology, drug delivery into biomembranes, and many others (see, e.g., \cite{Choi05,Cioffi06,Dreyer13,Jerome08,Hu05,Lu10,Mauri15,wang17} and the references therein). 

The mathematical analysis  (in particular,  existence and uniqueness of   solutions) for the coupled PNP/NS equations is a challenging task, due to the coupling of different mechanisms and multiphysics (internal/external charges, convection--diffusion, electro--osmosis, hydrodynamics, and so on) interacting closely. Starting from the early works \cite{Jerome85,Park97}, where one finds the  well-posedness analysis and the study of other properties of steady-state PNP equations, a number of contributions have addressed the existence, uniqueness, and regularity of different variants of the coupled PNP/NS equations. See, for instance,  \cite{Jerome02,Ryham06,Schmuck09} and the references therein. 

Reliable computational results are also challenging to obtain, again due to the nonlinearities involved, the presence of solution singularities owing to some types of charges, as well as the multiscale nature of the underlying phenomena. Double layers in the electrical fields  near the \rev{liquid--solid} interface are key to capturing the onset of instabilities and  fine spatio-temporal resolution is required, whereas the patterns of ionic transport are on a much larger scale \cite{Kim21}. Although numerical methods of different types have been used by computational physicists  and biophysicists and other  practitioners  over many decades, the rigorous analysis of numerical schemes is somewhat more recent. In such a context, the analysis of standard finite element methods (FEMs)  as well as of mixed, conservative, discontinuous Galerkin, stabilized, weak Galerkin, and other variants have been established for PNP and coupled PNP/NS equations  \cite{Huadong17,Huadong18,Gharibi20,he_numpde17,he_jcam18,Kim21,Linga20,Andreas09,prohl10,xie20}.  Since the formulation of FEMs  requires  explicit knowledge of the basis functions, such  methods might be often limited (at least in their classical setting) to meshes with simple-geometrical shaped elements, e.g., triangles or quadrilaterals. This constraint is overcome by polytopal element methods such as the VEM, which are designed for providing arbitrary order of accuracy on polygonal/polytopal elements.  In the VEM setting,   the   explicit knowledge of the basis functions is not required, while its practical implementation is based on suitable projection operators  which are computable by their degrees of freedom. 

One of the main purposes of this paper is to develop efficient numerical schemes, in the framework of VEM to solve the coupled PNP/NS model. By design, the proposed  schemes provide the following three desired properties, i.e., (i) accuracy (first order in time); (ii) stability (in the sense that the unconditional energy dissipation law holds); and (iii) simplicity and flexibility to be implemented on general meshes. For this \rev{purpose} we combine a space discretization by  mixed VEM   for the NS equations with the usual primal  VEM formulation for the PNP system, whereas for the discretization in time we use a classical backward Euler implicit method.
%., leading to an implicit fully-discrete and efficient scheme. % As in \cite{Andreas09,Huadong17}, the linearized scheme preserves electric energy decay. 

As an extension of  FEMs onto polygonal/polyhedral meshes, VEMs were  introduced in \cite{Brezzi13}. In the VEM, the local discrete space on each mesh element consists of polynomials up to a given degree and some additional non-polynomial functions. In order to discretize continuous problems, the VEM only requires the knowledge of the degrees of freedom of the shape functions, such as values at mesh vertices, the moments on mesh edges/faces, or the moments on mesh polygons/polyhedrons, instead of knowing the shape functions explicitly. Moreover, the discrete space can be extended to high order in a straightforward way.  VEMs for general second-order eltic problems were presented in \cite{Cangiani17}. We also mention that VEMs for the building blocks of the coupled system are already available from the literature. In particular, we employ here the VEM for NS equations introduced in \cite{Beir18N}. Other formulations (of mixed, discontinuous, nonconforming, and other types) for NS  include \cite{Beir19,Gatica18,liu19,verma21,wang21}, whereas for the PNP system a VEM scheme has been recently proposed in  \cite{Liu21}. The present method also follows other VEM formulations for Stokes flows from \cite{Beir17,Cangiani16,Gatica17,Wei21}. For a more thorough survey, we refer to \cite{Brezzi14,Marini14} and the references therein.

\subsection{Outline} The remainder of the paper has been organized in the  following manner. In what is left of this \rev{section},  we recall the coupled PNP/NS equations in non-dimensional form, we provide notational preliminaries, and introduce the corresponding variational formulation for the system. In Section \ref{s3}, we present the VE discretization, introducing the mesh entities, the degrees of freedom, the construction of VE spaces, and establishing properties of the discrete multilinear forms.
In Section \ref{s4}, we obtain two conservative properties global mass conservation and electric (and kinetic) energy conservation of the proposed scheme.
 In Section \ref{s5}, under the assumption of small data, the existence and uniqueness of the discrete problem are proved. In Section \ref{s6}, we establish error estimates for the velocity, pressure, concentrations and electrostatic potential. A set of  numerical tests are reported in Section \ref{s7}. They  allow us to assess the accuracy properties of the method  by confirming  the experimental rates of convergence predicted by the theory. Examples of applicative interest in the process of water desalination are also included.  

\subsection{The model problem in non-dimensional form}%\label{S2}
Consider a spatial bounded domain  $\Omega\subset\mathbb{R}^{d}$ ($d=2,3$)  with a Lipschitz continuous boundary $\partial\Omega$ with outward-pointing unit normal $\textbf{n}$, and consider the time interval $t \in [0,t_{F}]$, with $t_F>0$ a given final time. We focus on  the  electro-hydrodynamic model described by the coupled  PNP/NS equations  following the non-dimensionalization and problem setup from, e.g.,  \cite{Mani13,Gross19}, and cast in the following strong form  (including  transport of a  dilute 2-component electrolyte, electrostatic equilibrium, momentum balance with body force exerted by the electric field, mass conservation, no-flux and no-boundary conditions,  and appropriate initial conditions)
\rev{
\begin{subequations}\label{Eq_1}
\begin{align}
\partial_{t}c_{i}-\operatorname{div}\left(\kappa_{i}(\nabla c_{i}+e_{i}\nabla\phi) \right)+\operatorname{div}(\textbf{u}c_{i})&= 0 &  \text { in } \Omega \times(0, t_{F}], \\
-\operatorname{div}(\epsilon \nabla \phi)&=c_{1}-c_{2} &  \text { in } \Omega \times(0, t_{F}], \\
\partial_{t}\textbf{u}-\Delta \textbf{u} + (\textbf{u}\cdot\nabla)\textbf{u}+\nabla p&=-(c_{1}-c_{2})\nabla \phi &  \text { in } \Omega \times(0, t_{F}], \\
\operatorname{div}(\textbf{u})&=0 &  \text { in } \Omega \times(0, t_{F}], \\
\nabla c_{i}\cdot\textbf{n}=\nabla\phi\cdot\textbf{n}=0, ~~~\textbf{u}&=\textbf{0}&  \text { on } \partial \Omega \times(0, t_{F}], \\
    c_{i}(\mathbf{x}, 0)=c_{i,0}(\mathbf{x}), \quad \phi(\mathbf{x}, 0)=\phi_{0}(\mathbf{x}), \quad \textbf{u}(\mathbf{x}, 0)&=\textbf{u}_{0}(\mathbf{x}) &  \text { in } \Omega ,\end{align}
\end{subequations}
where $i\in\{1,2\}$, $c_{1},c_{2}$} are the concentrations of positively and negatively charged ions \rev{with valences $e_{1}=1$ and $e_{2}=-1$}, respectively; 
$\phi$ is the electrostatic potential, 
$\textbf{u}$ and $p$ are the velocity and  pressure of the incompressible fluid, respectively; 
$\epsilon$ represents the dielectric coefficient (\rev{assumed a positive constant}) 
%, here assumed heterogeneous, but uniformly bounded $0<\epsilon_{0}\leq \epsilon \leq \epsilon_{1}$, 
and \rev{$\kappa_{1}$ and $\kappa_{2}$} are diffusion/mobility coefficients (\rev{assumed also constant and positive}). 
%(heterogeneous and possibly anisotropic, but assumed uniformly positive definite).  
The boundary conditions considered in \eqref{Eq_1} could be extended to more general scenarios. They are taken as they are for sake of simplicity in the presentation of the analysis.

\subsection{Notation and weak formulation}\label{s2}
Throughout the paper, let $\mathcal{D}$ be any given open subset of $\Omega$. By $(\cdot, \cdot)$ and $\Vert\cdot\Vert_{\mathcal{D}}$ we denote the usual integral inner product and the corresponding norm of $L^{2}(\mathcal{D})$. For a non-negative integer $m$, we shall use the common notation for the Sobolev spaces $W^{m,r}(\mathcal{D})$ with the corresponding norm and semi-norm $\Vert\cdot\Vert_{m,r,\mathcal{D}}$ and $\vert\cdot\vert_{m,r,\mathcal{D}}$, respectively; and if $r=2$, we set $H^{m}(\mathcal{D}):=W^{m,2}(\mathcal{D})$, $\Vert\cdot\Vert_{m,\mathcal{D}}:=\Vert\cdot\Vert_{m,2,\mathcal{D}}$ and $\vert\cdot\vert_{m,\mathcal{D}}:=\vert\cdot\vert_{m,2,\mathcal{D}}$.
If $\mathcal{D}=\Omega$, the subscript will be omitted. 

Let us introduce the following functional spaces for velocity, pressure, and concentrations and electrostatic potential 
\[
\rev{\boldsymbol{X} := \textbf{H}_{0}^{1}(\Omega),\qquad Q:=L_{0}^{2}(\Omega),\qquad\textbf{Z} :=Z\times Z,\qquad Y:=\{v\in Z:~~~(v,1)_{0}=0\},}
\]
respectively, \rev{with $Z:=H^{1}(\Omega)$}. We endow $\textbf{X}$, $Q$, $\textbf{Z}$ and $Y$ with the following norms
\[
\rev{\|\boldsymbol{\tau}\|_{\boldsymbol{X}}^{2}:=\Vert \boldsymbol{\tau}\Vert_{1}^{2}, \quad\quad\|q\|_{Q}^{2}:=\|q\|_{0}^{2},\quad\quad\|(z_{1},z_{2})\|_{\textbf{Z}}^{2}:=\|z_{1}\|_{Z}^{2}+\|z_{2}\|_{Z}^{2},\quad\quad \|v\|_{Y}^{2}:=\|v\|_{Z}^{2},}
\]
respectively, \rev{with $\Vert \cdot\Vert_{Z}^{2}:=\|\cdot\|_{1}^{2}$}. For functions of both spatial \rev{$\textbf{x} \in\Omega$} and temporal variables $t\in J:=[0,t_{F}]$, we will also use the standard function space $L^{2}(J ; V)$ whose norms are defined by:
$$
\|\boldsymbol{v}\|_{L^{2}(V)}:=\left(\int_{0}^{t_{F}}\|v(t)\|_{V}^{2} \mathrm{~d} \rev{t}\right)^{\frac{1}{2}}, \quad\|v\|_{L^{\infty}( V)}:=\rev{\operatorname{ess\,sup}_{t \in J}}\|v(t)\|_{V},
$$
particularly, $V$ can represent $\textbf{X}$,~$Q$ and $\textbf{Z},~Y$. Next, and in order to write the variational formulation of problem \eqref{Eq_1}, we introduce the following bilinear \rev{(and trilinear) forms
%-------------------------
\begin{align*}
 \mathcal{M}_{1}(\rho, \zeta)&:=(\rho, \zeta)_{0}, \quad    \mathcal{A}_{i}(\rho, \zeta):=(\kappa_{i}\nabla \rho, \nabla \zeta)_{0} ,    \quad \mathcal{A}_{3}(\rho, \zeta):=(\epsilon \nabla \rho, \nabla \zeta)_{0},\quad    \mathcal{C}(\xi; \rho, \zeta):=(\xi\nabla \rho, \nabla \zeta)_{0},  \\[1mm]
  \quad  
\mathcal{M}_{2}(\textbf{u}, \textbf{v})&:=(\textbf{u}, \textbf{v})_{0}, \quad  \mathcal{K}(\textbf{u}, \textbf{v}):=(\nabla \textbf{u}, \nabla \textbf{v})_{0} , \quad \mathcal{B}(q, \textbf{v}):=(q, \operatorname{div}(\textbf{u}))_{0}.
\end{align*}
for all $\rho,\zeta,\xi\in H^{1}(\Omega)$, $\textbf{u},\textbf{v}\in \textbf{X}$ and $q\in Q$}. As usual for convective problems, for $\textbf{u},\textbf{v},\textbf{w}\in \boldsymbol{X}$ and using  that $\operatorname{div}(\textbf{u})=0$, we utilize the following equivalent skew--symmetric {forms for the terms} $(\operatorname{div}(\textbf{u}\rho), \zeta)_0$ and $((\textbf{u}\cdot\nabla)\textbf{w}, \textbf{v})_0$, respectively %, which are described as
\[
\mathcal{D}(\textbf{u}; \rho, \zeta):=\frac{1}{2}\big[(\textbf{u} \rho, \nabla \zeta)_{0}-(\textbf{u}\cdot \nabla \rho,  \zeta)_{0}  \big],\qquad 
\mathcal{E}(\textbf{u};\textbf{w}, \textbf{v}):=\frac{1}{2}\big[(\textbf{u}\cdot\nabla \textbf{w}, \textbf{v})_{0}-(\textbf{u}\cdot\nabla \textbf{v}, \textbf{w})_{0} \big].
\]
The weak formulation of \eqref{Eq_1}  consists in finding, for almost all $t\in J$, \rev{$\{(c_{1}(t),c_{2}(t)),\phi(t) \}\in \textbf{Z}\times Y$ and $\{\textbf{u}(t), p(t)\}\in \textbf{X}\times Q$} such that $\partial_{t}c_{i} \in L^{2}(J;\rev{H^{-1}(\Omega)})$,~$\partial_{t}\textbf{u}\in L^{2}(J;\rev{\textbf{H}^{-1}(\Omega)})$ and such that for $i\in\{1,2\}$ the following relations hold
\rev{\begin{subequations}\label{eq6}
\begin{align}
\mathcal{M}_{1}(\partial_{t}c_{i}, z_{i})+\mathcal{A}_{i}(c_{i}, z_{i})+e_{i}\mathcal{C}(c_{i};\phi, z_{i})-\mathcal{D}(\textbf{u}; c_{i}, z_{i})&=0 & \forall z_{i}\in Z,\\
\mathcal{A}_{3}(\phi, z_{3})&=\mathcal{M}_{1}(c_{1}, z_{3})-\mathcal{M}_{1}(c_{2}, z_{3}) & \forall z_{3}\in Y,\\
\mathcal{M}_{2}(\partial_{t}\textbf{u}, \textbf{v})+\mathcal{K}(\textbf{u}, \textbf{v})+\mathcal{E}(\textbf{u};\textbf{u}, \textbf{v})-\mathcal{B}(p, \textbf{v})&=-\left((c_{1}-c_{2})\nabla \phi, \textbf{v} \right) & \forall \textbf{v}\in \textbf{X},\\
\mathcal{B}(q, \textbf{u})&=0  & \forall q\in Q,
\end{align}
\end{subequations}
endowed with initial conditions $c_{i}(\cdot,0)=c_{i,0}$} and $\textbf{u}(\cdot,0)=\textbf{u}^{0}$. The existence and uniqueness of \rev{a} weak solution to \eqref{eq6}   has been proved in \cite{Schmuck09}, for the 2D case.

%%%%%%%%%%%%%%%%%%%%%%%%%%%%%%%%%%%%%%%%%%%%%%%%%%%%
\section{Virtual element approximation}\label{s3}
The chief target of this section is to present the  VE spaces and required discrete bilinear (and trilinear) forms. \rev{The presentation is restricted to the 2D case, for which the well-posedness of the continuous problem is available.}
%For simplicity of the presentation we restrict the construction to the 2D case. 

\subsection{Mesh notation and mesh regularity}
By $\{\mathcal{T}_{h}\}_{h}$ we will denote a sequence of partitions of $\Omega$ into general polygons $E$ (open and simply connected sets whose boundary $\partial E$ is a non-intersecting poly-line consisting of a finite number of straight line segments) having diameter $h_{E}$. Let $\mathcal{E}_{h}$ be the set of edges $e$ of $\{\mathcal{T}_{h}\}_{h}$, and let $\mathcal{E}_{h}^{I}=\mathcal{E}_{h}\backslash \partial\Omega$ ($\mathcal{E}_{h}^{B}=\mathcal{E}_{h}\cap \partial\Omega$) be the set of all interior edges. By $\textbf{n}_{E}^{e}$, we denote the unit normal (pointing outwards) vector $E$ for any edge $e\in \partial E \cap \mathcal{E}_{h}$. Following, \rev{for example,} \cite{Brezzi13,Lovadina17,Brenner17,Chen8}, we adopt the following regularity assumption %for polygonal meshes 
\begin{assumption}\label{assu:mesh} %(\cite{Brezzi13})
\label{as_1}
There exist constants $\rho_{1},\rho_{2}>0$ such that:
\begin{itemize}
\item[\textbullet] Every element $E$ is shaped like a star with respect to a ball with radius $\geq \rho_{1}h_{E}$,
\item[\textbullet] In $E$, the distance between every two vertices is $\geq \rho_{2}h_{E}$.
\end{itemize}
\end{assumption}

\subsection{Construction of a virtual element space for \rev{$\textbf{Z}$}}
This subsection is devoted to introducing  the VE subspace $Z_{h}^{k}\subset Z$. In order to do that, we recall the definition of \rev{some} useful spaces.  Given $k\in \mathbb{N}$, $E\in \mathcal{T}_{h}$ and $e\in \mathcal{E}_{h}$, we define
\begin{itemize}
\item[\textbullet]
$\mathbb{P}_{k}(E) $ the set of polynomials of degree at most $k$ on $E$ (with extended notation $\mathbb{P}_{-1}(E):=\emptyset$).
\item[\textbullet]
$\mathbb{P}_{k}(e) $ the set of polynomials of degree at most $k$ on $e$ (with the extended notation $\mathbb{P}_{-1}(e):=\emptyset$).
\item[\textbullet] $\mathbb{B}_{k}(\partial E):=\{v\in C^{0}(\partial E):~~~v|_{e}\in \mathbb{P}_{k}(e)~~~for~all~edges~e\subset \partial E\}$.
\item[\textbullet]
$\widetilde{Z}_{k}^{E}:=\left\{z_{h} \in C^{0}(E)\cap H^{1}(E) : \quad z_{h}|_{\partial E}\in \mathbb{B}_{k}(E),\quad \Delta z_{h} \in \mathbb{P}_{k}(E) \right\}.$
\end{itemize}
For $\mathcal{O}\subset\mathbb{R}^{2}$,  we denote by $|O|$ its area, $h_{O}$ its diameter, and $\mathbf{x}_{O}$ its barycenter. Given any integer $r\geq 1$, we denote by $\mathcal{M}_{r}(O)$ the set of scaled monomials
\begin{equation*}
\mathcal{M}_{r}(\mathcal{O}):=\left\{m: m=\left(\frac{\mathbf{x}-\mathbf{x}_{\mathcal{O}}}{h_{\mathcal{O}}}\right)^{\mathbf{s}} \text { for } \quad \mathbf{s} \in \mathbb{N}^{2} \quad \text { with } \quad|\mathbf{s}| \leq r\right\},
\end{equation*}
where $\mathbf{s}=(s_{1}, s_{2})$, $|s|=s_{1}+s_{2}$ and $\mathbf{x}^{s}=x_{1}^{s_{1}}x_{2}^{s_{2}}$. Besides, we need another set which is as follows
\begin{equation*}
\mathcal{M}_{r}^{*}(\mathcal{O}):=\left\{m: m=\left(\frac{\mathbf{x}-\mathbf{x}_{\mathcal{O}}}{h_{\mathcal{O}}}\right)^{\mathbf{s}} \text { for } \quad \mathbf{s} \in \mathbb{N}^{2} \quad \text { with } \quad|\mathbf{s}|=r\right\}.
\end{equation*}
Further, we recall the helpful polynomial projections $\Pi_{k}^{0,E}$ and $\Pi_{k}^{\nabla,E}$  associated with   $E\in\mathcal{T}_{h}$ as follows:
\begin{itemize}
\item[\textbullet]
the $L^{2}$-projection $\Pi_{k}^{0,E}: \widetilde{Z}_{k}^{E}\rightarrow \mathbb{P}_{k}(E)$, given by
\begin{equation*}%\label{eq_L2proj}
\int_{E} q_{k}(z-\Pi_{k}^{0, E} z) ~\mathrm{d}\textbf{x}=0 \quad \forall z \in L^{2}(E) \quad \text { and } \quad \forall q_{k} \in \mathbb{P}_{k}(E),
\end{equation*}
%with obvious extension for vector functions $\boldsymbol{\Pi}_{k}^{0,E}: \widetilde{Z}_{k}^{E}\times \widetilde{Z}_{k}^{E}\rightarrow [\mathbb{P}_{k}(E)]^{2}$.
\item[\textbullet]
 the $H^{1}$-projection $\Pi_{k}^{\nabla,E}: \widetilde{Z}_{k}^{E}\rightarrow \mathbb{P}_{k}(E)$, defined by
 \begin{equation*}
 \left\{
\begin{array}{ll}
\int_{E} \nabla q_{k}\cdot \nabla (z-\Pi_{k}^{\nabla, E} z) ~\mathrm{d} \textbf{x}=0, & \forall z\in H^{1}(E) \quad \text { and } \forall q_{k} \in \mathbb{P}_{k}(E) ,\\[1mm]
 \int_{\partial E}(z-\Pi_{k}^{\nabla, E} z)~ \mathrm{d} s=0, & \text { if } \quad k=1, \\[1mm]
  \int_{E} (z-\Pi_{k}^{\nabla, E} z)~ \mathrm{d} \textbf{x}=0, & \text { if } \quad k \geq 2.
\end{array}\right.
\end{equation*}
\end{itemize}
\rev{Finally, let $k$ be a fixed positive integer and consider the following local VE space on each  $E\in\mathcal{T}_{h}$ (cf. \cite{Ahmad})} 
\begin{equation*}
Z_{k}^{E}:=\left\{z_{h} \in \widetilde{Z}_{k}^{E}:~~ (q_{h}^{*} , z_{h})_{E}=(q_{h}^{*} , \Pi_{k}^{\nabla, E} z_{h})_{E} \quad \forall q_{h}^{*} \in \mathcal{M}_{k-1}^{*}(E) \cup \mathcal{M}_{k}^{*}(E)\right\}.
\end{equation*}
%\newpage
\rev{And its degrees of freedom (guaranteeing unisolvency) are as follows
  %for the local VE space $V_{k}^{E}$ are stated below
(see, e.g.,  \cite{Cangiani17}):}
\begin{itemize}
\item[\textbullet] $(\textbf{D1})$ The value of $z$ at the $i$-th vertex of the element $E$.

\item[\textbullet] $(\textbf{D2})$  The values of $z$ at $k-1$ distinct points in $e$, for all $e\subset\partial E$, and for $k\geq 2$.

\item[\textbullet] \rev{$(\textbf{D3})$} The internal moment $\int_{E}z~q$, for all $q\in \mathcal{M}_{k-2}(E)$, and $k\geq 2$.
\end{itemize}
It is noteworthy that $\Pi_{k}^{0, E}$ and $\Pi_{k}^{\nabla, E}$ are computable from \rev{knowing $(\textbf{D1})-(\textbf{D3})$ (see, e.g., \cite{Cangiani17}). Similarly to} the finite element case, the global VE space can be assembled as:
\[
Z_{h}:=\left\{z_{h} \in \rev{Z}:\quad z_{h}|_{E} \in Z_{k}^{E}\quad \forall E\in \mathcal{T}_{h}\right\}.
\]
Finally, we define a VE space on  $\mathcal{T}_{h}$ \rev{for the concentrations and electrostatic potential} as follows:
\[
\rev{\textbf{Z}_{h}:=Z_{h}\times Z_{h},\quad \text{and}\quad Y_{h}:=\left\{z_{h}\in Z_{h}: \quad(z_{h},1)_{0,\mathcal{T}_{h}}=0 \right\}}.
\]

%%%%%%%%%%%%%%%%%%%%%%%%%%%%%%%%%%%%%%%%%%
\noindent\textbf{Approximation properties in the \rev{local space $Z_{k}^{E}$}.} The following estimates (established using \rev{Assumption \ref{assu:mesh}}) can be obtained for the projection and interpolation operators \cite{Brezzi13}.
\begin{itemize}
\item[\textbullet]  there exists a $z_{\pi} \in \mathbb{P}_{k}(E)$ such that for $s\in [1,k+1]$ and  $z \in H^{s}(E)$, there holds  
\begin{equation}\label{eqPi}
\left|z-z_{\pi}\right|_{0, E}+h_{E}\left|z-z_{\pi}\right|_{1, E} \leq C h_{E}^{s}|z|_{s, E}.
\end{equation}
\item[\textbullet]  there exists a $z_{I} \in Z_{k}^{E}$ such that for $s\in [2,k+1]$ and  $z \in H^{s}(E)$, there holds  
\begin{equation}\label{eqIn}
\left|z-z_{I}\right|_{0, E}+h_{E}\left|z-z_{I}\right|_{1, E} \leq C h_{E}^{s}|z|_{s, E}.
\end{equation}
\end{itemize}
%\newpage
\subsection{Construction of a virtual element space approximating $\textbf{X}$}\label{subSA}
%In the present section, an appropriate election for virtual element space as an approximation space of $\textbf{X}$ will be introduced. For this purpose, f
Following \cite{Beir17}, for $k\geq 2$ let us introduce the spaces 
\begin{align*} 
\mathcal{G}_{k}(E) &:=\nabla\mathbb{P}_{k+1}(E)\subset [\mathbb{P}_{k}(E)]^{2},\qquad 
\mathcal{G}_{k}(E)^{\perp}  :=\textbf{x}^{\perp}[\mathbb{P}_{k-1}(E)]\subset [\mathbb{P}_{k}(E)]^{2} \text{ with }\textbf{x}^{\perp}:=(x_{2}, -x_{1}),\\
 \mathbf{\widetilde{X}}_{k}^{E} &:=\bigg\{\begin{array}{ll}\mathbf{v} \in \textbf{X} & \text { s.t }  \mathbf{v}\mid_{ \partial E} \in\left[\mathbb{B}_{k}(\partial E)\right]^{2},\left\{\begin{array}{l}- \boldsymbol{\Delta} \mathbf{v}-\nabla w \in \rev{\mathcal{G}_{k}}(E)^{\perp}, ~\forall w \in L^{2}(E)\setminus\mathbb{R} \\ \operatorname{div} \mathbf{v} \in \mathbb{P}_{k-1}(E),\end{array}\right.\end{array}\bigg\}.\end{align*}
%\end{itemize}
The definition of scaled monomials can be extended to the vectorial case.  Let $\boldsymbol{\alpha}:=(\alpha_{1}, \alpha_{2})$ and $\boldsymbol{\beta}:=(\beta_{1}, \beta_{2})$ be two
multi-indexes, then we define a vectorial scaled monomial as
\begin{equation*}
\rev{\boldsymbol{m}_{\boldsymbol{\alpha}, \boldsymbol{\beta}}:=\left(\begin{array}{l}
m_{\boldsymbol{\alpha}} \\
m_{\boldsymbol{\beta}}
\end{array}\right).}
\end{equation*}
Also in this case, it is easy to show that the set
\begin{equation*}
[\mathcal{M}_{r}(\mathcal{O})]^{2}:=\left\{\boldsymbol{m}_{\boldsymbol{\alpha}, \emptyset}: 0 \leq|\boldsymbol{\alpha}| \leq r\right\} \cup\left\{\boldsymbol{m}_{\emptyset, \boldsymbol{\beta}}: 0 \leq|\boldsymbol{\beta}| \leq r\right\}:=\left\{\boldsymbol{m}_{i}: 1 \leq i \leq 2 \pi_{r}\right\},
\end{equation*}
is a basis for the vectorial polynomial space $[\mathbb{P}_{r}(E)]^{2}$, where we implicitly use the natural correspondence between one-dimensional indices and double multi-indices. 

One core idea in the VEM construction is to define suitable (computable) polynomial projections. 
\rev{Polynomial projections can be extended to the vector case (see, e.g., \cite{Gatica17}): the $\textbf{L}^{2}$-projection $\boldsymbol{\Pi}_{k}^{0,E}$ and the $\textbf{H}^{1}$-projection  $\boldsymbol{\Pi}_{k}^{\nabla,E}$ similarly as in the scalar case. }
%can be defined for each \rev{$k\geq 2$ and $E\in\mathcal{T}_{h}$ (see, e.g., \cite{Gatica17})} 
%\begin{itemize}
%\item[\textbullet] the $\textbf{L}^{2}$-projection operator $\boldsymbol{\Pi}_{k}^{0,E}: \textbf{X}|_{E}\rightarrow [\mathbb{P}_{k}(E)]^{2}$, defined by 
%\[
%\int_{E} (\mathbf{v}-\boldsymbol{\Pi}_{k}^{0,E} \mathbf{v})\cdot\mathbf{q}_{k} ~\mathrm{d} E=0,\quad ~~~\forall \mathbf{v} \in \mathbf{X} ,~ \mathbf{q}_{k} %\in\left[\mathbb{P}_{k}(E)\right]^{2}.%
%\]
%\item[\textbullet] the $\textbf{H}^{1}$-projection operator $\boldsymbol{\Pi}_{k}^{\nabla,E}: \textbf{X}|_{E}\rightarrow [\mathbb{P}_{k}(E)]^{2}$, defined as
%\begin{equation*}
%\left\{\begin{array}{l}
%\int_{E} \boldsymbol{\nabla}(\mathbf{v}-\boldsymbol{\Pi}_{k}^{\nabla,E} \mathbf{v}):~ \boldsymbol{\nabla}\mathbf{q}_{k} ~\mathrm{d} E=0, ~~~\quad \forall %\mathbf{v} \in \mathbf{X} ,~~ \mathbf{q}_{k} \in\left[\mathbb{P}_{k}(E)\right]^{2}, \\[1mm]
%\boldsymbol{\Pi}_{0}^{0, E}\left(\boldsymbol{\Pi}_{k}^{\nabla,E} \mathbf{v}-\mathbf{v}\right)=\mathbf{0}.
%\end{array}\right.
%\end{equation*}
%\end{itemize}
And a VE subspace of  $\mathbf{\widetilde{X}}_{k}^{E}$ is given by 
\begin{equation*}
\mathbf{X}_{k}^{E}:=\left\{\mathbf{v}_{h} \in \mathbf{\widetilde{X}}_{k}^{E}: \quad  \quad\left(\boldsymbol{\Pi}_{k}^{\nabla, E} \mathbf{v}_{h}-\mathbf{v}_{h}, \mathbf{g}_{k}^{\perp}\right)=0,\quad \quad \forall \mathbf{g}_{k}^{\perp} \in \mathcal{G}_{k}^{\perp} (E) / \mathcal{G}_{k-2}^{\perp} (E)\right\}.
\end{equation*}
We recall the following properties of the space $\mathbf{X}_{k}^{E}$. Also, \rev{the corresponding unisolvent degrees of freedom} in $\mathbf{X}_{k}^{E}$ can be divided into the following four types (see \cite{Beir17,Beir18N}) 
\begin{itemize}
\item[\textbullet] $(\textbf{D1}_{\textbf{v}})$: the values of $\textbf{v}$ at the vertexes of the element $E$,
\item[\textbullet] $(\textbf{D2}_{\textbf{v}})$: the values of $\textbf{v}$ at $k-1$ distinct points of any edge $e\subset\partial E$,
\item[\textbullet] $(\textbf{D3}_{\textbf{v}})$: the moments 
\begin{equation*}
\rev{\int_{E} \mathbf{v} \cdot \textbf{g}^{\perp} ~\mathrm{d} E, \quad\quad \forall \textbf{g}^{\perp} \in \mathcal{G}_{k-2}^{\perp} (E),}
\end{equation*}
\item[\textbullet] $(\textbf{D4}_{\textbf{v}})$: the moments 
\begin{equation*}
\int_{E}(\operatorname{div} \mathbf{v}) m_{\alpha} \mathrm{~d} E ,\quad\quad \forall m_{\alpha} \in \mathcal{M}_{k-1}(E) / \mathbb{R}.
\end{equation*}
\end{itemize}
We observe that the projectors $\boldsymbol{\Pi}_{k}^{\nabla,E}$ and $\boldsymbol{\Pi}_{k}^{0,E}$ can be computed using only the degrees of freedom $(\textbf{D1}_{\textbf{v}})$--$(\textbf{D4}_{\textbf{v}})$.

Finally, the global finite dimensional space $\textbf{X}_{h}$, associated with the partition $\mathcal{T}_{h}$, is defined such that the restriction of every VE function $\textbf{v}$ to the mesh element $E$ belongs to $\textbf{X}_{k}^{E}$. On the other hand, the discrete pressure spaces are simply given by piecewise polynomials of degree \rev{up to $k-1$}:
\begin{equation*}
\rev{Q_{h}:=\bigg\{q_{h} \in Q:\left.\quad q_{h}\right|_{E} \in \mathbb{P}_{k-1}(K), \quad \forall E \in \mathcal{T}_{h}\bigg\},}
\end{equation*}
and we also remark that
\begin{equation}\label{propA}
\rev{\operatorname{div}\textbf{X}_{h} \subseteq Q_{h}}.
\end{equation}

\noindent\textbf{Approximation properties associated with the space $\textbf{X}_{k}^{E}$.} The following estimates \rev{can be obtained using Assumption \ref{assu:mesh} (see, e.g., \cite{Beir17})}:
\begin{itemize}
\item[\textbullet]  there exists a $\textbf{z}_{\pi} \in [\mathbb{P}_{k}(E)]^{2}$ such that for $s\in [1,k+1]$,  $\textbf{z} \in [H^{s}(E)]^{2}$ and \rev{$r\in [1,\infty]$} we have 
\begin{equation}\label{eqPi_A}
\rev{\left|\textbf{z}-\textbf{z}_{\pi}\right|_{r,E}+h_{E}\left|\textbf{z}-\textbf{z}_{\pi}\right|_{1,r,E} \leq C h_{E}^{s}|\textbf{z}|_{s,r,E}.}
\end{equation}
\item[\textbullet]  there exists a $\textbf{z}_{I} \in \textbf{X}_{k}^{E}$ such that for $s\in [2,k+1]$, $\textbf{z} \in [H^{s}(E)]^{2}$ and \rev{$r\in [1,\infty]$}, we have 
\begin{equation}\label{eqIn_A}
\rev{\left|\textbf{z}-\textbf{z}_{I}\right|_{r,E}+h_{E}\left|\textbf{z}-\textbf{z}_{I}\right|_{1,r,E} \leq C h_{E}^{s}|\textbf{z}|_{s,r,E}.}
\end{equation}
\end{itemize}
\subsection{The discrete forms and their properties}\label{ss4}
As usual in the  VE literature \cite{Brezzi13,Brezzi14} we define computable discrete  forms that approximate the continuous bilinear and trilinear forms in  \eqref{eq6} using projections. \rev{Similarly} to the finite element case, we only need to construct the computable local discrete forms, which can be summed up element by element to obtain the corresponding global discrete forms.

\medskip 
Firstly, we define \rev{$\mathcal{M}_{1,h}^{E}: Z_{k}^{E}\times Z_{k}^{E}\rightarrow \mathbb{R}$ and $\mathcal{A}_{j,h}^{E}: Z_{k}^{E}\times Z_{k}^{E}\rightarrow \mathbb{R}$ for $j=1,2,3$} as
\begin{equation}\label{dis_F1}
\rev{\mathcal{M}_{1,h}^{E}(\rho_{h}, \zeta_{h}):=\mathcal{M}_{1}^{E}(\Pi_{k}^{0,E}(\rho_{h}), \Pi_{k}^{0,E}(\zeta_{h}))+|E|S_{\mathtt{m}}^{E}((I-\Pi_{k}^{0,E})\rho_{h}, (I-\Pi_{k}^{0,E})\zeta_{h}),}
\end{equation}
and
\begin{equation*}%\label{dis_F2}
\rev{\mathcal{A}_{j,h}^{E}(\rho_{h}, \zeta_{h}):=\mathcal{A}_{j}^{E}(\Pi_{k}^{\nabla,E}\rho_{h}, \Pi_{k}^{\nabla,E}\zeta_{h})+|\lambda_{j}|S_{\mathtt{a}}^{E}((I-\Pi_{k}^{\nabla,E})\rho_{h}, (I-\Pi_{k}^{\nabla,E})\zeta_{h}),\quad \lambda_{j}\in\{\kappa_{1},\kappa_{2},\epsilon\},}
\end{equation*}
respectively, where the stabilizations  $S_{m}^{E}: Z_{k}^{E}\times Z_{k}^{E}\rightarrow \mathbb{R}$ and $S_{a}^{E}: Z_{k}^{E}\times Z_{k}^{E}\rightarrow \mathbb{R}$ are a symmetric, positive definite, bilinear \rev{forms such that
\begin{subequations}
\begin{align}\label{eq28_S}
c_{0,\mathtt{m}}\Vert \rho_{h}\Vert _{0,E}^{2}\leq S_{\mathtt{m}}^{E}(\rho_{h},\rho_{h}) \leq c_{1,\mathtt{m}}\Vert\rho_{h}\Vert _{0,E}^{2},\quad \forall \rho_{h}\in Z_{k}^{E},\quad \text{with}~\Pi_{k}^{0,E}(\rho_{h})&=0,\\
\label{eq29_S}
c_{0,\mathtt{a}}\rvert \rho_{h}\rvert _{1,E}^{2}\leq S_{\mathtt{a}}^{E}(\rho_{h},\rho_{h}) \leq c_{1,\mathtt{a}}\rvert \rho_{h}\rvert _{1,E}^{2},\quad \forall \rho_{h}\in Z_{k}^{E},\quad \text{with}~\Pi_{k}^{\nabla,E}(\rho_{h})&=0,
\end{align}
\end{subequations}
for positive constants $c_{0,\mathtt{m}}$, $c_{1,\mathtt{m}}$, $c_{0,\mathtt{a}}$, $c_{1,\mathtt{a}}$} that are independent of $h$.

Moreover, the term $\mathcal{C}(\xi;\rho, \zeta)$ is replaced by
\begin{equation}\label{defiDisCPNP}
\rev{\mathcal{C}_{h}^{E}(\xi_{h};\rho_{h}, \zeta_{h}):=(\Pi_{k-1}^{0,E}\xi_{h}\boldsymbol{\Pi}_{k-1}^{0,E}\nabla \rho_{h}, \boldsymbol{\Pi}_{k-1}^{0,E}\nabla \zeta_{h} )_{0,E}.}
\end{equation}
Also, the discrete local forms \rev{$\mathcal{M}_{2,h}: \textbf{X}_{k}^{E}\times \textbf{X}_{k}^{E}\rightarrow \mathbb{R}$ and $\mathcal{K}_{h}: \textbf{X}_{k}^{E}\times \textbf{X}_{k}^{E}\rightarrow \mathbb{R}$ are} defined as
\begin{align*}%\label{dis_F3}
\mathcal{M}_{2,h}^{E}(\textbf{u}_{h}, \textbf{v}_{h})&:=\mathcal{M}_{2}^{E}(\boldsymbol{\Pi}_{k}^{0,E}(\textbf{u}_{h}), \boldsymbol{\Pi}_{k}^{0,E}(\textbf{v}_{h}))+|E|\tilde{\mathcal{S}}_{\mathtt{m}}^{E}((I-\boldsymbol{\Pi}_{k}^{0,E})\textbf{u}_{h}, (I-\boldsymbol{\Pi}_{k}^{0,E})\textbf{v}_{h}),\\
\mathcal{K}_{h}^{E}(\textbf{u}_{h}, \textbf{v}_{h})&:=\mathcal{K}^{E}(\boldsymbol{\Pi}_{k}^{\nabla,E}(\textbf{u}_{h}), \boldsymbol{\Pi}_{k}^{\nabla,E}(\textbf{v}_{h}))+\tilde{\mathcal{S}}_{\mathtt{a}}^{E}((I-\boldsymbol{\Pi}_{k}^{0,E})\textbf{u}_{h}, (I-\boldsymbol{\Pi}_{k}^{0,E})\textbf{v}_{h}),
\end{align*}
respectively, where the stabilizers $\tilde{\mathcal{S}}_{\mathtt{m}}^{E}: \textbf{X}_{k}^{E}\times \textbf{X}_{k}^{E}\rightarrow \mathbb{R}$ and $\tilde{\mathcal{S}}_{\mathtt{a}}^{E}: \textbf{X}_{k}^{E}\times \textbf{X}_{k}^{E}\rightarrow \mathbb{R}$ are symmetric, positive definite bilinear forms \rev{satisfying
\begin{subequations}
  \begin{align}
    \label{eq30_S}
\tilde{c}_{0,\mathtt{m}}\Vert \textbf{z}_{h}\Vert _{0,E}^{2}\leq \tilde{\mathcal{S}}_{\mathtt{m}}^{E}(\textbf{z}_{h},\textbf{z}_{h}) \leq\tilde{c}_{1,\mathtt{m}}\Vert \textbf{z}_{h}\Vert _{0,E}^{2},\quad\forall \textbf{z}_{h}\in \textbf{X}_{k}^{E},\quad\text{with}~\boldsymbol{\Pi}_{k}^{0,E}(\textbf{z}_{h})&=\textbf{0},\\
\label{eq31_S}
\tilde{c}_{0,\mathtt{a}}\rvert \textbf{z}_{h}\rvert _{1,E}^{2}\leq \tilde{\mathcal{S}}_{\mathtt{a}}^{E}(\textbf{z}_{h},\textbf{z}_{h}) \leq \tilde{c}_{1,\mathtt{a}}\rvert \textbf{z}_{h}\rvert _{1,E}^{2},\quad\forall \textbf{z}_{h}\in \textbf{X}_{k}^{E},\quad\text{with}~\boldsymbol{\Pi}_{k}^{\nabla,E}(\textbf{z}_{h})&=\textbf{0},
\end{align}
\end{subequations}
for positive constants $\tilde{c}_{0,\mathtt{m}}, \tilde{c}_{1,\mathtt{m}}, \tilde{c}_{0,\mathtt{a}}, \tilde{c}_{1,\mathtt{a}}$} that are independent of $h$.
Finally, the skew-symmetric trilinear forms $\mathcal{D}(\textbf{w}; \rho ,\zeta)$ and \rev{$\mathcal{E}(\textbf{w};\textbf{u},\textbf{v})$} are replaced, respectively, by
\rev{
\begin{align}\nonumber 
\mathcal{D}_{h}^{E}(\textbf{u}_{h}; \rho_{h}, \zeta_{h})&:=\dfrac{1}{2}\big[(\boldsymbol{\Pi}_{k}^{0,E}\textbf{u}_h\cdot \Pi_{k}^{0,E} \rho_{h},  \Pi_{k-1}^{0,E}\nabla \zeta_{h})_{0}-(\boldsymbol{\Pi}_{k}^{0,E}\textbf{u}_{h}\cdot  \Pi_{k-1}^{0,E}\nabla \rho_{h},  \Pi_{k}^{0,E}\zeta_{h})_{0}  \big],\\
\label{dis_F6}
\mathcal{E}_{h}(\textbf{u}_{h};\textbf{w}_{h}, \textbf{v}_{h})&:=\dfrac{1}{2}\big[(\boldsymbol{\Pi}_{k}^{0,E}\textbf{u}_{h}\cdot\boldsymbol{\Pi}_{k-1}^{0,E}\nabla \textbf{w}_{h}, \boldsymbol{\Pi}_{k}^{0,E}\textbf{v}_{h})_{0}-(\boldsymbol{\Pi}_{k}^{0,E}\textbf{u}_{h}\cdot\boldsymbol{\Pi}_{k-1}^{0,E}\nabla \textbf{v}_{h}, \boldsymbol{\Pi}_{k}^{0,E}\textbf{w}_{h})_{0} \big].
\end{align}
}
\medskip 
\rev{The aforementioned bilinear forms are continuous thanks to Cauchy--Schwarz inequality, the continuity of the projections with respect to the $L^{2}$-norm, and the stabilities  \eqref{eq28_S}--\eqref{eq31_S}: 
  \begin{subequations}
    \begin{align}      \label{BUND1}
\mathcal{M}_{1,h}(\rho_{h},\zeta_{h})& \leq \alpha_{1}\Vert \rho_{h}\Vert_{Z}\Vert \zeta_{h}\Vert_{Z},\quad\quad \mathcal{A}_{j,h}(\rho_{h},\zeta_{h})\leq \alpha_{j+1}\Vert \rho_{h}\Vert_{Z}\Vert \zeta_{h}\Vert_{Z},\\
\label{BUND2}
\mathcal{M}_{2,h}(\textbf{u}_{h},\textbf{v}_{h})&\leq \tilde{\alpha}_{1}\Vert \textbf{u}_{h}\Vert_{\textbf{X}}\Vert \textbf{v}_{h}\Vert_{\textbf{X}},\quad\quad\mathcal{K}_{h}(\textbf{u}_{h},\textbf{v}_{h})\leq \tilde{\alpha}_{2}\Vert \textbf{u}_{h}\Vert_{\textbf{X}}\Vert \textbf{v}_{h}\Vert_{\textbf{X}},
\end{align}
\end{subequations}
for all $\rho_{h},\zeta_{h}\in Z_{h}$, $\textbf{u}_{h},\textbf{v}_{h}\in \textbf{X}_{h}$ and $j=1,2,3$.}

\rev{The bilinear forms $\mathcal{M}_{1}, \mathcal{M}_{2}$ and $\mathcal{A}_{j}$, $j=1,2,3$, turn out to be coercive owing to the stability properties of stabilizers (cf. \eqref{eq28_S}-\eqref{eq31_S}) together with Young and triangle inequalities
%    and the inequalities $(a+b)^{2}\leq 2(a^{2}+b^{2})$ and $|a+b|\leq |a|+|b|$}
  \begin{subequations}
    \begin{gather}\label{CORC1}
\mathcal{M}_{1,h}(\zeta_{h},\zeta_{h})\geq \beta_{1}\Vert \zeta_{h}\Vert_{0}^{2},\quad\quad \mathcal{A}_{j,h}(\zeta_{h},\zeta_{h})\geq \beta_{j+1}\Vert \zeta_{h}\Vert_{Z}^{2},\\
\label{CORC2}
\mathcal{M}_{2,h}(\textbf{v}_{h},\textbf{v}_{h})\geq \tilde{\beta}_{1}\Vert \textbf{v}_{h}\Vert_{0}^{2},
\end{gather}\end{subequations}
for all $\zeta_{h}\in Z_{h}, \textbf{v}_{h}\in\textbf{X}_{h}$.}

\rev{On the other hand, $\mathcal{K}$ is coercive on the discrete kernel $\textbf{X}_{h}$ of the bilinear form $\mathcal{B}$
\begin{equation}
\mathcal{K}_{h}(\textbf{v}_{h},\textbf{v}_{h})\geq \tilde{\beta}_{2}\Vert \textbf{v}_{h}\Vert_{\textbf{X}}^{2},\quad\quad\forall \textbf{v}_{h}\in \widetilde{\textbf{X}}_{h},
\end{equation}
where
\[
\widetilde{\textbf{X}}_{h}=\left\{\textbf{v}_{h}\in \textbf{X}_{h}:~~~\mathcal{B}(q_{h},\textbf{v}_{h})=0,~~~\forall q_{h}\in Q_{h}  \right\}.
\]}

%%%%%%%%%%%%%%%%%%%%%%%%%%%%%%%%%%%%%%%%%%%%%
\rev{The continuity of $\mathcal{D}_{h}(\cdot;\cdot,\cdot)$ and $\mathcal{E}_{h}(\cdot;\cdot,\cdot)$ on $Z_{h}$ and $\textbf{X}_{h}$, respectively, is stated in the following result.}%are continuous on $Z_{h}$ and $\textbf{X}_{h}$, respectively.
\begin{lemma}
\label{l_N}
\rev{The trilinear forms $\mathcal{D}_{h}(\cdot;\cdot,\cdot)$ and $\mathcal{E}_{h}(\cdot;\cdot,\cdot)$ are continuous, with respective continuity constants}
%\marginpar{\cred{WHERE ARE THEY USED?}}
\begin{equation*}
\rev{\gamma_{1}:=\sup _{\mathbf{u}_{h}\in \mathbf{X}_h, \rho_{h}, \zeta_{h} \in Z_{h}} \frac{\left|\mathcal{D}_{h}(\mathbf{u}_{h} ; \rho_{h}, \zeta_{h})\right|}{\|\mathbf{u}_{h}\|_{\mathbf{X}}\|\rho_{h}\|_{Z}\|\zeta_{h}\|_{Z}},\quad \quad \gamma_{2}:=\sup _{\mathbf{u}_{h}, \mathbf{w}_{h}, \mathbf{v}_{h} \in \mathbf{X}_{h}} \frac{\left|\mathcal{E}_{h}(\mathbf{u}_{h} ; \mathbf{w}_{h}, \mathbf{v}_{h})\right|}{\|\mathbf{u}_{h}\|_{\mathbf{X}}\|\mathbf{w}_{h}\|_{\mathbf{X}}\|\mathbf{v}_{h}\|_{\mathbf{X}}}.}
\end{equation*}
\end{lemma}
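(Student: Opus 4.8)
The plan is to exploit the fact that both $\mathcal{D}_h$ and $\mathcal{E}_h$ are sums of two mirror-image terms sharing the same structure, so that it suffices to bound a single generic term---say $(\boldsymbol{\Pi}_{k}^{0,E}\mathbf{u}_h\,\Pi_{k}^{0,E}\rho_h,\,\Pi_{k-1}^{0,E}\nabla\zeta_h)_{0,E}$ for $\mathcal{D}_h$ and $(\boldsymbol{\Pi}_{k}^{0,E}\mathbf{u}_h\cdot\boldsymbol{\Pi}_{k-1}^{0,E}\nabla\mathbf{w}_h,\,\boldsymbol{\Pi}_{k}^{0,E}\mathbf{v}_h)_{0,E}$ for $\mathcal{E}_h$---the remaining terms following by relabelling. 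Each such term is the integral over $E$ of a product of three factors, so the natural first move is the generalized H\"older inequality on each element with exponents $(4,4,2)$ for $\mathcal{D}_h$ and $(4,2,4)$ for $\mathcal{E}_h$, placing the projected velocity together with the projected concentration (respectively the projected test velocity) in $L^4(E)$, and the projected gradient in $L^2(E)$. This mirrors the classical argument that makes the continuous convective forms bounded on $H^1$ in two dimensions.

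First I would estimate the $L^2$-factor using that $\boldsymbol{\Pi}_{k-1}^{0,E}$ and $\Pi_{k-1}^{0,E}$ are $L^2$-orthogonal projections, hence contractions, so that $\|\Pi_{k-1}^{0,E}\nabla\zeta_h\|_{0,E}\le\|\nabla\zeta_h\|_{0,E}$, and likewise for $\nabla\mathbf{w}_h$; summing the squared contributions over $E$ reproduces $\|\nabla\zeta_h\|_{0}\le\|\zeta_h\|_{Z}$ (respectively $\|\mathbf{w}_h\|_{\mathbf{X}}$). For the two $L^4$-factors I would invoke the $L^4$-stability of the local $L^2$-projection onto polynomials, i.e. $\|\boldsymbol{\Pi}_{k}^{0,E}\mathbf{u}_h\|_{0,4,E}\le C\|\mathbf{u}_h\|_{0,4,E}$ and $\|\Pi_{k}^{0,E}\rho_h\|_{0,4,E}\le C\|\rho_h\|_{0,4,E}$, with $C$ depending only on $k$ and on the shape-regularity constants $\rho_{1},\rho_{2}$ of Assumption~\ref{assu:mesh}. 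Summing the resulting element bounds through the discrete H\"older inequality with the same exponents collapses the local $L^4(E)$ and $L^2(E)$ norms into the global norms $\|\mathbf{u}_h\|_{0,4,\Omega}$, $\|\rho_h\|_{0,4,\Omega}$ and $\|\nabla\zeta_h\|_{0}$. A final application of the Sobolev embedding $H^{1}(\Omega)\hookrightarrow L^{4}(\Omega)$, valid in two dimensions, converts the global $L^4$-norms into $\|\mathbf{u}_h\|_{\mathbf{X}}$ and $\|\rho_h\|_{Z}$, yielding $|\mathcal{D}_h(\mathbf{u}_h;\rho_h,\zeta_h)|\le C\|\mathbf{u}_h\|_{\mathbf{X}}\|\rho_h\|_{Z}\|\zeta_h\|_{Z}$; the identical chain gives $|\mathcal{E}_h(\mathbf{u}_h;\mathbf{w}_h,\mathbf{v}_h)|\le C\|\mathbf{u}_h\|_{\mathbf{X}}\|\mathbf{w}_h\|_{\mathbf{X}}\|\mathbf{v}_h\|_{\mathbf{X}}$, and taking suprema shows $\gamma_{1},\gamma_{2}<\infty$.

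The main obstacle is to secure the $L^4$-stability of the $L^2$-projection with a constant independent of $h$. On a single star-shaped polygon this should follow by mapping to a reference configuration: on a fixed reference element the bound is merely the equivalence of the $L^2$- and $L^4$-norms on the finite-dimensional space $\mathbb{P}_{k}$ combined with the $L^2$-contractivity of the projection, and a dilation argument shows that the homogeneity of the $L^2$-projection makes the estimate scale-invariant (the factors $h_{E}^{d/4}$ cancel), so the constant depends only on $\rho_{1},\rho_{2}$ and $k$. I would stress that the tempting shortcut of bounding $\|\boldsymbol{\Pi}_{k}^{0,E}\mathbf{u}_h\|_{0,4,E}$ through an inverse inequality against its own $L^2$-norm must be avoided, since it would introduce a negative power $h_{E}^{-d/4}$ and destroy the $h$-uniformity of $\gamma_{1},\gamma_{2}$; routing the $L^4$-control through the stability of the projection and a single \emph{global} Sobolev embedding is precisely what keeps every constant bounded uniformly in $h$. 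The only remaining point requiring care is the bookkeeping of the discrete H\"older summation---matching the exponents $(4,4,2)$ and $(4,2,4)$ across the three factors so that the element-wise integrals reassemble exactly into the global Lebesgue norms.
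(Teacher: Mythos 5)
Your proposal is correct and follows essentially the same route as the paper's proof: elementwise H\"older with exponents $(4,4,2)$, $L^{2}$-contractivity of the projections on the gradient factor, an $h$-uniform $L^{4}$-stability bound for the local $L^{2}$-projection on the remaining factors, and the two-dimensional embedding $H^{1}(\Omega)\hookrightarrow L^{4}(\Omega)$ (for $\mathcal{E}_{h}$ the paper simply cites \cite{Beir18N} instead of repeating the argument). One correction to your side remark: the ``inverse-inequality shortcut'' you warn against is exactly what the paper does in \eqref{Linfity_L4}, and it is sound there, since the chain $\Vert\boldsymbol{\Pi}_{k}^{0,E}\mathbf{u}_{h}\Vert_{0,4}\le h_{E}^{-1/2}\Vert\boldsymbol{\Pi}_{k}^{0,E}\mathbf{u}_{h}\Vert_{0,E}\le h_{E}^{-1/2}\Vert\mathbf{u}_{h}\Vert_{0,E}\le C_{1}\Vert\mathbf{u}_{h}\Vert_{0,4}$ is closed by H\"older on $E$ (namely $\Vert\mathbf{u}_{h}\Vert_{0,E}\le|E|^{1/4}\Vert\mathbf{u}_{h}\Vert_{0,4,E}\simeq h_{E}^{1/2}\Vert\mathbf{u}_{h}\Vert_{0,4,E}$), so the negative power cancels and the constant stays independent of $h$; this is merely an alternative, equally $h$-uniform, proof of the same local $L^{4}$-stability that you establish by your reference-element scaling argument.
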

\begin{proof}
Using \rev{the} definition of the discrete form $\mathcal{D}_{h}$ and the H\"older inequality, we have
\begin{align}\label{eq_V1}
\mathcal{D}_{h}(\textbf{u}_{h}; \rho_{h}, \zeta_{h})&=\dfrac{1}{2}\big[(\boldsymbol{\Pi}_{k}^{0}\textbf{u}_{h}~\Pi_{k}^{0} \rho_{h},  \Pi_{k-1}^{0}\nabla \zeta_{h})_{0}-(\boldsymbol{\Pi}_{k}^{0}\textbf{u}_{h}\cdot  \Pi_{k-1}^{0}\nabla \rho_{h},  \Pi_{k}^{0} \zeta_{h})_{0}  \big]\nonumber\\[1mm]
&\leq \Vert \boldsymbol{\Pi}_{k}^{0}\textbf{u}_{h}\Vert_{0,4}\Vert \Pi_{k}^{0} \rho_{h}\Vert_{0,4}\Vert  \Pi_{k-1}^{0}\nabla \zeta_{h}\Vert_{0}+\Vert \boldsymbol{\Pi}_{k}^{0}\textbf{u}_{h}\Vert_{0,4}\Vert \Pi_{k-1}^{0} \nabla\rho_{h}\Vert_{0}\Vert  \Pi_{k}^{0} \zeta_{h}\Vert_{0,4}.
\end{align}
Applying the inverse inequality in conjunction with the continuity of the projectors $\boldsymbol{\Pi}_{k}^{0}$ and $\Pi_{k}^{0}$ (with respect to the $L^{2}$-norm), gives the following upper bound for the terms $\Vert \boldsymbol{\Pi}_{k}^{0}\textbf{u}_{h}\Vert_{0,4}$, $\Vert \Pi_{k}^{0} \rho_{h}\Vert_{0,4}$ and  $\Vert  \Pi_{k}^{0} \zeta_{h}\Vert_{0,4}$ on the right-hand side of the above inequality, and for $E\in\mathcal{T}_{h}$:
\begin{equation}\label{Linfity_L4}
\Vert \boldsymbol{\Pi}_{k}^{0,E}\textbf{u}_{h}\Vert_{0,4}\leq h_{E}^{-1/2}\Vert \boldsymbol{\Pi}_{k}^{0,E}\textbf{u}_{h}\Vert_{0,E}\leq h_{E}^{-1/2}\Vert\textbf{u}_{h}\Vert_{0,E}\leq C_{1}\Vert\textbf{u}_{h}\Vert_{0,4},
\end{equation}
and similarly
\begin{equation}
\Vert \Pi_{k}^{0} \rho_{h}\Vert_{0,4}\leq C_{2}\Vert \rho_{h}\Vert_{0,4},\quad \Vert \Pi_{k}^{0} \zeta_{h}\Vert_{0,4}\leq C_{3}\Vert \zeta_{h}\Vert_{0,4}.
\end{equation}
Combining the above estimates with Eq. \eqref{eq_V1}, leads to 
\begin{equation*}
\mathcal{D}_{h}(\textbf{u}_{h}; \rho_{h}, \zeta_{h})\leq \dfrac{1}{2}\left( C_{1}C_{2}+C_{1}C_{3}\right)\|\mathbf{u}_{h}\|_{\mathbf{X}}\|\rho_{h}\|_{Z}\|\zeta_{h}\|_{Z},
 \end{equation*} 
which \rev{confirms the continuity of} $\mathcal{D}_{h}(\cdot;\cdot,\cdot)$. The proof of continuity of \rev{$\mathcal{E}_{h}(\cdot;\cdot,\cdot)$ can be found} in \cite{Beir18N}.
\end{proof}
%-------------
\begin{lemma}\label{l_Con_C}
 \rev{There exist constants $\gamma_{3}$ and $\gamma_{4}$ (independent of $E$ and $h$) verifying
  \begin{subequations}
\begin{align}
\mathcal{C}_{h}(\xi_{h};\rho_{h},\zeta_{h})&\leq \gamma_{3}\Vert\xi_{h}\Vert_{\infty}\Vert\rho_{h}\Vert_{Z}\Vert\zeta_{h}\Vert_{Z},\quad\quad\quad\quad \forall \xi_{h},\rho_{h},\zeta_{h}\in Z_{h},\label{Boun_C}\\
\left(\xi_{h}\nabla \rho_{h}, \textbf{v}_{h}\right)_{h}&\leq \gamma_{4}\Vert\xi_{h}\Vert_{Z}\Vert \rho_{h}\Vert_{Z}\Vert\textbf{v}_{h}\Vert_{\textbf{X}},\quad\quad\quad\quad \forall \xi_{h},\rho_{h}\in Z_{h},~\textbf{v}_{h}\in \textbf{X}_{h}.\label{Boun_C1}
\end{align}\end{subequations}}
\end{lemma}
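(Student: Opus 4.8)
The plan is to prove both estimates element by element, combining Hölder's inequality with the $L^{2}$-stability of the polynomial projections and local inverse inequalities, exactly in the spirit of the proof of Lemma \ref{l_N}. For \eqref{Boun_C} I start from the local definition \eqref{defiDisCPNP} and apply Hölder's inequality on each $E\in\mathcal{T}_{h}$ with exponents $(\infty,2,2)$,
\[
\left|\mathcal{C}_{h}^{E}(\xi_{h};\rho_{h},\zeta_{h})\right|\leq \|\Pi_{k-1}^{0,E}\xi_{h}\|_{\infty,E}\,\|\boldsymbol{\Pi}_{k-1}^{0,E}\nabla\rho_{h}\|_{0,E}\,\|\boldsymbol{\Pi}_{k-1}^{0,E}\nabla\zeta_{h}\|_{0,E}.
\]
The two gradient factors are controlled by the $L^{2}$-continuity of $\boldsymbol{\Pi}_{k-1}^{0,E}$, giving $\|\boldsymbol{\Pi}_{k-1}^{0,E}\nabla\rho_{h}\|_{0,E}\leq\|\nabla\rho_{h}\|_{0,E}=|\rho_{h}|_{1,E}$ and similarly for $\zeta_{h}$. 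For the first factor I establish the $L^{\infty}$-stability of the scalar $L^{2}$-projection with an $h$-independent constant, arguing as in \eqref{Linfity_L4}: an inverse inequality on the polynomial $\Pi_{k-1}^{0,E}\xi_{h}$ gives $\|\Pi_{k-1}^{0,E}\xi_{h}\|_{\infty,E}\leq Ch_{E}^{-1}\|\Pi_{k-1}^{0,E}\xi_{h}\|_{0,E}$, $L^{2}$-stability gives $\|\Pi_{k-1}^{0,E}\xi_{h}\|_{0,E}\leq\|\xi_{h}\|_{0,E}$, and Hölder on $E$ together with $|E|\leq Ch_{E}^{2}$ (Assumption \ref{assu:mesh}) gives $\|\xi_{h}\|_{0,E}\leq Ch_{E}\|\xi_{h}\|_{\infty,E}$; chaining these yields $\|\Pi_{k-1}^{0,E}\xi_{h}\|_{\infty,E}\leq C\|\xi_{h}\|_{\infty,E}\leq C\|\xi_{h}\|_{\infty}$. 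Summing over $E$, pulling out the global $L^{\infty}$-norm, and applying the discrete Cauchy--Schwarz inequality, $\sum_{E}|\rho_{h}|_{1,E}|\zeta_{h}|_{1,E}\leq|\rho_{h}|_{1}|\zeta_{h}|_{1}\leq\|\rho_{h}\|_{Z}\|\zeta_{h}\|_{Z}$, delivers \eqref{Boun_C}.

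For \eqref{Boun_C1} the structure is the same, but $\xi_{h}$ is now measured in $\|\cdot\|_{Z}$ rather than $\|\cdot\|_{\infty}$, so I distribute the integrability with Hölder's inequality using exponents $(4,2,4)$ on each $E$, after writing out the $L^{2}$-projections that define the discrete form $(\cdot\,\nabla\cdot,\cdot)_{h}$:
\[
\left|(\xi_{h}\nabla\rho_{h},\textbf{v}_{h})_{h}^{E}\right|\leq \|\Pi_{k}^{0,E}\xi_{h}\|_{0,4,E}\,\|\boldsymbol{\Pi}_{k-1}^{0,E}\nabla\rho_{h}\|_{0,E}\,\|\boldsymbol{\Pi}_{k}^{0,E}\textbf{v}_{h}\|_{0,4,E}.
\]
Each projection is removed in the $L^{4}$-norm through the inverse-inequality chain \eqref{Linfity_L4}, which shows $\|\Pi_{k}^{0,E}\xi_{h}\|_{0,4,E}\leq C\|\xi_{h}\|_{0,4,E}$ and $\|\boldsymbol{\Pi}_{k}^{0,E}\textbf{v}_{h}\|_{0,4,E}\leq C\|\textbf{v}_{h}\|_{0,4,E}$, while the middle factor obeys $\|\boldsymbol{\Pi}_{k-1}^{0,E}\nabla\rho_{h}\|_{0,E}\leq|\rho_{h}|_{1,E}$. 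After summing over $E$ and applying the discrete Hölder inequality, I invoke the continuous two-dimensional Sobolev embedding $H^{1}(\Omega)\hookrightarrow L^{4}(\Omega)$ --- legitimate because $\xi_{h}\in Z_{h}\subset H^{1}(\Omega)$ and $\textbf{v}_{h}\in\textbf{X}_{h}\subset\textbf{H}_{0}^{1}(\Omega)$ are genuine Sobolev functions --- to replace $\|\xi_{h}\|_{0,4}\leq C\|\xi_{h}\|_{Z}$ and $\|\textbf{v}_{h}\|_{0,4}\leq C\|\textbf{v}_{h}\|_{\textbf{X}}$, which yields \eqref{Boun_C1}.

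The routine ingredients (Hölder, $L^{2}$-stability of the projections, and the summation steps) are immediate; the crux is securing the $h$-independent $L^{\infty}$- and $L^{4}$-stability of the local polynomial projections. These rest on inverse inequalities valid on the polynomial images of the projections and on the shape-regularity encoded in Assumption \ref{assu:mesh} (through $|E|\leq Ch_{E}^{2}$ and the uniform constants in the inverse estimates), precisely the mechanism already used in \eqref{Linfity_L4}. No new difficulty arises beyond tracking that the resulting constants $\gamma_{3},\gamma_{4}$ depend only on $k$, the Sobolev embedding constant, and the regularity parameters $\rho_{1},\rho_{2}$, and not on $E$ or $h$.
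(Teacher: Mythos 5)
Your proof is correct and follows essentially the same route as the paper's (one-line) argument: H\"older's inequality combined with $h$-uniform $L^{\infty}$/$L^{4}$-stability of the local $L^{2}$-projections, obtained through exactly the inverse-inequality mechanism of \eqref{Linfity_L4}, plus the 2D Sobolev embedding for the second bound. The only difference is that you supply the details the paper omits---and you are right to drop the paper's citation of the stabilizer bound \eqref{eq29_S}, which is not actually needed since neither $\mathcal{C}_{h}$ nor $\left(\cdot\,\nabla\cdot,\cdot\right)_{h}$ contains a stabilization term.
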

\begin{proof}
\rev{The proof is a direct consequence of the H\"older inequality, the
continuity of $\Pi_{k}^{0}$ with respect to the $L^{\infty}$-norm, and the stability property of $S_{\mathtt{a}}$ (cf. \eqref{eq29_S}).}
\end{proof}
%----------------
\begin{lemma}[Discrete inf-sup condition]\cite{Beir18N}\label{l_INFD} 
Given the VE spaces $\textbf{X}_{h}$ and $Q_{h}$ defined in Section \ref{subSA}, there exists
a positive \rev{constant} $\widehat{\beta}$, independent of $h$, such that:
\begin{equation*}
\rev{\sup _{\mathbf{v}_{h} \in \mathbf{X}_{h} \mathbf{v}_{h} \neq \mathbf{0}} \frac{\mathcal{B}\left(\mathbf{v}_{h}, q_{h}\right)}{\left\|\mathbf{v}_{h}\right\|_{\mathbf{X}}} \geq \widehat{\beta}\left\|q_{h}\right\|_{Q} \quad \text { for all } q_{h} \in Q_{h}.}
\end{equation*}
\end{lemma}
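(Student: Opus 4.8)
The plan is to verify the discrete inf--sup condition by means of Fortin's criterion, thereby reducing the statement to the classical continuous inf--sup condition for the pair $(\textbf{X},Q)=(\textbf{H}_{0}^{1}(\Omega),L_{0}^{2}(\Omega))$, namely the surjectivity of the divergence: for every $q_{h}\in Q_{h}\subset Q$ there is $\textbf{v}\in\textbf{X}$ with $\operatorname{div}(\textbf{v})=q_{h}$ and $\|\textbf{v}\|_{1}\leq\beta_{c}^{-1}\|q_{h}\|_{0}$, with $\beta_{c}>0$ independent of $h$. Concretely, it suffices to construct a Fortin operator $\Pi_{F}:\textbf{X}\to\textbf{X}_{h}$ enjoying two properties: (i) the $h$-uniform stability $\|\Pi_{F}\textbf{v}\|_{\textbf{X}}\leq C_{F}\|\textbf{v}\|_{\textbf{X}}$, and (ii) the divergence-preservation identity $\mathcal{B}(\Pi_{F}\textbf{v},q_{h})=\mathcal{B}(\textbf{v},q_{h})$ for all $q_{h}\in Q_{h}$ and $\textbf{v}\in\textbf{X}$, where $\mathcal{B}(\cdot,q_{h})=(q_{h},\operatorname{div}(\cdot))_{0}$.

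For the construction I would take $\Pi_{F}$ to be the canonical VE interpolation operator attached to the degrees of freedom $(\textbf{D1}_{\textbf{v}})$--$(\textbf{D4}_{\textbf{v}})$ of $\textbf{X}_{k}^{E}$. Property (ii) is exactly where the design of this divergence-free pair pays off: since $\operatorname{div}\textbf{X}_{h}\subseteq Q_{h}$ by \eqref{propA} and $Q_{h}$ consists of piecewise polynomials of degree $\leq k-1$, it is enough to check on each $E\in\mathcal{T}_{h}$ that $\int_{E}\operatorname{div}(\Pi_{F}\textbf{v})\,m=\int_{E}\operatorname{div}(\textbf{v})\,m$ for every $m\in\mathbb{P}_{k-1}(E)$. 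For the non-constant modes $m\in\mathcal{M}_{k-1}(E)/\mathbb{R}$ this is precisely the interpolation condition encoded in $(\textbf{D4}_{\textbf{v}})$; for the constant mode, integration by parts reduces $\int_{E}\operatorname{div}(\Pi_{F}\textbf{v})$ to the boundary flux $\int_{\partial E}\Pi_{F}\textbf{v}\cdot\textbf{n}_{E}^{e}$, which coincides with $\int_{\partial E}\textbf{v}\cdot\textbf{n}_{E}^{e}$ because $\Pi_{F}\textbf{v}$ and $\textbf{v}$ carry the same edge polynomial traces fixed by $(\textbf{D1}_{\textbf{v}})$--$(\textbf{D2}_{\textbf{v}})$. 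Summing over the elements yields (ii) globally. Given (i) and (ii), choosing $\textbf{v}$ from the continuous inf--sup condition with $\operatorname{div}(\textbf{v})=q_{h}$ gives $\mathcal{B}(\Pi_{F}\textbf{v},q_{h})=\mathcal{B}(\textbf{v},q_{h})=\|q_{h}\|_{0}^{2}$, and hence
\[
\sup_{\textbf{v}_{h}\in\textbf{X}_{h}}\frac{\mathcal{B}(\textbf{v}_{h},q_{h})}{\|\textbf{v}_{h}\|_{\textbf{X}}}\;\geq\;\frac{\mathcal{B}(\Pi_{F}\textbf{v},q_{h})}{\|\Pi_{F}\textbf{v}\|_{\textbf{X}}}\;=\;\frac{\|q_{h}\|_{0}^{2}}{\|\Pi_{F}\textbf{v}\|_{\textbf{X}}}\;\geq\;\frac{\|q_{h}\|_{0}^{2}}{C_{F}\|\textbf{v}\|_{1}}\;\geq\;\frac{\beta_{c}}{C_{F}}\,\|q_{h}\|_{Q},
\]
so that $\widehat{\beta}=\beta_{c}/C_{F}$ is the desired $h$-independent constant.

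The main obstacle is property (i): the $h$-uniform $H^{1}$-stability of $\Pi_{F}$, since the naive nodal interpolant is not directly bounded in $H^{1}$. I would establish it elementwise by splitting $\textbf{v}-\Pi_{F}\textbf{v}=(\textbf{v}-\textbf{z}_{\pi})-(\Pi_{F}\textbf{v}-\textbf{z}_{\pi})$ with a polynomial $\textbf{z}_{\pi}\in[\mathbb{P}_{k}(E)]^{2}$, bounding the first term by \eqref{eqPi_A} and the second--a discrete function whose degrees of freedom are controlled by those of $\textbf{v}$--through a scaled $L^{2}$-to-$H^{1}$ inverse inequality together with the mesh regularity Assumption \ref{assu:mesh}, finally invoking the interpolation estimate \eqref{eqIn_A}. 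Care is needed because the velocity space is \emph{enhanced} (the constraints involving $\mathcal{G}_{k}^{\perp}(E)$), so one must use the interpolation bound stated specifically for $\textbf{X}_{k}^{E}$ rather than a generic one; fortunately this is exactly \eqref{eqIn_A}. As the statement is quoted verbatim from \cite{Beir18N}, whose construction is tailored to this divergence-free VE pair, I would ultimately refer to that work for the complete verification of (i).
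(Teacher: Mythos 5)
The paper itself offers no proof of this lemma: it is imported verbatim, citation included, from \cite{Beir18N}, so your attempt can only be compared with the argument in that reference, which is indeed of Fortin type. Your overall strategy (continuous inf--sup for $(\textbf{H}_{0}^{1}(\Omega),L_{0}^{2}(\Omega))$ plus a Fortin operator) is therefore the right one, but the concrete operator you propose --- the canonical interpolant attached to $(\textbf{D1}_{\textbf{v}})$--$(\textbf{D4}_{\textbf{v}})$ --- fails at two specific points. First, it is not even well defined on $\textbf{X}=\textbf{H}_{0}^{1}(\Omega)$: the degrees of freedom $(\textbf{D1}_{\textbf{v}})$--$(\textbf{D2}_{\textbf{v}})$ are \emph{point values}, and in two dimensions $H^{1}$ functions need not be continuous; the field $\textbf{v}$ supplied by the continuous inf--sup (a Bogovskii-type lift of the discontinuous datum $q_{h}$, hence only $H^{1}$, with no $H^{2}$ regularity to fall back on) has no meaningful vertex or edge-point values. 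For the same reason your plan for the stability (i) cannot invoke \eqref{eqIn_A}, since that estimate requires $s\ge 2$.

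Second, your verification of the constant mode in property (ii) is incorrect: $\Pi_{F}\textbf{v}$ and $\textbf{v}$ do \emph{not} ``carry the same edge polynomial traces''. The trace of $\textbf{v}$ on an edge is a generic $H^{1/2}$ function, while the trace of $\Pi_{F}\textbf{v}$ is only its degree-$k$ interpolant at the $k+1$ chosen points; pointwise interpolation does not preserve edge integrals, so in general $\int_{\partial E}(\textbf{v}-\Pi_{F}\textbf{v})\cdot\textbf{n}_{E}^{e}\,ds\neq 0$ and the piecewise-constant moments of the divergence are lost. The standard repair, which is essentially what the cited literature does, is a two-operator Fortin construction: take $\Pi_{F}=\Pi_{1}+\Pi_{2}(I-\Pi_{1})$, where $\Pi_{1}$ is an $H^{1}$-stable Cl\'ement/quasi-interpolation operator (well defined on $H^{1}$, with $\Vert\textbf{v}-\Pi_{1}\textbf{v}\Vert_{0}\le Ch\vert\textbf{v}\vert_{1}$), and $\Pi_{2}$ is defined purely through \emph{moments}: edge moments of the trace (which for $k\ge 2$ include constants, hence preserve the flux $\int_{e}\textbf{v}\cdot\textbf{n}\,ds$) together with the interior moments in $(\textbf{D3}_{\textbf{v}})$--$(\textbf{D4}_{\textbf{v}})$, all of which make sense for $H^{1}$ fields. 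Scaling plus Assumption \ref{assu:mesh} then gives the weak stability of $\Pi_{2}$ needed for the composite operator, and with this replacement your concluding chain of inequalities, and the constant $\widehat{\beta}=\beta_{c}/C_{F}$, are correct.
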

\rev{Such a discrete inf-sup property together with   \eqref{propA},} indicate that 
\[
\operatorname{div}\textbf{X}_{h} = Q_{h}.
\]
%\newpage
%%%%%%%%%%%%%%%%%%%
\rev{The following result compares $\mathcal{M}_{1}^{E}, \mathcal{M}_{2}^{E}$, $\mathcal{A}_{j}^{E}$ and $\mathcal{K}^{E}$ against their  computable counterparts.}
%versions $\mathcal{M}_{1,h}^{E}, \mathcal{M}_{2,h}^{E}$, $\mathcal{A}_{j,h}^{E}$ and $\mathcal{K}_{h}^{E}$, respectively.}
\begin{lemma}[\cite{Gharibi21}]\label{l_12}
Let $\alpha_{l}, \tilde{\alpha}_{r}$, $l=1,\cdots,4,~r=1,2$ be the constants from \eqref{BUND1} and \eqref{BUND2}. Then for each $\rho,\zeta\in Z$ and $\textbf{u},\textbf{v}\in \textbf{X}$, \rev{there hold
\begin{align*}
|\mathcal{M}_{1}(\rho, \zeta)-\mathcal{M}_{1,h}(\rho, \zeta)| & \leq \alpha_{1} \|\rho-\Pi_{k}^{0}(\rho)\|_{0}\|\zeta\|_{0},\\[1mm]
|\mathcal{M}_{2}(\textbf{u}, \textbf{v})-\mathcal{M}_{2,h}(\textbf{u}, \textbf{v})| & \leq \tilde{\alpha}_{1} \|\textbf{u}-\boldsymbol{\Pi}_{k}^{0}(\textbf{u})\|_{0}\|\textbf{v}\|_{0},\\[1mm]
|\mathcal{A}_{i}(\rho, \zeta)-\mathcal{A}_{i,h}(\rho, \zeta)|&\leq \alpha_{l+1} \vert\rho-\Pi_{k}^{\nabla}(\rho)\vert_{1}\vert\zeta\vert_{1},\\[1mm]
|\mathcal{K}(\textbf{u}, \textbf{v})-\mathcal{K}_{h}(\textbf{u}, \textbf{v})|& \leq \tilde{\alpha}_{2}\vert\textbf{u}-\boldsymbol{\Pi}_{k}^{\nabla}(\textbf{u})\vert_{1}\vert\textbf{v}\vert_{1}.
\end{align*}}
\end{lemma}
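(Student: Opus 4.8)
The plan is to prove all four estimates with a single template, reducing each to two ingredients available in this VEM setting: (i) \emph{polynomial consistency} of the discrete form, i.e. that each computable form reproduces its continuous counterpart exactly whenever one of its arguments is a polynomial of degree at most $k$ on $E$; and (ii) the local stability bounds \eqref{eq28_S}--\eqref{eq31_S} of the stabilizing forms together with the $L^{2}$-contraction of the projections $\Pi_{k}^{0,E},\Pi_{k}^{\nabla,E}$ and their vector analogues. Since every discrete form is assembled element by element, I would argue on a fixed $E\in\mathcal{T}_{h}$ and then sum, applying a discrete Cauchy--Schwarz inequality to recover the global norms on the right-hand side.

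First I would establish consistency for $\mathcal{M}_{1,h}^{E}$. For $p\in\mathbb{P}_{k}(E)$ one has $\Pi_{k}^{0,E}p=p$ and $(I-\Pi_{k}^{0,E})p=0$, so the stabilization drops out and, by the defining property of the $L^{2}$-projection, $\mathcal{M}_{1,h}^{E}(p,\zeta)=(p,\Pi_{k}^{0,E}\zeta)_{0,E}=(p,\zeta)_{0,E}=\mathcal{M}_{1}^{E}(p,\zeta)$. Setting $\rho_{\pi}:=\Pi_{k}^{0,E}\rho$ and using linearity in the first slot, the polynomial part cancels and $\mathcal{M}_{1}^{E}(\rho,\zeta)-\mathcal{M}_{1,h}^{E}(\rho,\zeta)=\mathcal{M}_{1}^{E}(\rho-\rho_{\pi},\zeta)-\mathcal{M}_{1,h}^{E}(\rho-\rho_{\pi},\zeta)$. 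Because $\Pi_{k}^{0,E}(\rho-\rho_{\pi})=0$, the projected part of the discrete form vanishes as well, leaving only the stabilization term; bounding $(\rho-\rho_{\pi},\zeta)_{0,E}$ by Cauchy--Schwarz and the stabilization term by \eqref{eq28_S} (with the contraction $\|(I-\Pi_{k}^{0,E})\zeta\|_{0,E}\le\|\zeta\|_{0,E}$) yields a local bound of the form $C\|\rho-\Pi_{k}^{0,E}\rho\|_{0,E}\|\zeta\|_{0,E}$. Summing over $E$ and applying Cauchy--Schwarz to the sum gives the first inequality with constant $\alpha_{1}$ from \eqref{BUND1}. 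The bound for $\mathcal{M}_{2,h}$ is identical, replacing $\Pi_{k}^{0,E}$ by $\boldsymbol{\Pi}_{k}^{0,E}$ and invoking \eqref{eq30_S} and $\tilde{\alpha}_{1}$ from \eqref{BUND2}.

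For $\mathcal{A}_{i,h}$ (and likewise $\mathcal{K}_{h}$) I would repeat the argument with the $H^{1}$-projection and the $H^{1}$-seminorm. The consistency step now uses that $\kappa_{i}$ (resp. $\epsilon$) is a positive constant and the orthogonality $\int_{E}\nabla p\cdot\nabla(\zeta-\Pi_{k}^{\nabla,E}\zeta)=0$, so that $\mathcal{A}_{i,h}^{E}(p,\zeta)=(\kappa_{i}\nabla p,\nabla\Pi_{k}^{\nabla,E}\zeta)_{0,E}=(\kappa_{i}\nabla p,\nabla\zeta)_{0,E}=\mathcal{A}_{i}^{E}(p,\zeta)$ for $p\in\mathbb{P}_{k}(E)$. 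Choosing now $\rho_{\pi}:=\Pi_{k}^{\nabla,E}\rho$, the same cancellation occurs, and bounding the remaining gradient and stabilization contributions via Cauchy--Schwarz and \eqref{eq29_S} (resp. \eqref{eq31_S}) produces $C|\rho-\Pi_{k}^{\nabla,E}\rho|_{1,E}|\zeta|_{1,E}$ locally, which sums to the stated $\mathcal{A}_{i}$ (resp. $\mathcal{K}$) estimate with the constants $\alpha_{l+1}$ (resp. $\tilde{\alpha}_{2}$).

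The only genuinely delicate point is the verification of polynomial consistency together with the bookkeeping that ensures the projection error falls on the \emph{first} argument, so that the right-hand side reads $\|\rho-\Pi\rho\|$ rather than a mixed expression. This hinges on the idempotency $\Pi(\rho-\Pi\rho)=0$, which makes the projected part of the discrete form vanish and leaves a stabilization term already carrying the factor $(I-\Pi)\rho=\rho-\Pi\rho$. Everything past this observation---the Cauchy--Schwarz estimates, the use of the stability bounds, and the summation over the mesh---is routine, so I would expect the consistency and cancellation step to be the crux of the proof.
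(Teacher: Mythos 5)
Your proof is correct and follows exactly the standard route — polynomial consistency (which the paper records separately as Lemma \ref{l_6}), insertion of $\Pi_{k}^{0}\rho$ (resp.\ $\Pi_{k}^{\nabla}\rho$), the idempotency cancellation, and Cauchy--Schwarz together with the stability bounds \eqref{eq28_S}--\eqref{eq31_S} — which is precisely the argument behind the cited reference \cite{Gharibi21}; the paper itself states this lemma by citation rather than proving it. The only wrinkle is notational and inherited from the paper rather than a flaw in your reasoning: as literally written, the stabilization in $\mathcal{K}_{h}^{E}$ acts on $(I-\boldsymbol{\Pi}_{k}^{0,E})$-arguments while \eqref{eq31_S} is stated under the hypothesis $\boldsymbol{\Pi}_{k}^{\nabla,E}(\textbf{z}_{h})=\textbf{0}$, so your template for the $\mathcal{K}$ estimate is rigorous under the standard (intended) definition in which the stabilizer of $\mathcal{K}_{h}$ is evaluated at $(I-\boldsymbol{\Pi}_{k}^{\nabla,E})$-arguments.
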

%------------------------------------
%\begin{lemma}\cite{Gharibi21}\label{l_ComA}
%Let $\beta_{i+1}, $ be the constant from Lemma \ref{l_6}. Then for each $E\in \mathcal{T}_{h}$, there hold
%\begin{align*}
%|\mathcal{A}_{i}^{E}(\rho, \zeta)-\mathcal{A}_{ih}^{E}(\rho, \zeta)|&\leq \beta_{2,i} \|\rho-\Pi_{k}^{\nabla,E}(\rho)\|_{1,E}\|\zeta\|_{1,E},~~~~~~~~~~\forall \rho,\zeta\in Z_{k}^{E},\\
%|\boldsymbol{\mathcal{A}}^{E}(\textbf{u}, \textbf{v})-\mathcal{K}_{h}^{E}(\textbf{u}, \textbf{v})|& \leq \boldsymbol{\beta}_{2} \|\textbf{u}-\boldsymbol{\Pi}_{k}^{\nabla,E}(\textbf{u})\|_{1,E}\|\textbf{v}\|_{1,E},~~~~~~~~~~\forall \textbf{u},\textbf{v}\in \textbf{X}_{k}^{E}.
%\end{align*}
%\end{lemma}
%\newpage
\begin{lemma}[\cite{Beir18N}]\label{l_C}
Assume that $\textbf{w}\in \textbf{X}\cap [H^{s+1}(\Omega)]^{2}$ and $s\in [0,k]$. Then, it holds
\begin{equation*}
\rev{|\mathcal{E}^{E}(\textbf{w};\textbf{w},\textbf{v})-\mathcal{E}_{h}^{E}(\textbf{w};\textbf{w},\textbf{v})|\leq Ch^{s}\left(\Vert \textbf{w}\Vert_{\textbf{X}}+\Vert \textbf{w}\Vert_{s}+\Vert \textbf{w}\Vert_{s+1} \right)\Vert \textbf{w}\Vert_{s+1}\Vert \textbf{v}\Vert_{\textbf{X}},\qquad \forall \textbf{v}\in \textbf{X}.}
\end{equation*}
\end{lemma}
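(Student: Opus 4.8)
The idea is to estimate the consistency error elementwise, starting from the definitions of $\mathcal{E}^{E}$ and $\mathcal{E}_{h}^{E}$ in \eqref{dis_F6}. Each form is the sum of two half-terms; because the gradient falls on a different argument in each half, they must be treated separately. For the first half I would analyse
\[
\delta^{E}:=(\textbf{w}\cdot\nabla\textbf{w},\textbf{v})_{0,E}-(\boldsymbol{\Pi}_{k}^{0,E}\textbf{w}\cdot\boldsymbol{\Pi}_{k-1}^{0,E}\nabla\textbf{w},\boldsymbol{\Pi}_{k}^{0,E}\textbf{v})_{0,E},
\]
and split it by a telescoping insertion isolating one projection error per summand,
\[
\delta^{E}=\underbrace{(\textbf{w}\cdot\nabla\textbf{w},\textbf{v}-\boldsymbol{\Pi}_{k}^{0,E}\textbf{v})_{0,E}}_{T_{1}}+\underbrace{((\textbf{w}-\boldsymbol{\Pi}_{k}^{0,E}\textbf{w})\cdot\nabla\textbf{w},\boldsymbol{\Pi}_{k}^{0,E}\textbf{v})_{0,E}}_{T_{2}}+\underbrace{(\boldsymbol{\Pi}_{k}^{0,E}\textbf{w}\cdot(\nabla\textbf{w}-\boldsymbol{\Pi}_{k-1}^{0,E}\nabla\textbf{w}),\boldsymbol{\Pi}_{k}^{0,E}\textbf{v})_{0,E}}_{T_{3}}.
\]

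The terms $T_{2}$ and $T_{3}$ are the routine ones. For $T_{3}$ I would use H\"older with exponents $(4,2,4)$, bounding $\Vert\boldsymbol{\Pi}_{k}^{0,E}\textbf{w}\Vert_{0,4,E}$ and $\Vert\boldsymbol{\Pi}_{k}^{0,E}\textbf{v}\Vert_{0,4,E}$ by $\Vert\textbf{w}\Vert_{\textbf{X}}$ and $\Vert\textbf{v}\Vert_{\textbf{X}}$ through the $L^{4}$-continuity of the projection and the embedding $H^{1}\hookrightarrow L^{4}$ (exactly as in \eqref{Linfity_L4}), and estimating the middle factor by $\Vert\nabla\textbf{w}-\boldsymbol{\Pi}_{k-1}^{0,E}\nabla\textbf{w}\Vert_{0,E}\le Ch_{E}^{s}\vert\textbf{w}\vert_{s+1,E}$ via \eqref{eqPi_A} applied to $\nabla\textbf{w}\in[H^{s}(E)]^{2\times2}$; this delivers the contribution $\Vert\textbf{w}\Vert_{\textbf{X}}\Vert\textbf{w}\Vert_{s+1}$. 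For $T_{2}$ I would instead use exponents $(2,4,4)$, estimate $\Vert\textbf{w}-\boldsymbol{\Pi}_{k}^{0,E}\textbf{w}\Vert_{0,E}\le Ch_{E}^{s}\Vert\textbf{w}\Vert_{s,E}$ by \eqref{eqPi_A}, and absorb $\Vert\nabla\textbf{w}\Vert_{0,4,E}$ into $\Vert\textbf{w}\Vert_{s+1}$, producing the contribution $\Vert\textbf{w}\Vert_{s}\Vert\textbf{w}\Vert_{s+1}$.

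The term $T_{1}$ is the delicate one and I expect it to be the main obstacle, since $\textbf{v}\in\textbf{X}=\textbf{H}_{0}^{1}(\Omega)$ only, so the naive bound $\Vert\textbf{v}-\boldsymbol{\Pi}_{k}^{0,E}\textbf{v}\Vert_{0,E}\le Ch_{E}\vert\textbf{v}\vert_{1,E}$ yields merely first order. To recover the full rate for $s\ge2$ I would invoke the $L^{2}$-orthogonality of $\boldsymbol{\Pi}_{k}^{0,E}$: since this projection annihilates $\textbf{v}-\boldsymbol{\Pi}_{k}^{0,E}\textbf{v}$ against any $\textbf{P}\in[\mathbb{P}_{k}(E)]^{2}$, one may write $T_{1}=(\textbf{w}\cdot\nabla\textbf{w}-\textbf{P},\textbf{v}-\boldsymbol{\Pi}_{k}^{0,E}\textbf{v})_{0,E}$ and choose $\textbf{P}$ as the best polynomial approximation of the nonlinear product $\textbf{w}\cdot\nabla\textbf{w}$. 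Cauchy--Schwarz then gives $|T_{1}|\le Ch_{E}^{s}|\textbf{w}\cdot\nabla\textbf{w}|_{s-1,E}\vert\textbf{v}\vert_{1,E}$, and it remains to estimate the Sobolev seminorm of the product via the Leibniz rule together with $H^{1}\hookrightarrow L^{4}$ and, for $s>1$, $H^{s+1}\hookrightarrow W^{1,\infty}$ in two dimensions, which produces the factors $\Vert\textbf{w}\Vert_{s}\Vert\textbf{w}\Vert_{s+1}$ and $\Vert\textbf{w}\Vert_{s+1}^{2}$ (the cases $s=0,1$ being covered directly by continuity, without orthogonality).

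Finally, the second half-term $(\textbf{w}\cdot\nabla\textbf{v},\textbf{w})_{0,E}-(\boldsymbol{\Pi}_{k}^{0,E}\textbf{w}\cdot\boldsymbol{\Pi}_{k-1}^{0,E}\nabla\textbf{v},\boldsymbol{\Pi}_{k}^{0,E}\textbf{w})_{0,E}$ is treated by an analogous telescoping, with the extra subtlety that the gradient now acts on the low-regularity field $\textbf{v}$, so in the summand carrying $\nabla\textbf{v}-\boldsymbol{\Pi}_{k-1}^{0,E}\nabla\textbf{v}$ one cannot extract a power of $h$ from $\textbf{v}$; here I would again use the $L^{2}$-orthogonality of $\boldsymbol{\Pi}_{k-1}^{0,E}$, now against the smooth tensor $\boldsymbol{\Pi}_{k}^{0,E}\textbf{w}\otimes\boldsymbol{\Pi}_{k}^{0,E}\textbf{w}$, so that the order $h^{s}$ is instead supplied by the approximation of that tensor. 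Collecting $T_{1},T_{2},T_{3}$ and the corresponding terms of the second half, summing over $E\in\mathcal{T}_{h}$ with discrete Cauchy--Schwarz and $h_{E}\le h$, gives the asserted factor $(\Vert\textbf{w}\Vert_{\textbf{X}}+\Vert\textbf{w}\Vert_{s}+\Vert\textbf{w}\Vert_{s+1})\Vert\textbf{w}\Vert_{s+1}\Vert\textbf{v}\Vert_{\textbf{X}}$. The genuinely nontrivial points are the low-regularity handling of $\textbf{v}$ via the two orthogonality arguments and the bookkeeping of the product seminorms across the range $s\in[0,k]$.
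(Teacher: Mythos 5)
The paper offers no proof of this lemma---it is imported verbatim from \cite{Beir18N}---and your plan reconstructs exactly the technique used there and in this paper's own proofs of the sibling consistency estimates (Lemma \ref{l_ComD}, Lemma \ref{l_ComCPNP}, and the treatment of $T_{6}^{(1)}$ inside the proof of Theorem \ref{T2}): telescope so that each summand carries a single projection error, bound the routine summands via H\"older, the $L^{4}$-stability of $\boldsymbol{\Pi}_{k}^{0,E}$ and the approximation property \eqref{eqPi_A}, and rescue the terms where the error falls on the $H^{1}$-only field $\textbf{v}$ by $L^{2}$-orthogonality, so that the power $h^{s}$ is extracted from a polynomial approximation of the smooth nonlinear factor. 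This is correct and essentially the same route; the only phrasing to tighten is in the second half-term, where $(I-\boldsymbol{\Pi}_{k-1}^{0,E})\nabla\textbf{v}$ is orthogonal to $[\mathbb{P}_{k-1}(E)]^{2\times 2}$ but not to the degree-$2k$ tensor $\boldsymbol{\Pi}_{k}^{0,E}\textbf{w}\otimes\boldsymbol{\Pi}_{k}^{0,E}\textbf{w}$ itself, so one must subtract a degree-$(k-1)$ approximant of that tensor (equivalently, of $\textbf{w}\otimes\textbf{w}$ after stripping the projection errors on $\textbf{w}$), which is precisely what your final clause intends.
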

\begin{lemma}\label{l_ComD}
\rev{Assume that $\textbf{w}\in \textbf{X}\cap [H^{s+1}(\Omega)]^{2}$, $\rho\in Z\cap H^{s+1}(\Omega)$ and $s\in [0,k]$. Then} %the following error estimate holds
\begin{equation*}
\rev{|\mathcal{D}(\textbf{w};\rho,\zeta)-\mathcal{D}_{h}(\textbf{w};\rho,\zeta) |\leq Ch^{s}\left( \Vert \textbf{w}\Vert_{s+1}(\Vert \rho\Vert_{s+1}+\Vert \rho\Vert_{Z})+\Vert \rho\Vert_{s+1}(\Vert \textbf{w}\Vert_{s}+\Vert \textbf{w}\Vert_{\textbf{X}}) \right)\Vert \zeta\Vert_{Z}, \qquad \forall \zeta \in Z.}
\end{equation*}
%for all $\zeta\in Z$.
\end{lemma}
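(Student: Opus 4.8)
The plan is to exploit that both $\mathcal{D}$ and $\mathcal{D}_h$ are assembled element by element, so that it suffices to bound the local consistency error on a fixed $E\in\mathcal{T}_h$ and then sum. Writing out the two skew-symmetric contributions I would split
\[
\mathcal{D}^{E}(\textbf{w};\rho,\zeta)-\mathcal{D}_{h}^{E}(\textbf{w};\rho,\zeta)=\frac12\big(T_{1}^{E}-T_{2}^{E}\big),
\]
with
\begin{align*}
T_{1}^{E}&:=(\textbf{w}\rho,\nabla\zeta)_{0,E}-(\boldsymbol{\Pi}_{k}^{0,E}\textbf{w}\,\Pi_{k}^{0,E}\rho,\Pi_{k-1}^{0,E}\nabla\zeta)_{0,E},\\
T_{2}^{E}&:=(\textbf{w}\cdot\nabla\rho,\zeta)_{0,E}-(\boldsymbol{\Pi}_{k}^{0,E}\textbf{w}\cdot\Pi_{k-1}^{0,E}\nabla\rho,\Pi_{k}^{0,E}\zeta)_{0,E}.
\end{align*}

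For each piece I would add and subtract intermediate quantities so that every resulting summand carries exactly one projection-error factor — one of $\textbf{w}-\boldsymbol{\Pi}_k^{0,E}\textbf{w}$, $\rho-\Pi_k^{0,E}\rho$, $\nabla\rho-\Pi_{k-1}^{0,E}\nabla\rho$, or a factor hitting $\zeta$ — while the remaining factors stay bounded. For instance, in $T_2^E$ one replaces the projected gradient by the exact one and collects $\textbf{w}\cdot(\nabla\rho-\Pi_{k-1}^{0,E}\nabla\rho)+(\textbf{w}-\boldsymbol{\Pi}_k^{0,E}\textbf{w})\cdot\Pi_{k-1}^{0,E}\nabla\rho$, and then peels off $\zeta-\Pi_k^{0,E}\zeta$. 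Each summand is estimated by H\"older's inequality (with the $L^4$--$L^2$--$L^4$ or $L^4$--$L^4$--$L^2$ splitting used in Lemma \ref{l_N}), the approximation bounds \eqref{eqPi}--\eqref{eqIn_A} employed for $r=2$ and $r=4$, the $L^2$- and $L^4$-stability of the projections, and the Sobolev embedding $H^1(\Omega)\hookrightarrow L^4(\Omega)$ valid in 2D. The order $h^s$ is produced chiefly by the gradient-projection error $\|\nabla\rho-\Pi_{k-1}^{0,E}\nabla\rho\|_{0,E}\le Ch_E^{s}|\rho|_{s+1,E}$ and by the polynomial errors of $\textbf{w}$ and $\rho$; the terms where only one factor is projected and the other is kept in $L^4$ are what generate the milder contributions $\|\rho\|_Z$ and $\|\textbf{w}\|_{\textbf{X}}$ on the right-hand side.

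The delicate point, and the main obstacle, is the summand $(\boldsymbol{\Pi}_k^{0,E}\textbf{w}\,\Pi_k^{0,E}\rho,\Pi_{k-1}^{0,E}\nabla\zeta)_{0,E}$ inside $T_1^E$ (and its analogue with $\Pi_k^{0,E}\zeta$ in $T_2^E$): since $\zeta$ is only assumed in $Z=H^1$, the factor $\nabla\zeta-\Pi_{k-1}^{0,E}\nabla\zeta$ is \emph{not} small, so it cannot be controlled by its own approximation error. The way around this is to use the $L^2$-orthogonality of $\nabla\zeta-\Pi_{k-1}^{0,E}\nabla\zeta$ to $[\mathbb{P}_{k-1}(E)]^2$: after the decomposition $(\textbf{w}\rho,\nabla\zeta)-(\boldsymbol{\Pi}_k^{0,E}\textbf{w}\,\Pi_k^{0,E}\rho,\Pi_{k-1}^{0,E}\nabla\zeta)=(\textbf{w}\rho-\boldsymbol{\Pi}_k^{0,E}\textbf{w}\,\Pi_k^{0,E}\rho,\nabla\zeta)+(\textbf{w}\rho,\nabla\zeta-\Pi_{k-1}^{0,E}\nabla\zeta)+\cdots$, I would subtract from the smooth factor $\textbf{w}\rho$ its best polynomial approximation of degree $k-1$ (legitimate by orthogonality), so that the non-smallness of $\nabla\zeta-\Pi_{k-1}^{0,E}\nabla\zeta$ is paired against $\textbf{w}\rho-(\textbf{w}\rho)_\pi$. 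The latter is $O(h_E^{s})$ by \eqref{eqPi} together with the product estimate $|\textbf{w}\rho|_{s,E}\le C\|\textbf{w}\|_{s+1,E}\|\rho\|_{s+1,E}$ (H\"older/algebra property of $H^{s+1}$ in 2D). In this manner the error is always shifted onto the regular data $\textbf{w},\rho$, and $\|\zeta\|_Z$ enters only through $\|\nabla\zeta\|_{0}$.

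Finally, I would collect the local bounds, apply the Cauchy--Schwarz inequality over the elements together with $h_E\le h$, and recognize the resulting sums as the global norms $\|\textbf{w}\|_{s+1}$, $\|\textbf{w}\|_{s}$, $\|\textbf{w}\|_{\textbf{X}}$, $\|\rho\|_{s+1}$, $\|\rho\|_Z$ and $\|\zeta\|_Z$, which yields exactly the stated estimate. The whole argument parallels the consistency analysis of the NS convective form $\mathcal{E}_h$ in Lemma \ref{l_C} (see \cite{Beir18N}), the only essential difference being the scalar/vector product structure of the PNP advection term.
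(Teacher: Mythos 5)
Your proposal is correct and follows essentially the same route as the paper: the identical split into the two skew-symmetric pieces $\frac12(T_1^E-T_2^E)$, the same telescoping insertion of projection errors, and the same tools (H\"older with $L^4$ splittings, $L^2$/$L^4$ stability of the projectors, Sobolev embedding, and the approximation estimates). In fact you make explicit the one step the paper leaves implicit --- pairing the non-small factor $(I-\Pi_{k-1}^{0,E})\nabla\zeta$ against $\textbf{w}\rho-(\textbf{w}\rho)_{\pi}$ via $L^2$-orthogonality --- so your write-up is, if anything, a more complete version of the paper's argument.
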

\begin{proof}
First, the definitions of the trilinear continuous and discrete forms $\mathcal{D}(\cdot;\cdot,\cdot)$ and $\mathcal{D}_{h}(\cdot;\cdot,\cdot)$ give
\begin{align}\label{MA1}
\mathcal{D}(\textbf{w};\rho,\zeta)-\mathcal{D}_{h}(\textbf{w};\rho,\zeta)&=\dfrac{1}{2}\big[(\textbf{w} \rho, \nabla \zeta)_{0}-(\boldsymbol{\Pi}_{k}^{0}\textbf{w}\cdot \Pi_{k}^{0} \rho,  \Pi_{k-1}^{0}\nabla \zeta)_{0} \big]\nonumber\\
&\quad -\dfrac{1}{2}\big[(\textbf{w}\cdot\nabla \rho, \zeta)_{0}-(\boldsymbol{\Pi}_{k}^{0}\textbf{w}\cdot  \Pi_{k-1}^{0}\nabla \rho,  \Pi_{k}^{0}\zeta)_{0}  \big]\nonumber\\
&=\rev{\dfrac{1}{2}(\eta_{1}-\eta_{2})}.
\end{align}
We now bound the terms $\eta_{1}$ and $\eta_{2}$. For the first term, elementary calculations show that 
\begin{align}\label{EqQ0}
\eta_{1}|_{E}&=\int_{E}\bigg( (\textbf{w} \rho)\cdot \nabla \zeta -(\boldsymbol{\Pi}_{k}^{0,E}\textbf{w} \Pi_{k}^{0,E} \rho)\cdot \Pi_{k-1}^{0,E}\nabla \zeta \bigg)dE\nonumber\\
&=\int_{E}\bigg(\textbf{w}~\rho~(I-\Pi_{k-1}^{0,E})\nabla \zeta+(I-\boldsymbol{\Pi}_{k}^{0,E})\textbf{w}~\rho~\Pi_{k-1}^{0,E}\nabla \zeta+\boldsymbol{\Pi}_{k}^{0,E}\textbf{w}(I-\Pi_{k}^{0,E})~\rho~\Pi_{k-1}^{0,E}\nabla \zeta\bigg)dE.
\end{align}
Then, using the H\"older inequality, the continuity of $\boldsymbol{\Pi}_{k}^{0,E}$ (with respect to the $L^{4}$-norm), as well as Sobolev embeddings, we can control the terms on the right-hand side of \eqref{MA1} as follows
\begin{subequations}
\begin{align}\label{EqQ1}
\eta_{1}&\leq Ch^{s}\left( \Vert \textbf{w}\Vert_{s+1}(\Vert \rho\Vert_{s+1}+\Vert \rho\Vert_{Z})+\Vert \rho\Vert_{s+1}\Vert \textbf{w}\Vert_{\textbf{X}}\right)\Vert \zeta\Vert_{Z},\\
\label{EqQ2}
\eta_{2}&\leq Ch^{s}\left(\Vert \rho\Vert_{s+1}(\Vert \textbf{w}\Vert_{s}+\Vert \textbf{w}\Vert_{\textbf{X}})+\Vert\textbf{w}\Vert_{s+1}\Vert \rho\Vert_{Z} \right)\Vert \zeta\Vert_{Z}.
\end{align}\end{subequations}
Consequently, the proof follows after putting together \eqref{EqQ1} and \eqref{EqQ2} into \eqref{EqQ0}.
\end{proof}
%-----------
\begin{lemma}\label{l_ComCPNP}
Assume that $\xi \in H^{s}(\Omega)\cap L^{\infty}(\Omega)$, $\rho\in H^{s+1}(\Omega)\cap W^{1,\infty}(\Omega)$ and $s\in [0,k]$. Then, it holds
\begin{equation*}
\rev{\big| \mathcal{C}(\xi;\rho,\zeta)-\mathcal{C}_{h}(\xi;\rho,\zeta)\big|\leq C\bigg(h^{s}\Vert \xi\nabla \rho\Vert_{s}+\Vert \xi\Vert_{\infty}h^{s}\Vert \rho\Vert_{s+1}+\Vert \rho\Vert_{1,\infty}h^{s}\Vert \xi\Vert_{s} \bigg)\Vert \zeta\Vert_{Z},\quad\forall \zeta\in Z.}
\end{equation*}
\end{lemma}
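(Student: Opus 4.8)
The plan is to mirror the argument already used for $\mathcal{D}$ in Lemma~\ref{l_ComD}: work element by element on each $E\in\mathcal{T}_h$, replace the factors $\xi$, $\nabla\rho$, $\nabla\zeta$ by their projections one at a time, and control each resulting consistency term by Hölder's inequality together with the approximation and stability properties of the $L^2$-projections, before summing over the mesh with a discrete Cauchy--Schwarz inequality.

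First I would write, on a fixed element,
$$\mathcal{C}^{E}(\xi;\rho,\zeta)-\mathcal{C}_{h}^{E}(\xi;\rho,\zeta)=\int_{E}\Big(\xi\,\nabla\rho\cdot\nabla\zeta-\Pi_{k-1}^{0,E}\xi\;\boldsymbol{\Pi}_{k-1}^{0,E}\nabla\rho\cdot\boldsymbol{\Pi}_{k-1}^{0,E}\nabla\zeta\Big)\,\mathrm{d}E,$$
and split the test direction as $\nabla\zeta=(I-\boldsymbol{\Pi}_{k-1}^{0,E})\nabla\zeta+\boldsymbol{\Pi}_{k-1}^{0,E}\nabla\zeta$. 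In the part carrying $(I-\boldsymbol{\Pi}_{k-1}^{0,E})\nabla\zeta$ I exploit the orthogonality of the $L^2$-projection to replace the non-polynomial factor $\xi\nabla\rho$ by $(I-\boldsymbol{\Pi}_{k-1}^{0,E})(\xi\nabla\rho)$, which after the approximation estimate $\|(I-\boldsymbol{\Pi}_{k-1}^{0,E})v\|_{0,E}\le Ch_{E}^{s}|v|_{s,E}$ (valid for $s\in[0,k]$) yields the factor $h_{E}^{s}\|\xi\nabla\rho\|_{s,E}$. For the remaining part I telescope
$$\xi\,\nabla\rho-\Pi_{k-1}^{0,E}\xi\;\boldsymbol{\Pi}_{k-1}^{0,E}\nabla\rho=\xi\,(I-\boldsymbol{\Pi}_{k-1}^{0,E})\nabla\rho+(I-\Pi_{k-1}^{0,E})\xi\;\boldsymbol{\Pi}_{k-1}^{0,E}\nabla\rho,$$
pair both pieces with $\boldsymbol{\Pi}_{k-1}^{0,E}\nabla\zeta$, and estimate by Hölder: the first gives $\|\xi\|_{\infty,E}\,\|(I-\boldsymbol{\Pi}_{k-1}^{0,E})\nabla\rho\|_{0,E}\,\|\boldsymbol{\Pi}_{k-1}^{0,E}\nabla\zeta\|_{0,E}\le C\|\xi\|_{\infty,E}\,h_{E}^{s}\|\rho\|_{s+1,E}\,\|\zeta\|_{Z,E}$, while the second gives $\|(I-\Pi_{k-1}^{0,E})\xi\|_{0,E}\,\|\boldsymbol{\Pi}_{k-1}^{0,E}\nabla\rho\|_{\infty,E}\,\|\boldsymbol{\Pi}_{k-1}^{0,E}\nabla\zeta\|_{0,E}\le C\|\rho\|_{1,\infty,E}\,h_{E}^{s}\|\xi\|_{s,E}\,\|\zeta\|_{Z,E}$, using $L^2$-stability of $\boldsymbol{\Pi}_{k-1}^{0,E}$ in the last factor and $L^\infty$-stability to pass from $\|\boldsymbol{\Pi}_{k-1}^{0,E}\nabla\rho\|_{\infty,E}$ to $\|\nabla\rho\|_{\infty,E}$.

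Collecting these three contributions and summing over $E\in\mathcal{T}_h$ by a discrete Cauchy--Schwarz inequality reproduces exactly the three terms of the claimed bound, each paired against the global factor $\|\zeta\|_Z$. The main technical point I expect to require care is the $L^\infty$-stability of the $L^2$-projection, namely $\|\boldsymbol{\Pi}_{k-1}^{0,E}\nabla\rho\|_{\infty,E}\le C\|\nabla\rho\|_{\infty,E}$: this is where Assumption~\ref{assu:mesh} enters, through a scaled inverse inequality on star-shaped elements relating the $L^\infty$- and $L^2$-norms of polynomials. Everything else is the same add-and-subtract bookkeeping, together with the $H^{s}$-regularity of $\xi\nabla\rho$, $\nabla\rho$ and $\xi$ guaranteed by the stated hypotheses, that was already carried out for $\mathcal{D}$ in Lemma~\ref{l_ComD}.
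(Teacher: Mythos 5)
Your proof is correct and follows essentially the same route as the paper's: an element-wise add-and-subtract decomposition into the same three consistency terms (projection error of $\xi\nabla\rho$ against $(I-\boldsymbol{\Pi}_{k-1}^{0,E})\nabla\zeta$, projection error of $\nabla\rho$ weighted by $\Vert\xi\Vert_{\infty}$, and projection error of $\xi$ weighted by $\Vert\boldsymbol{\Pi}_{k-1}^{0,E}\nabla\rho\Vert_{\infty}$), each controlled by H\"older, the $L^{2}$-projection approximation/stability properties, and the same inverse-inequality argument the paper uses in \eqref{Linfity_Pi} to bound $\Vert\boldsymbol{\Pi}_{k-1}^{0,E}\nabla\rho\Vert_{\infty,E}$ by $\Vert\rho\Vert_{1,\infty}$. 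The only difference is cosmetic: you split the test gradient first and then telescope the trial side, whereas the paper telescopes $\xi\nabla\rho\to\boldsymbol{\Pi}_{k-1}^{0,E}(\xi\nabla\rho)\to\boldsymbol{\Pi}_{k-1}^{0,E}(\xi\boldsymbol{\Pi}_{k-1}^{0,E}\nabla\rho)\to\Pi_{k-1}^{0,E}\xi\,\boldsymbol{\Pi}_{k-1}^{0,E}\nabla\rho$; by the orthogonality and self-adjointness of the projectors the resulting terms coincide.
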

\begin{proof}
We first write on each element $E\in \mathcal{T}_{h}$ the following relation 
\begin{align*}
\mathcal{C}^{E}(\xi;\rho,\zeta)-\mathcal{C}_{h}^{E}(\xi;\rho,\zeta)&=(\xi\nabla \rho, \nabla \zeta)_{0,E}-\left(\Pi_{k-1}^{0,E}\xi\boldsymbol{\Pi}_{k-1}^{0,E}\nabla \rho, \boldsymbol{\Pi}_{k-1}^{0,E}\nabla \zeta\right)_{0,E}\nonumber\\ 
&=\big[(\xi\nabla \rho, \nabla \zeta)_{0,E}-(\boldsymbol{\Pi}_{k-1}^{0,E}(\xi\nabla \rho), \nabla \zeta)_{0,E} \big]\nonumber\\ 
&\quad +
\big[(\boldsymbol{\Pi}_{k-1}^{0,E}(\xi\nabla \rho), \nabla \zeta)_{0,E}-(\boldsymbol{\Pi}_{k-1}^{0,E}(\xi\boldsymbol{\Pi}_{k-1}^{0,E}\nabla \rho), \nabla \zeta)_{0,E} \big]\nonumber\\ 
&\quad +\big[(\boldsymbol{\Pi}_{k-1}^{0,E}(\xi\boldsymbol{\Pi}_{k-1}^{0,E}\nabla \rho), \nabla \zeta)_{0,E}-(\Pi_{k-1}^{0,E}\xi\boldsymbol{\Pi}_{k-1}^{0,E}\nabla \rho, \boldsymbol{\Pi}_{k-1}^{0,E}\nabla \zeta)_{0,E} \big]\nonumber\\ 
&\leq C\bigg(h^{k}\Vert \xi\nabla \rho\Vert_{k}+\Vert \xi\Vert_{\infty}h^{k}\Vert \rho\Vert_{k+1}+\Vert \boldsymbol{\Pi}_{k-1}^{0,E}\nabla \rho\Vert_{\infty}h^{k}\Vert \xi\Vert_{k} \bigg)\rev{\Vert \zeta\Vert_{Z}},
\end{align*}
where in the last step the approximation properties of the $L^{2}$-projectors are used, and the term $\Vert \boldsymbol{\Pi}_{k-1}^{0,E}\nabla \rho\Vert_{\infty}$ is estimated \rev{by applying the inverse inequality and the continuity   of the projector $\boldsymbol{\Pi}_{k-1}^{0,E}$ as 
\begin{equation}\label{Linfity_Pi}
\Vert \boldsymbol{\Pi}_{k-1}^{0,E}\nabla \rho\Vert_{\infty,E}\leq h_{E}^{-1}\Vert \boldsymbol{\Pi}_{k-1}^{0,E}\nabla \rho\Vert_{0,E}\leq h_{E}^{-1}\Vert \nabla \rho\Vert_{0,E}\leq \Vert \rho\Vert_{1,\infty},
\end{equation}
which completes the proof.}
\end{proof}
\subsection{Semi-discrete and fully-discrete schemes}
With the aid of the discrete forms   \eqref{dis_F1}-\eqref{dis_F6} we can state the semi-discrete VE scheme as: 
Find $\{(c_{1,h}(\cdot,t),c_{2,h}(\cdot,t)),\phi_{h}(\cdot,t) \}\in \textbf{Z}_{h}\times Y_{h}$ and $\{\textbf{u}_{h}(\cdot,t),p_{h}(\cdot,t)\}\in \textbf{X}_{h}\times Q_{h}$ such that for almost all $t\in [0,t_{F}]$
\begin{align*}%\label{semi}
%\begin{cases}\text { (i) }
  \mathcal{M}_{1,h}(\partial_{t}c_{i,h}, z_{i,h})+\mathcal{A}_{i,h}(c_{i,h}, z_{i,h})+e_{i}
  \mathcal{C}_{h}(c_{i,h};\phi_{h}, z_{i,h})-\mathcal{D}_{h}(\textbf{u}_{h}; c_{i,h}, z_{i,h})&=0 & \forall z_{i,h}\in Z_{h},\\
%\text { (ii) }
\mathcal{A}_{3,h}(\phi_{h}, z_{3,h})-\mathcal{M}_{1,h}(c_{1,h}, z_{3,h})+\mathcal{M}_{1,h}(c_{2,h}, z_{3,h})&=0 & \forall z_{3,h}\in Y_{h},\\
%\text { (iii) }
\mathcal{M}_{2,h}(\partial_{t}\textbf{u}_{h}, \textbf{v}_{h})+\mathcal{K}_{h}(\textbf{u}_{h}, \textbf{v}_{h})+\mathcal{E}_{h}(\textbf{u}_{h};\textbf{u}_{h}, \textbf{v}_{h})-\mathcal{B}_{h}(p_{h}, \textbf{v}_{h})+\left((c_{1,h}-c_{2,h})\nabla \phi_{h}, \textbf{v}_{h} \right)_{h}&=0 & \forall \textbf{v}_{h}\in \textbf{X}_{h},\\
%\text { (iv) }
\mathcal{B}_{h}(q_{h}, \textbf{u}_{h})&=0  & \forall q_{h}\in Q_{h},
\end{align*}
with initial conditions \rev{$c_{i,h}(\cdot,0)=\Pi_{k}^{0}c_{i,0}$} and $\textbf{u}_{h}(\cdot,0)=\boldsymbol{\Pi}_{k}^{0}\textbf{u}_{0}$, and where 
\begin{align*}%\label{eq7_2}
\left((c_{1,h}-c_{2,h})\nabla \phi_{h}, \textbf{v} \right)_{h}:=\sum_{E\in\mathcal{T}_{h}}\left((\Pi_{k}^{0,E}c_{1,h}-\Pi_{k}^{0,E}c_{2,h})\boldsymbol{\Pi}_{k-1}^{0,E}\nabla \phi_{h}, \boldsymbol{\Pi}_{k}^{0,E}\textbf{v} \right)_{0,E}.
\end{align*}
Next, we \rev{discretize in time using the backward Euler method with constant step-size $\tau=\frac{t_{F}}{N}$ and
for a generic function $f$, denote $f^{n}=f(\cdot, t_{n})$, $\delta_{t}f^{n}=\frac{f^{n}-f^{n-1}}{\tau}$}.
The fully discrete system reads: for $n=1,\cdots , N$ find $\{ (c_{1,h}^{n}, c_{2,h}^{n}), \rev{\phi_h^{n}}\}\in \textbf{Z}_{h}\times Y_{h}$,~$\{\textbf{u}_{h}^{n},p_{h}^{n}\}\in \textbf{X}_{h}\times Q_{h}$ such that  
\begin{subequations}\label{ful_1}
\begin{align}
\mathcal{M}_{1,h}(\delta_{t}c_{i,h}^{n}, z_{i,h})+\mathcal{A}_{i,h}(c_{i,h}^{n}, z_{i,h})+e_{i}\mathcal{C}_{h}(c_{i,h}^{n};\phi_{h}^{n}, z_{i,h})-\mathcal{D}_{h}(\textbf{u}_{h}^{n}; c_{i,h}^{n}, z_{i,h})&=0,\\
\mathcal{A}_{3,h}(\phi_{h}^{n}, z_{3,h})-\mathcal{M}_{1,h}(c_{1,h}^{n}, z_{3,h})+\mathcal{M}_{1,h}(c_{2,h}^{n}, z_{3,h})&=0,\\
\mathcal{M}_{2,h}(\delta_{t}\textbf{u}_{h}^{n}, \textbf{v}_{h})+\mathcal{K}_{h}(\textbf{u}_{h}^{n}, \textbf{v}_{h})+\mathcal{E}_{h}(\textbf{u}_{h}^{n};\textbf{u}_{h}^{n}, \textbf{v}_{h})-\mathcal{B}_{h}(p_{h}^{n}, \textbf{v}_{h})+\left((c_{1,h}^{n}-c_{2,h}^{n})\nabla \phi_{h}^{n}, \textbf{v}_{h} \right)_{h}&=0,\\
\mathcal{B}_{h}(q_{h}, \textbf{u}_{h}^{n})&=0,\label{eq:full-last}
\end{align}
\end{subequations}
\rev{for all $\{ (z_{1}, z_{2}), z_{3}\}\in \textbf{Z}_{h}\times Y_{h}$ and $\{\textbf{v},q\}\in \textbf{X}_{h}\times Q_{h}$, where $c_{i,h}^{0}=c_{i,h}(\cdot,0)$, $\textbf{u}_{h}^{0}=\textbf{u}_{h}(\cdot,0)$}.

\begin{remark}
\rev{Equation \eqref{eq:full-last}}  along with the property \eqref{propA}, implies that the discrete
velocity $\textbf{u}_{h}^{n}\in \textbf{X}_{k}^{h}$ is exactly divergence-free. More generally, introducing the continuous and discrete kernels:
\[\rev{
\widetilde{\textbf{X}}=\left\{\textbf{v}\in \textbf{X}:~~~\mathcal{B}(q,\textbf{v})=0,~~~\forall q\in Q \right\},\qquad 
\widetilde{\textbf{X}}_{h}=\left\{\textbf{v}_{h}\in \textbf{X}_{h}:~~~\mathcal{B}(q_{h},\textbf{v}_{h})=0,~~~\forall q_{h}\in Q_{h}  \right\}},\]
we can readily check  that $\widetilde{\textbf{X}}_{h}\subseteq \widetilde{\textbf{X}}$. Therefore we \rev{consider the following reduced problem (equivalent to \eqref{ful_1}): Find $\{ (c_{1,h}^{n}, c_{2,h}^{n}), \phi_{h}^{n}\}\in \textbf{Z}_{h}\times Y_{h}$,~$\textbf{u}_{h}^{n}\in \widetilde{\textbf{X}}_{h}$ and $n=1,\cdots , N$ such that
\begin{subequations}\label{ful_3}
\begin{align}\label{ful_3:a}
\mathcal{M}_{1,h}(\delta_{t}c_{i,h}^{n}, z_{i,h})+\mathcal{A}_{i,h}(c_{i,h}^{n}, z_{i,h})+e_{i}\mathcal{C}_{h}(c_{i,h}^{n};\phi_{h}^{n}, z_{i,h})-\mathcal{D}_{h}(\textbf{u}_{h}^{n}; c_{i,h}^{n}, z_{i,h})&=0,\\
\mathcal{A}_{3,h}(\phi_{h}^{n}, z_{3,h})-\mathcal{M}_{1,h}(c_{1,h}^{n}, z_{3,h})+\mathcal{M}_{1,h}(c_{2,h}^{n}, z_{3,h})&=0,\\
\mathcal{M}_{2,h}(\delta_{t}\textbf{u}_{h}^{n}, \textbf{v}_{h})+\mathcal{K}_{h}(\textbf{u}_{h}^{n}, \textbf{v}_{h})+\mathcal{E}_{h}(\textbf{u}_{h}^{n};\textbf{u}_{h}^{n}, \textbf{v}_{h})+\left((c_{1,h}^{n}-c_{2,h}^{n})\nabla \phi_{h}^{n}, \textbf{v}_{h} \right)_{h}&=0,\label{ful_3:c}
\end{align}
\end{subequations} 
for all $\{ (z_{1}, z_{2}), z_{3}\}\in \textbf{Z}_{h}\times Y_{h}$ and $\textbf{v}_{h}\in \widetilde{\textbf{X}}_{h}$}.
%\begin{equation}\label{ful_3}
%\mathcal{M}_{2,h}(\delta_{t}\textbf{u}_{h}^{n}, \textbf{v})+\mathcal{K}_{h}(\textbf{u}_{h}^{n}, \textbf{v})+\mathcal{E}_{h}(\textbf{u}_{h}^{n-1};\textbf{u}_{h}^{n}, \textbf{v})=-\left((c_{1,h}^{n}-c_{2,h}^{n})\nabla \phi_{h}^{n}, \textbf{v} \right)_{h},\quad \forall \textbf{v}\in \widetilde{\textbf{X}}_{h}.
%\end{equation}
\end{remark}
%%%%%%%%%%%%%%%%%%%%%%%%%%%%%%%%%%%%%%%%%%%%%
\section{Discrete mass conservativon and discrete thermal energy decay}\label{s4}
This section is devoted to investigate discrete mass conservative and discrete energy decaying properties of \eqref{ful_3}. \rev{To that end, first we recall the $k-$consistency of $\mathcal{M}_{1,h}$ and $\mathcal{A}_{i,h}$.}
\begin{lemma}[\cite{Cangiani17}] \label{l_6}
For every polynomial $q_{k}\in \mathbb{P}_{k}(E)$ and every VE function $z_{h}\in Z_{k}^{E}$ it holds that
\begin{align*}
\mathcal{M}_{h}^{E}(q_{k}, z_{h})=\mathcal{M}^{E}(q_{k}, z_{h}), \quad \text{and}\quad
\mathcal{A}_{i,h}^{E}(q_{k}, z_{h})=\mathcal{A}_{i}^{E}(q_{k}, z_{h}), \qquad \text{for $i=1,2$}.
\end{align*}
\end{lemma}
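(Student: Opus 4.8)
The plan is to reduce both identities to two elementary facts about the polynomial projectors: their polynomial-reproduction property and their defining orthogonality relations. Recall that for any $q_{k}\in\mathbb{P}_{k}(E)$ one has $\Pi_{k}^{0,E}q_{k}=q_{k}$ and $\Pi_{k}^{\nabla,E}q_{k}=q_{k}$. The first is immediate from the definition of the $L^{2}$-projection (a polynomial of degree $\leq k$ is its own best $L^{2}$-approximation in $\mathbb{P}_{k}(E)$). The second follows because the gradient condition, tested with $q_{k}-\Pi_{k}^{\nabla,E}q_{k}\in\mathbb{P}_{k}(E)$, forces $|q_{k}-\Pi_{k}^{\nabla,E}q_{k}|_{1,E}=0$, so the difference is constant, and then the supplementary condition (the boundary integral for $k=1$, the internal moment for $k\geq 2$) pins that constant to zero.

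For the mass form I would substitute $\rho_{h}=q_{k}$ into the definition \eqref{dis_F1} of $\mathcal{M}_{1,h}^{E}$. Since $\Pi_{k}^{0,E}q_{k}=q_{k}$, the argument $(I-\Pi_{k}^{0,E})q_{k}$ of the stabilization $S_{\mathtt{m}}^{E}$ vanishes, so the stabilization term drops out entirely and $\mathcal{M}_{1,h}^{E}(q_{k},z_{h})=(q_{k},\Pi_{k}^{0,E}z_{h})_{0,E}$. The defining property of $\Pi_{k}^{0,E}$, namely $(q_{k},z_{h}-\Pi_{k}^{0,E}z_{h})_{0,E}=0$ for all $q_{k}\in\mathbb{P}_{k}(E)$, then yields $(q_{k},\Pi_{k}^{0,E}z_{h})_{0,E}=(q_{k},z_{h})_{0,E}=\mathcal{M}_{1}^{E}(q_{k},z_{h})$, which is the first claim.

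For the stiffness form the argument is structurally identical. Putting $\rho_{h}=q_{k}$ and using $\Pi_{k}^{\nabla,E}q_{k}=q_{k}$ makes the stabilizer $S_{\mathtt{a}}^{E}$ term vanish, so $\mathcal{A}_{i,h}^{E}(q_{k},z_{h})=\kappa_{i}(\nabla q_{k},\nabla\Pi_{k}^{\nabla,E}z_{h})_{0,E}$, where I used that $\kappa_{i}$ is a positive constant (which also lets $|\lambda_{i}|=\kappa_{i}$ factor cleanly). The gradient-orthogonality built into $\Pi_{k}^{\nabla,E}$, i.e.\ $\int_{E}\nabla q_{k}\cdot\nabla(z_{h}-\Pi_{k}^{\nabla,E}z_{h})\,\mathrm{d}\mathbf{x}=0$ for all $q_{k}\in\mathbb{P}_{k}(E)$, then lets me replace $\nabla\Pi_{k}^{\nabla,E}z_{h}$ by $\nabla z_{h}$, giving $\mathcal{A}_{i,h}^{E}(q_{k},z_{h})=(\kappa_{i}\nabla q_{k},\nabla z_{h})_{0,E}=\mathcal{A}_{i}^{E}(q_{k},z_{h})$.

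There is essentially no hard step here: $k$-consistency is a direct consequence of how the discrete forms are assembled from projectors plus stabilizers, the latter being switched off precisely on polynomials. The only point requiring slight care is the polynomial reproduction of the $H^{1}$-projector $\Pi_{k}^{\nabla,E}$, which — unlike $\Pi_{k}^{0,E}$ — is not reproduced by the gradient condition alone and relies crucially on the supplementary constant-fixing constraint in its definition; this is where I would be most explicit.
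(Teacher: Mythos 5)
Your proof is correct. The paper itself offers no proof of this lemma --- it is imported verbatim from \cite{Cangiani17} --- and your argument is exactly the standard consistency argument behind that citation: polynomial reproduction ($\Pi_{k}^{0,E}q_{k}=q_{k}$, $\Pi_{k}^{\nabla,E}q_{k}=q_{k}$) annihilates the stabilization terms by bilinearity, and the defining orthogonality relations of the two projectors then convert the projected forms back into the exact ones. Your explicit treatment of the constant-fixing condition in the definition of $\Pi_{k}^{\nabla,E}$ (needed because the gradient condition alone only determines the projection up to a constant) is the one point where care is genuinely required, and you handle it correctly.
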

\begin{theorem}[Discrete mass conservation]\label{T:cons}
Let $\{ (c_{1,h}^{n}, c_{2,h}^{n}, \phi^{n}),\textbf{u}_{h}^{n}\}_{n=1}^{N}$ be a solution of the VE scheme \eqref{ful_3}. Then the \rev{approximate concentrations satisfy}
\begin{equation*}
\rev{\sum_{E\in \mathcal{T}_{h}}\int_{E}c_{i,h}^{n}~dE=\sum_{E\in \mathcal{T}_{h}}\int_{E}c_{i,h}^{0}~dE, \qquad i = 1,2.}
\end{equation*}
\end{theorem}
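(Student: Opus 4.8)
The plan is to test the discrete concentration equation \eqref{ful_3:a} with the constant function $z_{i,h}=1$, which is an admissible element of $Z_{h}$ because $\mathbb{P}_{0}(E)\subset Z_{k}^{E}$ on every $E\in\mathcal{T}_{h}$. With this choice the four terms of \eqref{ful_3:a} are evaluated separately, and the whole argument reduces to showing that all of them except the temporal one vanish, so that $\sum_{E}\int_{E}\delta_{t}c_{i,h}^{n}=0$.

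First I would treat the mass term. By the $k$-consistency of $\mathcal{M}_{1,h}$ (Lemma \ref{l_6}) applied with the polynomial $q_{k}=1$, together with symmetry, $\mathcal{M}_{1,h}^{E}(\delta_{t}c_{i,h}^{n},1)=\mathcal{M}_{1}^{E}(1,\delta_{t}c_{i,h}^{n})=\int_{E}\delta_{t}c_{i,h}^{n}$, whence $\mathcal{M}_{1,h}(\delta_{t}c_{i,h}^{n},1)=\sum_{E}\int_{E}\delta_{t}c_{i,h}^{n}$. Next, the diffusion and migration terms vanish because the test gradient is zero: since $\Pi_{k}^{\nabla,E}1=1$ we have $(I-\Pi_{k}^{\nabla,E})1=0$, so both the consistency part $\mathcal{A}_{i}^{E}(\Pi_{k}^{\nabla,E}c_{i,h}^{n},1)=(\kappa_{i}\nabla\Pi_{k}^{\nabla,E}c_{i,h}^{n},\nabla 1)_{0,E}$ and the stabilization part of $\mathcal{A}_{i,h}^{E}(c_{i,h}^{n},1)$ are zero; likewise, in $\mathcal{C}_{h}^{E}(c_{i,h}^{n};\phi_{h}^{n},1)$ the factor $\boldsymbol{\Pi}_{k-1}^{0,E}\nabla 1=\mathbf{0}$ annihilates the whole term.

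The main obstacle is the convective contribution $\mathcal{D}_{h}(\textbf{u}_{h}^{n};c_{i,h}^{n},1)$. Because $\nabla 1=0$ kills the first half of the skew-symmetric form and $\Pi_{k}^{0,E}1=1$, it collapses to $\mathcal{D}_{h}^{E}(\textbf{u}_{h}^{n};c_{i,h}^{n},1)=-\tfrac{1}{2}\int_{E}\boldsymbol{\Pi}_{k}^{0,E}\textbf{u}_{h}^{n}\cdot\Pi_{k-1}^{0,E}\nabla c_{i,h}^{n}$. Using orthogonality and idempotency of the $L^{2}$-projections (so that $\boldsymbol{\Pi}_{k}^{0,E}\textbf{u}_{h}^{n}$ may be replaced by $\boldsymbol{\Pi}_{k-1}^{0,E}\textbf{u}_{h}^{n}$ against a degree-$(k-1)$ argument, and the projection is then removed from $\nabla c_{i,h}^{n}$), this equals $-\tfrac{1}{2}\int_{E}\boldsymbol{\Pi}_{k-1}^{0,E}\textbf{u}_{h}^{n}\cdot\nabla c_{i,h}^{n}$. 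To show the sum over $E$ vanishes I would integrate by parts elementwise and exploit the \emph{exact} discrete incompressibility: since $\textbf{u}_{h}^{n}\in\widetilde{\textbf{X}}_{h}$ and $\operatorname{div}\textbf{u}_{h}^{n}\in Q_{h}$ by \eqref{propA}, choosing $q_{h}=\operatorname{div}\textbf{u}_{h}^{n}$ in $\mathcal{B}(q_{h},\textbf{u}_{h}^{n})=0$ forces $\operatorname{div}\textbf{u}_{h}^{n}=0$ pointwise. Combined with the continuity of $c_{i,h}^{n}\in Z_{h}\subset H^{1}(\Omega)$ across interelement edges and the no-slip condition $\textbf{u}_{h}^{n}=\textbf{0}$ on $\partial\Omega$, the divergence and boundary/interface contributions cancel. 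This step is delicate precisely because $\boldsymbol{\Pi}_{k-1}^{0,E}\textbf{u}_{h}^{n}$ is only piecewise polynomial and is neither pointwise divergence-free nor normal-continuous across edges, so the cancellation must be tracked carefully rather than asserted elementwise.

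Finally, collecting the four evaluations in \eqref{ful_3:a} yields $\sum_{E}\int_{E}\delta_{t}c_{i,h}^{n}=0$, i.e. $\sum_{E}\int_{E}c_{i,h}^{n}=\sum_{E}\int_{E}c_{i,h}^{n-1}$ for every $n=1,\dots,N$. Iterating this relation in $n$ gives $\sum_{E}\int_{E}c_{i,h}^{n}=\sum_{E}\int_{E}c_{i,h}^{0}$ for $i=1,2$, which is the claimed discrete mass conservation.
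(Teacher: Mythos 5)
Your overall strategy is exactly the paper's: test \eqref{ful_3:a} with $z_{i,h}=1$ and invoke Lemma \ref{l_6}. Your first two paragraphs are correct and coincide with what the paper does: by consistency and symmetry, $\mathcal{M}_{1,h}(\delta_{t}c_{i,h}^{n},1)=\sum_{E}\int_{E}\delta_{t}c_{i,h}^{n}$, while $\mathcal{A}_{i,h}(c_{i,h}^{n},1)$ and $\mathcal{C}_{h}(c_{i,h}^{n};\phi_{h}^{n},1)$ vanish because $(I-\Pi_{k}^{\nabla,E})1=0$ and $\boldsymbol{\Pi}_{k-1}^{0,E}\nabla 1=\mathbf{0}$. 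Your reduction of the convective term to $-\tfrac12\sum_{E}\int_{E}\boldsymbol{\Pi}_{k-1}^{0,E}\textbf{u}_{h}^{n}\cdot\nabla c_{i,h}^{n}$ and your derivation of exact incompressibility ($\operatorname{div}\textbf{u}_{h}^{n}\in Q_{h}$ plus $\mathcal{B}(q_{h},\textbf{u}_{h}^{n})=0$) are also correct.

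The gap is precisely the step you flag but do not carry out: the vanishing of that convective remainder. The mechanism you propose cannot close it. After elementwise integration by parts you are left with $-\int_{E}\operatorname{div}(\boldsymbol{\Pi}_{k-1}^{0,E}\textbf{u}_{h}^{n})\,c_{i,h}^{n}+\int_{\partial E}(\boldsymbol{\Pi}_{k-1}^{0,E}\textbf{u}_{h}^{n}\cdot\textbf{n})\,c_{i,h}^{n}$, and the three facts you yourself concede are fatal: exact incompressibility of $\textbf{u}_{h}^{n}$ says nothing about the divergence of its element-local $L^{2}$ projection, the normal traces of those projections do not match across interior edges, and the projection does not vanish on $\partial\Omega$. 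The clean way to see that this is not mere bookkeeping is to note that, since the global integration by parts with the \emph{true} velocity gives $\sum_{E}\int_{E}\textbf{u}_{h}^{n}\cdot\nabla c_{i,h}^{n}=0$, your term equals a product of projection errors,
\[
\sum_{E\in\mathcal{T}_{h}}\int_{E}\boldsymbol{\Pi}_{k-1}^{0,E}\textbf{u}_{h}^{n}\cdot\nabla c_{i,h}^{n}\,dE
=-\sum_{E\in\mathcal{T}_{h}}\int_{E}\bigl(I-\boldsymbol{\Pi}_{k-1}^{0,E}\bigr)\textbf{u}_{h}^{n}\cdot\bigl(I-\boldsymbol{\Pi}_{k-1}^{0,E}\bigr)\nabla c_{i,h}^{n}\,dE,
\]
and for virtual (non-polynomial) $\textbf{u}_{h}^{n}$ and $c_{i,h}^{n}$ nothing in the scheme makes the two error fields orthogonal: the quantity is a small consistency defect, not identically zero. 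So along your route (and, as far as I can tell, along any route that keeps $\mathcal{D}_{h}$ as defined in \eqref{dis_F6}) the identity cannot be proven exactly; it holds only up to this projection-error term. For what it is worth, the paper's own proof is a one-liner that silently ignores $\mathcal{D}_{h}(\textbf{u}_{h}^{n};c_{i,h}^{n},1)$ altogether, so your third paragraph identifies a real subtlety the paper glosses over — but as a proof, your proposal leaves the decisive cancellation asserted rather than established.
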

\begin{proof}
\rev{The proof follows after testing \eqref{ful_3:a} against $z_{i}=1$,~$i=1,2$, and applying Lemma \ref{l_6}.}
\end{proof}

%----------
\rev{We now establish a discrete energy decay, independently of the discretization parameters  $h$, $\tau$. We define the total free energy as follows (see \cite{prohl10}):} 
\begin{equation}\label{EH}
\rev{E_{h}(\phi_{h}^{n},\textbf{u}_{h}^{n}):=\dfrac{1}{2}\big[\Vert \phi_{h}^{n}\Vert_{1}^{2}+\Vert \textbf{u}_{h}^{n}\Vert_{0}^{2} \big].}
\end{equation}
\begin{theorem}[Discrete energy decay]\label{T:En}
\rev{Let $\{ (c_{1,h}^{n}, c_{2,h}^{n}, \phi^{n}),\textbf{u}_{h}^{n}\}_{n=1}^{N}$ be a solution of \eqref{ful_3}. Then}
\begin{equation}\label{DisEn}
\rev{E_{h}(\phi_{h}^{n},\textbf{u}_{h}^{n})+\tau\sum_{j=0}^{n}\big[\beta_{1}\Vert c_{1,h}^{j}-c_{2,h}^{j}\Vert_{0}^{2}+\tilde{\beta}_{1} \Vert \textbf{u}_{h}^{j}\Vert_{0}^{2} \big]+\dfrac{\tau^{2}}{2}\big[\beta_{4} \Vert\delta_{t}\phi_{h}^{n}\Vert_{1}^{2}+\tilde{\beta}_{1}\Vert \delta_{t}\textbf{u}_{h}^{n}\Vert_{0}^{2} \big]
\leq E_{h}(\phi_{h}^{0},\textbf{u}_{h}^{0}).}
\end{equation}
\end{theorem}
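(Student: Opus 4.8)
The plan is to run a discrete energy argument on the reduced scheme \eqref{ful_3}, testing each block with the unknown that reproduces the quadratic quantities in $E_h$, and exploiting the skew-symmetry of the convective forms together with the summation-by-parts (polarization) identity for the backward Euler operator,
\[
\mathcal{M}(\delta_{t} f^{n}, f^{n}) = \tfrac{1}{2}\delta_{t}\mathcal{M}(f^{n},f^{n}) + \tfrac{\tau}{2}\mathcal{M}(\delta_{t} f^{n},\delta_{t} f^{n}),
\]
valid for any symmetric bilinear form $\mathcal{M}$. First I would take $\textbf{v}_{h} = \textbf{u}_{h}^{n}$ in \eqref{ful_3:c}. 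Skew-symmetry gives $\mathcal{E}_{h}(\textbf{u}_{h}^{n};\textbf{u}_{h}^{n},\textbf{u}_{h}^{n})=0$, and the identity above applied to $\mathcal{M}_{2,h}(\delta_{t}\textbf{u}_{h}^{n},\textbf{u}_{h}^{n})$ produces the kinetic energy increment $\tfrac{1}{2}\delta_{t}\mathcal{M}_{2,h}(\textbf{u}_{h}^{n},\textbf{u}_{h}^{n})$, the nonnegative remainder $\tfrac{\tau}{2}\mathcal{M}_{2,h}(\delta_{t}\textbf{u}_{h}^{n},\delta_{t}\textbf{u}_{h}^{n})$, the nonnegative viscous term $\mathcal{K}_{h}(\textbf{u}_{h}^{n},\textbf{u}_{h}^{n})$, and the electric force $(w_{h}^{n}\nabla\phi_{h}^{n},\textbf{u}_{h}^{n})_{h}$, where $w_{h}^{n}:=c_{1,h}^{n}-c_{2,h}^{n}$.

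Next I would test the Nernst--Planck equations \eqref{ful_3:a} with $z_{i,h}=e_{i}\phi_{h}^{n}$ and sum over $i=1,2$. Using $e_{i}^{2}=1$, the accumulation term collapses to $\mathcal{M}_{1,h}(\delta_{t}w_{h}^{n},\phi_{h}^{n})$, which I then rewrite — via the time difference of the discrete Poisson relation (the second equation of \eqref{ful_3}) tested with $\phi_{h}^{n}\in Y_{h}$ together with the polarization identity — as $\tfrac{1}{2}\delta_{t}\mathcal{A}_{3,h}(\phi_{h}^{n},\phi_{h}^{n})+\tfrac{\tau}{2}\mathcal{A}_{3,h}(\delta_{t}\phi_{h}^{n},\delta_{t}\phi_{h}^{n})$, i.e.\ the electric energy increment plus a nonnegative remainder. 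The electromigration contributions $\sum_{i} e_{i}\mathcal{A}_{i,h}(c_{i,h}^{n},\phi_{h}^{n})$ should then, after reinserting the discrete Poisson relation, furnish a nonnegative charge-dissipation term bounded below by $\beta_{1}\|w_{h}^{n}\|_{0}^{2}$ through \eqref{CORC1}, while $\sum_{i}\mathcal{C}_{h}(c_{i,h}^{n};\phi_{h}^{n},\phi_{h}^{n})$ is nonnegative and may be discarded.

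The heart of the argument is the cancellation of the fluid--charge coupling. By linearity, the transport contributions from the summed Nernst--Planck equations reduce to $-\mathcal{D}_{h}(\textbf{u}_{h}^{n};w_{h}^{n},\phi_{h}^{n})$, and the crux is to show that this exactly balances the electric force $(w_{h}^{n}\nabla\phi_{h}^{n},\textbf{u}_{h}^{n})_{h}$ coming from the momentum equation. I would establish this using the skew-symmetry of $\mathcal{D}_{h}(\textbf{u}_{h}^{n};\cdot,\cdot)$ together with the fact that, by \eqref{propA} and Lemma \ref{l_INFD}, the discrete velocity is \emph{exactly} divergence-free, $\operatorname{div}\textbf{u}_{h}^{n}=0$, so that the relevant projected transport and force inner products coincide. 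I expect this discrete integration-by-parts at the level of the $L^{2}$-projections to be the main obstacle, since it is precisely the matching of $\boldsymbol{\Pi}_{k}^{0}\textbf{u}_{h}$, $\Pi_{k}^{0}w_{h}$ and $\boldsymbol{\Pi}_{k-1}^{0}\nabla\phi_{h}$ in the definitions of $\mathcal{D}_{h}$ and of the discrete force that makes the balance hold, and this has to be checked term by term rather than inferred from the continuous identity.

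Once the coupling disappears, adding the momentum and transport identities leaves
\[
\tfrac{1}{2}\delta_{t}\big[\mathcal{M}_{2,h}(\textbf{u}_{h}^{n},\textbf{u}_{h}^{n})+\mathcal{A}_{3,h}(\phi_{h}^{n},\phi_{h}^{n})\big]+\big(\text{nonnegative dissipation and remainders}\big)=0 .
\]
Multiplying by $\tau$, telescoping the increments from $j=1$ to $n$, and invoking the coercivity bounds \eqref{CORC1}--\eqref{CORC2} to pass from the bilinear forms to the norms (which yields the constants $\beta_{1},\beta_{4},\tilde{\beta}_{1}$ in the statement), then retaining only the last of the nonnegative $\mathcal{O}(\tau^{2})$ remainders and discarding the earlier ones, delivers \eqref{DisEn}.
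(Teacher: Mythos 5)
Your proposal follows essentially the same route as the paper's own proof: up to the harmless factor $\tau$ in the test functions, you use the same choices (the $e_i$-weighted combination of the Nernst--Planck equations tested with $\phi_h^n$, the Poisson equation tested with $c_{1,h}^n-c_{2,h}^n$, the momentum equation tested with $\textbf{u}_h^n$), the same time-differencing of the discrete Poisson relation, the same backward-Euler polarization identity, the same conversion of the diffusion terms into $\mathcal{M}_{1,h}(c_{1,h}^n-c_{2,h}^n,c_{1,h}^n-c_{2,h}^n)\geq\beta_1\Vert c_{1,h}^n-c_{2,h}^n\Vert_0^2$ via the Poisson relation and \eqref{CORC1}, and the same coercivity-plus-telescoping conclusion.

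The one step that deserves comment is the coupling cancellation, which you correctly single out as the crux and leave unverified. Write $w_h^n:=c_{1,h}^n-c_{2,h}^n$. The paper does not verify the identity $\mathcal{D}_h(\textbf{u}_h^n;w_h^n,\phi_h^n)=\left(w_h^n\nabla\phi_h^n,\textbf{u}_h^n\right)_h$ either: in its proof the two coupling terms simply vanish from the summed inequality without comment. Be aware, though, that the mechanism you propose (skew-symmetry of $\mathcal{D}_h$ plus exact divergence-freeness of $\textbf{u}_h^n$) is not by itself enough at the discrete level. Comparing the definitions of $\mathcal{D}_h$ and $(\cdot,\cdot)_h$, the first half of $\mathcal{D}_h(\textbf{u}_h^n;w_h^n,\phi_h^n)$ coincides exactly with $\tfrac12\left(w_h^n\nabla\phi_h^n,\textbf{u}_h^n\right)_h$, so the desired cancellation is equivalent to the discrete product-rule identity
\[
\left(\boldsymbol{\Pi}_k^{0}\textbf{u}_h^n\cdot\boldsymbol{\Pi}_{k-1}^{0}\nabla w_h^n,\,\Pi_k^{0}\phi_h^n\right)_0+\left(\Pi_k^{0}w_h^n\,\boldsymbol{\Pi}_{k-1}^{0}\nabla\phi_h^n,\,\boldsymbol{\Pi}_k^{0}\textbf{u}_h^n\right)_0=0,
\]
the discrete counterpart of $\int_\Omega\textbf{u}\cdot\nabla(w\phi)=0$. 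This holds in the continuous setting, but the $L^2$-projections do not commute with products and gradients, and $\boldsymbol{\Pi}_k^{0}\textbf{u}_h^n$ itself is not divergence-free even when $\operatorname{div}\textbf{u}_h^n=0$, so in general the identity holds only up to projection errors rather than exactly. Your caution ("this has to be checked term by term rather than inferred from the continuous identity") is therefore well founded: this is a genuine gap, but it is exactly the gap present in the published proof, which, unlike your proposal, does not even acknowledge it.
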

\begin{proof}
Using \rev{as} test functions \rev{$(z_{i},z_{3})=(\tau\phi_{h}^{n},\tau (c_{1,h}^{n}-c_{2,h}^{n}))$ and $\textbf{v}=\tau\textbf{u}_{h}^{n}$ in \eqref{ful_3}, gives
\begin{subequations}
\begin{align}
\mathcal{M}_{1,h}(\delta_{t}c_{i,h}^{n}, \tau\phi_{h}^{n})+\mathcal{A}_{i,h}(c_{i,h}^{n}, \tau\phi_{h}^{n})+e_{i}\mathcal{C}_{h}(c_{i,h}^{n};\phi_{h}^{n}, \tau\phi_{h}^{n})-\mathcal{D}_{h}(\textbf{u}_{h}^{n}; c_{i,h}^{n}, \tau\phi_{h}^{n})&=0,\label{Sub1}\\[1mm] 
\mathcal{A}_{3,h}(\tau\phi_{h}^{n}, c_{1,h}^{n}-c_{2,h}^{n})-\tau\mathcal{M}_{1,h}(c_{1,h}^{n}-c_{2,h}^{n}, c_{1,h}^{n}-c_{2,h}^{n})&=0,\label{Sub3} \\[1mm]
\mathcal{M}_{2,h}(\delta_{t}\textbf{u}_{h}^{n}, \tau\textbf{u}_{h}^{n})+\mathcal{K}_{h}(\textbf{u}_{h}^{n}, \tau\textbf{u}_{h}^{n})+\mathcal{E}_{h}(\textbf{u}_{h}^{n};\textbf{u}_{h}^{n}, \tau\textbf{u}_{h}^{n})+\left((c_{1,h}^{n}-c_{2,h}^{n})\nabla \phi_{h}^{n}, \tau\textbf{u}_{h}^{n} \right)_{h}&=0.\label{NS}
\end{align}
\end{subequations}
Next we proceed to differentiate \eqref{ful_3:c} with respect to $t$, leading to} 
\begin{equation*}
\rev{\mathcal{A}_{3,h}(\partial_{t}\phi_{h}^{n}, z_{3,h})=\mathcal{M}_{1,h}(\partial_{t}c_{1,h}^{n}, z_{3,h})-\mathcal{M}_{1,h}(\partial_{t}c_{2,h}^{n}, z_{3,h}),\quad\quad \forall z_{3,h}\in Z_{h}.}
\end{equation*}
Using the backward Euler method to approximate the time derivative in the above equation yields
\begin{equation*}
\rev{\mathcal{A}_{3,h}(\delta_{t}\phi_{h}^{n}, z_{3,h})=\mathcal{M}_{1,h}(\delta_{t}c_{1,h}^{n}, z_{3,h})-\mathcal{M}_{1,h}(\delta_{t}c_{2,h}^{n}, z_{3,h}),\quad\quad \forall z_{3,h}\in Z_{h},}
\end{equation*}
and then taking \rev{$z_{3,h}=\tau\phi_{h}^{n}$ implies that}
\begin{equation}\label{EQ_I}
\mathcal{A}_{3,h}(\delta_{t}\phi_{h}^{n}, \tau\phi_{h}^{n})=\mathcal{M}_{1,h}(\delta_{t}c_{1,h}^{n}, \tau\phi_{h}^{n})-\mathcal{M}_{1,h}(\delta_{t}c_{2,h}^{n}, \tau\phi_{h}^{n}).
\end{equation}
Combining \eqref{Sub1}-\eqref{Sub3} and \eqref{EQ_I}, and \rev{using the chain of identities}
\begin{align}\label{Iden1}
\mathcal{A}_{3,h}(\delta_{t}\phi_{h}^{n}, \tau\phi_{h}^{n})&=\mathcal{A}_{3,h}(\phi_{h}^{n}-\phi_{h}^{n-1}, \phi_{h}^{n})=\dfrac{1}{2}\mathcal{A}_{3,h}(\phi_{h}^{n}-\phi_{h}^{n-1}, (\phi_{h}^{n}-\phi_{h}^{n-1})+(\phi_{h}^{n}+\phi_{h}^{n-1}))\nonumber\\
&=\dfrac{\tau^{2}}{2}\mathcal{A}_{3,h}(\delta_{t}\phi_{h}^{n},\delta_{t}\phi_{h}^{n})+\dfrac{\tau}{2}\delta_{t}\mathcal{A}_{3,h}(\phi_{h}^{n},\phi_{h}^{n}),
\end{align}
we can readily \rev{conclude that 
\begin{align*}
\dfrac{\tau^{2}}{2}\mathcal{A}_{3,h}(\delta_{t}\phi_{h}^{n},\delta_{t}\phi_{h}^{n})+\dfrac{\tau}{2}\delta_{t}\mathcal{A}_{3,h}(\phi_{h}^{n},\phi_{h}^{n})&=-\tau\mathcal{M}_{1,h}(c_{1,h}^{n}-c_{2,h}^{n}, c_{1,h}^{n}-c_{2,h}^{n})
-\mathcal{C}_{h}(c_{1,h}^{n}-c_{2,h}^{n};\phi_{h}^{n}, \tau\phi_{h}^{n})\nonumber\\
&\quad +\mathcal{D}_{h}(\textbf{u}_{h}^{n}; c_{1,h}^{n}-c_{2,h}^{n}, \tau\phi_{h}^{n}).
\end{align*}
Also, after applying the fact that} $\mathcal{E}_{h}(\textbf{u}_{h}^{n};\textbf{u}_{h}^{n}, \tau\textbf{u}_{h}^{n})=0$ and its analogous identity \eqref{Iden1} in \eqref{NS}, we obtain
\begin{equation*}
\rev{\dfrac{\tau^{2}}{2}\mathcal{M}_{2,h}(\delta_{t}\textbf{u}_{h}^{n},\delta_{t}\textbf{u}_{h}^{n})+\dfrac{\tau}{2}\delta_{t}\mathcal{M}_{2,h}(\textbf{u}_{h}^{n},\textbf{u}_{h}^{n})+\tau \mathcal{K}_{h}(\textbf{u}_{h}^{n}, \textbf{u}_{h}^{n})=-\tau\left((c_{1,h}^{n}-c_{2,h}^{n})\nabla \phi_{h}^{n}, \textbf{u}_{h}^{n} \right)_{h}.}
\end{equation*}
Summing the two obtained inequalities and employing the \rev{coercivity of} $\mathcal{A}_{3,h}, \mathcal{M}_{1,h}, \mathcal{M}_{2,h}$ stated in \eqref{CORC1} and \eqref{CORC2}, \rev{allows us to assert that
\begin{align*}
\dfrac{\tau^{2}}{2}\big[ \beta_{4}\Vert \delta_{t}\phi_{h}^{n}\Vert_{1}^{2}+\tilde{\beta}_{1}\Vert \delta_{t}\textbf{u}_{h}^{n}\Vert_{0}^{2}\big]&+\dfrac{1}{2}\big[\beta_{4} \Vert\phi_{h}^{n}\Vert_{1}^{2}+\tilde{\beta}_{1}\Vert \textbf{u}_{h}^{n}\Vert_{0}^{2} \big]+\tau \beta_{1}\Vert c_{1,h}^{n}-c_{2,h}^{n}\Vert_{0}^{2}\nonumber\\[1mm]
&+ \tau \tilde{\beta}_{1} \Vert \textbf{u}_{h}^{n}\Vert_{0}^{2}+ \Vert (\Pi_{k-1}^{0}c_{1,h}^{n}+\Pi_{k-1}^{0}c_{2,h}^{n})^{1/2}\boldsymbol{\Pi}_{k-1}^{0}(\nabla\phi_{h}^{n})\Vert_{0}^{2}\leq \dfrac{1}{2}\big[\beta_{4} \Vert\phi_{h}^{n-1}\Vert_{1}^{2}+\tilde{\beta}_{1}\Vert \textbf{u}_{h}^{n-1}\Vert_{0}^{2} \big].
\end{align*}
And}\rev{ summing up the above inequality on $n$ ($1\leq n\leq N$), leads to \eqref{DisEn}. }
\end{proof}

%%%%%%%%%%%%%%%%%%%%%%%%%%%%%%%%%%%%%%%%%%%%%
\section{Well-posedness analysis}\label{s5}
\rev{We begin by introducing a fixed-point operator
  \[\textbf{T}: \textbf{Z}_{h} \times \textbf{X}_{h} \to \textbf{Z}_{h} \times \textbf{X}_{h}, \quad
  (\boldsymbol{\xi}_{h}^{n},\textbf{w}_{h}^{n})\mapsto \textbf{T}(\boldsymbol{\xi}_{h}^{n},\textbf{w}_{h}^{n}):=(\widehat{\textbf{c}}_{h}^{n},\widehat{\textbf{u}}_{h}^{n}),\]
with $\boldsymbol{\xi}_{h}^{n}:=(\xi_{1,h}^{n}, \xi_{2,h}^{n})\in \textbf{Z}_{h}$, $\widehat{\textbf{c}}_{h}^{n}:=(\widehat{c}_{1,h}^{n},\widehat{c}_{2,h}^{n})$,  and where $(\widehat{\textbf{c}}_{h}^{n},\widehat{\textbf{u}}_{h}^{n})$ are the first and third components of the solution of the linearized version of problem \eqref{ful_3}: Given $\widehat{\textbf{c}}_{h}^{0}=(\Pi_{k}^{0}c_{1,0},\Pi_{k}^{0}c_{2,0}),~\widehat{\textbf{u}}_{h}^{0}=\boldsymbol{\Pi}_{k}^{0}\textbf{u}_{0}$, find $\{(\widehat{\textbf{c}}_{h}^{n},\widehat{\phi}_{h}^{n}),\widehat{\textbf{u}}_{h}^{n}\}$ for $i=1,2$,~$n=1,\cdots,N$ such \rev{that
 \begin{equation}\label{EQ_EU1}
 \begin{split}
\mathcal{M}_{1,h}(\delta_{t}\widehat{c}_{i,h}^{n}, z_{i,h})+\mathcal{A}_{i,h}(\widehat{c}_{i,h}^{n}, z_{i,h})+e_{i}\mathcal{C}_{h}(\xi_{i,h}^{n};\widehat{\phi}_{h}^{n}, z_{i,h})-\mathcal{D}_{h}(\textbf{w}_{h}^{n};\widehat{c}_{i,h}^{n}, z_{i,h})&=0,\\
\mathcal{A}_{3,h}(\widehat{\phi}_{h}^{n}, z_{3,h})&=\mathcal{M}_{1,h}(\widehat{c}_{1,h}^{n}, z_{3,h})-\mathcal{M}_{1,h}(\widehat{c}_{2,h}^{n}, z_{3,h}),\\
\mathcal{M}_{2,h}(\delta_{t}\widehat{\textbf{u}}_{h}^{n}, \textbf{v}_{h})+\mathcal{K}_{h}(\widehat{\textbf{u}}_{h}^{n}, \textbf{v}_{h})+\mathcal{E}_{h}(\textbf{w}_{h}^{n};\widehat{\textbf{u}}_{h}^{n}, \textbf{v}_{h})&=-\left((\xi_{1,h}^{n}-\xi_{2,h}^{n})\nabla \widehat{\phi}_{h}^{n}, \textbf{v}_{h} \right)_{h}.
\end{split}
 \end{equation}
System} \eqref{EQ_EU1} can be reformulated as follows:
\begin{equation}\label{EEQ_1}
\widehat{\mathbf{A}}_{\boldsymbol{\xi}_{h}^{n},\textbf{w}_{h}^{n}}\left((\widehat{\textbf{c}}_{h}^{n},\widehat{\phi}_{h}^{n}, \widehat{\textbf{u}}_{h}^{n}), (\textbf{z}_{h},z_{3,h},\textbf{v}_{h}) \right)=\mathcal{M}_{1,h}(\widehat{c}_{1,h}^{n-1},z_{1,h})+\mathcal{M}_{1,h}(\widehat{c}_{2,h}^{n-1},z_{2,h})+\mathcal{M}_{2,h}(\widehat{\textbf{u}}_{h}^{n-1},\textbf{v}_{h}),
\end{equation}
where
\begin{equation}\label{defi_Ah}
\widehat{\mathbf{A}}_{\boldsymbol{\xi}_{h}^{n},\textbf{w}_{h}^{n}}\left((\boldsymbol{\rho}_{h}^{n},\zeta_{h}^{n},\textbf{z}_{h}^{n}), (\textbf{z}_{h},z_{3,h},\textbf{v}_{h})\right):= \mathbf{A}\left((\boldsymbol{\rho}_{h}^{n},\zeta_{h}^{n},\textbf{z}_{h}^{n}), (\textbf{z}_{h},z_{3,h},\textbf{v}_{h}) \right)+\tau \mathbf{B}_{\boldsymbol{\xi}_{h}^{n},\textbf{w}_{h}^{n}}\left((\boldsymbol{\rho}_{h}^{n},\zeta_{h}^{n},\textbf{z}_{h}^{n}), (\textbf{z}_{h},z_{3,h},\textbf{v}_{h})\right),
\end{equation}
with 
 \begin{align*}
 &\mathbf{A}\left((\boldsymbol{\rho}_{h}^{n},\zeta_{h}^{n},\textbf{z}_{h}^{n}), (\textbf{z}_{h},z_{3,h},\textbf{v}_{h}) \right):=\mathcal{M}_{1,h}(\rho_{1,h}^{n},z_{1,h})+\tau\mathcal{A}_{1,h}(\rho_{1,h}^{n},z_{1,h})
 +\mathcal{M}_{1,h}(\rho_{2,h}^{n},z_{2,h})+\tau\mathcal{A}_{2,h}(\rho_{2,h}^{n},z_{2,h})\nonumber\\[1mm]
 &\quad +\tau\mathcal{A}_{3,h}(\zeta_{h}^{n},z_{3,h})-\tau\mathcal{M}_{1,h}(\rho_{1,h}^{n},z_{3,h})+\tau\mathcal{M}_{1,h}(\rho_{2,h}^{n},z_{3,h})+\mathcal{M}_{2,h}(\textbf{z}_{h}^{n},\textbf{v}_{h})+\tau\mathcal{K}_{h}(\textbf{z}_{h}^{n},\textbf{v}_{h}),\\
& \mathbf{B}_{\boldsymbol{\xi}_{h}^{n},\textbf{w}_{h}^{n}}\left((\boldsymbol{\rho}_{h}^{n},\zeta_{h}^{n},\textbf{z}_{h}^{n}), (\textbf{z}_{h},z_{3,h},\textbf{v}_{h})\right) :=\mathcal{C}_{h}(\xi_{1,h}^{n};\zeta_{h}^{n},z_{1,h})-\mathcal{C}_{h}(\xi_{2,h}^{n};\zeta_{h}^{n},z_{2,h})-\mathcal{D}_{h}(\textbf{w}_{h}^{n};\rho_{1,h}^{n},z_{1,h})\nonumber\\[1mm]
&\quad -\mathcal{D}_{h}(\textbf{w}_{h}^{n};\rho_{2,h}^{n},z_{2,h})+\mathcal{E}_{h}(\textbf{w}_{h}^{n};\textbf{z}_{h}^{n}, \textbf{v}_{h})+\left((\xi_{1,h}^{n}-\xi_{2,h}^{n})\nabla \zeta_{h}^{n}, \textbf{v}_{h}\right)_{h},
\end{align*}
 \rev{for all $(\textbf{z}_{h},z_{3,h})\in \textbf{Z}_{h}\times Y_{h}$ and $\textbf{v}_{h}\in \textbf{X}_{h}$. 

%The following lemma demonstrates a discrete global inf-sup condition for $\widehat{\mathbf{A}}_{\boldsymbol{\xi}_{h}^{n},\textbf{w}_{h}^{n}}$ over $\textbf{Z}_{h}\times Y_{h}\times \textbf{X}_{h}$.
\begin{lemma}[Discrete global inf-sup condition]\label{l_INFSUP}
For each $\boldsymbol{\xi}_{h}^{n}\in \textbf{Z}_{h}$ and $\textbf{w}_{h}^{n}\in \textbf{X}_{h}$ such that  $\Vert\textbf{w}_{h}^{n}\Vert_{\textbf{X}}\leq \frac{\widehat{\alpha}}{6(\gamma_{1}+\gamma_{2})}$, $\Vert\boldsymbol{\xi}_{h}^{n}\Vert_{\infty}\leq \frac{\widehat{\alpha}}{6\gamma_{3}}$ and $\Vert\boldsymbol{\xi}_{h}^{n}\Vert_{\textbf{Z}}\leq \frac{\widehat{\alpha}}{6\gamma_{4}}$, there exists a positive constant $\widehat{\alpha}$ satisfying
\begin{align}\label{INF_Ah}
\sup _{\left(\textbf{z}_{h},z_{3,h},\textbf{v}_{h}\right) \in \textbf{Z}_{h} \times Y_{h}\times \textbf{X}_{h}\atop
(\textbf{z}_{h},z_{3,h},\textbf{v}_{h}) \neq \mathbf{0}} \frac{\widehat{\mathbf{A}}_{\boldsymbol{\xi}_{h}^{n},\textbf{w}_{h}^{n}}\left((\boldsymbol{\rho}_{h}^{n},\zeta_{h}^{n},\textbf{z}_{h}^{n}), (\textbf{z}_{h},z_{3,h},\textbf{v}_{h})\right)}{\left\|(\textbf{z}_{h},z_{3,h},\textbf{v}_{h})\right\|_{\textbf{Z}\times Y\times \textbf{X}}} &\geq \beta_{1}\Vert \boldsymbol{\rho}_{h}^{n}\Vert_{0}+\tilde{\beta}_{1}\Vert\textbf{z}_{h}^{n}\Vert_{0}+\tau\dfrac{\widehat{\alpha}}{2}
\left\|(\boldsymbol{\rho}_{h}^{n},\zeta_{h}^{n},\textbf{z}_{h}^{n})\right\|_{\textbf{Z} \times Y\times \textbf{X}}.
\end{align}
\end{lemma}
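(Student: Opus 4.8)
\emph{Strategy.} The form $\widehat{\mathbf{A}}_{\boldsymbol{\xi}_{h}^{n},\textbf{w}_{h}^{n}}=\mathbf{A}+\tau\mathbf{B}_{\boldsymbol{\xi}_{h}^{n},\textbf{w}_{h}^{n}}$ splits into a principal part $\mathbf{A}$ collecting the (coercive) mass, diffusion, Poisson and viscous contributions together with the indefinite concentration--potential coupling, and a perturbation $\tau\mathbf{B}$ collecting the trilinear terms $\mathcal{C}_{h},\mathcal{D}_{h},\mathcal{E}_{h}$ and the Lorentz-type coupling $((\xi_{1,h}^{n}-\xi_{2,h}^{n})\nabla\zeta_{h}^{n},\textbf{v}_{h})_{h}$. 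The plan is to bound the supremum from below by evaluating $\widehat{\mathbf{A}}$ on the diagonal test triple $(\textbf{z}_{h},z_{3,h},\textbf{v}_{h})=(\boldsymbol{\rho}_{h}^{n},\zeta_{h}^{n},\textbf{z}_{h}^{n})$, whose norm coincides with $\|(\boldsymbol{\rho}_{h}^{n},\zeta_{h}^{n},\textbf{z}_{h}^{n})\|_{\textbf{Z}\times Y\times\textbf{X}}$, and then to absorb all indefinite and perturbative contributions into the coercive ones under the stated smallness thresholds.

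\emph{Key steps.} First I would exploit the skew-symmetry of $\mathcal{D}_{h}(\cdot;\cdot,\cdot)$ and $\mathcal{E}_{h}(\cdot;\cdot,\cdot)$ in their last two arguments, so that $\mathcal{D}_{h}(\textbf{w}_{h}^{n};\rho_{i,h}^{n},\rho_{i,h}^{n})=0$ and $\mathcal{E}_{h}(\textbf{w}_{h}^{n};\textbf{z}_{h}^{n},\textbf{z}_{h}^{n})=0$ on the diagonal; this removes the $\textbf{w}_{h}^{n}$-dependent convective terms entirely from $\tau\mathbf{B}$, leaving only $\tau[\mathcal{C}_{h}(\xi_{1,h}^{n};\zeta_{h}^{n},\rho_{1,h}^{n})-\mathcal{C}_{h}(\xi_{2,h}^{n};\zeta_{h}^{n},\rho_{2,h}^{n})]$ and the Lorentz coupling. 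Next, applying the coercivity bounds \eqref{CORC1}--\eqref{CORC2} to $\mathbf{A}$ on the diagonal produces the unscaled masses $\beta_{1}\|\boldsymbol{\rho}_{h}^{n}\|_{0}^{2}$ and $\tilde{\beta}_{1}\|\textbf{z}_{h}^{n}\|_{0}^{2}$ together with the $\tau$-weighted contributions $\tau\beta_{2}\|\rho_{1,h}^{n}\|_{Z}^{2}+\tau\beta_{3}\|\rho_{2,h}^{n}\|_{Z}^{2}+\tau\beta_{4}\|\zeta_{h}^{n}\|_{Y}^{2}+\tau\tilde{\beta}_{2}\|\textbf{z}_{h}^{n}\|_{\textbf{X}}^{2}$, minus the indefinite cross term $-\tau\mathcal{M}_{1,h}(\rho_{1,h}^{n}-\rho_{2,h}^{n},\zeta_{h}^{n})$. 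This last term I would control by Cauchy--Schwarz together with the continuity of $\mathcal{M}_{1,h}$ (constant $\alpha_{1}$) and Young's inequality, sending the $\|\zeta_{h}^{n}\|_{Z}^{2}$ part into $\tau\beta_{4}\|\zeta_{h}^{n}\|_{Y}^{2}$ and the $\|\boldsymbol{\rho}_{h}^{n}\|_{0}^{2}$ part into $\beta_{1}\|\boldsymbol{\rho}_{h}^{n}\|_{0}^{2}$ (the latter being admissible because the cross term carries a factor $\tau$ while the mass term does not). The surviving $\tau\mathbf{B}$ contributions are bounded via \eqref{Boun_C} and \eqref{Boun_C1}: $|\mathcal{C}_{h}(\xi_{i,h}^{n};\zeta_{h}^{n},\rho_{i,h}^{n})|\le\gamma_{3}\|\xi_{i,h}^{n}\|_{\infty}\|\zeta_{h}^{n}\|_{Z}\|\rho_{i,h}^{n}\|_{Z}$ and $|((\xi_{1,h}^{n}-\xi_{2,h}^{n})\nabla\zeta_{h}^{n},\textbf{z}_{h}^{n})_{h}|\le\gamma_{4}\|\boldsymbol{\xi}_{h}^{n}\|_{\textbf{Z}}\|\zeta_{h}^{n}\|_{Z}\|\textbf{z}_{h}^{n}\|_{\textbf{X}}$. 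The hypotheses $\|\boldsymbol{\xi}_{h}^{n}\|_{\infty}\le\widehat{\alpha}/(6\gamma_{3})$ and $\|\boldsymbol{\xi}_{h}^{n}\|_{\textbf{Z}}\le\widehat{\alpha}/(6\gamma_{4})$ then make each such term no larger than a fixed fraction of the $\tau$-weighted coercive contributions (the factor $6$ leaving room to split across the several terms after a further Young step), so that they are absorbed; the threshold on $\|\textbf{w}_{h}^{n}\|_{\textbf{X}}$ via $\gamma_{1}+\gamma_{2}$ is what would be needed for the off-diagonal occurrences of $\mathcal{D}_{h},\mathcal{E}_{h}$ in the companion bound. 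Choosing $\widehat{\alpha}$ as a suitable multiple of $\min\{\beta_{2},\beta_{3},\beta_{4},\tilde{\beta}_{2}\}$ then yields $\widehat{\mathbf{A}}(\cdot,(\boldsymbol{\rho}_{h}^{n},\zeta_{h}^{n},\textbf{z}_{h}^{n}))\ge\beta_{1}\|\boldsymbol{\rho}_{h}^{n}\|_{0}^{2}+\tilde{\beta}_{1}\|\textbf{z}_{h}^{n}\|_{0}^{2}+\tau\tfrac{\widehat{\alpha}}{2}\|(\boldsymbol{\rho}_{h}^{n},\zeta_{h}^{n},\textbf{z}_{h}^{n})\|_{\textbf{Z}\times Y\times\textbf{X}}^{2}$. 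Finally, dividing by the norm of the diagonal test triple and keeping the unscaled $L^{2}$ masses separate from the $\tau$-weighted full-norm term delivers \eqref{INF_Ah}.

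\emph{Main obstacle.} The delicate point is the simultaneous absorption of the indefinite cross-mass coupling and the residual trilinear terms: one must track carefully that the potential-norm and concentration-$L^{2}$ pieces land in the correct coercive reservoirs, and must fix $\widehat{\alpha}$ compatibly with the three smallness thresholds so that no term is over-allocated. Equally delicate is the bookkeeping in the last step, namely preserving the two distinct scales --- the $O(1)$ masses $\beta_{1}\|\boldsymbol{\rho}_{h}^{n}\|_{0}$, $\tilde{\beta}_{1}\|\textbf{z}_{h}^{n}\|_{0}$ and the $\tau$-weighted full norm --- rather than collapsing everything into a single coercivity constant, since it is precisely this refined two-scale lower bound that the subsequent existence and error analysis will require.
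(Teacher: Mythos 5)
Your strategy of testing on the diagonal cannot produce the two-scale lower bound that the lemma asserts, and this is a genuine gap, not a bookkeeping subtlety. Evaluating $\widehat{\mathbf{A}}$ at the diagonal triple and using the coercivity bounds \eqref{CORC1}--\eqref{CORC2} yields at best a \emph{quadratic} estimate of the form $\beta_{1}\Vert\boldsymbol{\rho}_{h}^{n}\Vert_{0}^{2}+\tilde{\beta}_{1}\Vert\textbf{z}_{h}^{n}\Vert_{0}^{2}+\tau\tfrac{\widehat{\alpha}}{2}\Vert(\boldsymbol{\rho}_{h}^{n},\zeta_{h}^{n},\textbf{z}_{h}^{n})\Vert_{\textbf{Z}\times Y\times\textbf{X}}^{2}$, and after dividing by $\Vert(\boldsymbol{\rho}_{h}^{n},\zeta_{h}^{n},\textbf{z}_{h}^{n})\Vert_{\textbf{Z}\times Y\times\textbf{X}}$ the first two terms become $\beta_{1}\Vert\boldsymbol{\rho}_{h}^{n}\Vert_{0}^{2}/\Vert(\boldsymbol{\rho}_{h}^{n},\zeta_{h}^{n},\textbf{z}_{h}^{n})\Vert_{\textbf{Z}\times Y\times\textbf{X}}+\tilde{\beta}_{1}\Vert\textbf{z}_{h}^{n}\Vert_{0}^{2}/\Vert(\boldsymbol{\rho}_{h}^{n},\zeta_{h}^{n},\textbf{z}_{h}^{n})\Vert_{\textbf{Z}\times Y\times\textbf{X}}$, a quantity bounded \emph{above} by $\beta_{1}\Vert\boldsymbol{\rho}_{h}^{n}\Vert_{0}+\tilde{\beta}_{1}\Vert\textbf{z}_{h}^{n}\Vert_{0}$: the inequality you need points the wrong way, and the loss is arbitrarily large whenever $\Vert\boldsymbol{\rho}_{h}^{n}\Vert_{0}\ll\Vert(\boldsymbol{\rho}_{h}^{n},\zeta_{h}^{n},\textbf{z}_{h}^{n})\Vert_{\textbf{Z}\times Y\times\textbf{X}}$. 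The paper does not argue this way: it first asserts the mixed inf-sup \eqref{INF_SUP} for the linear part $\mathbf{A}$ alone, taken over \emph{general} test triples rather than the diagonal, and then treats $\tau\mathbf{B}_{\boldsymbol{\xi}_{h}^{n},\textbf{w}_{h}^{n}}$ as a perturbation, bounded through the continuity estimates of Lemmas \ref{l_N} and \ref{l_Con_C} by $\tau\big[(\gamma_{1}+\gamma_{2})\Vert\textbf{w}_{h}^{n}\Vert_{\textbf{X}}+\gamma_{3}\Vert\boldsymbol{\xi}_{h}^{n}\Vert_{\infty}+\gamma_{4}\Vert\boldsymbol{\xi}_{h}^{n}\Vert_{\textbf{Z}}\big]$ times the product of trial and test norms; the three smallness hypotheses each contribute $\widehat{\alpha}/6$, so the constant degrades only from $\widehat{\alpha}$ to $\widehat{\alpha}/2$ while the two-scale structure is inherited verbatim from \eqref{INF_SUP}.

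Two further symptoms confirm that your route has diverged from what the statement is calibrated for. First, your absorption of the cross term $-\tau\mathcal{M}_{1,h}(\rho_{1,h}^{n}-\rho_{2,h}^{n},\zeta_{h}^{n})$ is not free: after Young's inequality one faces a contribution of the type $\tfrac{\tau^{2}\alpha_{1}^{2}}{2\beta_{1}}\Vert\zeta_{h}^{n}\Vert_{0}^{2}$, which is dominated by $\tau\beta_{4}\Vert\zeta_{h}^{n}\Vert_{Y}^{2}$ only under a time-step restriction of the type $\tau\lesssim\beta_{1}\beta_{4}/(\alpha_{1}c_{\mathtt{p}})^{2}$ (splitting the other way instead imposes a compatibility condition such as $\alpha_{1}^{2}\lesssim\beta_{4}\min\{\beta_{2},\beta_{3}\}$ among the constants); neither condition appears in the lemma, and your parenthetical remark that ``the cross term carries a factor $\tau$'' does not remove it, because the $\tau$-free mass term is asked to absorb a quantity from which the factor $\tau$ has been traded away. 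Second, because skew-symmetry annihilates $\mathcal{D}_{h}$ and $\mathcal{E}_{h}$ on the diagonal, your argument never invokes the hypothesis $\Vert\textbf{w}_{h}^{n}\Vert_{\textbf{X}}\leq\widehat{\alpha}/(6(\gamma_{1}+\gamma_{2}))$; in the paper's proof that hypothesis is essential, precisely because $\mathbf{B}$ is bounded against arbitrary test triples, where the convective forms do not vanish and contribute the term $(\gamma_{1}+\gamma_{2})\Vert\textbf{w}_{h}^{n}\Vert_{\textbf{X}}$. To be fair, the paper's own proof is terse---\eqref{INF_SUP} is asserted from ``ellipticity'' without exhibiting test functions, so the delicate point you flag is real---but the repair you propose, pure diagonal testing followed by division, demonstrably cannot close it.
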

 \begin{proof}
First, note that the ellipticity of $\mathcal{M}_{1,h}, \mathcal{A}_{i,h}$ and $\mathcal{M}_{2,h}, \mathcal{K}_{h}$, will imply an inf-sup condition for $\mathbf{A}$. That is, there exists $\widehat{\alpha}>0$, such that
\begin{equation}\label{INF_SUP}
\begin{aligned}
&\sup_{\left(\textbf{z}_{h},z_{3,h},\textbf{v}_{h}\right) \in \textbf{Z}_{h} \times Y_{h}\times \textbf{X}_{h}\atop
(\textbf{z}_{h},z_{3,h},\textbf{v}_{h}) \neq \mathbf{0}} \frac{\mathbf{A}\left((\boldsymbol{\rho}_{h}^{n},\zeta_{h}^{n},\textbf{z}_{h}^{n}), (\textbf{z}_{h},z_{3,h},\textbf{v}_{h})\right)}{\left\|(\textbf{z}_{h},z_{3,h},\textbf{v}_{h})\right\|_{\textbf{Z}\times Y\times \textbf{X}}}  \geq \beta_{1}\Vert \boldsymbol{\rho}_{h}^{n}\Vert_{0}+\tilde{\beta}_{1}\Vert\textbf{z}_{h}^{n}\Vert_{0}+\tau\widehat{\alpha}
\left\|(\boldsymbol{\rho}_{h}^{n},\zeta_{h}^{n},\textbf{z}_{h}^{n})\right\|_{\textbf{Z} \times Y\times \textbf{X}}, 
\end{aligned}
\end{equation}
for all $\left(\textbf{z}_{h}, z_{3,h},\textbf{v}_{h}\right) \in \textbf{Z}_{h}\times Y_{h}\times\textbf{X}_{h}$. Employing \eqref{INF_SUP} and the boundedness for $\mathcal{C}_{h}, \mathcal{D}_{h}, \mathcal{E}_{h}, (\cdot,\cdot)_{h}$ stated in Lemmas \ref{l_N} and \ref{l_Con_C}, we readily obtain 
\begin{align*}
&\sup_{\left(\textbf{z}_{h},z_{3,h},\textbf{v}_{h}\right) \in \textbf{Z}_{h} \times Y_{h}\times \textbf{X}_{h}\atop
  (\textbf{z}_{h},z_{3,h},\textbf{v}_{h}) \neq \mathbf{0}} \frac{\widehat{\mathbf{A}}_{\boldsymbol{\xi}_{h}^{n},\textbf{w}_{h}^{n}}\left((\boldsymbol{\rho}_{h}^{n},\zeta_{h}^{n},\textbf{z}_{h}^{n}), (\textbf{z}_{h},z_{3,h},\textbf{v}_{h})\right)}{\left\|(\textbf{z}_{h},z_{3,h},\textbf{v}_{h})\right\|_{\textbf{Z}\times Y\times \textbf{X}}} \nonumber\\
&\qquad \geq \beta_{1}\Vert \boldsymbol{\rho}_{h}^{n}\Vert_{0}+\tilde{\beta}_{1}\Vert\textbf{z}_{h}^{n}\Vert_{0}
+\tau(\widehat{\alpha}-(\gamma_{1}+\gamma_{2})\Vert\textbf{w}_{h}^{n}\Vert_{\textbf{X}}-\gamma_{3}\Vert\boldsymbol{\xi}_{h}^{n}\Vert_{\infty}-\gamma_{4}\Vert\boldsymbol{\xi}_{h}^{n}\Vert_{\textbf{Z}})
\left\|(\boldsymbol{\rho}_{h}^{n},\zeta_{h}^{n},\textbf{z}_{h}^{n})\right\|_{\textbf{Z} \times Y\times \textbf{X}},
\end{align*}
and  \eqref{INF_Ah} follows as a consequence of the assumptions $\Vert\textbf{w}_{h}^{n}\Vert_{\textbf{X}}\leq \frac{\widehat{\alpha}}{6(\gamma_{1}+\gamma_{2})}$, $\Vert\boldsymbol{\xi}_{h}^{n}\Vert_{\infty}\leq \frac{\widehat{\alpha}}{6\gamma_{3}}$ and $\Vert\boldsymbol{\xi}_{h}^{n}\Vert_{Z}\leq \frac{\widehat{\alpha}}{6\gamma_{4}}$.
 \end{proof}
 Now we are ready to show that  $\textbf{T}$ is well-defined, or equivalently, that problem \eqref{EEQ_1} is uniquely solvable.}
\begin{lemma}[Well-definedness of $\textbf{T}$]\label{l_S1}
Let the assumptions of Lemma \ref{l_INFSUP} be satisfied. Then, there exists a unique $\{(\widehat{\textbf{c}}_{h}^{n},\widehat{\phi}_{h}^{n}),\widehat{\textbf{u}}_{h}^{n}\}$ solution to \eqref{EQ_EU1}. In addition, \rev{for any $1\leq n\leq N$}, there holds
\begin{align}\label{E_Stab}
\Vert \textbf{T}(\boldsymbol{\xi}_{h}^{n},\textbf{w}_{h}^{n})\Vert_{0}+\tau\sum_{j=0}^{n}\Vert \textbf{T}(\boldsymbol{\xi}_{h}^{j},\textbf{w}_{h}^{j})\Vert_{\textbf{Z}\times\textbf{X}}&=\Vert \widehat{\textbf{c}}_{h}^{n}\Vert_{0}+\Vert\widehat{\textbf{u}}_{h}^{n}\Vert_{0}+\tau\sum_{j=0}^{n}(\Vert\widehat{\textbf{c}}_{h}^{j}\Vert_{\textbf{Z}}+\Vert\widehat{\textbf{u}}_{h}^{j}\Vert_{\textbf{X}})\nonumber\\[1mm]
%&\leq \Vert \widehat{\textbf{c}}_{h}^{n}\Vert_{0}+\Vert\widehat{\textbf{u}}_{h}^{n}\Vert_{0}+\tau\sum_{n=0}^{j}\left\|(\widehat{\textbf{c}}_{h}^{n},\widehat{\phi}_{h}^{n},\widehat{\textbf{u}}_{h}^{n})\right\|_{\textbf{Z} \times Y\times \textbf{X}}\nonumber\\
&\leq \max\{ c_{1},c_{2}\}\max\{ c_{1},c_{2},\frac{\widehat{\alpha}}{2}\}\left(\Vert \textbf{c}_{0}\Vert_{0}+\Vert\textbf{u}_{0}\Vert_{0}\right).
\end{align} 
\end{lemma}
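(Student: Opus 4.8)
The plan is to read off both the solvability and the a priori bound from the single global inf--sup estimate of Lemma \ref{l_INFSUP}, treating \eqref{EEQ_1} as a square linear system on the finite-dimensional space $\textbf{Z}_{h}\times Y_{h}\times \textbf{X}_{h}$. First I would settle unique solvability. Since the trial and test spaces coincide and are finite-dimensional, it suffices to show that the linear operator induced by $\widehat{\mathbf{A}}_{\boldsymbol{\xi}_{h}^{n},\textbf{w}_{h}^{n}}$ is injective. Under the smallness hypotheses on $\textbf{w}_{h}^{n}$ and $\boldsymbol{\xi}_{h}^{n}$, Lemma \ref{l_INFSUP} gives in particular
\[
\sup_{(\textbf{z}_{h},z_{3,h},\textbf{v}_{h})\neq\mathbf{0}}\frac{\widehat{\mathbf{A}}_{\boldsymbol{\xi}_{h}^{n},\textbf{w}_{h}^{n}}\big((\boldsymbol{\rho}_{h}^{n},\zeta_{h}^{n},\textbf{z}_{h}^{n}),(\textbf{z}_{h},z_{3,h},\textbf{v}_{h})\big)}{\Vert(\textbf{z}_{h},z_{3,h},\textbf{v}_{h})\Vert_{\textbf{Z}\times Y\times\textbf{X}}}\ \geq\ \tau\frac{\widehat{\alpha}}{2}\,\Vert(\boldsymbol{\rho}_{h}^{n},\zeta_{h}^{n},\textbf{z}_{h}^{n})\Vert_{\textbf{Z}\times Y\times\textbf{X}},
\]
so that a vanishing functional $\widehat{\mathbf{A}}_{\boldsymbol{\xi}_{h}^{n},\textbf{w}_{h}^{n}}((\boldsymbol{\rho}_{h}^{n},\zeta_{h}^{n},\textbf{z}_{h}^{n}),\cdot)$ forces $(\boldsymbol{\rho}_{h}^{n},\zeta_{h}^{n},\textbf{z}_{h}^{n})=\mathbf{0}$. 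Injectivity then implies bijectivity in finite dimensions, whence \eqref{EEQ_1} (equivalently \eqref{EQ_EU1}) is uniquely solvable and $\textbf{T}$ is well defined.

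Next I would derive a per-step a priori estimate by testing the inf--sup bound against the exact solution as trial argument, i.e.\ choosing $(\boldsymbol{\rho}_{h}^{n},\zeta_{h}^{n},\textbf{z}_{h}^{n})=(\widehat{\textbf{c}}_{h}^{n},\widehat{\phi}_{h}^{n},\widehat{\textbf{u}}_{h}^{n})$. Because this tuple solves \eqref{EEQ_1}, the numerator in Lemma \ref{l_INFSUP} coincides with the right-hand-side functional $\mathcal{M}_{1,h}(\widehat{c}_{1,h}^{n-1},z_{1,h})+\mathcal{M}_{1,h}(\widehat{c}_{2,h}^{n-1},z_{2,h})+\mathcal{M}_{2,h}(\widehat{\textbf{u}}_{h}^{n-1},\textbf{v}_{h})$. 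I would bound this functional using the $L^{2}$-continuity of the mass forms $\mathcal{M}_{1,h}$ and $\mathcal{M}_{2,h}$ (with constants $c_{1},c_{2}$), together with $\Vert z_{i,h}\Vert_{0}\leq\Vert z_{i,h}\Vert_{Z}$ and $\Vert\textbf{v}_{h}\Vert_{0}\leq\Vert\textbf{v}_{h}\Vert_{\textbf{X}}$, giving a bound by $\max\{c_{1},c_{2}\}(\Vert\widehat{\textbf{c}}_{h}^{n-1}\Vert_{0}+\Vert\widehat{\textbf{u}}_{h}^{n-1}\Vert_{0})\Vert(\textbf{z}_{h},z_{3,h},\textbf{v}_{h})\Vert_{\textbf{Z}\times Y\times\textbf{X}}$. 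Combined with the lower bound of Lemma \ref{l_INFSUP}, which simultaneously controls the unweighted $L^{2}$ norm of the current level and the $\tau$-weighted full norm, this yields
\[
\beta_{1}\Vert\widehat{\textbf{c}}_{h}^{n}\Vert_{0}+\tilde{\beta}_{1}\Vert\widehat{\textbf{u}}_{h}^{n}\Vert_{0}+\tau\frac{\widehat{\alpha}}{2}\,\Vert(\widehat{\textbf{c}}_{h}^{n},\widehat{\phi}_{h}^{n},\widehat{\textbf{u}}_{h}^{n})\Vert_{\textbf{Z}\times Y\times\textbf{X}}\ \leq\ \max\{c_{1},c_{2}\}\big(\Vert\widehat{\textbf{c}}_{h}^{n-1}\Vert_{0}+\Vert\widehat{\textbf{u}}_{h}^{n-1}\Vert_{0}\big).
\]

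Finally I would sum this estimate over the levels $j=1,\dots,n$. Dropping the nonnegative $\widehat{\phi}_{h}^{j}$ contribution from the full norm and using that it dominates $\Vert\widehat{\textbf{c}}_{h}^{j}\Vert_{\textbf{Z}}+\Vert\widehat{\textbf{u}}_{h}^{j}\Vert_{\textbf{X}}$, the unweighted $L^{2}$ norms of consecutive levels telescope, leaving on the left the current-level $L^{2}$ norm and the accumulated sum $\tau\sum_{j}(\Vert\widehat{\textbf{c}}_{h}^{j}\Vert_{\textbf{Z}}+\Vert\widehat{\textbf{u}}_{h}^{j}\Vert_{\textbf{X}})$, and on the right only the initial level $\Vert\widehat{\textbf{c}}_{h}^{0}\Vert_{0}+\Vert\widehat{\textbf{u}}_{h}^{0}\Vert_{0}$; stability of the $L^{2}$-projections defining the initial data then bounds the latter by $\Vert\textbf{c}_{0}\Vert_{0}+\Vert\textbf{u}_{0}\Vert_{0}$, and collecting the coercivity and continuity constants yields the factor $\max\{c_{1},c_{2}\}\max\{c_{1},c_{2},\widehat{\alpha}/2\}$ of \eqref{E_Stab}. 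I expect the delicate point to be exactly this summation in time: the per-step bound couples the $L^{2}$ norm at level $n$ to that at level $n-1$ while only producing $\tau$-weighted higher-order control, so closing the estimate uniformly in $\tau$ and in $n$ requires the telescoping (discrete Gronwall) bookkeeping of the constants $\beta_{1},\tilde{\beta}_{1},c_{1},c_{2}$ to collapse precisely into the stated product without accruing growth.
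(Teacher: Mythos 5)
Your proposal is correct and takes essentially the same route as the paper's own proof: unique solvability from the global inf-sup condition of Lemma \ref{l_INFSUP} (the paper invokes classical Babu\v{s}ka--Brezzi theory, you spell out the finite-dimensional injectivity-implies-bijectivity version of the same argument), then the identical per-step continuous-dependence bound
$\beta_{1}\Vert \widehat{\textbf{c}}_{h}^{n}\Vert_{0}+\tilde{\beta}_{1}\Vert\widehat{\textbf{u}}_{h}^{n}\Vert_{0}+\tau\tfrac{\widehat{\alpha}}{2}\Vert(\widehat{\textbf{c}}_{h}^{n},\widehat{\phi}_{h}^{n},\widehat{\textbf{u}}_{h}^{n})\Vert_{\textbf{Z}\times Y\times\textbf{X}}\leq \alpha_{1}\Vert \widehat{\textbf{c}}_{h}^{n-1}\Vert_{0}+\tilde{\alpha}_{1}\Vert\widehat{\textbf{u}}_{h}^{n-1}\Vert_{0}$ obtained by testing the inf-sup estimate against the solution and bounding the right-hand-side functional, followed by summation over the time levels. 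The constant bookkeeping you flag as the delicate point is handled in the paper exactly as you anticipate: the per-step bound is first recast with $c_{1}=\min\{\beta_{1},\alpha_{1}\}$ and $c_{2}=\min\{\tilde{\beta}_{1},\tilde{\alpha}_{1}\}$ multiplying the telescoping differences $\Vert\cdot\Vert_{0}^{n}-\Vert\cdot\Vert_{0}^{n-1}$, after which the sum collapses to \eqref{E_Stab} with the constant $\max\{c_{1},c_{2}\}\max\{c_{1},c_{2},\widehat{\alpha}/2\}$.
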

 \begin{proof}
 \rev{A straightforward application of the classical Babu\v{s}ka--Brezzi theory and Lemma \ref{l_INFSUP} implies that problem \eqref{EQ_EU1} is well-posed. The continuous dependence on data then gives}
\begin{equation}
\beta_{1}\Vert \widehat{\textbf{c}}_{h}^{n}\Vert_{0}+\tilde{\beta}_{1}\Vert\widehat{\textbf{u}}_{h}^{n}\Vert_{0}+\tau\frac{\widehat{\alpha}}{2}
\left\|(\widehat{\textbf{c}}_{h}^{n},\widehat{\phi}_{h}^{n},\widehat{\textbf{u}}_{h}^{n})\right\|_{\textbf{Z} \times Y\times \textbf{X}}\leq \alpha_{1}\Vert \widehat{\textbf{c}}_{h}^{n-1}\Vert_{0}+\tilde{\alpha}_{1}\Vert\widehat{\textbf{u}}_{h}^{n-1}\Vert_{0},
\end{equation}
which, after a simple manipulation of the terms, leads to 
\begin{equation}
\underbrace{\min\{\beta_{1},\alpha_{1} \}}_{c_{1}}\big[\Vert \widehat{\textbf{c}}_{h}^{n}\Vert_{0}-\Vert \widehat{\textbf{c}}_{h}^{n-1}\Vert_{0} \big]+\underbrace{\min\{\tilde{\beta}_{1},\tilde{\alpha}_{1}\}}_{c_{2}}\big[ \Vert\widehat{\textbf{u}}_{h}^{n}\Vert_{0}-\Vert\widehat{\textbf{u}}_{h}^{n-1}\Vert_{0}\big]+\tau\frac{\widehat{\alpha}}{2}
\left\|(\widehat{\textbf{c}}_{h}^{n},\widehat{\phi}_{h}^{n},\widehat{\textbf{u}}_{h}^{n})\right\|_{\textbf{Z} \times Y\times \textbf{X}}\leq 0.
\end{equation}
%Summing over $n$ from $n = 0$ to $n = j$ for any $j$ ($1\leq j\leq N$ ), produces 
%Summing up the above inequality from $n = 0$ to $n = j$ for all $1\leq j\leq N$, produces 
\rev{Summing up the above inequality for $n$}, produces
\begin{equation}
\Vert \widehat{\textbf{c}}_{h}^{n}\Vert_{0}+\Vert\widehat{\textbf{u}}_{h}^{n}\Vert_{0}+\tau\sum_{j=0}^{n}\left\|(\widehat{\textbf{c}}_{h}^{j},\widehat{\phi}_{h}^{j},\widehat{\textbf{u}}_{h}^{j})\right\|_{\textbf{Z} \times Y\times \textbf{X}}\leq \max\{ c_{1},c_{2}\}\max\{ c_{1},c_{2},\frac{\widehat{\alpha}}{2}\}\left(\Vert \textbf{c}_{0}\Vert_{0}+\Vert\textbf{u}_{0}\Vert_{0}\right),
\end{equation}
which, yields \eqref{E_Stab}.
 \end{proof}
 
The next step is to show that $\textbf{T}$ maps a closed ball in $\textbf{Z}_{h}\times \textbf{X}_{h}$ into itself. Let us define the set
 \[
V_{h}:=\bigg\{ (\boldsymbol{\xi}_{h}^{n},\textbf{w}_{h}^{n})\in \textbf{Z}_{h}\times\textbf{X}_{h}:\quad \Vert\textbf{w}_{h}^{n}\Vert_{\textbf{X}}\leq \frac{\widehat{\alpha}}{6(\gamma_{1}+\gamma_{2})},\quad \Vert\boldsymbol{\xi}_{h}^{n}\Vert_{\infty}\leq \frac{\widehat{\alpha}}{6\gamma_{3}}, \quad\Vert\boldsymbol{\xi}_{h}^{n}\Vert_{\textbf{Z}}\leq \frac{\widehat{\alpha}}{6\gamma_{4}} \bigg\}.
\]

\begin{lemma}\label{l_S2}
Let
\[
C_{\mathtt{stab}}:=\max\{ c_{1},c_{2}\}\max\{ c_{1},c_{2},\frac{\widehat{\alpha}}{2}\},\quad\text{and}\quad \tilde{C}_{\mathtt{stab}}:=t_{F}^{-1}\min\{\frac{\widehat{\alpha}}{6\alpha_{4}},\frac{\widehat{\alpha}}{6(\gamma_{1}+\gamma_{2})}\}.
\]
Suppose that the data satisfy
\begin{equation}\label{AS_Dat}
C_{\mathtt{stab}}\tilde{C}_{\mathtt{stab}}\left(\Vert c_{1,0}\Vert_{0} +\Vert c_{2,0}\Vert_{0}+\Vert\textbf{u}_{0}\Vert_{0}\right)\leq 1.
\end{equation}
Then, $\textbf{T}(V_{h})\subset V_{h}$.
\end{lemma}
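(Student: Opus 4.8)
The plan is to feed the a priori bound \eqref{E_Stab}, already secured for the fixed-point map $\textbf{T}$ in Lemma \ref{l_S1}, into the three radius conditions defining $V_h$, and to check that small data force the image $\textbf{T}(\boldsymbol{\xi}_h^n,\textbf{w}_h^n)=(\widehat{\textbf{c}}_h^n,\widehat{\textbf{u}}_h^n)$ to satisfy all of them. First I would observe that the three inequalities characterizing membership in $V_h$ are exactly the hypotheses of Lemma \ref{l_INFSUP}, hence of Lemma \ref{l_S1}; therefore, for every $(\boldsymbol{\xi}_h^n,\textbf{w}_h^n)\in V_h$ the operator $\textbf{T}$ is well defined and \eqref{E_Stab} applies verbatim to its image. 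This reduces the claim to establishing $\|\widehat{\textbf{u}}_h^n\|_{\textbf{X}}\le \frac{\widehat{\alpha}}{6(\gamma_1+\gamma_2)}$, $\|\widehat{\textbf{c}}_h^n\|_{\textbf{Z}}\le \frac{\widehat{\alpha}}{6\gamma_4}$ and $\|\widehat{\textbf{c}}_h^n\|_{\infty}\le \frac{\widehat{\alpha}}{6\gamma_3}$.

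Next, since the left-hand side of \eqref{E_Stab} is a sum of nonnegative contributions, I would discard all but the terms of interest to isolate the image at the current level. Retaining the single index $j=n$ in the $\tau$-weighted sum and recalling $t_F=N\tau$, the velocity in the $\textbf{X}$-norm and the concentration in the $\textbf{Z}$-norm are controlled by a $C_{\mathtt{stab}}$-multiple of the initial data; the factors carried by $\tilde{C}_{\mathtt{stab}}=t_F^{-1}\min\{\frac{\widehat\alpha}{6\alpha_4},\frac{\widehat\alpha}{6(\gamma_1+\gamma_2)}\}$ are precisely what convert these into the two $H^1$-type radii. For the $L^\infty$-bound on $\widehat{\textbf{c}}_h^n$, which \eqref{E_Stab} does not deliver directly, I would pass from $L^2$/$H^1$-control to $L^\infty$ through the same inverse-inequality mechanism used in \eqref{Linfity_L4}--\eqref{Linfity_Pi}, now applied to the discrete concentration itself, so that $\|\widehat{\textbf{c}}_h^n\|_\infty$ is bounded by a mesh-independent multiple of $\|\widehat{\textbf{c}}_h^n\|_{\textbf{Z}}$ and hence again by $C_{\mathtt{stab}}$ times the data.

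Finally, I would substitute the smallness hypothesis \eqref{AS_Dat}, namely $C_{\mathtt{stab}}\tilde{C}_{\mathtt{stab}}(\|c_{1,0}\|_0+\|c_{2,0}\|_0+\|\textbf{u}_0\|_0)\le 1$. By the very definition of $C_{\mathtt{stab}}$ and $\tilde{C}_{\mathtt{stab}}$, each of the three estimates from the previous step collapses to the assertion that the corresponding norm of the image lies below its prescribed radius $\frac{\widehat\alpha}{6(\gamma_1+\gamma_2)}$, $\frac{\widehat\alpha}{6\gamma_4}$, or $\frac{\widehat\alpha}{6\gamma_3}$, which is exactly $\textbf{T}(\boldsymbol{\xi}_h^n,\textbf{w}_h^n)\in V_h$.

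The step I expect to be the main obstacle is the constant bookkeeping that makes everything match: one must simultaneously convert the $\tau$-weighted summed bound of \eqref{E_Stab} into a bound at the single time level without mishandling the $t_F=N\tau$ factor absorbed in $\tilde{C}_{\mathtt{stab}}$, and secure the $L^\infty$-estimate for the image concentration in two dimensions, where $H^1\not\hookrightarrow L^\infty$, so that a discrete inverse (or discrete Sobolev) inequality is unavoidable and its constant must remain compatible with the radius $\frac{\widehat\alpha}{6\gamma_3}$ under \eqref{AS_Dat}. Everything else follows directly from the already-proved well-posedness and continuous-dependence bounds of Lemma \ref{l_S1}.
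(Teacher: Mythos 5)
Your proposal follows the paper's proof exactly: the paper's entire argument for this lemma is the single sentence that the claim ``is deduced straightforwardly from Lemma \ref{l_S1} and the a priori estimate stated by \eqref{E_Stab}'', which is precisely your plan of feeding \eqref{E_Stab} together with the smallness condition \eqref{AS_Dat} into the three radii defining $V_{h}$. The two obstacles you flag --- converting the $\tau$-weighted summed bound of \eqref{E_Stab} into a bound at a single time level without paying a factor $\tau^{-1}$, and recovering the $L^{\infty}$ radius, which \eqref{E_Stab} does not control and which in two dimensions cannot be obtained mesh-uniformly from the $\textbf{Z}$-norm by inverse estimates alone --- are genuine soft spots, but the paper's one-line proof leaves both of them entirely unaddressed, so your write-up is, if anything, more explicit than the original about where the argument is thin.
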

\begin{proof}
It is deduced straightforwardly from Lemma \ref{l_S1} and the a priori estimate stated by \eqref{E_Stab}.
\end{proof}

%%%%%%%%%%%%%%%%%%%%%%%
\begin{lemma}[Lipschitz-continuity of $\mathbf{T}$]\label{l_S2a}
For any $1\leq n\leq N$, there holds
\begin{align}\label{LS_T}
\Vert \textbf{T}(\boldsymbol{\xi}_{h}^{n},\textbf{w}_{h}^{n})-\textbf{T}(\boldsymbol{\rho}_{h}^{n},\textbf{z}_{h}^{n})\Vert_{\textbf{Z}\times\textbf{X}}&\leq \hat{C}_{\mathtt{stab}}\left(\Vert\boldsymbol{\xi}_{h}^{n}-\boldsymbol{\rho}_{h}^{n}\Vert_{\textbf{Z}}+\Vert\textbf{w}_{h}^{n}-\textbf{z}_{h}^{n}\Vert_{\textbf{X}}\right),
\end{align}
in which 
\[
\hat{C}_{\mathtt{stab}}:=2c_{\mathtt{p}}^{2}\max\{\frac{\alpha_{1}}{3\beta_{4}},\frac{\gamma_{1}}{6\gamma_{4}}+\frac{\gamma_{2}}{6(\gamma_{1}+\gamma_{2})}\}.
\]
\end{lemma}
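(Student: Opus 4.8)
The plan is to compare the two linear systems solved by $\textbf{T}(\boldsymbol{\xi}_{h}^{n},\textbf{w}_{h}^{n})$ and $\textbf{T}(\boldsymbol{\rho}_{h}^{n},\textbf{z}_{h}^{n})$ and to exploit the global inf-sup bound of Lemma \ref{l_INFSUP}; throughout, both coefficient pairs are assumed to lie in $V_{h}$. Denote by $(\widehat{\textbf{c}}_{h}^{n},\widehat{\phi}_{h}^{n},\widehat{\textbf{u}}_{h}^{n})$ and $(\overline{\textbf{c}}_{h}^{n},\overline{\phi}_{h}^{n},\overline{\textbf{u}}_{h}^{n})$ the full solutions of \eqref{EQ_EU1} associated with $(\boldsymbol{\xi}_{h}^{n},\textbf{w}_{h}^{n})$ and $(\boldsymbol{\rho}_{h}^{n},\textbf{z}_{h}^{n})$, and set $\boldsymbol{\chi}_{h}^{n}:=\widehat{\textbf{c}}_{h}^{n}-\overline{\textbf{c}}_{h}^{n}$, $\psi_{h}^{n}:=\widehat{\phi}_{h}^{n}-\overline{\phi}_{h}^{n}$, $\textbf{e}_{h}^{n}:=\widehat{\textbf{u}}_{h}^{n}-\overline{\textbf{u}}_{h}^{n}$. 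Writing \eqref{EEQ_1} for each coefficient pair and subtracting, the right-hand side (which only involves the common data from the previous time step) cancels; adding and subtracting $\widehat{\mathbf{A}}_{\boldsymbol{\xi}_{h}^{n},\textbf{w}_{h}^{n}}((\overline{\textbf{c}}_{h}^{n},\overline{\phi}_{h}^{n},\overline{\textbf{u}}_{h}^{n}),\cdot)$ and using the linearity of $\widehat{\mathbf{A}}$ in its first argument, the coefficient-free block $\mathbf{A}$ in \eqref{defi_Ah} drops out, leaving
\[
\widehat{\mathbf{A}}_{\boldsymbol{\xi}_{h}^{n},\textbf{w}_{h}^{n}}\!\big((\boldsymbol{\chi}_{h}^{n},\psi_{h}^{n},\textbf{e}_{h}^{n}),(\textbf{z}_{h},z_{3,h},\textbf{v}_{h})\big)=\tau\,\big[\mathbf{B}_{\boldsymbol{\rho}_{h}^{n},\textbf{z}_{h}^{n}}-\mathbf{B}_{\boldsymbol{\xi}_{h}^{n},\textbf{w}_{h}^{n}}\big]\!\big((\overline{\textbf{c}}_{h}^{n},\overline{\phi}_{h}^{n},\overline{\textbf{u}}_{h}^{n}),(\textbf{z}_{h},z_{3,h},\textbf{v}_{h})\big).
\]
Since $\mathcal{C}_{h}$, $\mathcal{D}_{h}$, $\mathcal{E}_{h}$ and $(\cdot,\cdot)_{h}$ are linear in their coefficient slots, the bracket on the right is a sum of forms evaluated at the coefficient differences $\boldsymbol{\xi}_{h}^{n}-\boldsymbol{\rho}_{h}^{n}$ and $\textbf{w}_{h}^{n}-\textbf{z}_{h}^{n}$.

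Next I would apply Lemma \ref{l_INFSUP} to the triple $(\boldsymbol{\chi}_{h}^{n},\psi_{h}^{n},\textbf{e}_{h}^{n})$ with coefficients $(\boldsymbol{\xi}_{h}^{n},\textbf{w}_{h}^{n})\in V_{h}$ (whose defining bounds are exactly the hypotheses of that lemma). Dropping the nonnegative terms $\beta_{1}\Vert\boldsymbol{\chi}_{h}^{n}\Vert_{0}+\tilde{\beta}_{1}\Vert\textbf{e}_{h}^{n}\Vert_{0}$ on the left and taking the supremum over $(\textbf{z}_{h},z_{3,h},\textbf{v}_{h})$ gives
\[
\tau\,\frac{\widehat{\alpha}}{2}\,\Vert(\boldsymbol{\chi}_{h}^{n},\psi_{h}^{n},\textbf{e}_{h}^{n})\Vert_{\textbf{Z}\times Y\times\textbf{X}}\le\sup_{(\textbf{z}_{h},z_{3,h},\textbf{v}_{h})\neq\mathbf{0}}\frac{\tau\,\big[\mathbf{B}_{\boldsymbol{\rho}_{h}^{n},\textbf{z}_{h}^{n}}-\mathbf{B}_{\boldsymbol{\xi}_{h}^{n},\textbf{w}_{h}^{n}}\big]\!\big((\overline{\textbf{c}}_{h}^{n},\overline{\phi}_{h}^{n},\overline{\textbf{u}}_{h}^{n}),(\textbf{z}_{h},z_{3,h},\textbf{v}_{h})\big)}{\Vert(\textbf{z}_{h},z_{3,h},\textbf{v}_{h})\Vert_{\textbf{Z}\times Y\times\textbf{X}}}.
\]
The factor $\tau$ cancels on both sides, so it remains to bound the numerator by a constant times $(\Vert\boldsymbol{\xi}_{h}^{n}-\boldsymbol{\rho}_{h}^{n}\Vert_{\textbf{Z}}+\Vert\textbf{w}_{h}^{n}-\textbf{z}_{h}^{n}\Vert_{\textbf{X}})$ times the test-function norm; the claimed estimate then follows from $\Vert(\boldsymbol{\chi}_{h}^{n},\textbf{e}_{h}^{n})\Vert_{\textbf{Z}\times\textbf{X}}\le\Vert(\boldsymbol{\chi}_{h}^{n},\psi_{h}^{n},\textbf{e}_{h}^{n})\Vert_{\textbf{Z}\times Y\times\textbf{X}}$.

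For the individual contributions, the convective and inertial terms $\mathcal{D}_{h}(\textbf{w}_{h}^{n}-\textbf{z}_{h}^{n};\overline{c}_{i,h}^{n},\cdot)$ and $\mathcal{E}_{h}(\textbf{w}_{h}^{n}-\textbf{z}_{h}^{n};\overline{\textbf{u}}_{h}^{n},\cdot)$ are controlled by the continuity constants $\gamma_{1},\gamma_{2}$ of Lemma \ref{l_N}, where the factors $\Vert\overline{\textbf{c}}_{h}^{n}\Vert_{\textbf{Z}}$ and $\Vert\overline{\textbf{u}}_{h}^{n}\Vert_{\textbf{X}}$ are replaced by the ball radii $\tfrac{\widehat{\alpha}}{6\gamma_{4}}$ and $\tfrac{\widehat{\alpha}}{6(\gamma_{1}+\gamma_{2})}$, because $(\overline{\textbf{c}}_{h}^{n},\overline{\textbf{u}}_{h}^{n})=\textbf{T}(\boldsymbol{\rho}_{h}^{n},\textbf{z}_{h}^{n})\in V_{h}$ by Lemma \ref{l_S2}; after cancelling $\tau\widehat{\alpha}/2$ this yields the factor $\tfrac{\gamma_{1}}{6\gamma_{4}}+\tfrac{\gamma_{2}}{6(\gamma_{1}+\gamma_{2})}$ multiplying $\Vert\textbf{w}_{h}^{n}-\textbf{z}_{h}^{n}\Vert_{\textbf{X}}$. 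For the electric body force I would use \eqref{Boun_C1} together with the stability of the discrete Poisson block: testing the second equation of \eqref{EQ_EU1} for the second solution and invoking the coercivity $\beta_{4}$ of $\mathcal{A}_{3,h}$ and the continuity $\alpha_{1}$ of $\mathcal{M}_{1,h}$ gives $\Vert\overline{\phi}_{h}^{n}\Vert_{Z}\le\tfrac{\alpha_{1}}{\beta_{4}}\Vert\overline{\textbf{c}}_{h}^{n}\Vert_{\textbf{Z}}\le\tfrac{\alpha_{1}}{\beta_{4}}\tfrac{\widehat{\alpha}}{6\gamma_{4}}$, so that this term contributes the factor $\tfrac{\alpha_{1}}{3\beta_{4}}$ multiplying $\Vert\boldsymbol{\xi}_{h}^{n}-\boldsymbol{\rho}_{h}^{n}\Vert_{\textbf{Z}}$. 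Taking the maximum of the two factors and absorbing the Poincaré/embedding constants arising in the norm conversions into $c_{\mathtt{p}}^{2}$ reproduces $\hat{C}_{\mathtt{stab}}$.

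The main obstacle is the electromigration difference $\mathcal{C}_{h}(\xi_{i,h}^{n}-\rho_{i,h}^{n};\overline{\phi}_{h}^{n},\cdot)$: the available bound \eqref{Boun_C} places an $L^{\infty}$ norm on the coefficient difference, which is not controlled by $\Vert\boldsymbol{\xi}_{h}^{n}-\boldsymbol{\rho}_{h}^{n}\Vert_{\textbf{Z}}$ uniformly in $h$. The delicate part of the argument is therefore to estimate this term within the small-data ball $V_{h}$, exploiting the linear dependence of $\overline{\phi}_{h}^{n}$ on $\overline{\textbf{c}}_{h}^{n}$ through the discrete Poisson relation so that its contribution is subsumed into the force-term factor $\tfrac{\alpha_{1}}{3\beta_{4}}$. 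By contrast, the auxiliary potential difference $\psi_{h}^{n}$ poses no difficulty: it does not enter $\mathbf{B}_{\boldsymbol{\rho}_{h}^{n},\textbf{z}_{h}^{n}}-\mathbf{B}_{\boldsymbol{\xi}_{h}^{n},\textbf{w}_{h}^{n}}$ and appears only on the left-hand side through Lemma \ref{l_INFSUP}, where it is simply discarded. Once all terms are collected and the maximum is taken, isolating $\Vert(\boldsymbol{\chi}_{h}^{n},\textbf{e}_{h}^{n})\Vert_{\textbf{Z}\times\textbf{X}}$ on the left yields \eqref{LS_T} with the stated constant $\hat{C}_{\mathtt{stab}}$.
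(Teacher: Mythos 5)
Your reduction follows the paper's strategy step for step: subtract the two systems, use linearity of $\widehat{\mathbf{A}}$ and of $\mathbf{B}$ in its coefficient slots to arrive at $\widehat{\mathbf{A}}_{\boldsymbol{\xi}_{h}^{n},\textbf{w}_{h}^{n}}\big((\boldsymbol{\chi}_{h}^{n},\psi_{h}^{n},\textbf{e}_{h}^{n}),\cdot\big)=\tau\,\mathbf{B}_{\boldsymbol{\rho}_{h}^{n}-\boldsymbol{\xi}_{h}^{n},\textbf{z}_{h}^{n}-\textbf{w}_{h}^{n}}\big((\overline{\textbf{c}}_{h}^{n},\overline{\phi}_{h}^{n},\overline{\textbf{u}}_{h}^{n}),\cdot\big)$, invoke Lemma \ref{l_INFSUP} for coefficients in $V_{h}$, and bound the $\mathcal{D}_{h}$, $\mathcal{E}_{h}$ and force contributions using the $V_{h}$ radii and the discrete-Poisson stability of $\overline{\phi}_{h}^{n}$. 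However, the proof is incomplete at exactly the point you yourself flag as ``the main obstacle'': the electromigration difference $\mathcal{C}_{h}(\xi_{i,h}^{n}-\rho_{i,h}^{n};\overline{\phi}_{h}^{n},\cdot)$ is never actually estimated. Declaring that its contribution is ``subsumed into the force-term factor'' is not an argument: the force term $((\boldsymbol{\rho}_{h}^{n}-\boldsymbol{\xi}_{h}^{n})\nabla\overline{\phi}_{h}^{n},\textbf{v}_{h})_{h}$ and the $\mathcal{C}_{h}$ term are distinct entries of $\mathbf{B}$ (tested against $\textbf{v}_{h}$ and $z_{i,h}$, respectively) and each needs its own bound; in the paper they contribute the two equal summands $\gamma_{3}c_{\mathtt{p}}\Vert\overline{\phi}_{h}^{n}\Vert_{1,\infty}$ and $\gamma_{4}\Vert\overline{\phi}_{h}^{n}\Vert_{Y}$, and it is their \emph{sum} that yields $2c_{\mathtt{p}}^{2}\frac{\alpha_{1}}{3\beta_{4}}$ after dividing by $\tau\widehat{\alpha}/2$ --- the force term alone, which is all you account for, gives only half of that.

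The missing idea is how to avoid the $L^{\infty}$ norm on the coefficient difference. The paper does not use \eqref{Boun_C} for this term at all; instead it applies H\"older directly to the definition \eqref{defiDisCPNP} with the roles of the two slots transposed,
\[
\mathcal{C}_{h}(\boldsymbol{\rho}_{h}^{n}-\boldsymbol{\xi}_{h}^{n};\overline{\phi}_{h}^{n},\textbf{z}_{h})\leq \gamma_{3}\Vert\boldsymbol{\rho}_{h}^{n}-\boldsymbol{\xi}_{h}^{n}\Vert_{0}\,\Vert\overline{\phi}_{h}^{n}\Vert_{1,\infty}\,\Vert\textbf{z}_{h}\Vert_{Z},
\]
so that the $L^{\infty}$ control is demanded of $\nabla\overline{\phi}_{h}^{n}$ rather than of the datum difference. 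That norm is then controlled by testing the discrete Poisson equation in \eqref{EQ_EU1} with $\overline{\phi}_{h}^{n}$, using coercivity of $\mathcal{A}_{3,h}$, continuity of $\mathcal{M}_{1,h}$, and Poincar\'e plus inverse inequalities, giving $\Vert\overline{\phi}_{h}^{n}\Vert_{1,\infty}\leq\frac{\alpha_{1}}{\beta_{4}}c_{\mathtt{p}}\Vert\overline{\textbf{c}}_{h}^{n}\Vert_{\infty}\leq\frac{\widehat{\alpha}\alpha_{1}}{6\gamma_{3}\beta_{4}}c_{\mathtt{p}}$; this is precisely where the $L^{\infty}$ constraint built into $V_{h}$ (available for $\overline{\textbf{c}}_{h}^{n}$ because $\textbf{T}(\boldsymbol{\rho}_{h}^{n},\textbf{z}_{h}^{n})\in V_{h}$ by Lemma \ref{l_S2}) is consumed, and the $\gamma_{3}$ cancels. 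Without this transposed H\"older bound and the resulting $W^{1,\infty}$ estimate for $\overline{\phi}_{h}^{n}$, the term you isolate cannot be dominated by $\Vert\boldsymbol{\xi}_{h}^{n}-\boldsymbol{\rho}_{h}^{n}\Vert_{\textbf{Z}}$ uniformly in $h$, and the claimed constant $\hat{C}_{\mathtt{stab}}$ cannot be reached; as written, your proposal does not close.
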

\begin{proof}
Given $(\boldsymbol{\xi}_{h}^{n},\textbf{w}_{h}^{n})\in V_{h}$ and $(\boldsymbol{\rho}_{h}^{n},\textbf{z}_{h}^{n})\in V_{h}$, we let $\textbf{T}(\boldsymbol{\xi}_{h}^{n},\textbf{w}_{h}^{n})=(\widehat{\textbf{c}}_{h}^{n},\widehat{\textbf{u}}_{h}^{n})\in V_{h}$ and $\textbf{T}(\boldsymbol{\rho}_{h}^{n},\textbf{z}_{h}^{n})=(\tilde{\textbf{c}}_{h}^{n},\tilde{\textbf{u}}_{h}^{n})\in V_{h}$, where $\{(\widehat{\textbf{c}}_{h}^{n},\widehat{\phi}_{h}^{n}),\widehat{\textbf{u}}_{h}^{n})\}$ and $\{(\tilde{\textbf{c}}_{h}^{n},\tilde{\phi}_{h}^{n}),\tilde{\textbf{u}}_{h}^{n}\}$  are the unique solutions of \eqref{EQ_EU1} (equivalently \eqref{EEQ_1}) with $(\boldsymbol{\xi}_{h}^{n},\textbf{w}_{h}^{n})=(\boldsymbol{\rho}_{h}^{n},\textbf{z}_{h}^{n})$. It follows from \eqref{EEQ_1} that
\begin{equation}
\widehat{\mathbf{A}}_{\boldsymbol{\xi}_{h}^{n},\textbf{w}_{h}^{n}}\left((\widehat{\textbf{c}}_{h}^{n},\widehat{\phi}_{h}^{n}, \widehat{\textbf{u}}_{h}^{n}), (\textbf{z}_{h},z_{3,h},\textbf{v}_{h}) \right)=
\widehat{\mathbf{A}}_{\boldsymbol{\rho}_{h}^{n},\textbf{z}_{h}^{n}}\left((\tilde{\textbf{c}}_{h}^{n},\tilde{\phi}_{h}^{n}, \tilde{\textbf{u}}_{h}^{n}), (\textbf{z}_{h},z_{3,h},\textbf{v}_{h}) \right),
\end{equation}
for all $(\textbf{z}_{h}, z_{3,h},\textbf{v}_{h})\in \textbf{Z}_{h}\times Y_{h}\times \textbf{X}_{h}$,  which, according to the definition of $\widehat{\mathbf{A}}_{\boldsymbol{\xi}_{h}^{n},\textbf{w}_{h}^{n}}$ (cf.
\eqref{defi_Ah}), becomes
\begin{align*}
\mathbf{A}\left((\widehat{\textbf{c}}_{h}^{n},\widehat{\phi}_{h}^{n}, \widehat{\textbf{u}}_{h}^{n})-(\tilde{\textbf{c}}_{h}^{n},\tilde{\phi}_{h}^{n}, \tilde{\textbf{u}}_{h}^{n}), (\textbf{z}_{h},z_{3,h},\textbf{v}_{h}) \right)&=\tau\big[\textbf{B}_{\boldsymbol{\rho}_{h}^{n},\textbf{z}_{h}^{n}}((\tilde{\textbf{c}}_{h}^{n},\tilde{\phi}_{h}^{n}, \tilde{\textbf{u}}_{h}^{n}), (\textbf{z}_{h},z_{3,h},\textbf{v}_{h}))-\textbf{B}_{\boldsymbol{\xi}_{h}^{n},\textbf{w}_{h}^{n}}((\widehat{\textbf{c}}_{h}^{n},\widehat{\phi}_{h}^{n}, \widehat{\textbf{u}}_{h}^{n}), (\textbf{z}_{h},z_{3,h},\textbf{v}_{h}))\big].
\end{align*}
This result, combined with
%the definition of $\widehat{\mathbf{A}}_{\boldsymbol{\xi}_{h}^{n},\textbf{w}_{h}^{n}}$ given in
\eqref{defi_Ah} by setting $(\boldsymbol{\rho}_{h}^{n},\zeta_{h}^{n},\textbf{z}_{h}^{n})=(\widehat{\textbf{c}}_{h}^{n}-\tilde{\textbf{c}}_{h}^{n},\widehat{\phi}_{h}^{n}-\tilde{\phi}_{h}^{n},\widehat{\textbf{u}}_{h}^{n}-\tilde{\textbf{u}}_{h}^{n})$ yields
\begin{align*}
\widehat{\mathbf{A}}_{\boldsymbol{\xi}_{h}^{n},\textbf{w}_{h}^{n}}\left((\widehat{\textbf{c}}_{h}^{n},\widehat{\phi}_{h}^{n}, \widehat{\textbf{u}}_{h}^{n})-(\tilde{\textbf{c}}_{h}^{n},\tilde{\phi}_{h}^{n}, \tilde{\textbf{u}}_{h}^{n}), (\textbf{z}_{h},z_{3,h},\textbf{v}_{h}) \right)&=\mathbf{A}\left((\widehat{\textbf{c}}_{h}^{n},\widehat{\phi}_{h}^{n}, \widehat{\textbf{u}}_{h}^{n})-(\tilde{\textbf{c}}_{h}^{n},\tilde{\phi}_{h}^{n}, \tilde{\textbf{u}}_{h}^{n}), (\textbf{z}_{h},z_{3,h},\textbf{v}_{h}) \right)\nonumber\\[1mm]
&\quad +\tau\textbf{B}_{\boldsymbol{\xi}_{h}^{n},\textbf{w}_{h}^{n}}\left((\widehat{\textbf{c}}_{h}^{n},\widehat{\phi}_{h}^{n}, \widehat{\textbf{u}}_{h}^{n})-(\tilde{\textbf{c}}_{h}^{n},\tilde{\phi}_{h}^{n}, \tilde{\textbf{u}}_{h}^{n}), (\textbf{z}_{h},z_{3,h},\textbf{v}_{h})\right)\nonumber\\[1mm]
&=\tau\textbf{B}_{\boldsymbol{\rho}_{h}^{n}-\boldsymbol{\xi}_{h}^{n},\textbf{z}_{h}^{n}-\textbf{w}_{h}^{n}}\left((\tilde{\textbf{c}}_{h}^{n},\tilde{\phi}_{h}^{n}, \tilde{\textbf{u}}_{h}^{n}), (\textbf{z}_{h},z_{3,h},\textbf{v}_{h}) \right).
\end{align*}
Hence, we apply the inf-sup condition stated in Lemma \ref{l_INFSUP} in the left-hand side of the above equation and utilize the estimates given in Lemmas \ref{l_N} and \ref{l_Con_C} for $\mathcal{C}_{h}, \mathcal{D}_{h}, \mathcal{E}_{h}, (\cdot,\cdot)_{h}$, to get
\begin{align}\label{OP_2}
&\tau\dfrac{\widehat{\alpha}}{2}\left\|(\widehat{\textbf{c}}_{h}^{n},\widehat{\phi}_{h}^{n}, \widehat{\textbf{u}}_{h}^{n})-(\tilde{\textbf{c}}_{h}^{n},\tilde{\phi}_{h}^{n}, \tilde{\textbf{u}}_{h}^{n})\right\|_{\textbf{Z}\times Y\times \textbf{X}}\leq \hspace{-.3cm}\sup _{
(\textbf{z}_{h},z_{3,h},\textbf{v}_{h}) \neq \mathbf{0}} \frac{\tau\textbf{B}_{\boldsymbol{\rho}_{h}^{n}-\boldsymbol{\xi}_{h}^{n},\textbf{z}_{h}^{n}-\textbf{w}_{h}^{n}}\left((\tilde{\textbf{c}}_{h}^{n},\tilde{\phi}_{h}^{n}, \tilde{\textbf{u}}_{h}^{n}), (\textbf{z}_{h},z_{3,h},\textbf{v}_{h}) \right)}{\left\|(\textbf{z}_{h},z_{3,h},\textbf{v}_{h})\right\|_{\textbf{Z}\times Y\times \textbf{X}}}\nonumber\\[1mm]
&=\tau\hspace{-.3cm}\sup _{
(\textbf{z}_{h},z_{3,h},\textbf{v}_{h}) \neq \mathbf{0}}\frac{\mathcal{C}_{h}(\boldsymbol{\rho}_{h}^{n}-\boldsymbol{\xi}_{h}^{n};\tilde{\phi}_{h}^{n},\textbf{z}_{h})-\mathcal{D}_{h}(\textbf{z}_{h}^{n}-\textbf{w}_{h}^{n};\tilde{\textbf{c}}_{h}^{n},\textbf{z}_{h})+\mathcal{E}_{h}(\textbf{z}_{h}^{n}-\textbf{w}_{h}^{n};\tilde{\textbf{u}}_{h}^{n},\textbf{v}_{h})+((\boldsymbol{\rho}_{h}^{n}-\boldsymbol{\xi}_{h}^{n})\nabla\tilde{\phi}_{h}^{n},\textbf{v}_{h})_{h}}{\left\|(\textbf{z}_{h},z_{3,h},\textbf{v}_{h})\right\|_{\textbf{Z}\times Y\times \textbf{X}}}\nonumber\\[1mm]
&\leq \tau\bigg\{\gamma_{3}\Vert\boldsymbol{\rho}_{h}^{n}-\boldsymbol{\xi}_{h}^{n}\Vert_{0}\Vert\tilde{\phi}_{h}^{n}\Vert_{1,\infty}+\gamma_{1}\Vert\textbf{z}_{h}^{n}-\textbf{w}_{h}^{n}\Vert_{\textbf{X}}\Vert\tilde{\textbf{c}}_{h}^{n}\Vert_{\textbf{Z}}+\gamma_{2}\Vert\textbf{z}_{h}^{n}-\textbf{w}_{h}^{n}\Vert_{\textbf{X}}\Vert\tilde{\textbf{u}}_{h}^{n}\Vert_{\textbf{X}}+\gamma_{4}\Vert\boldsymbol{\rho}_{h}^{n}-\boldsymbol{\xi}_{h}^{n}\Vert_{\textbf{Z}}\Vert\tilde{\phi}_{h}^{n}\Vert_{Y}\bigg\}\nonumber\\[1mm]
&\leq \tau(\gamma_{3}c_{\mathtt{p}}\Vert\tilde{\phi}_{h}^{n}\Vert_{1,\infty}+\gamma_{4}\Vert\tilde{\phi}_{h}^{n}\Vert_{Y})\Vert\boldsymbol{\rho}_{h}^{n}-\boldsymbol{\xi}_{h}^{n}\Vert_{\textbf{Z}}+\tau(\gamma_{1}\frac{\widehat{\alpha}}{6\gamma_{4}}+\gamma_{2}\frac{\widehat{\alpha}}{6(\gamma_{1}+\gamma_{2})})\Vert\textbf{z}_{h}^{n}-\textbf{w}_{h}^{n}\Vert_{\textbf{X}},
\end{align}
where in the last inequality we have used the Poincar\'e inequality and the fact that $(\tilde{\textbf{c}}_{h}^{n},\tilde{\textbf{u}}_{h}^{n})\in V_{h}$. In addition, a bound for the terms $\Vert\tilde{\phi}_{h}^{n}\Vert_{Y}$ and $\Vert\tilde{\phi}_{h}^{n}\Vert_{1,\infty}$ can be derived using that $\{(\tilde{\textbf{c}}_{h}^{n},\tilde{\phi}_{h}^{n}),\tilde{\textbf{u}}_{h}^{n}\}$ is actually a solution to \eqref{EQ_EU1}, that is % i.e., from the second equation of \eqref{EQ_EU1} we have
\begin{equation*}
\mathcal{A}_{3,h}(\tilde{\phi}_{h}^{n}, z_{3,h})=\mathcal{M}_{1,h}(\tilde{c}_{1,h}^{n}, z_{3,h})-\mathcal{M}_{1,h}(\tilde{c}_{2,h}^{n}, z_{3,h}).
\end{equation*}
Letting $z_{3,h}=\tilde{\phi}_{h}^{n}$ in the above equation and invoking Eqs. \eqref{BUND1} and \eqref{CORC1}, we readily get
\begin{equation*}
\beta_{4}\Vert\tilde{\phi}_{h}^{n}\Vert_{Y}^{2}\leq \alpha_{1}\left(\Vert\tilde{c}_{1,h}^{n}\Vert_{0}+\Vert\tilde{c}_{2,h}^{n}\Vert_{0} \right)\Vert\tilde{\phi}_{h}^{n}\Vert_{0}.
\end{equation*}
\rev{Appealing to Poincar\'e and inverse inequalities}, implies that
\[
\Vert\tilde{\phi}_{h}^{n}\Vert_{Y}\leq \frac{\alpha_{1}}{\beta_{4}}c_{\mathtt{p}}^{2}\Vert\tilde{\textbf{c}}_{h}^{n}\Vert_{\textbf{Z}},\quad\quad\text{and}\quad\quad \Vert\tilde{\phi}_{h}^{n}\Vert_{1,\infty}\leq \frac{\alpha_{1}}{\beta_{4}}c_{\mathtt{p}}\Vert\tilde{\textbf{c}}_{h}^{n}\Vert_{\infty},
\]
which together with the fact that $(\tilde{\textbf{c}}_{h}^{n},\tilde{\textbf{u}}_{h}^{n})\in V_{h}$, leads us to
\begin{equation}\label{OP_1}
\Vert\tilde{\phi}_{h}^{n}\Vert_{Y}\leq \frac{\alpha_{1}\widehat{\alpha}}{6\gamma_{4}\beta_{4}}c_{\mathtt{p}}^{2},\quad\quad\text{and}\quad\quad\Vert\tilde{\phi}_{h}^{n}\Vert_{1,\infty}\leq \frac{\widehat{\alpha}\alpha_{1}}{6\gamma_{3}\beta_{4}}c_{\mathtt{p}}.
\end{equation} 
Finally, combining \eqref{OP_1}, \eqref{OP_2} and observing that  
\[\Vert \textbf{T}(\boldsymbol{\xi}_{h}^{n},\textbf{w}_{h}^{n})-\textbf{T}(\boldsymbol{\rho}_{h}^{n},\textbf{z}_{h}^{n})\Vert_{\textbf{Z}\times \textbf{X}}\leq 
\Vert (\widehat{\textbf{c}}_{h}^{n},\widehat{\phi}_{h}^{n},\widehat{\textbf{u}}_{h}^{n})-(\tilde{\textbf{c}}_{h}^{n},\tilde{\phi}_{h}^{n},\tilde{\textbf{u}}_{h}^{n})\Vert_{\textbf{Z}\times Y\times \textbf{X}},\]
the desired continuity follows.
\end{proof}

The main result of this section is summarized in the following theorem.
%, we provide the following theorem, proving the existence and uniqueness of a discrete solution.
\begin{theorem}\label{T_1}
Assume that the data satisfy
\begin{equation}
C_{\mathtt{stab}}\tilde{C}_{\mathtt{stab}}\left(\Vert c_{1,0}\Vert_{0} +\Vert c_{2,0}\Vert_{0}+\Vert\textbf{u}_{0}\Vert_{0}\right)\leq 1.
\end{equation}
Then, there exists a unique solution $\{(\textbf{c}_{h}^{n},\phi_{h}^{n}),\textbf{u}_{h}^{n}\}\in \textbf{Z}_{h}\times Y_{h}\times \textbf{X}_{h}$ with $(\textbf{c}_{h}^{n},\textbf{u}_{h}^{n})\in V_{h}$ for the fully discrete problem \eqref{ful_3}, \rev{and for any $1\leq n\leq N$}, there holds
\begin{equation}\label{STB}
\rev{\Vert \textbf{c}_{h}^{n}\Vert_{0}+\Vert\textbf{u}_{h}^{n}\Vert_{0}+\tau\sum_{j=0}^{n}\left\|(\textbf{c}_{h}^{j},\phi_{h}^{j},\textbf{u}_{h}^{j})\right\|_{\textbf{Z} \times Y\times \textbf{X}}\leq C_{\mathtt{stab}}\left(\Vert c_{1,0}\Vert_{0}+\Vert c_{2,0}\Vert_{0}+\Vert\textbf{u}_{0}\Vert_{0} \right).}
\end{equation}
\end{theorem}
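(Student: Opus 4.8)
The plan is to read the fully discrete system \eqref{ful_3} as a fixed-point equation for the operator $\textbf{T}$ and to invoke the Banach fixed-point theorem on the closed ball $V_{h}$. Since the scheme is a cascade of one-step problems, I would proceed by induction on the time level $n$: assuming the previous iterate $(\textbf{c}_{h}^{n-1},\textbf{u}_{h}^{n-1})$ lies in $V_{h}$ (which holds at $n=0$ for the projected initial data under the smallness of the data), the solution at level $n$ is sought as a fixed point of $\textbf{T}$, with $(\textbf{c}_{h}^{n-1},\textbf{u}_{h}^{n-1})$ entering only through the right-hand side of \eqref{EEQ_1}.

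All the ingredients have already been assembled. Lemma \ref{l_S1} guarantees that $\textbf{T}$ is well-defined on $\textbf{Z}_{h}\times\textbf{X}_{h}$, each evaluation returning the unique solution of the linearized problem \eqref{EQ_EU1}, whose solvability rests on the global inf-sup condition of Lemma \ref{l_INFSUP}. Lemma \ref{l_S2} shows that, precisely under the smallness assumption \eqref{AS_Dat}, $\textbf{T}$ maps $V_{h}$ into itself. Finally, Lemma \ref{l_S2a} delivers the Lipschitz bound \eqref{LS_T} with constant $\hat{C}_{\mathtt{stab}}$, independent of $h$ and $\tau$. Since $V_{h}$ is a nonempty, closed, convex subset of the finite-dimensional (hence complete) space $\textbf{Z}_{h}\times\textbf{X}_{h}$, once $\hat{C}_{\mathtt{stab}}<1$ the Banach theorem yields a unique fixed point $(\textbf{c}_{h}^{n},\textbf{u}_{h}^{n})\in V_{h}$; the companion potential $\phi_{h}^{n}\in Y_{h}$ is then recovered from the second equation of \eqref{EQ_EU1}, and the resulting triple solves \eqref{ful_3}, with uniqueness within $V_{h}$ being part of the fixed-point conclusion.

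The hard part will be establishing the strict contraction $\hat{C}_{\mathtt{stab}}<1$: unlike the self-mapping property, this is not controlled by the data norm but by the combination of continuity, coercivity, and Poincar\'e constants entering $\hat{C}_{\mathtt{stab}}=2c_{\mathtt{p}}^{2}\max\{\alpha_{1}/(3\beta_{4}),\,\gamma_{1}/(6\gamma_{4})+\gamma_{2}/(6(\gamma_{1}+\gamma_{2}))\}$. I would therefore track these constants carefully and, if necessary, tighten the admissible regime so that the ball radius $\widehat{\alpha}$, which scales the bounds on $\|\textbf{w}_{h}^{n}\|_{\textbf{X}}$ and $\|\boldsymbol{\xi}_{h}^{n}\|$ defining $V_{h}$, forces $\hat{C}_{\mathtt{stab}}<1$. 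Once the contraction is secured, the stability bound \eqref{STB} is immediate: evaluating the a priori estimate \eqref{E_Stab} of Lemma \ref{l_S1} at the fixed point, where $\textbf{T}(\boldsymbol{\xi}_{h}^{j},\textbf{w}_{h}^{j})=(\textbf{c}_{h}^{j},\textbf{u}_{h}^{j})$, and summing over the time levels reproduces \eqref{STB} with constant $C_{\mathtt{stab}}$, which also closes the induction by keeping each newly computed iterate inside $V_{h}$.
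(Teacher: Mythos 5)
Your proposal follows essentially the same route as the paper's own proof: recast \eqref{ful_3} as the fixed-point equation \eqref{FP_1} for $\textbf{T}$, use Lemma \ref{l_S2} for the self-map property of $V_{h}$ under the smallness assumption \eqref{AS_Dat} and Lemma \ref{l_S2a} for the Lipschitz bound, apply Banach's fixed-point theorem, and obtain \eqref{STB} directly from the a priori estimate \eqref{E_Stab} of Lemma \ref{l_S1}. Your concern about actually securing the strict contraction $\hat{C}_{\mathtt{stab}}<1$ is well placed and is, if anything, a point on which you are more careful than the paper, which invokes Banach's theorem without verifying that the Lipschitz constant of Lemma \ref{l_S2a} is below one.
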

\begin{proof}
%The proof is a direct consequence of the Banach fixed-point theorem and Lemmas \ref{l_S2} and \ref{l_S2a}.
Firstly we realize that solving \eqref{ful_3} (or equivalently \eqref{EEQ_1}) is equivalent to finding  $\textbf{c}_{h}^{n}, \textbf{u}_{h}^{n}$ such that
 \begin{equation}\label{FP_1}
 \left\{\begin{array}{ll}
 \textbf{T}(\textbf{c}_{h}^{n}, \textbf{u}_{h}^{n})=(\textbf{c}_{h}^{n},\textbf{u}_{h}^{n}),~~~~~~~~~~~~n=1,\cdots,N,\\[2mm]
\textbf{c}_{h}^{0}=(\Pi_{k}^{0}c_{1,0},\Pi_{k}^{0}c_{2,0}),~\textbf{u}_{h}^{0}=\boldsymbol{\Pi}_{k}^{0}\textbf{u}_{0}.
  \end{array}\right.
 \end{equation} 
Finally, the compactness of $\textbf{T}$ (on the ball $V_{h}$) and its Lipschitz-continuity are guaranteed by Lemmas 
\ref{l_S2} and \ref{l_S2a}. Hence, it suffices to apply Banach's fixed-point theorem to the fully discrete VE scheme \eqref{ful_3} to conclude the existence and uniqueness of solution. Furthermore, the stability result \eqref{STB} is derived directly from \eqref{E_Stab}, provided in Lemma \ref{l_S1}.
\end{proof}
}

%%%%%%%%%%%%%%%%%%%%%%%%%%%%%%%%%%%%%%%%%%%%%%
\section{Convergence analysis}\label{s6}
%%%%%%%%%%%%%%%%%%%%%%%%%%%%%%%%%%%%%%%%%%%%%%%
We split the error analysis in two steps. First one estimates the velocity and pressure discretization errors, $\Vert \textbf{u}^{n}-\textbf{u}_{h}^{n}\Vert_{0}$ and $\Vert p^{n}-p_{h}^{n}\Vert_{0}$, respectively; and  the second stage corresponds to establishing bounds for the concentrations error 
\rev{$\Vert c_{i}^{n}-c_{i,h}^{n}\Vert_{0}$} and electrostatic potential error $\Vert \phi^{n}-\phi_{h}^{n}\Vert_{0}$. \rev{
For this purpose, we recall the following estimate as corollary from Theorem \ref{T_1} under assumption \eqref{AS_Dat}%\ref{T:En}
\begin{equation}\label{RR1}
\Vert c_{1,h}^{n}\Vert_{Z}+\Vert c_{2,h}^{n}\Vert_{Z}\leq \dfrac{\widehat{\alpha}}{6\gamma_{4}},\quad \Vert c_{1,h}^{n}\Vert_{\infty}+\Vert c_{2,h}^{n}\Vert_{\infty}\leq \dfrac{\widehat{\alpha}}{6\gamma_{3}},\quad 
\Vert\tilde{\phi}_{h}^{n}\Vert_{1,\infty}\leq \frac{\widehat{\alpha}\alpha_{1}}{6\gamma_{3}\beta_{4}}c_{\mathtt{p}}.
\end{equation}
Also, we notice the following a priori estimate, which  can be derived similarly to the proof of Theorem \ref{T_1} under a similar assumption with \eqref{AS_Dat}%\ref{T:En}
\begin{equation}\label{RR2}
\Vert c_{1}^{n}\Vert_{Z}+\Vert c_{2}^{n}\Vert_{Z}\leq \dfrac{\widehat{\beta}}{6\tilde{\gamma}_{4}},\quad \Vert c_{1}^{n}\Vert_{\infty}+\Vert c_{2}^{n}\Vert_{\infty}\leq \dfrac{\widehat{\beta}}{6\tilde{\gamma}_{3}},\quad 
\Vert\tilde{\phi}^{n}\Vert_{1,\infty}\leq \frac{\widehat{\beta}\tilde{\alpha}_{1}}{6\tilde{\gamma}_{3}\tilde{\beta}_{4}}c_{\mathtt{p}}.
\end{equation}
where constants $\tilde{\gamma}_{1}, \tilde{\gamma}_{2}, \tilde{\gamma}_{4}$ are the bounds of continuous forms $\mathcal{D}, \mathcal{E}, (\cdot;\cdot,\cdot)_{0,\Omega}$, respectively.
%\begin{equation}\label{RR1}
%\tau\left(\Vert c_{1,h}^{n}\Vert_{Z}+\Vert c_{2,h}^{n}\Vert_{Z}+\Vert\textbf{u}_{h}^{n}\Vert_{\textbf{X}}\right)\leq C_{\mathtt{stab}}\left(\Vert c_{1,0}\Vert_{0}+\Vert c_{2,0}\Vert_{0}+\Vert\textbf{u}_{0}\Vert_{0} \right).
%\end{equation}
%Also, we notice the following a priori estimate, which  can be derived similarly to the proof of Theorem \ref{T_1}%\ref{T:En}
%\begin{equation}\label{RR2}
%\tau\left(\Vert c_{1}^{n}\Vert_{Z}+\Vert c_{2}^{n}\Vert_{Z}+\Vert\textbf{u}^{n}\Vert_{\textbf{X}}\right)\leq \widehat{C}_{\mathtt{stab}}\left(\Vert c_{1,0}\Vert_{0}+\Vert c_{2,0}\Vert_{0}+\Vert\textbf{u}_{0}\Vert_{0} \right).
%\end{equation}
%where $\widehat{C}_{\mathtt{stab}}$ is the continuous version of \eqref{EH}.
}
\subsection{Error bounds: velocity and pressure}
We consider the following problem:
\begin{equation}\label{part1}
\rev{\mathcal{M}_{2,h}(\delta_{t}\textbf{u}_{h}^{n}, \textbf{v})+\mathcal{K}_{h}(\textbf{u}_{h}^{n}, \textbf{v})+\mathcal{E}_{h}(\textbf{u}_{h}^{n};\textbf{u}_{h}^{n}, \textbf{v})=-\left((c_{1,h}^{n}-c_{2,h}^{n})\nabla \phi_{h}^{n}, \textbf{v} \right)_{h},\quad \forall \textbf{v}\in \widetilde{\textbf{X}}_{h},}
\end{equation}
where \rev{$\{c_{1,h}^{n},c_{2,h}^{n},\phi_{h}^{n}\}$} is an approximate solution of the PNP system \eqref{ful_1} and $\textbf{u}_{h}^{0}=\textbf{u}_{h,0}$ with $n=1,\cdots , N$.  The aim is to obtain an error bound for $\Vert \textbf{u}^{n}-\textbf{u}_{h}^{n}\Vert_{0}$ and $\Vert p^{n}-p_{h}^{n}\Vert_{0}$ dependent on \rev{$\Vert c_{i}^{n}-c_{i,h}^{n}\Vert_{0}$} and $\Vert \phi^{n}-\phi_{h}^{n}\Vert_{0}$.
\begin{theorem}\label{T2}
\rev{Suppose that the data satisfy
\begin{equation}\label{AS_T2}
\tilde{\beta}_{2}^{-1}C_{\mathtt{stab}}\left(\Vert c_{1,0}\Vert_{0} +\Vert c_{2,0}\Vert_{0}+\Vert\textbf{u}_{0}\Vert_{0}\right)\leq \frac{1}{4}.
\end{equation}
Given $\{\textbf{c}_{h}^{n},\phi_{h}^{n}\}\in \textbf{Z}_{h}\times Y_{h}$}, let $\textbf{u}_{h}^{n}\in \widetilde{\textbf{X}}_{h}$ be the solution to \eqref{part1} and 
$\{ \textbf{c}^{n}, \phi^{n}\}$, $\{\textbf{u}^{n},p^{n}\}$ be the solution of \eqref{Eq_1} satisfying the following regularity conditions
\[
\left\|\partial_{t} \textbf{u}\right\|_{L^{\infty}(H^{k+1})}+\Vert \textbf{u}\Vert_{L^{\infty}(H^{k+1})}+\left\|\partial_{tt} \textbf{u}^{n}\right\|_{L^{2}\left( L^{2}\right)}+ \left\|\partial_{t} \textbf{u}^{n}\right\|_{L^{2}\left( H^{k+1}\right)}+\left(\Vert \textbf{u}^{n}\Vert_{k}+\Vert \textbf{u}^{n}\Vert_{1}+\Vert \textbf{u}^{n}\Vert_{k+1}\! +\!1\right)\Vert \textbf{u}^{n}\Vert_{k+1}\leq C,
\]
\[(\Vert c_{1}^{n}\Vert_{1}+\Vert c_{2}^{n}\Vert_{1})(\Vert\phi^{n}\Vert_{2}+\Vert\phi^{n}\Vert_{s+1})+(\Vert c_{1}^{n}\Vert_{k+1}+\Vert c_{2}^{n}\Vert_{k+1})\Vert\phi^{n}\Vert_{1}\leq C.
\]
Then, for all $k\in\mathbb{N}_{0}$, the following estimate holds
\begin{equation*}
\rev{\Vert \textbf{u}^{n}-\textbf{u}_{h}^{n}\Vert_{0}^{2}+\tau\sum_{j=1}^{n} \Vert  \textbf{u}^{j}-\textbf{u}_{h}^{j}\Vert_{\textbf{X}}^{2} \leq C(\tau^{2}+h^{2k})+C\tau\sum_{j=1}^{n}\big[\Vert c_{1}^{j}-c_{1,h}^{j}\Vert_{0}^{2}+\Vert c_{2}^{j}-c_{2,h}^{j}\Vert_{0}^{2}+\Vert \phi^{j}-\phi_{h}^{j}\Vert_{Y}^{2}\big].}
\end{equation*}
\end{theorem}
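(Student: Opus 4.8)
The plan is to follow the standard energy argument for the discrete Navier--Stokes part, working entirely on the discrete kernel $\widetilde{\textbf{X}}_h$ so that the pressure never appears. First I would introduce a divergence-free-preserving interpolant $\textbf{u}_I^n\in\widetilde{\textbf{X}}_h$ of $\textbf{u}^n$ (such an interpolant exists because $\operatorname{div}\textbf{u}^n=0$ together with the degrees of freedom $(\textbf{D4}_{\textbf{v}})$ force $\operatorname{div}\textbf{u}_I^n=0$, cf. \eqref{propA}), and split the error as $\textbf{u}^n-\textbf{u}_h^n=\boldsymbol{\rho}^n+\boldsymbol{\theta}^n$ with $\boldsymbol{\rho}^n:=\textbf{u}^n-\textbf{u}_I^n$ and $\boldsymbol{\theta}^n:=\textbf{u}_I^n-\textbf{u}_h^n$. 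The term $\boldsymbol{\rho}^n$ is controlled directly by the interpolation estimate \eqref{eqIn_A}, so the task reduces to bounding $\boldsymbol{\theta}^n\in\widetilde{\textbf{X}}_h$. Subtracting \eqref{part1} from the momentum equation of \eqref{eq6} tested against $\textbf{v}\in\widetilde{\textbf{X}}_h$ (the term $\mathcal{B}(p^n,\textbf{v})$ vanishes since $\widetilde{\textbf{X}}_h\subseteq\widetilde{\textbf{X}}$ consists of exactly divergence-free fields), I would obtain the error identity and then choose $\textbf{v}=\boldsymbol{\theta}^n$.

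For the linear terms I would proceed as usual. The discrete time-derivative term $\mathcal{M}_{2,h}(\delta_t\boldsymbol{\theta}^n,\boldsymbol{\theta}^n)$ is treated with the coercivity \eqref{CORC2} and the identity $2a(a-b)\ge a^2-b^2$, producing the telescoping quantity $\tfrac{\tilde{\beta}_1}{2}(\Vert\boldsymbol{\theta}^n\Vert_0^2-\Vert\boldsymbol{\theta}^{n-1}\Vert_0^2)$; the consistency mismatch $\mathcal{M}_2-\mathcal{M}_{2,h}$ and the time-truncation $\partial_t\textbf{u}^n-\delta_t\textbf{u}^n$ are absorbed into $O(h^{2k})$ (using Lemma \ref{l_12} and \eqref{eqIn_A}) and $O(\tau^2)$ (using $\partial_{tt}\textbf{u}\in L^2(L^2)$), respectively, while the interpolation term $\mathcal{M}_{2,h}(\delta_t\boldsymbol{\rho}^n,\boldsymbol{\theta}^n)$ is handled by writing $\delta_t\boldsymbol{\rho}^n=\tfrac1\tau\int_{t_{n-1}}^{t_n}\partial_t\boldsymbol{\rho}$ and invoking $\partial_t\textbf{u}\in L^2(H^{k+1})$. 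The viscous term yields the coercive contribution $\tau\tilde{\beta}_2\Vert\boldsymbol{\theta}^n\Vert_{\textbf{X}}^2$ on the left via coercivity of $\mathcal{K}_h$ on $\widetilde{\textbf{X}}_h$, whereas $\mathcal{K}-\mathcal{K}_h$ contributes another $O(h^{2k})$ term through Lemma \ref{l_12} and \eqref{eqIn_A}.

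The \emph{main obstacle} is the convective term. After adding and subtracting, I would first replace $\mathcal{E}$ by $\mathcal{E}_h$ using the consistency estimate of Lemma \ref{l_C} (giving $O(h^{2k})$), and then use the bilinearity identity $\mathcal{E}_h(\textbf{u}^n;\textbf{u}^n,\boldsymbol{\theta}^n)-\mathcal{E}_h(\textbf{u}_h^n;\textbf{u}_h^n,\boldsymbol{\theta}^n)=\mathcal{E}_h(\boldsymbol{\rho}^n+\boldsymbol{\theta}^n;\textbf{u}^n,\boldsymbol{\theta}^n)+\mathcal{E}_h(\textbf{u}_h^n;\boldsymbol{\rho}^n+\boldsymbol{\theta}^n,\boldsymbol{\theta}^n)$. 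The crucial simplification is that $\mathcal{E}_h(\textbf{u}_h^n;\boldsymbol{\theta}^n,\boldsymbol{\theta}^n)=0$ by skew-symmetry, leaving the three terms $\mathcal{E}_h(\boldsymbol{\rho}^n;\textbf{u}^n,\boldsymbol{\theta}^n)$ and $\mathcal{E}_h(\textbf{u}_h^n;\boldsymbol{\rho}^n,\boldsymbol{\theta}^n)$ (both bounded via Lemma \ref{l_N} and Young's inequality by $C\Vert\boldsymbol{\rho}^n\Vert_{\textbf{X}}^2+\varepsilon\Vert\boldsymbol{\theta}^n\Vert_{\textbf{X}}^2$), plus the genuinely quadratic term $\mathcal{E}_h(\boldsymbol{\theta}^n;\textbf{u}^n,\boldsymbol{\theta}^n)\le\gamma_2\Vert\textbf{u}^n\Vert_{\textbf{X}}\Vert\boldsymbol{\theta}^n\Vert_{\textbf{X}}^2$. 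This last term cannot be sent to the data; instead the small-data hypothesis \eqref{AS_T2}, together with the stability bound of Theorem \ref{T_1}, makes $\gamma_2\Vert\textbf{u}^n\Vert_{\textbf{X}}$ small enough relative to $\tilde{\beta}_2$ that it is absorbed into the coercive viscous term, keeping a strictly positive coefficient of $\Vert\boldsymbol{\theta}^n\Vert_{\textbf{X}}^2$ on the left.

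Finally I would treat the coupling force $((c_{1,h}^n-c_{2,h}^n)\nabla\phi_h^n,\boldsymbol{\theta}^n)_h-((c_1^n-c_2^n)\nabla\phi^n,\boldsymbol{\theta}^n)$. Adding and subtracting $((c_1^n-c_2^n)\nabla\phi^n,\boldsymbol{\theta}^n)_h$, the discrete/continuous mismatch of $(\cdot,\cdot)_h$ on the smooth datum is an $O(h^{2k})$ consistency term (handled by the $L^2$-projection approximation properties exactly as in Lemma \ref{l_ComCPNP}, using the regularity hypotheses on $c_i^n,\phi^n$). For the remaining factor I would write $(c_{1,h}^n-c_{2,h}^n)\nabla\phi_h^n-(c_1^n-c_2^n)\nabla\phi^n=\big[(c_{1,h}^n-c_1^n)-(c_{2,h}^n-c_2^n)\big]\nabla\phi_h^n+(c_1^n-c_2^n)\nabla(\phi_h^n-\phi^n)$ and estimate each piece using the $L^\infty$ bounds $\Vert\phi_h^n\Vert_{1,\infty}$ and $\Vert c_i^n\Vert_\infty$ from \eqref{RR1}--\eqref{RR2}, so that Young's inequality produces exactly $C\tau\sum_j\big[\Vert c_1^j-c_{1,h}^j\Vert_0^2+\Vert c_2^j-c_{2,h}^j\Vert_0^2+\Vert\phi^j-\phi_h^j\Vert_Y^2\big]$ together with absorbable multiples of $\Vert\boldsymbol{\theta}^j\Vert_0^2$. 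Summing the resulting inequality over $n$, telescoping the time term, noting $\Vert\boldsymbol{\theta}^0\Vert_0=\Vert\textbf{u}_I^0-\boldsymbol{\Pi}_k^0\textbf{u}_0\Vert_0=O(h^{k+1})$, and applying the discrete Gr\"onwall lemma to close the residual $\tau\sum_j\Vert\boldsymbol{\theta}^j\Vert_0^2$, I would recover the claimed bound, the triangle inequality $\Vert\textbf{u}^n-\textbf{u}_h^n\Vert\le\Vert\boldsymbol{\rho}^n\Vert+\Vert\boldsymbol{\theta}^n\Vert$ restoring the full velocity error.
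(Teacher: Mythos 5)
Your proposal is correct and follows essentially the same route as the paper's proof: the divergence-free interpolant splitting $\textbf{u}^n-\textbf{u}_h^n=\boldsymbol{\rho}^n+\boldsymbol{\theta}^n$ on the kernel $\widetilde{\textbf{X}}_h$ (so the pressure drops out), consistency Lemmas \ref{l_12} and \ref{l_C} plus interpolation estimates for the linear and convective mismatches, skew-symmetry to kill $\mathcal{E}_h(\textbf{u}_h^n;\boldsymbol{\theta}^n,\boldsymbol{\theta}^n)$ with the remaining quadratic convective term absorbed into the viscous coercivity via the smallness assumption \eqref{AS_T2}, and the coupling term split so that the concentration and potential errors appear exactly as in the stated bound. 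The only cosmetic deviations are that your quadratic term carries $\Vert\textbf{u}^n\Vert_{\textbf{X}}$ where the paper's carries $\Vert\textbf{u}_h^n\Vert_{\textbf{X}}$ (both absorbed by the same smallness/a priori bounds \eqref{RR1}--\eqref{RR2}), and that you close with a discrete Gr\"onwall step, which the paper's arrangement of Young's inequality renders unnecessary.
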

\begin{proof}
The proof is conducted in three steps:

\medskip 
\noindent\textbf{Step 1:  evolution equation for the error.} 
Setting $\boldsymbol{\vartheta}_{\textbf{u}}^{n}:=\textbf{u}_{h}^{n}-\textbf{u}_{I}^{n}$, it holds that $\boldsymbol{\vartheta}_{\textbf{u}}^{n}\in \widetilde{\textbf{X}}_{h}$. Using Eq. \eqref{part1} and the fourth equation in  \eqref{Eq_1} and letting $\textbf{v}=\boldsymbol{\vartheta}_{\textbf{u}}^{n}$, \rev{yield
\begin{align}\label{EST1b}
&\mathcal{M}_{2,h}(\dfrac{\boldsymbol{\vartheta}_{\textbf{u}}^{n}-\boldsymbol{\vartheta}_{\textbf{u}}^{n-1}}{\tau}, \boldsymbol{\vartheta}_{\textbf{u}}^{n})+\mathcal{K}_{h}(\boldsymbol{\vartheta}_{\textbf{u}}^{n}, \boldsymbol{\vartheta}_{\textbf{u}}^{n}) \nonumber \\[1mm]
&=\big[\mathcal{M}_{2,h}(\delta_{t}\textbf{u}_{h}^{n}, \boldsymbol{\vartheta}_{\textbf{u}}^{n})+\mathcal{K}_{h}(\textbf{u}_{h}^{n}, \boldsymbol{\vartheta}_{\textbf{u}}^{n})\big]-\big[\mathcal{M}_{2,h}(\delta_{t}\textbf{u}_{I}^{n}, \boldsymbol{\vartheta}_{\textbf{u}}^{n})+\mathcal{K}_{h}(\textbf{u}_{I}^{n}, \boldsymbol{\vartheta}_{\textbf{u}}^{n})\big]\nonumber\\[1mm]
&=\mathcal{M}_{2,h}(\delta_{t}(\textbf{u}^{n}-\textbf{u}_{I}^{n}), \boldsymbol{\vartheta}_{\textbf{u}}^{n})+\big[ \mathcal{M}_{2}(\partial_{t} \textbf{u}^{n}, \boldsymbol{\vartheta}_{\textbf{u}}^{n})-\mathcal{M}_{2,h}(\delta_{t}\textbf{u}^{n}, \boldsymbol{\vartheta}_{\textbf{u}}^{n})\big]+\mathcal{K}_{h}(\textbf{u}^{n}-\textbf{u}_{I}^{n}, \boldsymbol{\vartheta}_{\textbf{u}}^{n})\nonumber\\[1mm]
&\quad+\big[ \mathcal{K}( \textbf{u}^{n}, \boldsymbol{\vartheta}_{\textbf{u}}^{n})-\mathcal{K}_{h}(\textbf{u}^{n}, \boldsymbol{\vartheta}_{\textbf{u}}^{n})\big] +\big[\mathcal{E}(\textbf{u}^{n};\textbf{u}^{n},\boldsymbol{\vartheta}_{\textbf{u}}^{n})-\mathcal{E}_{h}(\textbf{u}_{h}^{n};\textbf{u}_{h}^{n},\boldsymbol{\vartheta}_{\textbf{u}}^{n}) \big]\nonumber\\[1mm]
&\quad+\big[\left((c_{1}^{n}-c_{2}^{n})\nabla \phi^{n}, \boldsymbol{\vartheta}_{\textbf{u}}^{n} \right)-\left((c_{1,h}^{n}-c_{2,h}^{n})\nabla \phi_{h}^{n}, \boldsymbol{\vartheta}_{\textbf{u}}^{n} \right)_{h} \big]\nonumber\\[1mm]
&:=\rev{T_{1}+T_{2}+T_{3}+T_{4}+T_{5}+T_{6}.}
\end{align}

\medskip 
\noindent \textbf{Step 2: bounding the error terms.} %$\textbf{R}_{1}$-$\textbf{R}_{6}$.}  
For the terms  $T_{1}$ and $T_{3}$ we can apply the continuity of $\mathcal{M}_{2,h}(\cdot,\cdot)$ and $\mathcal{K}_{h}(\cdot,\cdot)$ (cf. \eqref{BUND2}) and the approximation properties of the interpolator $\textbf{u}_{I}$ (cf. \eqref{eqIn}) to arrive at}  
\begin{equation*}
\begin{aligned}
\left|T_{1}\right| &=\rev{\left|\mathcal{M}_{2,h}(\delta_{t}(\textbf{u}^{n}-\textbf{u}_{I}^{n}), \boldsymbol{\vartheta}_{\textbf{u}}^{n})\right| \leq\left| \mathcal{M}_{2,h}\left(\delta_{t} \textbf{u}^{n}-\partial_{t} \textbf{u}^{n}, \boldsymbol{\vartheta}_{\textbf{u}}^{n}\right)+\mathcal{M}_{2,h}\left(\partial_{t} \textbf{u}^{n}-\delta_{t} \textbf{u}_{I}^{n}, \boldsymbol{\vartheta}_{\textbf{u}}^{n}\right) \right|}\\
& \leq \tilde{\alpha}_{1}\left\{2 \tau^{1 / 2}\left\|\partial_{tt} \textbf{u}\right\|_{L^{2}\left( L^{2}\right)}+\tau^{-1 / 2} h^{k+1}\left\|\partial_{t} \textbf{u}\right\|_{L^{2}\left( H^{k+1}\right)}\right\}\left\|\boldsymbol{\vartheta}_{\textbf{u}}^{n}\right\|_{0},
\end{aligned}
\end{equation*}
and similarly
\begin{equation*}
\rev{\left|T_{3}\right| =\left|\mathcal{K}_{h}(\textbf{u}^{n}-\textbf{u}_{I}^{n}, \boldsymbol{\vartheta}_{\textbf{u}}^{n})\right|\leq Ch^{k+1}\Vert u^{n}\Vert_{L^{\infty}(H^{k+1})}\left|\boldsymbol{\vartheta}_{\textbf{u}}^{n}\right|_{1}.}
\end{equation*}

For $T_{2}$ and $T_{4}$ we first notice that, \rev{by adding and subtracting $\mathcal{M}_{2,h}(\partial_{t} \textbf{u}^{n}, \boldsymbol{\vartheta}_{\textbf{u}}^{n})$,  we can write} 
\begin{equation}\label{EST2}
\rev{  T_{2}
  %=\mathcal{M}_{2}\left(\partial_{t} \textbf{u}^{n}, \boldsymbol{\vartheta}_{\textbf{u}}^{n}\right)-\mathcal{M}_{2,h}\left(\delta_{t} \textbf{u}^{n}, \boldsymbol{\vartheta}_{\textbf{u}}^{n}\right)
  = \mathcal{M}_{2}(\partial_{t} \textbf{u}^{n}, \boldsymbol{\vartheta}_{\textbf{u}}^{n})-\mathcal{M}_{2,h}(\partial_{t} \textbf{u}^{n}, \boldsymbol{\vartheta}_{\textbf{u}}^{n})  
+\mathcal{M}_{2,h}(\partial_{t} \textbf{u}^{n}-\delta_{t} \textbf{u}^{n}, \boldsymbol{\vartheta}_{\textbf{u}}^{n}).}
\end{equation}
To determine upper bounds for the terms in the right-hand side of \eqref{EST2}, we use Cauchy--Schwarz's inequality, Lemma \ref{l_12}, and the continuity   of the $L^{2}$-projector $\Pi_{k}^{0}$. This gives 
\begin{equation}\label{EST3}
\rev{\big| \mathcal{M}_{2}(\partial_{t}\textbf{u}^{n}, \boldsymbol{\vartheta}_{\textbf{u}}^{n})-\mathcal{M}_{2,h}(\partial_{t}\textbf{u}^{n}, \boldsymbol{\vartheta}_{\textbf{u}}^{n})\big|\leq \tilde{\alpha}_{1}\Vert \partial_{t}\textbf{u}^{n}-\boldsymbol{\Pi}_{k}^{0}\partial_{t}\textbf{u}^{n}\Vert_{0}\Vert\boldsymbol{\vartheta}_{\textbf{u}}^{n}\Vert_{0}\leq Ch^{k+1}\Vert \partial_{t}\textbf{u}^{n}\Vert_{k+1}\Vert\boldsymbol{\vartheta}_{\textbf{u}}^{n}\Vert_{0},}
\end{equation}
and
\[
\rev{\mathcal{M}_{2,h}(\partial_{t}\textbf{u}^{n}-\delta_{t} \textbf{u}^{n}, \boldsymbol{\vartheta}_{\textbf{u}}^{n})\leq \tilde{\alpha}_{1}\tau^{1/2}\Vert \partial_{tt}\textbf{u}\Vert_{L^{2}(L^{2})}\Vert\boldsymbol{\vartheta}_{\textbf{u}}^{n}\Vert_{0}.}
\]
After combining this estimate with \eqref{EST3} and \eqref{EST2}, we can conclude that 
\begin{equation*}
\rev{|T_{2}| \leq\left\{C h^{k+1}\left\|\partial_{t}\textbf{u}^{n}\right\|_{k+1}+\tau^{1 / 2}\left\|\partial_{tt}\textbf{u}\right\|_{L^{2}\left( L^{2}\right)}\right\}\left\|\boldsymbol{\vartheta}_{\textbf{u}}^{n}\right\|_{0},}
\end{equation*}
and similarly
\begin{equation*}
\rev{|T_{4}|=\big| \mathcal{K}(\textbf{u}^{n}, \boldsymbol{\vartheta}_{\textbf{u}}^{n})-\mathcal{K}_{h}(\textbf{u}^{n}, \boldsymbol{\vartheta}_{\textbf{u}}^{n})\big|\leq \tilde{\alpha}_{2}\Vert \textbf{u}^{n}-\boldsymbol{\Pi}_{k}^{\nabla}\textbf{u}^{n}\Vert_{0}\Vert\boldsymbol{\vartheta}_{\textbf{u}}^{n}\Vert_{0}\leq Ch^{k}\Vert \textbf{u}^{n}\Vert_{k+1}\vert\boldsymbol{\vartheta}_{\textbf{u}}^{n}\vert_{1}.}
\end{equation*}
The term $T_{5}$  can be rewritten by adding and subtracting some suitable terms 
\begin{align}\label{Geq_1}
  T_{5}&
  %= \mathcal{E}(\textbf{u}^{n};\textbf{u}^{n},\boldsymbol{\vartheta}_{\textbf{u}}^{n})-\mathcal{E}_{h}(\textbf{u}_{h}^{n};\textbf{u}_{h}^{n},\boldsymbol{\vartheta}_{\textbf{u}}^{n})\nonumber\\[1mm]
= \rev{\big[\mathcal{E}(\textbf{u}^{n};\textbf{u}^{n},\boldsymbol{\vartheta}_{\textbf{u}}^{n})-\mathcal{E}_{h}(\textbf{u}^{n};\textbf{u}^{n},\boldsymbol{\vartheta}_{\textbf{u}}^{n})\big]+ \big[\mathcal{E}_{h}(\textbf{u}^{n};\textbf{u}^{n},\boldsymbol{\vartheta}_{\textbf{u}}^{n})-\mathcal{E}_{h}(\textbf{u}_{h}^{n};\textbf{u}_{h}^{n},\boldsymbol{\vartheta}_{\textbf{u}}^{n})\big]}:=T_{5}^{(1)}+T_{5}^{(2)}.
\end{align}
The first term above is estimated using Lemma \ref{l_C}
\begin{equation*}
|T_{5}^{(1)}|\leq Ch^{k}\left(\Vert \textbf{u}^{n}\Vert_{k}+\Vert \textbf{u}^{n}\Vert_{\textbf{X}}+\Vert \textbf{u}^{n}\Vert_{k+1} \right)\Vert \textbf{u}^{n}\Vert_{k+1}\Vert \boldsymbol{\vartheta}_{\textbf{u}}^{n}\Vert_{\textbf{X}},
\end{equation*}
while for  the second term %we perform simple computations,
\rev{we use the skew-symmetry of $\mathcal{E}$ and $\mathcal{E}_{h}$},  and we  recall that $\boldsymbol{\vartheta}_{\textbf{u}}^{n}=\textbf{u}_{h}^{n}-\textbf{u}_{I}^{n}$
\begin{align*}
 T_{5}^{(2)}&
  %=\mathcal{E}_{h}(\textbf{u}^{n};\textbf{u}^{n}-\textbf{u}_{h}^{n}, \boldsymbol{\vartheta}_{\textbf{u}}^{n})+\mathcal{E}_{h}(\textbf{u}^{n}-\textbf{u}_{h};\textbf{u}_{h}^{n}, \boldsymbol{\vartheta}_{\textbf{u}}^{n})\nonumber\\ 
=\mathcal{E}_{h}(\textbf{u}^{n};\textbf{u}^{n}-\textbf{u}_{h}^{n}+\boldsymbol{\vartheta}_{\textbf{u}}^{n}, \boldsymbol{\vartheta}_{\textbf{u}}^{n})
+\mathcal{E}_{h}(\textbf{u}^{n}-\textbf{u}_{h}+\boldsymbol{\vartheta}_{\textbf{u}}^{n};\textbf{u}_{h}^{n}, \boldsymbol{\vartheta}_{\textbf{u}}^{n})-\mathcal{E}_{h}(\boldsymbol{\vartheta}_{\textbf{u}}^{n};\textbf{u}_{h}^{n},\boldsymbol{\vartheta}_{\textbf{u}}^{n})\\ 
&\leq \gamma_{2}\bigg(\Vert \textbf{u}^{n}-\textbf{u}_{h}^{n}+\boldsymbol{\vartheta}_{\textbf{u}}^{n}\Vert_{\textbf{X}}(\Vert \textbf{u}^{n}\Vert_{\textbf{X}}+\Vert \textbf{u}_{h}^{n}\Vert_{\textbf{X}})+\Vert \textbf{u}_{h}^{n}\Vert_{\textbf{X}}\Vert \boldsymbol{\vartheta}_{\textbf{u}}^{n}\Vert_{\textbf{X}} \bigg)\Vert \boldsymbol{\vartheta}_{\textbf{u}}^{n}\Vert_{\textbf{X}}\\ 
%&\leq \gamma_{2}\bigg(\Vert \textbf{u}^{n}-\textbf{u}_{I}^{n}\Vert_{\textbf{X}}(\Vert \textbf{u}^{n}\Vert_{\textbf{X}}+\Vert \textbf{u}_{h}^{n}\Vert_{\textbf{X}})+\Vert \textbf{u}_{h}^{n}\Vert_{\textbf{X}}\Vert \boldsymbol{\vartheta}_{\textbf{u}}^{n}\Vert_{\textbf{X}} \bigg)\Vert \boldsymbol{\vartheta}_{\textbf{u}}^{n}\Vert_{\textbf{X}}\nonumber\\ 
&\leq \gamma_{2}\bigg(h^{k}\Vert \textbf{u}^{n}\Vert_{k+1}(\Vert \textbf{u}^{n}\Vert_{\textbf{X}}+\Vert \textbf{u}_{h}^{n}\Vert_{\textbf{X}})+\Vert \textbf{u}_{h}^{n}\Vert_{\textbf{X}}\Vert \boldsymbol{\vartheta}_{\textbf{u}}^{n}\Vert_{\textbf{X}} \bigg)\Vert \boldsymbol{\vartheta}_{\textbf{u}}^{n}\Vert_{\textbf{X}}.
\end{align*}
Substituting these expressions \rev{back into \eqref{Geq_1} and rearranging terms, we arrive at}  
\begin{equation*}
\rev{|T_{5}|\leq \bigg(Ch^{k}\left(\Vert \textbf{u}^{n}\Vert_{k}+\Vert \textbf{u}^{n}\Vert_{\textbf{X}}+\Vert \textbf{u}^{n}\Vert_{k+1}+\Vert \textbf{u}_{h}^{n}\Vert_{\textbf{X}} \right)\Vert \textbf{u}^{n}\Vert_{k+1}+\Vert \textbf{u}_{h}^{n}\Vert_{\textbf{X}}\Vert \boldsymbol{\vartheta}_{\textbf{u}}^{n}\Vert_{\textbf{X}}\bigg) \Vert \boldsymbol{\vartheta}_{\textbf{u}}^{n}\Vert_{\textbf{X}}.}
\end{equation*}
\rev{
Finally, the term $T_{6}$  is rewritten as
\begin{align}\label{Eq_W0}
T_{6}&=\big[ \left((c_{1}^{n}-c_{2}^{n})\nabla \phi^{n}, \boldsymbol{\vartheta}_{\textbf{u}}^{n} \right)-\left((c_{1}^{n}-c_{2}^{n})\nabla \phi^{n}, \boldsymbol{\vartheta}_{\textbf{u}}^{n} \right)_{h}\big]+\big[\left((c_{1}^{n}-c_{2}^{n})\nabla \phi^{n}, \boldsymbol{\vartheta}_{\textbf{u}}^{n} \right)_{h}-\left((c_{1,h}^{n}-c_{2,h}^{n})\nabla \phi_{h}^{n}, \boldsymbol{\vartheta}_{\textbf{u}}^{n} \right)_{h}\big]\nonumber\\[1mm]
&:=T_{6}^{(1)}+T_{6}^{(2)}.
\end{align}
Recalling the definition of the discrete inner product $(\cdot;\cdot,\cdot)_{h}$ we add  and subtract  suitable terms to have 
\begin{align}\label{Eq_W1}
T_{6}^{(1)}&=\sum_{E\in\mathcal{T}_{h}}\int_{E}\left( ((c_{1}^{n}-c_{2}^{n})\nabla \phi^{n})\cdot \boldsymbol{\vartheta}_{\textbf{u}}^{n} -(\Pi_{k}^{0}(c_{1}^{n}-c_{2}^{n})\boldsymbol{\Pi}_{k-1}^{0}\nabla \phi^{n})\cdot \boldsymbol{\Pi}_{k}^{0}\boldsymbol{\vartheta}_{\textbf{u}}^{n} \right)dE\nonumber\\[1mm]
&=\sum_{E\in\mathcal{T}_{h}}\int_{E}\bigg(((c_{1}^{n}-c_{2}^{n})\nabla \phi^{n})\cdot (I-\boldsymbol{\Pi}_{k}^{0})\boldsymbol{\vartheta}_{\textbf{u}}^{n}+(I-\Pi_{k}^{0,E})(c_{1}^{n}-c_{2}^{n})\nabla \phi^{n} \cdot\boldsymbol{\Pi}_{k}^{0}\boldsymbol{\vartheta}_{\textbf{u}}^{n}\nonumber\\
&\quad + \Pi_{k}^{0,E}(c_{1}^{n}-c_{2}^{n})(I-\boldsymbol{\Pi}_{k-1}^{0})\nabla \phi^{n} \cdot\boldsymbol{\Pi}_{k}^{0}\boldsymbol{\vartheta}_{\textbf{u}}^{n}\bigg)dE.
\end{align}
Then, the definition of  $\boldsymbol{\Pi}_{k}^{0,E}$, estimate \eqref{eqPi}, the H\"older inequality and Sobolev embedding $H^{k}\subset W_{4}^{k-1}$ lead to 
\begin{align*}
\int_{E}\left(((c_{1}^{n}-c_{2}^{n})\nabla \phi^{n})\cdot (I-\boldsymbol{\Pi}_{k}^{0})\boldsymbol{\vartheta}_{\textbf{u}}^{n}\right)dE&=\int_{E}\left((I-\boldsymbol{\Pi}_{k-1}^{0})((c_{1}^{n}-c_{2}^{n})\nabla \phi^{n})\cdot (I-\boldsymbol{\Pi}_{k}^{0})\boldsymbol{\vartheta}_{\textbf{u}}^{n}\right)dE\nonumber\\[1mm]
&\leq \Vert (I-\boldsymbol{\Pi}_{k-1}^{0})((c_{1}^{n}-c_{2}^{n})\nabla \phi^{n})\Vert_{0,E}\Vert (I-\boldsymbol{\Pi}_{k}^{0})\boldsymbol{\vartheta}_{\textbf{u}}^{n}\Vert_{0,E}\nonumber\\[1mm]
&\leq Ch_{E}^{k}\Vert (c_{1}^{n}-c_{2}^{n})\nabla \phi^{n}\Vert_{k-1,E}\Vert \boldsymbol{\vartheta}_{\textbf{u}}^{n}\Vert_{1,E}\nonumber\\[1mm]
&\leq Ch_{E}^{k}\Vert (c_{1}^{n}-c_{2}^{n})\Vert_{k-1,4,E}\Vert \nabla\phi^{n}\Vert_{k-1,4,E}\Vert \boldsymbol{\vartheta}_{\textbf{u}}^{n}\Vert_{1,E}\nonumber\\[1mm]
&\leq Ch_{E}^{k}\Vert (c_{1}^{n}-c_{2}^{n})\Vert_{1,E}\Vert \nabla\phi^{n}\Vert_{1,E}\Vert \boldsymbol{\vartheta}_{\textbf{u}}^{n}\Vert_{1,E}.
\end{align*}
Also,   the H\"older inequality, \rev{again property   \eqref{eqPi}, and  the continuity of $\boldsymbol{\Pi}_{k}^{0,E}$, give}
%with respect to the $L^{4}$-norm
\begin{align*}
\hspace{-.5cm}\int_{E}\left((I-\Pi_{k}^{0,E})(c_{1}^{n}-c_{2}^{n})\nabla \phi^{n} \cdot\boldsymbol{\Pi}_{k}^{0}\boldsymbol{\vartheta}_{\textbf{u}}^{n}\right)dE &\leq \Vert (I-\Pi_{k}^{0,E})(c_{1}^{n}-c_{2}^{n})\Vert_{0,4,E}\Vert\nabla \phi^{n}\Vert_{0,E}\Vert\boldsymbol{\Pi}_{k}^{0}\boldsymbol{\vartheta}_{\textbf{u}}^{n}\Vert_{0,4,E}\nonumber\\[1mm]
&\leq \bigg(\Vert (c_{1}^{n}-c_{2}^{n})-(c_{1,\pi}^{n}-c_{2,\pi}^{n})\Vert_{0,4,E}\nonumber\\[1mm]
&+\Vert \Pi_{k}^{0,E}((c_{1}^{n}-c_{2}^{n})-(c_{1,\pi}^{n}-c_{2,\pi}^{n}))\Vert_{0,4,E} \bigg)\Vert\nabla \phi^{n}\Vert_{0,E}\Vert\boldsymbol{\vartheta}_{\textbf{u}}^{n}\Vert_{0,4,E}\nonumber\\[1mm]
%&\leq Ch_{E}^{k}\Vert (c_{1}^{n}-c_{2}^{n})\Vert_{k,4,E}\Vert\phi^{n}\Vert_{1,E}\Vert\boldsymbol{\vartheta}_{\textbf{u}}^{n}\Vert_{1,E}\nonumber\\[1mm]
&\leq Ch_{E}^{k}(\Vert c_{1}^{n}\Vert_{k+1,E}+\Vert c_{2}^{n}\Vert_{k+1,E})\Vert\phi^{n}\Vert_{1,E}\Vert\boldsymbol{\vartheta}_{\textbf{u}}^{n}\Vert_{1,E},
\end{align*}
and
\begin{align*}
\int_{E}\left( \Pi_{k}^{0,E}(c_{1}^{n}-c_{2}^{n})(I-\boldsymbol{\Pi}_{k-1}^{0})\nabla \phi^{n} \cdot\boldsymbol{\Pi}_{k}^{0}\boldsymbol{\vartheta}_{\textbf{u}}^{n}\right)dE &\leq \Vert \Pi_{k}^{0,E}(c_{1}^{n}-c_{2}^{n})\Vert_{0,4,E}\Vert (I-\boldsymbol{\Pi}_{k-1}^{0})\nabla \phi^{n}\Vert_{0,E}\Vert \boldsymbol{\Pi}_{k}^{0}\boldsymbol{\vartheta}_{\textbf{u}}^{n}\Vert_{0,4,E}\nonumber\\[1mm]
&\leq Ch_{E}^{k}(\Vert c_{1}^{n}-c_{2}^{n}\Vert_{1,E}\Vert\phi^{n} \Vert_{k+1}\Vert\boldsymbol{\vartheta}_{\textbf{u}}^{n}\Vert_{1,E}.
\end{align*}
Substituting this expression into \eqref{Eq_W1} and rearranging terms, \rev{leads to}  
\begin{equation}\label{Eq_E1}
|T_{6}^{(1)}|\leq Ch^{k}\bigg(\Vert c_{1}^{n}-c_{2}^{n}\Vert_{Z}(\Vert\phi^{n}\Vert_{2}+\Vert\phi^{n}\Vert_{k+1})+(\Vert c_{1}^{n}\Vert_{k+1}+\Vert c_{2}^{n}\Vert_{k+1})\Vert\phi^{n}\Vert_{Y} \bigg)\Vert\boldsymbol{\vartheta}_{\textbf{u}}^{n}\Vert_{\textbf{X}}.
\end{equation}
The \rev{second term in \eqref{Eq_W0} can be estimated by the H\"older inequality, Sobolev embedding $H^{k}\subset W_{4}^{k-1}$ and the continuity of $\boldsymbol{\Pi}_{k}^{0,E}$ with respect to the $L^{4}$-norm as
\begin{align}\label{Eq_E2}
T_{6}^{(2)}&=\left((c_{1}^{n}-c_{2}^{n})\nabla \phi^{n}-(c_{1,h}^{n}-c_{2,h}^{n})\nabla \phi_{h}^{n}, \boldsymbol{\vartheta}_{\textbf{u}}^{n} \right)_{h}\nonumber\\[1mm]
&=\left((c_{1}^{n}-c_{2}^{n})\nabla(\phi^{n}-\phi_{h}^{n}), \boldsymbol{\vartheta}_{\textbf{u}}^{n} \right)_{h}
+\left((c_{1}^{n}-c_{2}^{n})-(c_{1,h}^{n}-c_{2,h}^{n}))\nabla(\phi^{n}-\phi_{h}^{n}), \boldsymbol{\vartheta}_{\textbf{u}}^{n} \right)_{h}
+\left((c_{1}^{n}-c_{2}^{n})-(c_{1,h}^{n}-c_{2,h}^{n}))\nabla\phi^{n}, \boldsymbol{\vartheta}_{\textbf{u}}^{n} \right)_{h}\nonumber\\[1mm]
%&\leq \bigg(\Vert \Pi_{k}^{0}c_{1}^{n}\Vert_{0,4}\Vert \boldsymbol{\Pi}_{k-1}^{0}\nabla(\phi^{n}-\phi_{h}^{n})\Vert_{0}+\Vert\Pi_{k}^{0}(c_{1}^{n}-c_{1,h}^{n})\Vert_{0,4}\Vert \boldsymbol{\Pi}_{k-1}^{0}\nabla(\phi^{n}-\phi_{h}^{n})\Vert_{0}\nonumber\\[1mm]
%&+\Vert\Pi_{k}^{0}(c_{1}^{n}-c_{1,h}^{n})\Vert_{0}\Vert \boldsymbol{\Pi}_{k-1}^{0}\nabla\phi^{n}\Vert_{0,4} \bigg)\Vert\boldsymbol{\vartheta}_{\textbf{u}}^{n}\Vert_{0,4}\nonumber\\[1mm]
&\leq \bigg((\Vert c_{1}^{n}- c_{2}^{n}\Vert_{Z}+\Vert c_{1,h}^{n}-c_{2,h}^{n}\Vert_{Z})\Vert \phi^{n}-\phi_{h}^{n}\Vert_{Y}+(\Vert c_{1}^{n}-c_{1,h}^{n}\Vert_{0}+\Vert c_{2}^{n}-c_{2,h}^{n}\Vert_{0})\Vert\phi^{n}\Vert_{2} \bigg)\Vert  \boldsymbol{\vartheta}_{\textbf{u}}^{n}\Vert_{\textbf{X}}.
\end{align}
%Proceeding in a similar manner as above,  we have
%\begin{equation*}%\label{Eq_E3}
%T_{6}^{(3)}\leq Ch^{k}\bigg(\Vert c_{2}^{n}\Vert_{Z}(\Vert\phi^{n}\Vert_{2}+\Vert\phi^{n}\Vert_{k+1})+\Vert c_{2}^{n}\Vert_{k+1}\Vert\phi^{n}\Vert_{Y} \bigg)\Vert\boldsymbol{\vartheta}_{\textbf{u}}^{n}\Vert_{\textbf{X}},
%\end{equation*}
%and
%\begin{equation}\label{Eq_E4}
%T_{6}^{(4)}\leq \bigg((\Vert c_{2}^{n}\Vert_{Z}+\Vert c_{2}^{n}-c_{2,h}^{n}\Vert_{Z})\Vert \phi^{n}-\phi_{h}^{n}\Vert_{Y}+\Vert c_{2}^{n}-c_{2,h}^{n}\Vert_{0}\Vert\phi^{n}\Vert_{2} \bigg)\Vert  \boldsymbol{\vartheta}_{\textbf{u}}^{n}\Vert_{\textbf{X}}.
%\end{equation}
Finally, it suffices to substitute \eqref{Eq_E1} and \eqref{Eq_E2} back into \eqref{Eq_W0}, to arrive at 
\begin{align*}
T_{6}&\leq Ch^{k}\bigg(\Vert c_{1}^{n}-c_{2}^{n}\Vert_{Z}(\Vert\phi^{n}\Vert_{2}+\Vert\phi^{n}\Vert_{k+1})+(\Vert c_{1}^{n}\Vert_{s+1}+\Vert c_{2}^{n}\Vert_{k+1})\Vert\phi^{n}\Vert_{Z} \bigg)\Vert\boldsymbol{\vartheta}_{\textbf{u}}^{n}\Vert_{\textbf{X}}\nonumber\\[1mm]
&\quad+\bigg((\Vert c_{1}^{n}-c_{2}^{n}\Vert_{Z}+\Vert c_{1,h}^{n}-c_{2,h}^{n}\Vert_{Z})\Vert \phi^{n}-\phi_{h}^{n}\Vert_{Z} + (\Vert c_{1}^{n}-c_{1,h}^{n}\Vert_{0}+\Vert c_{2}^{n}-c_{2,h}^{n}\Vert_{0})\Vert\phi^{n}\Vert_{2} \bigg)\Vert  \boldsymbol{\vartheta}_{\textbf{u}}^{n}\Vert_{\textbf{X}}.
\end{align*}
}

\medskip 
\noindent\textbf{Step 3:   error estimate at the $n$-th time step.} Inserting the bounds on $T_{1}$-$T_{6}$ into \eqref{EST1b}, \rev{yields
\begin{align*}
&\mathcal{M}_{2,h}(\dfrac{\boldsymbol{\vartheta}_{\textbf{u}}^{n}-\boldsymbol{\vartheta}_{\textbf{u}}^{n-1}}{\tau}, \boldsymbol{\vartheta}_{\textbf{u}}^{n})+\mathcal{K}_{h}(\boldsymbol{\vartheta}_{\textbf{u}}^{n}, \boldsymbol{\vartheta}_{\textbf{u}}^{n})
\leq \big[\rev{\widehat{\varpi}_{1}^{n}}+\Vert\textbf{u}_{h}^{n}\Vert_{\textbf{X}}\Vert\boldsymbol{\vartheta}_{\textbf{u}}^{n}\Vert_{\textbf{X}}+(\Vert c_{1}^{n}-c_{1,h}^{n}\Vert_{0}\\[1mm]
&\qquad  +\Vert c_{2}^{n}-c_{2,h}^{n}\Vert_{0})\Vert\phi^{n}\Vert_{2}+\left(\rev{\widehat{\varpi}_{2}^{n}}+\widehat{\varpi}_{3}^{n} \right)\Vert \phi^{n}-\phi_{h}^{n}\Vert_{Y}\big]\left\|\boldsymbol{\vartheta}_{\textbf{u}}^{n}\right\|_{\textbf{X}},
\end{align*}
with positive scalars
\begin{equation}\label{EQLI}
\widehat{\varpi}_{1}^{n}\leq \overline{C}_{1}h^{k+1}+\overline{C}_{2}h^{k}+\tau^{1/2}O_{1}^{n}+\tau^{-1/2}h^{k+1}O_{2}^{n},\quad \widehat{\varpi}_{2}^{n}\leq \Vert c_{1}^{n}- c_{2}^{n}\Vert_{Z},\quad \widehat{\varpi}_{3}^{n}\leq \Vert c_{1,h}^{n}- c_{2,h}^{n}\Vert_{Z},
\end{equation}
and where
\[
\overline{C}_{1}\leq \left\|\partial_{t}\textbf{u}^{n}\right\|_{L^{\infty}(H^{k+1})}+\Vert \textbf{u}^{n}\Vert_{L^{\infty}(H^{k+1})},\quad O_{1}\leq \left\|\partial_{tt}\textbf{u}\right\|_{L^{2}\left( L^{2}\right)},\quad O_{2}\leq \left\|\partial_{t}\textbf{u}\right\|_{L^{2}\left( H^{k+1}\right)},
\]
\begin{align*}
\overline{C}_{2}&\leq \left(\Vert \textbf{u}^{n}\Vert_{k}+\Vert \textbf{u}^{n}\Vert_{\textbf{X}}+\Vert \textbf{u}^{n}\Vert_{k+1}+\Vert \textbf{u}_{h}^{n}\Vert_{\textbf{X}} +1\right)\Vert \textbf{u}^{n}\Vert_{k+1}\nonumber\\[1mm]
&+(\Vert c_{1}^{n}\Vert_{Z}+\Vert c_{2}^{n}\Vert_{Z})(\Vert\phi^{n}\Vert_{2}+\Vert\phi^{n}\Vert_{k+1})+(\Vert c_{1}^{n}\Vert_{k+1}+\Vert c_{2}^{n}\Vert_{k+1})\Vert\phi^{n}\Vert_{Y}.
\end{align*}
And an application of Eqs. \eqref{RR1} and \eqref{RR2}, yields
\begin{align*}
 \left( \widehat{\varpi}_{2}^{n}+\widehat{\varpi}_{3}^{n}\right)
\leq \left(\Vert c_{1}^{n}\Vert_{Z}+\Vert c_{2}^{n}\Vert_{Z}+\Vert c_{1,h}^{n}\Vert_{Z}+\Vert c_{2,h}^{n}\Vert_{Z}\right)\leq \dfrac{\widehat{\alpha}}{6\gamma_{4}}+\dfrac{\widehat{\beta}}{6\tilde{\gamma}_{4}}:=\overline{C}_{3}
\end{align*}
Also, it} is not difficult to verify that
\begin{equation*}
\mathcal{M}_{2,h}(\dfrac{\boldsymbol{\vartheta}_{\textbf{u}}^{n}-\boldsymbol{\vartheta}_{\textbf{u}}^{n-1}}{\tau}, \boldsymbol{\vartheta}_{\textbf{u}}^{n})\geq \dfrac{1}{2\tau}\left(\tilde{\beta}_{1}\Vert \boldsymbol{\vartheta}_{\textbf{u}}^{n}\Vert_{0}^{2}-\tilde{\alpha}_{1}\Vert \boldsymbol{\vartheta}_{\textbf{u}}^{n-1}\Vert_{0}^{2}\right), \qquad 
\mathcal{K}_{h}(\boldsymbol{\vartheta}_{\textbf{u}}^{n}, \boldsymbol{\vartheta}_{\textbf{u}}^{n})\geq \tilde{\beta}_{2} \Vert \boldsymbol{\vartheta}_{\textbf{u}}^{n}\Vert_{\textbf{X}}^{2}.
\end{equation*}
And employing the inequalities above, gives
\begin{align}\label{EQAK1}
&\dfrac{1}{2\tau}\left(\Vert \boldsymbol{\vartheta}_{\textbf{u}}^{n}\Vert_{0}^{2}-\Vert \boldsymbol{\vartheta}_{\textbf{u}}^{n-1}\Vert_{0}^{2}\right)+\tilde{\beta}_{2} \Vert\boldsymbol{\vartheta}_{\textbf{u}}^{n}\Vert_{\textbf{X}}^{2}
\leq \Vert\textbf{u}_{h}^{n}\Vert_{\textbf{X}}\Vert\boldsymbol{\vartheta}_{\textbf{u}}^{n}\Vert_{\textbf{X}}^{2}+\big[\widehat{\varpi}_{1}^{n}+(\Vert c_{1}^{n}-c_{1,h}^{n}\Vert_{0}+\Vert c_{2}^{n}-c_{2,h}^{n}\Vert_{0})\Vert\phi^{n}\Vert_{2}\big]^{2}\nonumber\\[1mm]
&\qquad \qquad +\overline{C}_{3}^{2}\Vert \phi^{n}-\phi_{h}^{n}\Vert_{Y}^{2}+\dfrac{\tilde{\beta}_{2}}{4}\left\|\boldsymbol{\vartheta}_{\textbf{u}}^{n}\right\|_{\textbf{X}}^{2}.
\end{align}
\rev{Next, using  \eqref{E_Stab} and invoking the smallness assumptions \eqref{AS_T2}, we arrive at
\begin{align*}
\Vert \boldsymbol{\vartheta}_{\textbf{u}}^{n}\Vert_{0}^{2}+2\tau\tilde{\beta}_{2} \Vert \boldsymbol{\vartheta}_{\textbf{u}}^{n}\Vert_{\textbf{X}}^{2}&\leq \Vert \boldsymbol{\vartheta}_{\textbf{u}}^{n-1}\Vert_{0}^{2}+2\tau\big[\widehat{\varpi}_{1}^{n}+(\Vert c_{1}^{n}-c_{1,h}^{n}\Vert_{0}+\Vert c_{2}^{n}-c_{2,h}^{n}\Vert_{0})\Vert\phi^{n}\Vert_{2}+\overline{C}_{3}\Vert \phi^{n}-\phi_{h}^{n}\Vert_{Y}\big]^{2}.
\end{align*}
Then we proceed to sum} up \rev{the above inequality over $n$,
%$j=1,\cdots,n$
$1\leq n\leq N$}, \rev{which gives 
\begin{align}\label{EstPr_L2}
\Vert \boldsymbol{\vartheta}_{\textbf{u}}^{n}\Vert_{0}^{2}+2\tau\tilde{\beta}_{2}\sum_{j=1}^{n} \Vert \boldsymbol{\vartheta}_{\textbf{u}}^{j}\Vert_{\textbf{X}}^{2} &\leq \Vert \boldsymbol{\vartheta}_{\textbf{u}}^{0}\Vert_{0}^{2}+\tau\sum_{j=1}^{n}\big[\widehat{\varpi}_{1}^{j}+(\Vert c_{1}^{j}-c_{1,h}^{j}\Vert_{0}+\Vert c_{2}^{j}-c_{2,h}^{j}\Vert_{0})\Vert\phi^{j}\Vert_{2}+\overline{C}_{3}\Vert \phi^{j}-\phi_{h}^{j}\Vert_{Y}\big]^{2}.
\end{align}
Using the fact that $\sum_{j=1}^{n}\tau\leq t_{F}$ along  with the definition of $\widehat{\varpi}_{1}^{n}$ in \eqref{EQLI}}, we obtain
\begin{align*}
\tau \sum_{j=1}^{n} (\widehat{\varpi}_{1}^{j})^{2}&\leq \sum_{j=1}^{n}\tau \left(\overline{C}_{1}h^{k+1}+\overline{C}_{2}h^{k}+\tau^{1/2}O_{1}^{n}+\tau^{-1/2}h^{k+1}O_{2}^{n} \right)^{2}\nonumber\\
&\leq \big[ h^{2(k+1)}\overline{C}_{1}^{2}+h^{2(k)}\overline{C}_{2}^{2}\big]\sum_{j=1}^{n}\tau+\tau^{2}\sum_{j=1}^{n}(O_{1}^{n})^{2}+h^{2(k+1)}\sum_{j=1}^{n}(O_{2}^{n})^{2}\nonumber\\
&\leq C(h^{2k}+\tau^{2}),
\end{align*}
%and an application of Eqs. \eqref{RR1} and \eqref{RR2}, yields
%\begin{align*}
%\tau \sum_{j=1}^{n}\left( (\widehat{\varpi}_{2}^{j})^{2}+(\widehat{\varpi}_{3}^{j})^{2}\right)
%&=\sum_{j=1}^{n}\tau\Vert c_{1}^{n}- c_{2}^{n}\Vert_{Z}^{2}+\sum_{j=1}^{n}\tau\Vert c_{1,h}^{n}- c_{2,h}^{n}\Vert_{Z}^{2}\nonumber\\
%&\leq E(\phi_{0},\textbf{u}_{0})+E_{h}(\phi_{h}^{0},\textbf{u}_{h}^{0}),
%\end{align*}
}
which, together with \eqref{EstPr_L2}, completes the proof.
\end{proof}

\subsection{Error bounds: concentrations and electrostatic potential}
\rev{Let us now consider} the following problem:
\begin{subequations}\label{Dis_PNP}
\begin{align}
\mathcal{M}_{1,h}(\delta_{t}c_{i,h}^{n}, z_{i,h})+\mathcal{A}_{i,h}(c_{i,h}^{n}, z_{i,h})+e_{i}\mathcal{C}_{h}(c_{i,h}^{n};\phi_{h}^{n}, z_{i,h})-\mathcal{D}_{h}(\textbf{u}_{h}^{n}; c_{i,h}^{n}, z_{i,h})&=0,\\[1mm]
\mathcal{A}_{3,h}(\phi_{h}^{n}, z_{3,h})-\mathcal{M}_{1,h}(c_{1,h}^{n}, z_{3,h})+\mathcal{M}_{1,h}(c_{2,h}^{n}, z_{3,h})&=0,\label{Dis_PNP:b}
\end{align}
\end{subequations}
where $\textbf{u}_{h}^{n}\in \widetilde{\textbf{X}}_{h}$ is the solution from \eqref{ful_3} for $n=1,\cdots, N$. Next we  define   discrete projection operators \rev{that will be instrumental in deriving} error estimates for concentrations and electrostatic potential. 
\subsubsection{Electrostatic potential}
We now derive an upper bound for $\Vert \phi^{n}-\phi_{h}^{n}\Vert_{1}$ in terms of the concentration errors for
$n=1,\cdots,N$. For any $t\in [0,t_{F}]$, we define the energy projection \rev{$\mathcal{P}_{h}: H^{1}(\Omega)\cap H^{k+1}(\Omega)\rightarrow Y_{h}$} as the solution of
\begin{equation}\label{Proj_1}
\mathcal{A}_{3,h}(\mathcal{P}_{h}\phi(t), z_{3})=\mathcal{A}_{3}(\phi(t), z_{3}),\quad\quad\forall z_{3}\in  Y_{h}.
\end{equation}
Using the interpolation  property \eqref{eqIn_A}, we recall the following approximation properties of $\mathcal{P}_{h}$.
\begin{lemma}[\cite{Vacca15}]
\label{l_proj1}
Assume that $z\in H^{k+1}(\Omega)\cap H^{1}(\Omega)$. Then, there exists a unique $\mathcal{P}_{h}z\in Y_{h}$ solution of \eqref{Proj_1} satisfying
\begin{equation}\label{err_proj1}
\Vert z-\mathcal{P}_{h}z\Vert_{0}+h\vert z-\mathcal{P}_{h}z\vert_{1}\leq Ch^{k+1}\Vert z\Vert_{k+1},
\end{equation}
and
\begin{equation*}
\Vert z-\mathcal{P}_{h}z\Vert_{1,\infty}\leq Ch^{k}\Vert z\Vert_{k+1,\infty}.
\end{equation*}
\end{lemma}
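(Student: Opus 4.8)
The plan is to treat the three assertions in turn: the well-posedness of \eqref{Proj_1}, the combined $L^{2}$/$H^{1}$ bound \eqref{err_proj1}, and the $W^{1,\infty}$ estimate. I expect the first two to be routine Galerkin arguments, and the last to be the genuine difficulty.

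\emph{Well-posedness.} I would simply invoke Lax--Milgram on $Y_{h}\subset Y$: by \eqref{BUND1} the form $\mathcal{A}_{3,h}$ is bounded, by \eqref{CORC1} it is coercive on $Y_{h}$ with constant $\beta_{4}$ (the zero-mean constraint defining $Y_{h}$ supplying the Poincar\'e control hidden in $\|\cdot\|_{Z}$), and $z_{3}\mapsto \mathcal{A}_{3}(\phi(t),z_{3})$ is a bounded linear functional. Hence \eqref{Proj_1} admits a unique solution $\mathcal{P}_{h}\phi(t)\in Y_{h}$.

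\emph{Energy and $L^{2}$ bounds.} For the gradient half of \eqref{err_proj1} I would run a C\'ea-type estimate. Setting $\chi_{h}:=\mathcal{P}_{h}z-z_{I}\in Y_{h}$, with $z_{I}$ the (mean-corrected) interpolant from \eqref{eqIn}, coercivity and the defining identity \eqref{Proj_1} give
\begin{equation*}
\beta_{4}\|\chi_{h}\|_{Z}^{2}\leq \mathcal{A}_{3,h}(\chi_{h},\chi_{h})=\mathcal{A}_{3}(z,\chi_{h})-\mathcal{A}_{3,h}(z_{I},\chi_{h}).
\end{equation*}
Inserting $\mathcal{A}_{3,h}(z,\chi_{h})$ splits the right-hand side into a consistency error $\mathcal{A}_{3}(z,\chi_{h})-\mathcal{A}_{3,h}(z,\chi_{h})$, bounded by Lemma \ref{l_12} applied to the (structurally identical) form $\mathcal{A}_{3}$ together with \eqref{eqPi} as $Ch^{k}\|z\|_{k+1}\|\chi_{h}\|_{Z}$, and a term $\mathcal{A}_{3,h}(z-z_{I},\chi_{h})$ controlled by continuity \eqref{BUND1} and \eqref{eqIn}. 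This yields $\|\chi_{h}\|_{Z}\leq Ch^{k}\|z\|_{k+1}$, and a triangle inequality with \eqref{eqIn} gives $|z-\mathcal{P}_{h}z|_{1}\leq Ch^{k}\|z\|_{k+1}$. The $L^{2}$ half I would obtain by Aubin--Nitsche duality: let $\psi\in Y$ solve $\mathcal{A}_{3}(v,\psi)=(z-\mathcal{P}_{h}z,v)_{0}$ for all $v\in Y$, assume $H^{2}$ elliptic regularity $\|\psi\|_{2}\leq C\|z-\mathcal{P}_{h}z\|_{0}$, test with $v=z-\mathcal{P}_{h}z$, insert the interpolant $\psi_{I}$, and bound the ensuing consistency and approximation contributions by $Ch^{k+1}\|z\|_{k+1}\|\psi\|_{2}$; dividing through delivers the $O(h^{k+1})$ rate.

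\emph{The $W^{1,\infty}$ estimate --- the main obstacle.} Here I would split $z-\mathcal{P}_{h}z=(z-z_{I})+(z_{I}-\mathcal{P}_{h}z)$. The first summand is handled by an $L^{\infty}$-stable interpolation bound $\|z-z_{I}\|_{1,\infty}\leq Ch^{k}\|z\|_{k+1,\infty}$ (the scalar analogue of \eqref{eqIn_A} with $r=\infty$). The trouble is the fully discrete remainder $z_{I}-\mathcal{P}_{h}z\in Y_{h}$: a direct inverse inequality gives only $|z_{I}-\mathcal{P}_{h}z|_{1,\infty,E}\leq Ch_{E}^{-1}|z_{I}-\mathcal{P}_{h}z|_{1,E}\leq Ch_{E}^{k-1}\|z\|_{k+1,E}$, losing a full power of $h$ relative to the claimed $O(h^{k})$. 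Recovering the sharp rate cannot be done by energy arguments alone, so I would follow \cite{Vacca15} and establish it through a weighted-norm (Nitsche--Schatz type) argument built on a regularized discrete Green's function, upgrading the $L^{2}$/$H^{1}$ control to pointwise control at the cost of a logarithmic factor absorbed into the constant. This pointwise step, which relies on the $L^{\infty}$-continuity of the projectors $\Pi_{k}^{0}$ and $\Pi_{k}^{\nabla}$ and on the mesh regularity of Assumption \ref{assu:mesh}, is the part I expect to demand the most care.
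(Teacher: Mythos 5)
The paper does not actually prove Lemma \ref{l_proj1}: it is recalled verbatim from \cite{Vacca15}, so there is no internal proof to compare against, only the standard argument that the citation stands for. Measured against that, your treatment of existence/uniqueness and of both halves of \eqref{err_proj1} is correct and is essentially the canonical route: Lax--Milgram on $Y_{h}$ (and you read \eqref{CORC1} correctly --- coercivity in the full $\Vert\cdot\Vert_{Z}$-norm is only true on the zero-mean subspace, via Poincar\'e--Wirtinger); the C\'ea-type splitting of $\mathcal{A}_{3}(z,\chi_{h})-\mathcal{A}_{3,h}(z_{I},\chi_{h})$ into a consistency part (Lemma \ref{l_12} with \eqref{eqPi}) plus an approximation part (\eqref{BUND1} with \eqref{eqIn}); and Aubin--Nitsche duality under $H^{2}$ regularity of the Neumann problem on a convex domain. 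Two fine points you should make explicit, though both are imprecisions you inherit from the paper itself: (i) evaluating $\mathcal{A}_{3,h}$ (in particular the stabilization $S_{\mathtt{a}}^{E}$) at the non-virtual function $z$ is legitimate only because $z\in H^{k+1}(\Omega)\hookrightarrow C^{0}(\overline{\Omega})$ in 2D has well-defined degrees of freedom --- the same abuse occurs in Lemma \ref{l_12}, which is stated for $\rho,\zeta\in Z$; (ii) since $\mathcal{A}_{3}$ sees only $\nabla z$, the projection is invariant under $z\mapsto z+c$, so the $L^{2}$ part of \eqref{err_proj1} is false unless $z$ is normalized to zero mean; the statement implicitly takes $z\in Y$, which is how it is applied to $\phi^{n}$.

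The genuine gap is the $W^{1,\infty}$ estimate. Your diagnosis is exactly right --- energy bounds plus the inverse inequality lose one power of $h$, so a localized/weighted-norm (Nitsche--Schatz, regularized Green's function) argument is required --- but your proposal stops at naming that machinery; nothing is constructed or estimated, so this part is a placeholder rather than a proof. Two specific warnings. First, a $|\log h|$ factor cannot be ``absorbed into the constant'': it diverges as $h\to 0$, so you must either prove it is absent (as is typical for higher polynomial degree) or accept a weaker statement than the one claimed. Second, before deferring to \cite{Vacca15}, check what that reference actually establishes: it is a parabolic VEM paper whose projection estimates are of $L^{2}$/$H^{1}$ (energy) type, and a sup-norm bound of the form $\Vert z-\mathcal{P}_{h}z\Vert_{1,\infty}\leq Ch^{k}\Vert z\Vert_{k+1,\infty}$ for virtual element energy projections on polygonal meshes is a delicate result that cannot simply be read off from it. So your plan settles the first two assertions but leaves the third unproven --- which, to be fair, is the same debt the paper itself leaves outstanding by citation.
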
 
In the next result we state an optimal error estimate for $\mathcal{P}_{h}\phi-\phi_{h}$.
\begin{lemma}\label{l_esti_phi}
Let \rev{$\{c_{1},c_{2},\phi\}$, $\{c_{1,h}^{n},c_{2,h}^{n},\phi_{h}^{n}\}$} be solutions to \eqref{Eq_1}, \eqref{Dis_PNP}, respectively. Then for $n=1,\cdots, N$ we have
\begin{equation*}%\label{err_phi}
\rev{\Vert \phi^{n}-\phi_{h}^{n}\Vert_{0}\leq C\left(\Vert c_{1}^{n}-c_{1,h}^{n}\Vert_{0} +\Vert c_{2}^{n}-c_{2,h}^{n}\Vert_{0}+h^{k+1}(\Vert c_{1}^{n}\Vert_{k+1}+\Vert c_{2}^{n}\Vert_{k+1})\right).}
\end{equation*}
\end{lemma}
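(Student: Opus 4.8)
The plan is to follow a standard Galerkin-type splitting through the energy projection $\mathcal{P}_h$ introduced in \eqref{Proj_1}, writing
\[
\phi^n-\phi_h^n=(\phi^n-\mathcal{P}_h\phi^n)+(\mathcal{P}_h\phi^n-\phi_h^n)=:\rho^n+\theta_h^n,\qquad \theta_h^n:=\mathcal{P}_h\phi^n-\phi_h^n\in Y_h.
\]
The first contribution $\rho^n$ is controlled immediately by the approximation estimate \eqref{err_proj1} of Lemma \ref{l_proj1}, giving $\Vert\rho^n\Vert_0\leq Ch^{k+1}\Vert\phi^n\Vert_{k+1}$. To turn the factor $\Vert\phi^n\Vert_{k+1}$ into the concentration norms appearing on the right-hand side, I would invoke elliptic regularity for the generalized Poisson problem $-\operatorname{div}(\epsilon\nabla\phi^n)=c_1^n-c_2^n$, which yields $\Vert\phi^n\Vert_{k+1}\leq C(\Vert c_1^n\Vert_{k+1}+\Vert c_2^n\Vert_{k+1})$ and hence the desired $h^{k+1}(\Vert c_1^n\Vert_{k+1}+\Vert c_2^n\Vert_{k+1})$ term.

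For the discrete component $\theta_h^n$, I would start from the coercivity bound \eqref{CORC1} of $\mathcal{A}_{3,h}$ (constant $\beta_4$) and then rewrite $\mathcal{A}_{3,h}(\theta_h^n,\theta_h^n)$ using three identities evaluated at $z_3=\theta_h^n\in Y_h\subset Y$: the definition of $\mathcal{P}_h$ in \eqref{Proj_1}, the continuous Poisson equation \eqref{eq6} at time $t_n$, and the discrete Poisson equation \eqref{Dis_PNP:b}. This produces
\[
\beta_4\Vert\theta_h^n\Vert_1^2\leq \sum_{i=1}^{2}(-1)^{i+1}\big[\mathcal{M}_1(c_i^n,\theta_h^n)-\mathcal{M}_{1,h}(c_{i,h}^n,\theta_h^n)\big].
\]
Each bracket I would split as $[\mathcal{M}_1(c_i^n,\theta_h^n)-\mathcal{M}_{1,h}(c_i^n,\theta_h^n)]+\mathcal{M}_{1,h}(c_i^n-c_{i,h}^n,\theta_h^n)$; the consistency part is estimated by Lemma \ref{l_12} together with the $L^2$-projection estimate $\Vert c_i^n-\Pi_k^0 c_i^n\Vert_0\leq Ch^{k+1}\Vert c_i^n\Vert_{k+1}$, while the remaining part is bounded, using the $L^2$-continuity of $\mathcal{M}_{1,h}$, by $C\Vert c_i^n-c_{i,h}^n\Vert_0\Vert\theta_h^n\Vert_0$.

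Collecting these bounds, dividing by $\Vert\theta_h^n\Vert_1$ and using $\Vert\theta_h^n\Vert_0\leq\Vert\theta_h^n\Vert_1$, I obtain $\Vert\theta_h^n\Vert_0\leq C[h^{k+1}(\Vert c_1^n\Vert_{k+1}+\Vert c_2^n\Vert_{k+1})+\Vert c_1^n-c_{1,h}^n\Vert_0+\Vert c_2^n-c_{2,h}^n\Vert_0]$; adding the bound on $\Vert\rho^n\Vert_0$ via the triangle inequality closes the argument. I expect the main obstacle to be the treatment of the mismatch $\mathcal{M}_1(c_i^n,\theta_h^n)-\mathcal{M}_{1,h}(c_{i,h}^n,\theta_h^n)$: one must peel off a consistency error (measured by the $L^2$-projection) before the genuine concentration error appears, and, crucially, bound $\mathcal{M}_{1,h}(c_i^n-c_{i,h}^n,\theta_h^n)$ by the $L^2$-norm $\Vert c_i^n-c_{i,h}^n\Vert_0$ rather than the stronger $Z$-norm used in the generic continuity estimate \eqref{BUND1}, which is legitimate precisely because $\mathcal{M}_{1,h}$ is assembled from $L^2$-projections and an $L^2$-stabilization satisfying \eqref{eq28_S}.
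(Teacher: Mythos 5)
Your proposal is correct and takes essentially the same route as the paper's own proof: the identical energy-projection splitting $\phi^{n}-\phi_{h}^{n}=(\phi^{n}-\mathcal{P}_{h}\phi^{n})+(\mathcal{P}_{h}\phi^{n}-\phi_{h}^{n})$, the same combination of the projection identity \eqref{Proj_1}, the continuous Poisson equation and the discrete equation \eqref{Dis_PNP:b} tested with the discrete error, and the same consistency-plus-error decomposition handled via Lemma \ref{l_12} (with the $L^{2}$-projection estimate) and the $L^{2}$-continuity of $\mathcal{M}_{1,h}$. The only minor differences are that you replace the paper's duality argument---which is superfluous for the stated bound---by the trivial inequality $\Vert\theta_{h}^{n}\Vert_{0}\leq\Vert\theta_{h}^{n}\Vert_{Y}$, and you make explicit the elliptic-regularity step $\Vert\phi^{n}\Vert_{k+1}\leq C(\Vert c_{1}^{n}\Vert_{k+1}+\Vert c_{2}^{n}\Vert_{k+1})$ needed to convert the projection error into concentration norms, which the paper leaves implicit.
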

\begin{proof}
\rev{Setting $\vartheta^{n}=\phi_{h}^{n}-\mathcal{P}_{h}\phi^{n}$ implies that  $\vartheta^{n}\in Y_{h}$. Using \eqref{Dis_PNP:b},  \eqref{Proj_1}, and choosing $z_{3}=\vartheta^{n}$,  we get}
\begin{align*}
\beta_{4}\Vert \vartheta^{n}\Vert_{Y}^{2}\leq \mathcal{A}_{3,h}(\vartheta^{n},\vartheta^{n})&=\mathcal{A}_{3,h}(\phi_{h}^{n},\vartheta^{n})-\mathcal{A}_{3,h}(\mathcal{P}_{h}\phi^{n},\vartheta^{n})\nonumber\\[1mm]
%&=\big[\mathcal{M}_{1,h}(c_{1,h}^{n}, \vartheta^{n})-\mathcal{M}_{1}(c_{1}^{n}, \vartheta^{n})\big]-\big[\mathcal{M}_{1,h}(c_{2,h}^{n}, z_{3})-\mathcal{M}_{1}(c_{2}^{n}, \vartheta^{n})\big]\nonumber\\[1mm]
&=\mathcal{M}_{1,h}(c_{1,h}^{n}-c_{1}^{n}, \vartheta^{n})-\mathcal{M}_{1,h}(c_{2,h}^{n}-c_{2}^{n}, \vartheta^{n})\nonumber\\[1mm]
&+\big[\mathcal{M}_{1,h}(c_{1}^{n}, \vartheta^{n})-\mathcal{M}(c_{1}^{n}, \vartheta^{n})\big]-\big[\mathcal{M}_{1,h}(c_{2}^{n}, z_{3})-\mathcal{M}(c_{2}^{n}, \vartheta^{n})\big].
\end{align*}
The continuity of $\mathcal{M}_{1,h}(\cdot,\cdot)$ given in \eqref{BUND1}, and Lemma \ref{l_12}, confirm that 
\begin{equation*}
\beta_{4}\Vert \vartheta^{n}\Vert_{Y}^{2}\leq \bigg(\alpha_{1}(\Vert c_{1}^{n}-c_{1,h}^{n}\Vert_{0} +\Vert c_{2}^{n}-c_{2,h}^{n}\Vert_{0})+Ch^{k+1}(\Vert c_{1}^{n}\Vert_{k+1}+\Vert c_{2}^{n}\Vert_{k+1}) \bigg)\Vert \vartheta^{n}\Vert_{Y},
\end{equation*}
which, by Poincar\'e inequality, implies that 
\begin{equation}\label{eq1_err_phi}
\rev{\Vert \vartheta^{n}\Vert_{Y}\leq C_{\mathtt{p}}\beta_{4}^{-1}\bigg(\alpha_{1}(\Vert c_{1}^{n}-c_{1,h}^{n}\Vert_{0} +\Vert c_{2}^{n}-c_{2,h}^{n}\Vert_{0})+Ch^{k+1}(\Vert c_{1}^{n}\Vert_{k+1}+\Vert c_{2}^{n}\Vert_{k+1}) \bigg).}
\end{equation}
Next,  a duality argument for nonlinear elliptic equations \cite{Cangiani20I}, gives the following $L^{2}$-error estimate 
\begin{equation}\label{eq2_err_phi}
\rev{\Vert \vartheta^{n}\Vert_{0}\leq Ch\Vert \vartheta^{n}\Vert_{Y}+C_{\mathtt{p}}\beta_{4}^{-1}\bigg(\alpha_{1}(\Vert c_{1}^{n}-c_{1,h}^{n}\Vert_{0} +\Vert c_{2}^{n}-c_{2,h}^{n}\Vert_{0})+Ch^{k+1}(\Vert c_{1}^{n}\Vert_{k+1}+\Vert c_{2}^{n}\Vert_{k+1}) \bigg).}
\end{equation}
Finally, combining \eqref{eq1_err_phi}, \eqref{eq2_err_phi}, the triangle inequality, and estimate   \eqref{err_proj1}, the desired result follows.
\end{proof}

\subsubsection{Concentrations} 
The aim of this part is to attain an upper bound for $\Vert c_{i}^{n}-c_{i,h}^{n}\Vert_{0}$. For this purpose, for a fixed $\textbf{u}(t)\in \textbf{X}$, $\phi(t)\in Z$ and $t\in J$, we define a discrete projection operator  $\mathcal{P}_{i,h}: Z\rightarrow Z_{h}$, as follows 
\begin{equation}\label{defiPN_Ph}
\mathcal{L}_{i,h}(\textbf{u}(t),\phi(t);\mathcal{P}_{i,h}c_{i}, z_{i})=\mathcal{L}_{i}(\textbf{u}(t),\phi(t);c_{i}, z_{i}),\quad\quad \forall z_{i}\in Z_{h},
\end{equation}
where
\begin{subequations}
\begin{align}
\mathcal{L}_{i,h}(\textbf{u}(t),\phi;c_{i}, z_{i})&=\mathcal{A}_{i,h}(c_{i}, z_{i})+e_{i}\mathcal{C}_{h}(c_{i};\phi, z_{i})-\mathcal{D}_{h}(\textbf{u}(t); c_{i}, z_{i})+(c_{i}, z_{i})_{h},\label{Pdef_1}\\[1mm]
\mathcal{L}_{i}(\textbf{u}(t),\phi;c_{i}, z_{i})&=\mathcal{A}_{i}(c_{i}, z_{i})+e_{i}\mathcal{C}(c_{i};\phi, z_{i})-\mathcal{D}(\textbf{u}(t); c_{i}, z_{i})+(c_{i}, z_{i})_{0}.\label{Pdef_2}
\end{align}\end{subequations}
\begin{lemma}\label{l_well_Ph}
Assume that   $\textbf{u}\in [L^{\infty}(\Omega)]^{2}$ and $\phi\in W^{1,\infty}(\Omega)$ for all $t\in (0,t_{F}]$. Then, the operator $\mathcal{P}_{i,h}: Z\rightarrow Z_{h}$ in \eqref{defiPN_Ph} is well-defined.
\end{lemma}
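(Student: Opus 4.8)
The plan is to read \eqref{defiPN_Ph} as a square linear problem on the finite–dimensional space $Z_{h}$: for the fixed data $\textbf{u}(t)\in[L^{\infty}(\Omega)]^{2}$, $\phi(t)\in W^{1,\infty}(\Omega)$ and the prescribed $c_{i}\in Z$, one seeks $w:=\mathcal{P}_{i,h}c_{i}\in Z_{h}$ with $\mathcal{L}_{i,h}(\textbf{u}(t),\phi(t);w,z_{i})=\mathcal{L}_{i}(\textbf{u}(t),\phi(t);c_{i},z_{i})$ for all $z_{i}\in Z_{h}$. Since $Z_{h}$ is finite–dimensional and the right-hand side is a fixed bounded linear functional of $z_{i}$ (bounded by continuity of the \emph{continuous} forms in \eqref{Pdef_2} together with the regularity of $\textbf{u},\phi,c_{i}$), it suffices by the Lax--Milgram lemma, equivalently by injectivity of a square matrix, to prove that the bilinear map $(w,z_{i})\mapsto\mathcal{L}_{i,h}(\textbf{u}(t),\phi(t);w,z_{i})$ is continuous and coercive on $Z_{h}\times Z_{h}$. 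Continuity is immediate from the bounds \eqref{BUND1} for $\mathcal{A}_{i,h}$ and the mass term, Lemma \ref{l_N} for $\mathcal{D}_{h}$ (here the hypothesis $\textbf{u}\in[L^{\infty}(\Omega)]^{2}$ is used, controlling $\boldsymbol{\Pi}_{k}^{0}\textbf{u}$ through $\Vert\textbf{u}\Vert_{\infty}$), and Lemma \ref{l_Con_C} for $\mathcal{C}_{h}$.

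For coercivity I would test with $z_{i}=w$. First, the convective contribution vanishes: by construction $\mathcal{D}_{h}(\textbf{u};w,w)=0$, since the two summands in its definition share the common integrand $(\boldsymbol{\Pi}_{k}^{0}\textbf{u}\cdot\Pi_{k-1}^{0}\nabla w)\,\Pi_{k}^{0}w$ and cancel. Next, the coercivity estimates \eqref{CORC1} give $\mathcal{A}_{i,h}(w,w)\ge\beta_{i+1}\Vert w\Vert_{Z}^{2}$ and $(w,w)_{h}=\mathcal{M}_{1,h}(w,w)\ge\beta_{1}\Vert w\Vert_{0}^{2}$. It remains to absorb the indefinite term $e_{i}\mathcal{C}_{h}(w;\phi,w)$. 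Using the definition \eqref{defiDisCPNP}, the $L^{\infty}$-stability of $\boldsymbol{\Pi}_{k-1}^{0}$ applied to $\nabla\phi$ (legitimate because $\phi\in W^{1,\infty}(\Omega)$), the $L^{2}$-stability of $\Pi_{k-1}^{0}$, and Hölder's inequality, I would obtain
\begin{equation*}
|e_{i}\mathcal{C}_{h}(w;\phi,w)|\le C\Vert\phi\Vert_{1,\infty}\Vert w\Vert_{0}\,\Vert w\Vert_{Z},
\end{equation*}
and then split by Young's inequality, moving the $\Vert w\Vert_{Z}^{2}$ part into $\tfrac{\beta_{i+1}}{2}\Vert w\Vert_{Z}^{2}$ and the $\Vert w\Vert_{0}^{2}$ part against the reaction term $\beta_{1}\Vert w\Vert_{0}^{2}$ supplied by the mass form.

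The main obstacle is precisely this last term: $\mathcal{C}_{h}(w;\phi,w)$ is neither symmetric nor skew-symmetric, hence genuinely indefinite, and the Young split leaves a coefficient proportional to $\Vert\phi\Vert_{1,\infty}^{2}$ multiplying $\Vert w\Vert_{0}^{2}$. Plain coercivity therefore holds only once this coefficient is dominated by the reaction constant $\beta_{1}$, which is exactly the smallness furnished by the a priori bound \eqref{RR2} on $\Vert\phi\Vert_{1,\infty}$ available under the standing data assumption; with it, the form is coercive with a constant independent of $h$ and Lax--Milgram yields a unique $\mathcal{P}_{i,h}c_{i}\in Z_{h}$. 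I would note in passing that, even without invoking smallness, the same test-function computation produces a Gårding-type inequality $\mathcal{L}_{i,h}(\textbf{u},\phi;w,w)\ge\alpha\Vert w\Vert_{Z}^{2}-\lambda\Vert w\Vert_{0}^{2}$; but turning this into well-definedness still requires a separate uniqueness input in the finite–dimensional Fredholm alternative, so the cleanest and self-contained route remains the coercivity argument closed by \eqref{RR2}.
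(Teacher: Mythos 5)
Your proposal is correct and follows the same route as the paper: both proofs invoke the Lax--Milgram lemma on $Z_{h}\times Z_{h}$, obtain continuity of $\mathcal{L}_{i,h}$ from Lemmas \ref{l_N} and \ref{l_Con_C} (and boundedness of the right-hand side functional $\mathcal{L}_{i}$ from H\"older/Poincar\'e estimates that use exactly the hypotheses $\textbf{u}\in[L^{\infty}(\Omega)]^{2}$ and $\phi\in W^{1,\infty}(\Omega)$), and for coercivity exploit the diagonal cancellation $\mathcal{D}_{h}(\textbf{u};w,w)=0$ together with \eqref{CORC1}. The difference lies in how much of the coercivity step is actually carried out. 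The paper stops after observing $\mathcal{D}_{h}^{E}(\textbf{u};c_{i},c_{i})=0$ and ``combining this result with \eqref{CORC1}'', never addressing the remaining indefinite term $e_{i}\mathcal{C}_{h}(w;\phi,w)$, which does not vanish when tested diagonally. You treat exactly this term: the bound $|\mathcal{C}_{h}(w;\phi,w)|\le C\Vert\phi\Vert_{1,\infty}\Vert w\Vert_{0}\Vert w\Vert_{Z}$ (via the $L^{\infty}$- and $L^{2}$-stability of the projections, in the spirit of \eqref{Linfity_Pi}) followed by a Young split is precisely what is needed, and your observation that the resulting coefficient of $\Vert w\Vert_{0}^{2}$, proportional to $\Vert\phi\Vert_{1,\infty}^{2}$, must be dominated by the reaction constant coming from the mass term is an honest accounting of what the paper leaves implicit: coercivity, hence well-definedness by this route, is only guaranteed under a smallness condition on $\Vert\phi\Vert_{1,\infty}$ such as the one furnished by \eqref{RR2}, or else one must fall back on your G\aa rding-plus-uniqueness (Fredholm) alternative. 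In short, your argument supplies rigor where the paper's own coercivity step has a gap, at the price of making the conclusion explicitly conditional on the small-data regime that the paper assumes throughout its analysis anyway.
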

\begin{proof} We proceed by the Lax--Milgram lemma and the proof 
is divided into two steps. The first step establishes  that the bilinear form   on the left-hand side of \eqref{defiPN_Ph} is continuous and coercive on $Z_{h}\times Z_{h} $, whereas the second step proves that the right-hand side functional is bounded over $Z_{h}$.
Continuity of $\mathcal{L}_{i}$ is achieved by the continuity of the forms
$(\cdot,\cdot)_{0}$ and $\mathcal{A}_{i}(\cdot,\cdot)$, and Poincar\'e inequality with
\begin{equation*}
\mathcal{C}(c_{i};\phi,z_{i})=(c_{i}\nabla\phi, \nabla z_{i})_{0}\leq \Vert\phi\Vert_{1,\infty}\vert c_{i}\vert_{1} \vert z_{i}\vert_{1},
\end{equation*}
and
\begin{align*}
\mathcal{D}(\textbf{u}(t); c_{i}, z_{i})&\leq \dfrac{1}{2}\Vert \textbf{u}\Vert_{0,4}\left(\Vert c_{i}\Vert_{0,4}\Vert z_{i}\Vert_{1}+\Vert c_{i}\Vert_{1}\Vert z_{i}\Vert_{0,4} \right)\leq C \Vert \textbf{u}\Vert_{\textbf{X}}\Vert c_{i}\Vert_{Z}\Vert z_{i}\Vert_{Z}.
\end{align*}
The continuity of $\mathcal{L}_{i,h}$ can be handled using Lemmas \ref{l_N} and \ref{l_Con_C}. In turn, for the coercivity of $\mathcal{L}_{i,h}$ we have
\begin{align*}
\mathcal{D}_{h}^{E}(\textbf{u}; c_{i}, c_{i})&=\dfrac{1}{2}\big[(\boldsymbol{\Pi}_{k}^{0,E}\textbf{u}\cdot \Pi_{k}^{0,E} c_{i},  \Pi_{k-1}^{0,E}\nabla c_{i})_{0}-(\boldsymbol{\Pi}_{k}^{0,E}\textbf{u}\cdot  \Pi_{k-1}^{0,E}\nabla c_{i},  \Pi_{k}^{0,E}c_{i})_{0}  \big] 
=0.
\end{align*}
Then, combining this result with \eqref{CORC1} completes the proof.
\end{proof}
Now we derive the error estimates of $c_{1}^{n}-\mathcal{P}_{1,h}c_{1}^{n}$ and $c_{2}^{n}-\mathcal{P}_{2,h}c_{2}^{n}$ in the $L^{2}$-norm.
\begin{lemma}\label{l_D}
Assume that $\{c_{1},c_{2},\phi\}$ is the solution of \eqref{Eq_1} satisfing the regularity
assumptions
\begin{align*}
C(\Vert\textbf{u}^{n}\Vert_{\textbf{X}},\Vert\phi^{n}\Vert_{1,\infty})\Vert c_{1}^{n}\Vert_{k+1}&+\Vert c_{1}^{n}\nabla \phi^{n}\Vert_{k}+\Vert c_{1}^{n}\Vert_{\infty}\Vert \phi^{n}\Vert_{k+1}+\Vert \phi^{n}\Vert_{1,\infty}\Vert c_{1}^{n}\Vert_{k}\\
&+\Vert \textbf{u}\Vert_{k+1}(\Vert c_{1}^{n}\Vert_{k+1}+\Vert c_{1}^{n}\Vert_{1})+\Vert c_{1}^{n}\Vert_{k+1}(\Vert \textbf{u}^{n}\Vert_{k}+\Vert \textbf{u}^{n}\Vert_{1}) \leq C,\\[1mm]
C(\Vert\textbf{u}^{n}\Vert_{\textbf{X}},\Vert\phi^{n}\Vert_{1,\infty})\Vert c_{2}^{n}\Vert_{k+1}&+\Vert c_{2}^{n}\nabla \phi^{n}\Vert_{k}+\Vert c_{2}^{n}\Vert_{\infty}\Vert \phi^{n}\Vert_{k+1}+\Vert \phi^{n}\Vert_{1,\infty}\Vert c_{2}^{n}\Vert_{k}\\
&+\Vert \textbf{u}^{n}\Vert_{k+1}(\Vert c_{2}^{n}\Vert_{k+1}+\Vert c_{2}^{n}\Vert_{1})+\Vert c_{2}^{n}\Vert_{k+1}(\Vert \textbf{u}^{n}\Vert_{k}+\Vert \textbf{u}^{n}\Vert_{1}) \leq C,
\end{align*}
and $\mathcal{P}_{i,h}$ is defined as in \eqref{defiPN_Ph}. Then for $n=1,\cdots, N$ and $i=1,2$, we have the following error estimates
\begin{align*}
\rev{\Vert c_{i}^{n}-\mathcal{P}_{i,h}c_{i}^{n}\Vert_{0}+h\Vert c_{i}^{n}-\mathcal{P}_{i,h}c_{i}^{n}\Vert_{Z}\leq Ch^{k+1}.}
\end{align*}
\end{lemma}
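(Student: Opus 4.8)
The plan is to view $\mathcal{P}_{i,h}$ as a nonsymmetric Ritz-type projection attached to the frozen-coefficient form $\mathcal{L}_{i,h}(\textbf{u}(t),\phi(t);\cdot,\cdot)$, to establish first the bound on $\|c_i^n-\mathcal{P}_{i,h}c_i^n\|_Z$ from coercivity, and then to recover the $L^2$ bound by an Aubin--Nitsche duality argument in the spirit of \cite{Cangiani20I}. Throughout I abbreviate $e_i^n:=c_i^n-\mathcal{P}_{i,h}c_i^n$ and split it through the VE interpolant $c_{i,I}^n$ as $e_i^n=\rho_i^n+\theta_i^n$, with $\rho_i^n:=c_i^n-c_{i,I}^n$ controlled by \eqref{eqIn} and $\theta_i^n:=c_{i,I}^n-\mathcal{P}_{i,h}c_i^n\in Z_h$.

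For the energy bound I would start from the coercivity of $\mathcal{L}_{i,h}$ proved in Lemma \ref{l_well_Ph} (with coercivity constant $\beta$; recall that the skew-symmetry of $\mathcal{D}_h$ makes its diagonal vanish, so it does not interfere). Testing with $\theta_i^n$ and inserting the projection identity \eqref{defiPN_Ph} gives
\[
\beta\,\|\theta_i^n\|_Z^2\le \mathcal{L}_{i,h}(\textbf{u},\phi;\theta_i^n,\theta_i^n)=\mathcal{L}_i(\textbf{u},\phi;c_i^n,\theta_i^n)-\mathcal{L}_{i,h}(\textbf{u},\phi;c_{i,I}^n,\theta_i^n),
\]
and I would split the right-hand side into a consistency part $[\mathcal{L}_i-\mathcal{L}_{i,h}](\textbf{u},\phi;c_i^n,\theta_i^n)$ and an approximation part $\mathcal{L}_{i,h}(\textbf{u},\phi;\rho_i^n,\theta_i^n)$. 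The consistency part is handled term by term by Lemma \ref{l_12} for $\mathcal{A}_i$ and the mass form, Lemma \ref{l_ComCPNP} for $\mathcal{C}$, and Lemma \ref{l_ComD} for $\mathcal{D}$, each contributing $O(h^k)\|\theta_i^n\|_Z$ under the stated regularity; the approximation part is bounded through the continuity of the discrete forms (Lemmas \ref{l_N} and \ref{l_Con_C}) combined with \eqref{eqIn}, again yielding $O(h^k)\|\theta_i^n\|_Z$. Dividing by $\|\theta_i^n\|_Z$ and applying the triangle inequality with \eqref{eqIn} delivers $\|e_i^n\|_Z\le Ch^k$, i.e. the bound on the second term of the statement.

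For the $L^2$ bound I would introduce the adjoint problem: let $\psi\in Z$ solve $\mathcal{L}_i(\textbf{u},\phi;w,\psi)=(e_i^n,w)_0$ for all $w\in Z$, with the elliptic regularity $\|\psi\|_2\le C\|e_i^n\|_0$. Taking $w=e_i^n$, introducing the interpolant $\psi_I$ of $\psi$, and exploiting the orthogonality built into \eqref{defiPN_Ph}, I would reduce $\|e_i^n\|_0^2=\mathcal{L}_i(\textbf{u},\phi;e_i^n,\psi)$ to the two contributions $\mathcal{L}_i(\textbf{u},\phi;e_i^n,\psi-\psi_I)$ and the consistency term $[\mathcal{L}_i-\mathcal{L}_{i,h}](\textbf{u},\phi;\mathcal{P}_{i,h}c_i^n,\psi_I)$. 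The first is $\le C\|e_i^n\|_Z\|\psi-\psi_I\|_Z\le Ch^k\cdot h\|\psi\|_2$ by \eqref{eqIn}, hence $O(h^{k+1})\|e_i^n\|_0$. To squeeze the extra power of $h$ out of the consistency term I would write $\mathcal{P}_{i,h}c_i^n=c_i^n-e_i^n$ and, for the symmetric pieces, invoke the polynomial $k$-consistency of $\mathcal{A}_{i,h}$ and of the mass form (Lemma \ref{l_6}) in \emph{both} slots, rewriting each mismatch as $[\mathcal{L}_i-\mathcal{L}_{i,h}](\textbf{u},\phi;c_i^n-c_{i,\pi}^n,\psi_I-\psi_\pi)$ and estimating with \eqref{eqPi}, so that two approximation factors $h^k$ and $h$ combine into $O(h^{k+1})$. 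Absorbing $\|\psi\|_2\le C\|e_i^n\|_0$ then yields $\|e_i^n\|_0\le Ch^{k+1}$, and adding the two estimates completes the proof.

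The step I expect to be the main obstacle is precisely this gain of the additional power of $h$ in the $L^2$ estimate for the terms that are not the mass form: the crude comparison bounds of Lemmas \ref{l_12}, \ref{l_ComCPNP} and \ref{l_ComD} deliver only $O(h^k)$, so the duality must be re-run with the double polynomial-consistency device, and one must also control the part of the consistency term carrying the discrete error $e_i^n$ rather than the smooth $c_i^n$. For the variable-coefficient term $\mathcal{C}_h$ and, above all, for the nonsymmetric convective term $\mathcal{D}_h$ — where polynomial consistency in the second slot is not symmetric — this forces careful use of the $L^\infty$ and $W^{1,\infty}$ hypotheses on $\textbf{u}$ and $\phi$ together with projector bounds such as \eqref{Linfity_Pi}, following the nonlinear-elliptic duality framework of \cite{Cangiani20I}.
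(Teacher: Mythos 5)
Your proposal follows essentially the same route as the paper's own proof: the $H^{1}$-bound is derived identically, via the coercivity of $\mathcal{L}_{i,h}$ from Lemma \ref{l_well_Ph}, the splitting of the error into the consistency part $[\mathcal{L}_{i}-\mathcal{L}_{i,h}](\textbf{u}^{n},\phi^{n};c_{i}^{n},\theta_{i}^{n})$ and the interpolation part $\mathcal{L}_{i,h}(\textbf{u}^{n},\phi^{n};c_{i}^{n}-c_{i,I}^{n},\theta_{i}^{n})$, estimated with the same Lemmas \ref{l_12}, \ref{l_ComCPNP}, \ref{l_ComD} and the interpolation bound \eqref{eqIn}. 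For the $L^{2}$-bound the paper merely invokes ``a standard duality approach'' and omits all details, which is exactly the Aubin--Nitsche argument you sketch (including the double polynomial-consistency device needed to gain the extra power of $h$ in the non-mass terms), so on that step your write-up is, if anything, more explicit than the paper's.
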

\begin{proof}
We first bound the term $c_{i}^{n}-\mathcal{P}_{i,h}c_{i}^{n}$ in the $H^{1}$-norm for any $n=1,\cdots,N$. To this end, for   $\{c_{1}^{n},c_{2}^{n}\}\in H^{k+1}(\Omega)\times H^{k+1}(\Omega)$ we recall the estimate of its interpolant $\{c_{1,I}^{n},c_{2,I}^{n}\}$   given in \eqref{eqIn_A}. Let $\theta_{i}^{n}:=\mathcal{P}_{i,h}c_{i}^{n}-c_{i,I}^{n}$ be elements of $Z_{h}$. Employing the discrete coercivity of $\mathcal{L}_{i,h}$ (cf. proof of Lemma \ref{l_well_Ph}) and Eq. \eqref{defiPN_Ph}, yields
\rev{
\begin{align}\label{eqK1}
\hat{C}\vert \theta_{i}^{n}\vert_{1}^{2}&\leq \mathcal{L}_{i,h}(\textbf{u}^{n},\phi^{n};\theta_{i}^{n}, \theta_{i}^{n})=\mathcal{L}_{i,h}(\textbf{u}^{n},\phi^{n};\mathcal{P}_{h}c_{i}^{n}, \theta_{i}^{n})-\mathcal{L}_{i,h}(\textbf{u}^{n},\phi^{n};c_{i,I}^{n}, \theta_{i}^{n})\nonumber\\
&=\big[\mathcal{L}_{i}(\textbf{u}^{n},\phi^{n};c_{i}^{n}, \theta_{i}^{n})-\mathcal{L}_{i,h}(\textbf{u}^{n},\phi^{n};c_{i}^{n}, \theta_{i}^{n}) \big]+\mathcal{L}_{i,h}(\textbf{u}^{n},\phi^{n};c_{i}^{n}-c_{i,I}^{n}, \theta_{i}^{n})\nonumber\\
&:=L_{1}+L_{2}.
\end{align}
}
Using the definitions of $\mathcal{L}_{i}$ and $\mathcal{L}_{i,h}$ given in \eqref{Pdef_1} and \eqref{Pdef_2}, respectively, splits the term $L_{1}$ as follows:
\begin{align*}
L_{1}&=\big[\mathcal{A}_{i}(c_{i}^{n}, \theta_{i}^{n})-\mathcal{A}_{i,h}(c_{i}^{n}, \theta_{i}^{n}) \big]
+e_{i}\big[\mathcal{C}(c_{i}^{n};\phi^{n}, \theta_{i}^{n})-\mathcal{C}_{h}(c_{i}^{n};\phi, \theta_{i}^{n}) \big]\nonumber\\ 
&\quad +\big[\mathcal{D}(\textbf{u}^{n}; c_{i}^{n}, \theta_{i}^{n}) -\mathcal{D}_{h}(\textbf{u}^{n}; c_{i}^{n}, \theta_{i}^{n})\big]
+\big[ (c_{i}^{n}, \theta_{i}^{n})_{0}-(c_{i}^{n}, \theta_{i}^{n})_{h}\big]\nonumber\\ 
&:=L_{1}^{(1)}+L_{1}^{(2)}+L_{1}^{(3)}+L_{1}^{(4)}.
\end{align*}
Next, we will bound each of the terms $L_{1}^{(j)}$, with $j=1,2,3,4$ in the above decomposition. This is achieved by Lemmas \ref{l_12}, \ref{l_ComCPNP}, \ref{l_ComD} and \ref{l_12}, respectively, as
\begin{gather*}
L_{1}^{(1)}\leq Ch^{k}\Vert c_{i}^{n}\Vert_{k+1}\vert \theta_{i}^{n}\vert_{1},\quad L_{1}^{(2)}\leq C\bigg(h^{k}\Vert c_{i}^{n}\nabla \phi^{n}\Vert_{k}+\Vert c_{i}^{n}\Vert_{\infty}h^{k}\Vert \phi^{n}\Vert_{k+1}+\Vert \phi^{n}\Vert_{1,\infty}h^{k}\Vert c_{i}^{n}\Vert_{k} \bigg)\vert \theta_{i}^{n}\vert_{1},\\
L_{1}^{(3)} \leq Ch^{k}\left( \Vert \textbf{u}^{n}\Vert_{k+1}(\Vert c_{i}^{n}\Vert_{k+1}+\Vert c_{i}^{n}\Vert_{1})+\Vert c_{i}^{n}\Vert_{k+1}(\Vert \textbf{u}^{n}\Vert_{k}+\Vert \textbf{u}^{n}\Vert_{1}) \right)\Vert \theta_{i}^{n}\Vert_{1},\\
L_{1}^{(4)}\leq Ch^{k+1}\Vert c_{i}^{n}\Vert_{k+1}\Vert \theta_{i}^{n}\Vert_{0}.
\end{gather*}
Next, to estimate $L_{2}$, we apply the continuity of $\mathcal{L}_{i,h}$ (cf. Lemma \ref{l_well_Ph}) and the interpolation error estimate \eqref{eqIn}, to find that
\begin{equation*}
L_{2}=\mathcal{L}_{i,h}(\textbf{u}^{n},\phi^{n};c_{i}^{n}-c_{i,I}^{n}, \theta_{i}^{n})\leq C(\Vert\textbf{u}^{n}\Vert_{\textbf{X}},\Vert\phi^{n}\Vert_{1,\infty})h^{k}\Vert c_{i}^{n}\Vert_{k+1}\vert \theta_{i}^{n}\vert_{1}.
\end{equation*}
Thus, the  $H^{1}$-seminorm estimate is derived by inserting all bounds $L_{1}^{(j)}$ for $j=1,\cdots,4$ into $L_{1}$ and then substituting the obtained estimates for $L_{1}$ and $L_{2}$ in \eqref{eqK1}. Note that an estimate  in the $L^2$-norm is obtained by combining the  arguments in above with a standard duality approach. That is omitted here.
\end{proof}

%**************************************************************************************************
Finally, we state an error estimate for $c_{1}^{n}-c_{1,h}^{n}$, $c_{2}^{n}-c_{2,h}^{n}$ and $\phi^{n}-\phi_{h}^{n}$ in the $L^{2}$-norm valid for the scheme \eqref{ful_3}.
\begin{theorem}\label{T3}
Let the assumption of Theorem \ref{T_1} be satisfied.
Also, assume that $\{c_{1}^{n},c_{2}^{n},\phi^{n}\}$ be the solution of \eqref{Eq_1} satisfying the regularity assumptions presented in Lemma \ref{l_D} and $\{c_{1,h}^{n},c_{2,h}^{n},\phi_{h}^{n}\}$ be the solution of \eqref{Dis_PNP}. 
%Moreover, we assume that $k\geq d-1$. 
Then, the following error estimation
holds for $n=1,\cdots, N$,
\begin{equation*}
\Vert c_{1}^{n}-c_{1,h}^{n}\Vert_{0}+\Vert c_{2}^{n}-c_{2,h}^{n}\Vert_{0}+\tau \sum_{j=1}^{n}\left(\vert c_{1}^{n}-c_{1,h}^{n}\vert_{1}+\vert c_{2}^{n}-c_{2,h}^{n}\vert_{1} \right)\leq C(\tau + h^{k}).
\end{equation*}
\end{theorem}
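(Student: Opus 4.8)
The plan is to follow the standard projection-based parabolic error analysis, exploiting the elliptic projection $\mathcal{P}_{i,h}$ introduced in \eqref{defiPN_Ph}. First I would split each concentration error as $c_i^n-c_{i,h}^n=(c_i^n-\mathcal{P}_{i,h}c_i^n)+(\mathcal{P}_{i,h}c_i^n-c_{i,h}^n)=:\rho_i^n+\theta_i^n$, where the projection part $\rho_i^n$ is already controlled by Lemma \ref{l_D}, namely $\Vert\rho_i^n\Vert_0+h\Vert\rho_i^n\Vert_Z\leq Ch^{k+1}$ (and, under the stated regularity, the same bound holds for the time increments $\delta_t\rho_i^n$). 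It therefore remains to estimate the discrete component $\theta_i^n\in Z_h$.

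To obtain an evolution equation for $\theta_i^n$, I would evaluate the continuous Nernst--Planck equation \eqref{eq6} at $t=t_n$, rewrite its spatial part through the definition of the projection \eqref{defiPN_Ph} (with frozen data $\textbf{u}^n,\phi^n$), and subtract the discrete scheme \eqref{Dis_PNP}. Reorganizing the mass term as $\mathcal{M}_{1,h}(\delta_t\theta_i^n,z_i)$ on the left produces on the right the usual consistency residuals: the time-discretization error $\mathcal{M}_1(\delta_t c_i^n-\partial_t c_i^n,z_i)=O(\tau)$ via a Taylor expansion, the variational-crime term $(\mathcal{M}_1-\mathcal{M}_{1,h})(\delta_t c_i^n,z_i)$ and the projection-increment term $\mathcal{M}_{1,h}(\delta_t\rho_i^n,z_i)$, both $O(h^{k+1})$ by Lemmas \ref{l_12} and \ref{l_D}, together with the analogous $O(h^{k+1})$ residual coming from the mass-shift in $\mathcal{L}_{i,h}$. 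The genuinely nonlinear contributions are the differences $\mathcal{C}_h(\mathcal{P}_{i,h}c_i^n;\phi^n,z_i)-\mathcal{C}_h(c_{i,h}^n;\phi_h^n,z_i)$ and $\mathcal{D}_h(\textbf{u}^n;\mathcal{P}_{i,h}c_i^n,z_i)-\mathcal{D}_h(\textbf{u}_h^n;c_{i,h}^n,z_i)$, which I would split by trilinearity as $\mathcal{C}_h(\theta_i^n;\phi^n,z_i)+\mathcal{C}_h(c_{i,h}^n;\phi^n-\phi_h^n,z_i)$ and $\mathcal{D}_h(\textbf{u}^n;\theta_i^n,z_i)+\mathcal{D}_h(\textbf{u}^n-\textbf{u}_h^n;c_{i,h}^n,z_i)$.

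Next I would test the error equation with $z_i=\theta_i^n$. The time term gives $\mathcal{M}_{1,h}(\delta_t\theta_i^n,\theta_i^n)\geq\frac{1}{2\tau}(\beta_1\Vert\theta_i^n\Vert_0^2-\alpha_1\Vert\theta_i^{n-1}\Vert_0^2)$ and the coercivity of $\mathcal{A}_{i,h}$ (cf. \eqref{CORC1}) furnishes the dissipative $\Vert\theta_i^n\Vert_Z^2$. For the nonlinear terms, the skew-symmetry of $\mathcal{D}_h$ kills the diagonal convection term $\mathcal{D}_h(\textbf{u}^n;\theta_i^n,\theta_i^n)=0$ (exactly as in the proof of Lemma \ref{l_well_Ph}); the diagonal Coulomb term is bounded by $C\Vert\phi^n\Vert_{1,\infty}\Vert\theta_i^n\Vert_0\Vert\theta_i^n\Vert_Z$, using the $W^{1,\infty}$-regularity of $\phi^n$ and the $L^\infty$-continuity of the projections, and absorbed via Young's inequality; the remaining $\mathcal{C}_h(c_{i,h}^n;\phi^n-\phi_h^n,\theta_i^n)$ and $\mathcal{D}_h(\textbf{u}^n-\textbf{u}_h^n;c_{i,h}^n,\theta_i^n)$ are controlled by $C\Vert\phi^n-\phi_h^n\Vert_Z\Vert\theta_i^n\Vert_Z$ and $C\Vert\textbf{u}^n-\textbf{u}_h^n\Vert_\textbf{X}\Vert\theta_i^n\Vert_Z$, where the a priori bounds $\Vert c_{i,h}^n\Vert_Z,\Vert c_{i,h}^n\Vert_\infty\leq C$ from \eqref{RR1} (guaranteed by the smallness assumption) keep the constants uniform.

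Finally I would multiply by $2\tau$, sum over $n$, and invoke a discrete Gronwall inequality. To close the estimate I would insert the potential bound $\Vert\phi^n-\phi_h^n\Vert_Z^2\leq C(h^{2k}+\sum_i\Vert c_i^n-c_{i,h}^n\Vert_0^2)$ from Lemma \ref{l_esti_phi} (combined with the projection estimate \eqref{err_proj1}) and the velocity bound $\tau\sum_j\Vert\textbf{u}^j-\textbf{u}_h^j\Vert_\textbf{X}^2\leq C(\tau^2+h^{2k})+C\tau\sum_j\sum_i\Vert c_i^j-c_{i,h}^j\Vert_0^2$ from Theorem \ref{T2}; since $\Vert c_i^j-c_{i,h}^j\Vert_0\leq Ch^{k+1}+\Vert\theta_i^j\Vert_0$, all cross-terms reduce to $\tau\sum_j\Vert\theta_i^j\Vert_0^2$ plus $C(\tau^2+h^{2k})$, and Gronwall yields $\sum_i\Vert\theta_i^n\Vert_0^2+\tau\sum_j\sum_i\Vert\theta_i^j\Vert_Z^2\leq C(\tau^2+h^{2k})$. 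A Cauchy--Schwarz step then converts the $\ell^2$-in-time $H^1$ control into the $\tau\sum_j|\cdot|_1$ quantity of the statement, and the triangle inequality with $\rho_i^n$ finishes the argument. I expect the main obstacle to be precisely this closing step: the velocity, concentration and potential errors are mutually coupled, so the estimate does not decouple and one must rely on the smallness-of-data hypothesis—through the uniform a priori bounds \eqref{RR1}--\eqref{RR2}—to ensure that the constants multiplying the cross-terms are small enough for the simultaneous discrete Gronwall argument to close.
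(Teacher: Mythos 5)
Your proposal is correct and follows essentially the same route as the paper's proof: the elliptic projection $\mathcal{P}_{i,h}$ splitting, the discrete evolution equation for the projected error tested with itself, the trilinear splittings of $\mathcal{C}_{h}$ and $\mathcal{D}_{h}$ with skew-symmetry killing the diagonal convection term, the a priori bounds \eqref{RR1}--\eqref{RR2}, Lemma \ref{l_esti_phi} for the potential error, Theorem \ref{T2} for the velocity coupling, and a discrete Gronwall argument to close the coupled system. The only (immaterial) difference is that you cancel $\mathcal{D}_{h}(\textbf{u}^{n};\theta_{i}^{n},\theta_{i}^{n})=0$ and keep $\mathcal{D}_{h}(\textbf{u}^{n}-\textbf{u}_{h}^{n};c_{i,h}^{n},\theta_{i}^{n})$, whereas the paper cancels the diagonal term with $\textbf{u}_{h}^{n}$ and keeps $\mathcal{D}_{h}(\textbf{u}^{n}-\textbf{u}_{h}^{n};\mathcal{P}_{i,h}c_{i}^{n},\theta_{i}^{n})$; both are bounded identically.
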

\begin{proof}
We divide the proof into three steps.

\medskip 
\noindent\textbf{Step 1:  discrete evolution equation for the error.} First, we split the
concentration errors 
%$c_{1}^{n}-c_{1,h}^{n}$ and $c_{2}^{n}-c_{2,h}^{n}$ 
as follows
\begin{equation*}
c_{i}^{n}-c_{i,h}^{n}=c_{i}^{n}-\mathcal{P}_{i,h}c_{i}^{n}+\mathcal{P}_{i,h}c_{i}^{n}-c_{i,h}^{n}:=\vartheta_{i}^{n}+\rho_{i}^{n},
\end{equation*}
where $\rho_{i}^{n}$ is estimated in Lemma \ref{l_D}. Now we estimate $\vartheta_{i}^{n}$. An application of Eqs. \eqref{eq6} and \eqref{Dis_PNP} with $z_{i}=\vartheta_{i}^{n}$ and the definition of the projector
$\mathcal{P}_{i,h}$ given in \eqref{defiPN_Ph} imply
\begin{align}\label{eq_D1}
&\mathcal{M}_{1,h}(\dfrac{\vartheta_{i}^{n}-\vartheta_{i}^{n}}{\tau}, \vartheta_{i}^{n})+\mathcal{A}_{i,h}(\vartheta_{i}^{n},\vartheta_{i}^{n})\nonumber \\
&=\big[\mathcal{M}_{1}(\partial_{t} c_{i}^{n}, \vartheta_{i}^{n}) -\mathcal{M}_{1,h}(\delta_{t}\mathcal{P}_{i,h}c_{i}^{n}, \vartheta_{i}^{n})\big] +e_{i}\big[\mathcal{C}_{h}(\mathcal{P}_{i,h}c_{i}^{n};\phi^{n}, \vartheta_{i}^{n})-\mathcal{C}_{h}(c_{i,h}^{n};\phi_{h}^{n}, \vartheta_{i}^{n}) \big]\nonumber\\ 
&+\big[ \mathcal{D}_{h}(\textbf{u}^{n};\mathcal{P}_{i,h}c_{i}^{n}, \vartheta_{i}^{n})-\mathcal{D}_{h}(\textbf{u}_{h}^{n};c_{i,h}^{n}, \vartheta_{i}^{n})\big]\nonumber +\big[(\mathcal{P}_{i,h}c_{i}^{n}, \vartheta_{i}^{n})_{h}-(c_{i}^{n}, \vartheta_{i}^{n})_{0} \big]\nonumber\\ 
&:=R_{1,i}+R_{2,i}+R_{3,i}+R_{4,i},
\end{align}
\rev{and owing} to the coercivity of $\mathcal{A}_{i,h}$, we have 
\begin{equation*}
\mathcal{A}_{i,h}(\vartheta_{i}^{n},\vartheta_{i}^{n})\geq \beta_{i+1}\vert \vartheta_{i}^{n}\vert_{1}^{2}.
\end{equation*}
\textbf{Step 2: bounding the error terms $R_{1,i}$-$R_{4,i}$.}  For the term $R_{1,i}$ we first notice that by adding zero in the form $\pm\mathcal{M}_{1,h}(\partial_{t} c_{i}^{n}, \vartheta_{i}^{n})$, we can obtain
\[
R_{1,i}= \mathcal{M}_{1}(\partial_{t} c_{i}^{n}, \vartheta_{i}^{n}) -\mathcal{M}_{1,h}(\delta_{t}\mathcal{P}_{i,h}c_{i}^{n}, \vartheta_{i}^{n})=\big[\mathcal{M}_{1}(\partial_{t} c_{i}^{n}, \vartheta_{i}^{n})-\mathcal{M}_{1,h}(\partial_{t} c_{i}^{n}, \vartheta_{i}^{n}) \big]
+\mathcal{M}_{1,h}(\partial_{t} c_{i}^{n}-\delta_{t}\mathcal{P}_{i,h}c_{i}^{n}, \vartheta_{i}^{n}).
\]
To determine upper bounds of the right-hand side terms above, we use Cauchy--Schwarz's inequality, Lemma \ref{l_12}, and the continuity  of the $L^{2}$-projector $\Pi_{k}^{0}$. This gives 
\begin{align*}
|R_{1,i}| & \leq \left( Ch^{k+1}\Vert \partial_{t} c_{i}^{n}\Vert_{k+1}+\alpha_{1}\tau^{1/2}\Vert \partial_{tt} c_{i}\Vert_{L^{2}(L^{2})}\right)\Vert \vartheta_{i}^{n}\Vert_{0}.
\end{align*}
For the term $R_{2,i}$, note that from the definition of $\mathcal{C}_{h}(\cdot;\cdot,\cdot)$ in \eqref{defiDisCPNP}, it holds
\begin{align*}
R_{2,i}&=\mathcal{C}_{h}(\mathcal{P}_{i,h}c_{i}^{n};\phi^{n}, \vartheta_{i}^{n})-\mathcal{C}_{h}(c_{i,h}^{n};\phi_{h}^{n}, \vartheta_{i}^{n}) =(\mathcal{P}_{i,h}c_{i}^{n}\nabla \phi^{n}, \nabla \vartheta_{i}^{n})_{h}-(c_{i,h}^{n}\nabla\phi_{h}^{n},  \nabla \vartheta_{i}^{n})_{h}.
\end{align*}
Note also that, after adding and subtracting some suitable terms, we can rewrite the above expression as 
\begin{align}\label{R2}
R_{2,i}&=\left((\mathcal{P}_{i,h}c_{i}^{n}-c_{i,h}^{n})\nabla\phi^{n},  \nabla \vartheta_{i}^{n}\right)_{h}
+\left( c_{i,h}^{n}\nabla (\phi^{n}-\phi_{h}^{n}), \nabla \vartheta_{i}^{n}\right)_{h}\nonumber\\
&:=R_{2,i}^{(1)}+R_{2,i}^{(2)}.
\end{align}
For $R_{2,i}^{(1)}$, using the H\"older inequality and the continuity of $\Pi_{k-1}^{0}$ and $\boldsymbol{\Pi}_{k-1}^{0,E}$ we can  write 
%$\Pi_{k-1}^{0}$ ($\boldsymbol{\Pi}_{k-1}^{0,E}$) with respect to the $L^{2}$-norm ($L^{\infty}$ and $L^{2}$ norms) we estimate
\begin{align}\label{R2_1}
R_{2,i}^{(1)}&=\big| \left((\mathcal{P}_{i,h}c_{i}^{n}-c_{i,h}^{n})\nabla\phi^{n},  \nabla \vartheta_{i}^{n}\right)_{h}\big|\leq \Vert \boldsymbol{\Pi}_{k-1}^{0}\nabla\phi^{n}\Vert_{\infty} \Vert\Pi_{k-1}^{0}\vartheta_{i}^{n}\Vert_{0}\Vert \boldsymbol{\Pi}_{k-1}^{0}\nabla\vartheta_{i}^{n}\Vert_{0}\nonumber\\
&\leq 
\Vert \phi^{n}\Vert_{1,\infty} \Vert\vartheta_{i}^{n}\Vert_{0}\vert\vartheta_{i}^{n}\vert_{1}.
\end{align}
Regarding the terms $R_{2,i}^{(2)}$, we have 
\begin{align}\label{R2_3}
R_{2,i}^{(2)}&= \bigg| \left( c_{i,h}^{n}\nabla (\phi^{n}-\phi_{h}^{n}), \nabla \vartheta_{i}^{n}\right)_{h}\bigg|\leq \Vert c_{i,h}^{n}\Vert_{\infty}\left( Ch^{k}+\Vert c_{1}^{n}-c_{1,h}^{n}\Vert_{0} +\Vert c_{2}^{n}-c_{2,h}^{n}\Vert_{0} \right)\vert \vartheta_{i}^{n}\vert_{1}.
\end{align}
Substituting Eqs. \eqref{R2_1}, \eqref{R2_3}, into \eqref{R2}, and using inequalities given in \eqref{RR1} and \eqref{RR2},  yield %the following bounds 
%\begin{equation}\label{RR1}
%\Vert c_{1,h}^{n}\Vert_{Z}+\Vert c_{2,h}^{n}\Vert_{Z}\leq \dfrac{\widehat{\alpha}}{6\gamma_{4}},\quad \Vert c_{1,h}^{n}\Vert_{\infty}+\Vert c_{2,h}^{n}\Vert_{\infty}\leq \dfrac{\widehat{\alpha}}{6\gamma_{3}},\quad 
%\Vert\tilde{\phi}_{h}^{n}\Vert_{1,\infty}\leq \frac{\widehat{\alpha}\alpha_{1}}{6\gamma_{3}\beta_{4}}c_{\mathtt{p}}.
%\end{equation}
\begin{align*}
|R_{2,i}|&\leq \big[\Vert \phi^{n}\Vert_{1,\infty} \Vert\vartheta_{i}^{n}\Vert_{0}+\Vert c_{i,h}^{n}\Vert_{\infty}\left( Ch^{k}+\Vert c_{1}^{n}-c_{1,h}^{n}\Vert_{0} +\Vert c_{2}^{n}-c_{2,h}^{n}\Vert_{0} \right) \big]\vert \vartheta_{i}^{n}\vert_{1}\nonumber\\[1mm]
&\left(\frac{\widehat{\beta}\tilde{\alpha}_{1}}{6\tilde{\gamma}_{3}\tilde{\beta}_{4}}c_{\mathtt{p}} \Vert\vartheta_{i}^{n}\Vert_{0}+\dfrac{\widehat{\alpha}}{6\gamma_{3}}\left( Ch^{k}+\Vert \vartheta_{1}^{n}\Vert_{0} +\Vert \vartheta_{2}^{n}\Vert_{0} \right) \right)\vert \vartheta_{i}^{n}\vert_{1}
\end{align*} 
For the term $R_{3,i}$, we note that the definition of $\mathcal{D}_{h}(\cdot;\cdot,\cdot)$ implies 
\begin{align*}
R_{3,i}&=\mathcal{D}_{h}(\textbf{u}^{n};\mathcal{P}_{i,h}c_{i}^{n}, \vartheta_{i}^{n})-\mathcal{D}_{h}(\textbf{u}_{h}^{n};c_{i,h}^{n}, \vartheta_{i}^{n})\nonumber\\
&=\dfrac{1}{2}\big[\left(\textbf{u}^{n}\mathcal{P}_{i,h}c_{i}^{n},\nabla  \vartheta_{i}^{n}\right)_{h}-\left(\textbf{u}_{h}^{n}c_{i,h}^{n},\nabla  \vartheta_{i}^{n}\right)_{h} \big]-\dfrac{1}{2}\big[\left( \textbf{u}^{n}\cdot \nabla \mathcal{P}_{i,h}c_{i}^{n}, \vartheta_{i}^{n}\right)_{h} -\left( \textbf{u}_{h}^{n}\cdot \nabla c_{i,h}^{n}, \vartheta_{i}^{n}\right)_{h} \big].
\end{align*}
We note that the above equation, after adding zero as 
\begin{align*}
0&=\left(\textbf{u}_{h}^{n}\cdot \nabla \vartheta_{i}^{n}, \vartheta_{i}^{n} \right)_{h}-\left(\textbf{u}_{h}^{n}\cdot \nabla \vartheta_{i}^{n}, \vartheta_{i}^{n} \right)_{h}\\
&=\left(\textbf{u}_{h}^{n}\cdot \nabla \vartheta_{i}^{n}, c_{i,h}^{n} \right)_{h}-\left(\textbf{u}_{h}^{n}\cdot \nabla \vartheta_{i}^{n}, \mathcal{P}_{i,h}c_{i}^{n} \right)_{h}
-\left(\textbf{u}_{h}^{n}\nabla c_{i,h}^{n},  \vartheta_{i}^{n}\right)_{h}+\left(\textbf{u}_{h}^{n}\nabla \mathcal{P}_{i,h}c_{i}^{n},  \vartheta_{i}^{n}\right)_{h},
\end{align*}
can be bounded as follows
\begin{equation*}
|R_{3,i}|\leq \dfrac{1}{2}\big[ \left((\textbf{u}^{n}-\textbf{u}_{h}^{n})\mathcal{P}_{i,h}c_{i}^{n}, \nabla  \vartheta_{i}^{n}\right)_{h}- \left((\textbf{u}^{n}-\textbf{u}_{h}^{n})\cdot \nabla \mathcal{P}_{i,h}c_{i}^{n}, \vartheta_{i}^{n}\right)_{h} \big]:=R_{3,i}^{(1)}+R_{3,i}^{(2)}.
\end{equation*}
For $R_{3,i}^{(2)}$, applying the H\"older inequality and the continuity of the projectors $\boldsymbol{\Pi}_{k}^{0}$ with respect to the $L^{2}$ and $L^{4}$-norms we estimate
\begin{align*}%}\label{ER3_2}
|R_{3,i}^{(2)}|= \bigg| \left((\textbf{u}^{n}-\textbf{u}_{h}^{n})\cdot \nabla \mathcal{P}_{i,h}c_{i}^{n}, \vartheta_{i}^{n}\right)_{h}\bigg| &\leq \Vert \boldsymbol{\Pi}_{k}^{0}(\textbf{u}^{n}-\textbf{u}_{h}^{n})\Vert_{0,4} \Vert\boldsymbol{\Pi}_{k-1}^{0}\nabla \mathcal{P}_{i,h}c_{i}^{n}\Vert_{0} \Vert \Pi_{k}^{0}\vartheta_{i}^{n}\Vert_{0,4}\nonumber\\
&\leq \Vert \textbf{u}^{n}-\textbf{u}_{h}^{n}\Vert_{0,4} \Vert \nabla \mathcal{P}_{h}c_{i}^{n}\Vert_{0} \Vert \vartheta_{i}^{n}\Vert_{0,4}.
\end{align*}
Using the triangle inequality and Lemma \ref{l_D}, we end up with the following upper bound for the second term on the right-hand side of the above inequality %and $E\in\mathcal{T}_{h}$
\begin{align*}
\Vert \nabla \mathcal{P}_{i,h}c_{i}^{n}\Vert_{0}\leq \Vert \nabla c_{i}^{n}\Vert_{0}+\Vert \nabla (\mathcal{P}_{i,h}c_{i}^{n}-c_{i}^{n})\Vert_{0}\leq \Vert c_{i}^{n}\Vert_{1},
\end{align*} 
%\begin{align}\label{boundInf}
%\|\mathbf{\Pi}_{k-1}^{0, E} \nabla \mathcal{P}_{c} c_{1}^{n}\|_{\infty, E} &\leq  h_{E}^{-1}\|\mathbf{\Pi}_{k-1}^{0, E} \nabla \mathcal{P}_{h} c_{1}^{n}\|_{0, E} \leq  h_{E}^{-1}\|\nabla \mathcal{P}_{h} c_{1}^{n}\|_{0, E} \nonumber\\
%& \leq \left(\|\nabla c_{1}^{n}\|_{\infty, E}+h_{E}^{-1}\|\nabla \mathcal{P}_{h} c_{1}^{n}-\nabla c_{1}^{n}\|_{0, E}\right) \leq \|\nabla c_{1}^{n}\|_{\infty, E}.
%\end{align}
%Recalling Theorem \ref{T2}, and Lemmas \ref{l_esti_phi} and \ref{l_D}, we can assert that 
%\begin{align*}
%\Vert \textbf{u}^{n}-\textbf{u}_{h}^{n}\Vert_{0}
%&\leq C(\tau +h^{k}) +\tau\sum_{j=1}^{n}\left(\Vert\vartheta_{c_{1}}^{n}\Vert_{0}+ \Vert\vartheta_{c_{2}}^{n}\Vert_{0} \right).
%\end{align*}
%Next we look again at the induction hypothesis  \eqref{hypo} with $r=1,\cdots, n-1$, and proceed to sum up for $r$, which  gives the relation 
%\begin{equation*}
%\tau \sum_{j=1}^{n-1}\left( \Vert \vartheta_{c_{1}}^{j}\Vert_{0}+\Vert \vartheta_{c_{2}}^{j}\Vert_{0}\right)\leq t_{F}M,
%\end{equation*}
which, together with the Sobolev embedding $H^{1}\subset L^{4}$, in turn implies
\begin{equation*}
|R_{3,i}^{(2)}|\leq \Vert \textbf{u}^{n}-\textbf{u}_{h}^{n}\Vert_{\textbf{X}} \Vert c_{i}^{n}\Vert_{1} \Vert \vartheta_{i}^{n}\Vert_{1}.
\end{equation*}
%\begin{equation}\label{un_1}
%\Vert \textbf{u}^{n}-\textbf{u}_{h}^{n}\Vert_{0}\leq  C(\tau +h^{k}).
%\end{equation}
%Substituting Eqs. \eqref{boundInf} and \eqref{un_1} into Eq. \eqref{ER3_2}, we readily get 
%\begin{equation*}
%R_{3}^{2}\leq C(\tau +h^{k})\Vert \vartheta_{c_{1}}^{n}\Vert_{0}.
%\end{equation*}
Bounding the term $R_{3,i}^{(1)}$ analogously to $R_{3,i}^{(2)}$, we can confirm that 
\begin{equation*}
R_{3,i}^{(1)}\leq \Vert \textbf{u}^{n}-\textbf{u}_{h}^{n}\Vert_{\textbf{X}} \Vert c_{i}^{n}\Vert_{1} \Vert \vartheta_{i}^{n}\Vert_{1}.
\end{equation*}
Thus using \eqref{RR2} we arrive at the bounds 
\begin{align*}
R_{3,i}&\leq \dfrac{\widehat{\beta}}{6\tilde{\gamma}_{4}} \Vert \textbf{u}^{n}-\textbf{u}_{h}^{n}\Vert_{\textbf{X}} \Vert \vartheta_{i}^{n}\Vert_{1}.
\end{align*}

\medskip 
\noindent\textbf{Step 3:  error estimate at  a generic $n$-th time step.}
We now insert the bounds on $R_{1,i}-R_{3,i}$ in \eqref{eq_D1}, yielding 
\begin{align}\label{St3_1}
\dfrac{1}{2\tau}\left(\Vert \vartheta_{i}^{n}\Vert_{0}^{2}-\Vert \vartheta_{i}^{n-1}\Vert_{0}^{2} \right)+\beta_{i+1}\vert \vartheta_{i}^{n}\vert_{1}^{2}&\leq \varpi_{1,i}\Vert \vartheta_{i}^{n}\Vert_{0}+\big[C_{1}\Vert \textbf{u}^{n}-\textbf{u}_{h}^{n}\Vert_{\textbf{X}}+C_{2}( \Vert \vartheta_{1}^{n}\Vert_{0}+\Vert \vartheta_{2}^{n}\Vert_{0})\big]\vert \vartheta_{i}^{n}\vert_{1}\nonumber\\
&\leq \dfrac{1}{2}\big[\varpi_{1,i}^{2}+\Vert \vartheta_{i}^{n}\Vert_{0}^{2}\big]+\epsilon\vert \vartheta_{i}^{n}\vert_{1}^{2}+\big[ \Vert \vartheta_{1}^{n}\Vert_{0}+\Vert \vartheta_{2}^{n}\Vert_{0}\big]^{2},
\end{align}
with positive scalars
\begin{align*}
\varpi_{1,i}&\leq Ch^{k+1}\Vert \partial_{t} c_{i}^{n}\Vert_{k+1}+\alpha_{1}\tau^{1/2}\Vert \partial_{tt} c_{i}\Vert_{L^{2}(L^{2})},\quad\quad\quad C_{1}\leq \dfrac{\widehat{\beta}}{6\tilde{\gamma}_{4}},\quad C_{2}\leq \max\{\frac{\widehat{\beta}\tilde{\alpha}_{1}}{6\tilde{\gamma}_{3}\tilde{\beta}_{4}}c_{\mathtt{p}},\dfrac{\widehat{\alpha}}{6\gamma_{3}}\}
\end{align*}
\rev{Summing on $n$ on both sides of \eqref{St3_1},} where $0\leq n\leq N$, allows us to obtain 
% the following inequality by means of the telescoping technique
\rev{
\begin{align*}
\dfrac{1}{2\tau}\left(\Vert \vartheta_{i}^{n}\Vert_{0}^{2}-\Vert \vartheta_{i}^{0}\Vert_{0}^{2} \right)+\sum_{j=0}^{n}\vert \vartheta_{i}^{j}\vert_{1}^{2}&\leq \sum_{j=0}^{n}\varpi_{1,i}^{2}+\max\{C_{1},C_{2}\}\sum_{j=0}^{n}\left( \Vert \vartheta_{1}^{j}\Vert_{0}^{2}+\Vert \vartheta_{2}^{j}\Vert_{0}^{2}+\Vert \textbf{u}^{j}-\textbf{u}_{h}^{j}\Vert_{\textbf{X}}^{2} \right).
\end{align*}
}
Summing up these inequalities and employing Theorem \ref{T2} and Gronwall's inequality, it finally gives 
\begin{equation*}
\Vert \vartheta_{1}^{j}\Vert_{0}^{2}+\Vert \vartheta_{2}^{j}\Vert_{0}^{2}+\tau\sum_{n=0}^{j}\left(\vert \vartheta_{1}^{n}\vert_{1}^{2}+\vert \vartheta_{2}^{n}\vert_{1}^{2}\right)\leq \tau\sum_{n=0}^{j}\left(\varpi_{1,1}^{2}+\varpi_{1,2}^{2} \right).
\end{equation*}
\rev{The sought result follows from a similar procedure as in Theorem \ref{T2} and employing Lemma \ref{l_D}.} 
\end{proof}

%%%%%%%%%%%%%%%%%%%%%----------------------
\section{Numerical Results}\label{s7}
In this section, we provide numerical experiments to show the performance of the proposed VEM for coupled PNP/NS equations. In all examples, we use the virtual spaces $(\textbf{Z}_{h},Y_{h})$ for concentrations and electrostatic potential and the pair  ($\textbf{X}_{h}$, $Q_{h}$) for velocity and pressure, specified by the polynomial degree $k=2$, unless otherwise stated. \rev{The nonlinear fully-discrete system is linearized using a Picard algorithm and the fixed-point iterations are terminated when the $\ell^2$-norm of the global incremental discrete solutions drop below a fixed tolerance of 1e-08.} 

\subsection{Example 1: Accuracy assessment}
\rev{First we apply the fully discrete VEM to validate all theoretical convergence results shown in Theorems \ref{T2} and \ref{T3}. For this we consider the following closed-form} solutions to the coupled PNP/NS problem 
\begin{equation}\label{eq:manuf}
\left\{\begin{array}{l}
c_{1}(x,y,t) = \sin(2\pi x)\sin(2\pi y)\sin(t),\qquad 
c_{2}(x,y,t) = \sin(3\pi x)\sin(3\pi y)\sin(2t),\\ 
\phi(x,y,t) = \sin(\pi x)\sin(\pi y)(1-\exp(-t)),\\[.14ex] 
\textbf{u}(x,y,t) = \begin{pmatrix}  -0.5\exp(t)\cos(x)^{2}\cos(y)\sin(y)\\0.5\exp(t)\cos(y)^{2}\cos(x)\sin(x)  \end{pmatrix},\quad 
p(x,y,t) = \exp(t)(\sin(x)-\sin(y)),
\end{array}\right.
\end{equation}
 defined over the computational domain $\Omega =(0,1)^{2}$ and the time interval $[0,0.5]$. The exact velocity is divergence-free and the problem is modified including non-homogeneous forcing and source terms on the momentum and concentration equations constructed using the manufactured solutions \eqref{eq:manuf}. The model parameters  are taken as $\kappa_{1},\kappa_{2},\epsilon = 1$.
 \begin{figure}[t!]
\begin{center}	
\includegraphics[height=0.45\textwidth]{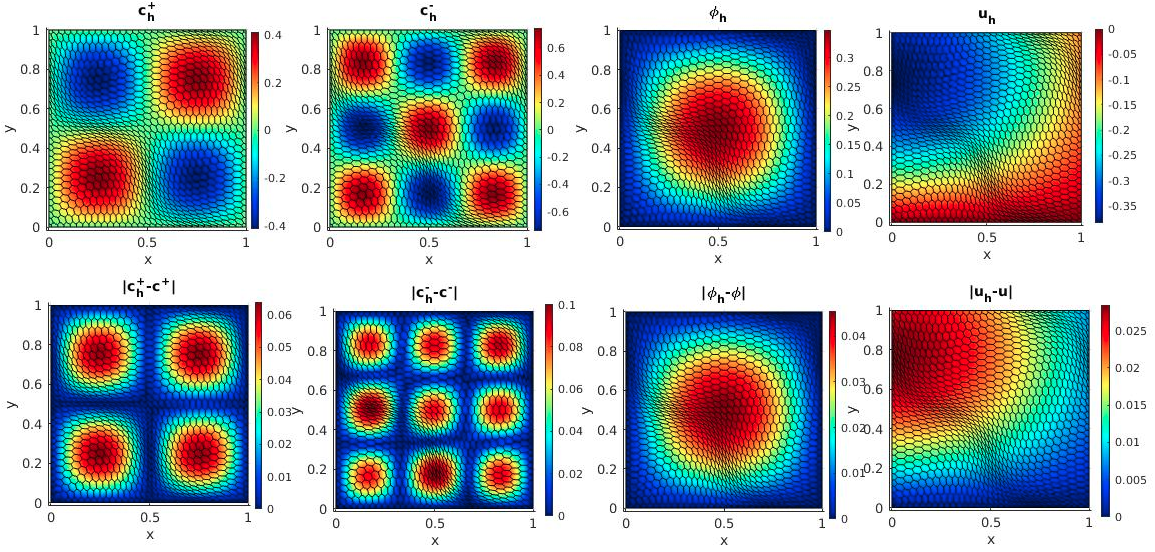}\\
\includegraphics[height=0.325\textwidth]{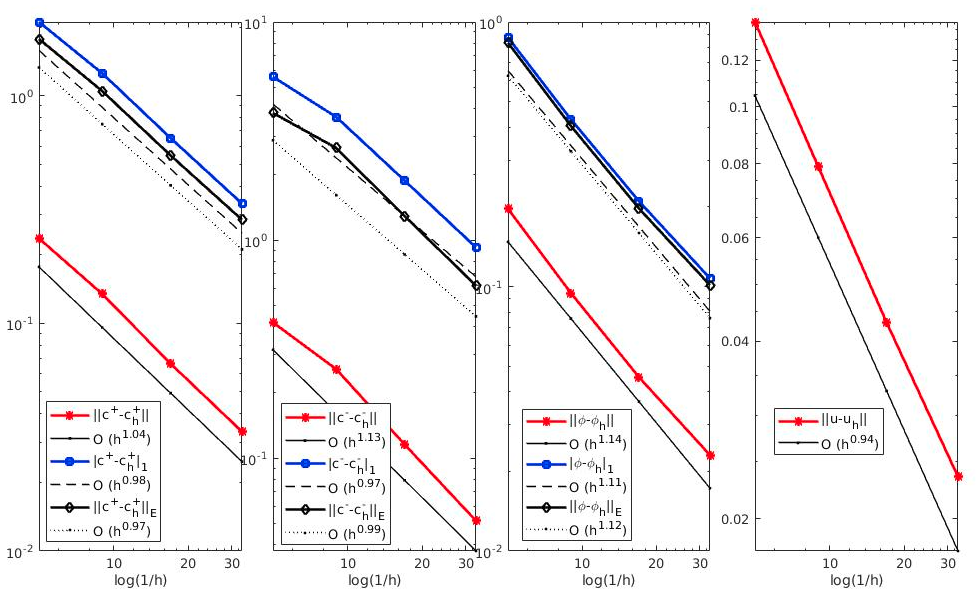}\\
\includegraphics[height=0.325\textwidth]{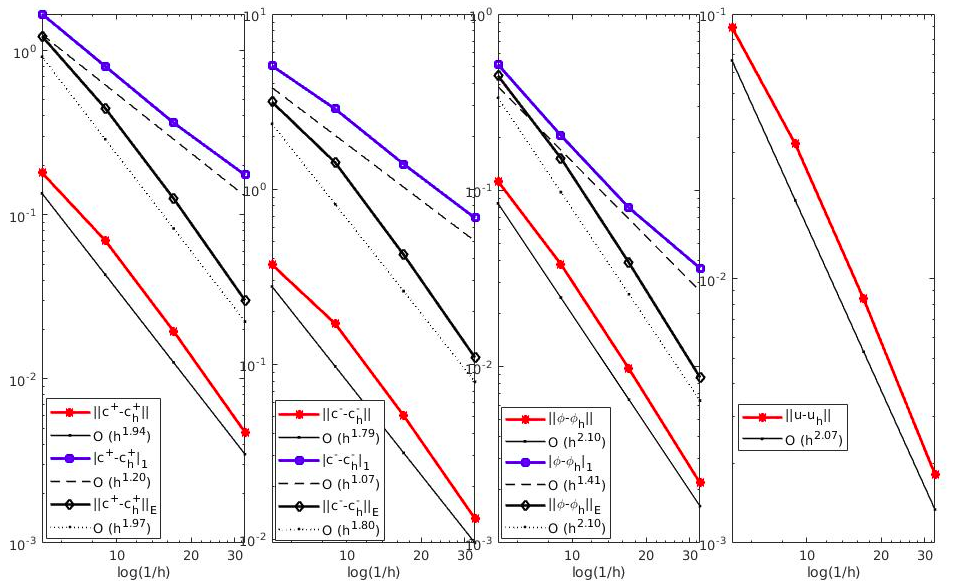}
\end{center}
	\vspace{-4mm}
	\caption{Example 1. Snapshots of numerical solutions $\{c_{1,h}^{n},c_{2,h}^{n},\phi_{h}^{n},\textbf{u}_{h}^{n}\}$ and its absolute error (top) and error history for the verification of convergence \rev{with $\tau =h$ (middle row) and
          $\tau = h^2$ (bottom)}.}\label{fig0}
\end{figure}
%-----------------------

Approximate errors (computed with the aid of suitable projections) and the associated convergence rates generated on a sequence of successively refined grids (uniform hexagon meshes) are displayed in \rev{Fig. \ref{fig0}} by setting $\tau =h$ and $\tau =h^{2}$. One can see the second-order convergence for the total errors of all individual variables in the $L^{2}$-norm, and the first-order convergence for errors of concentrations and potential in the $H^{1}$-seminorm, which are in agreement with the theoretical analysis. \rev{The top panels of Fig. \ref{fig0} show samples of coarse-mesh approximate solutions together with absolute errors.}

\subsection{Example 2: Dynamics of the PNP/NS equations with initial discontinuous concentrations}
Now we investigate the dynamics of the system on the unit square with an initial value as follows (see \cite{Andreas09,Huadong17,Huadong18})
\begin{equation*}
c_{1,0}=\left\{\begin{array}{lll}
{1}&&{(0,1)^{2}\backslash\{(0,0.75)\times(0,1)\cup (0.75,1)\times (0,\frac{11}{20})\}},\\
\text{1e{-06}}&&\text{otherwise},
\end{array}\right.
\end{equation*}
\begin{equation*}
c_{2,0}=\left\{\begin{array}{lll}
{1}&&{(0,1)^{2}\backslash\{(0,0.75)\times(0,1)\cup (0.75,1)\times (\frac{9}{20},1)\}},\\
\text{1e{-06}}&&\text{otherwise}.
\end{array}\right.
\end{equation*}
and $\textbf{u}_{0}=\textbf{0}$. The  discontinuity of the initial concentrations represents an interface between the the electrolyte and the solid surfaces where  electrosmosis (transport of ions  from the electrolyte towards the solid surface) is expected to occur.  We consider a fixed time step of $\tau=$1e-03 and a coarse polygonal mesh with mesh size $h=1/64$. We show snapshots of the numerical solutions (concentrations and electrostatic potential)  at times $t_{F}=$2e-03, $t_{F}=$2e-02 and $t_{F}=$0.1 in Fig \ref{fig2}. All plots  confirm that the obtained results qualitatively match with those obtained in, e.g.,  \cite{Andreas09,Huadong17,Huadong18} (which use similar decoupling schemes).  Moreover, Fig. \ref{fig2a} shows that the total discrete energy is decreasing and the numerical solution is mass preserving during the evolution, which verifies numerically our findings from Theorems \ref{T:cons} and \ref{T:En}.

\begin{figure}[t!]
	\begin{center}
    \includegraphics[width=0.35\textwidth]{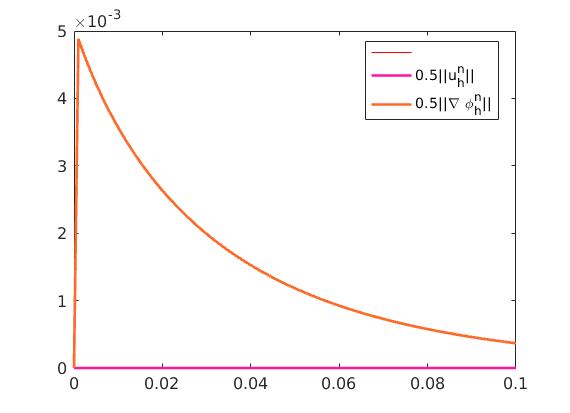}
     \includegraphics[width=0.35\textwidth]{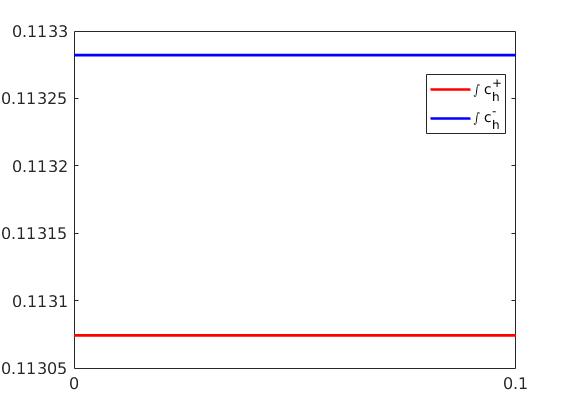}
    \end{center}

   \vspace{-4mm}
	\caption{Example 2.  Evolution of electric (and kinetic) energy (left) and global masses (right) with $\tau =1$e-3. }\label{fig2a}
\end{figure}

\begin{figure}[t!]
	\begin{center}
    \includegraphics[width=0.19\textwidth]{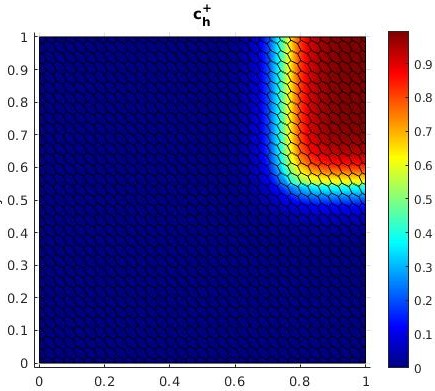}
    \includegraphics[width=0.19\textwidth]{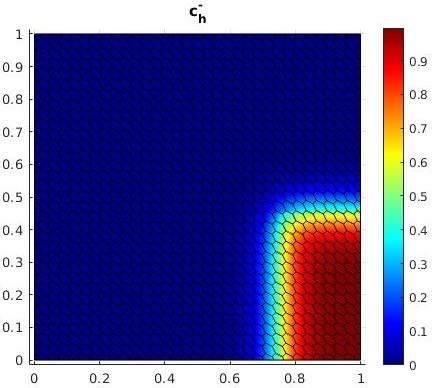}
    \includegraphics[width=0.19\textwidth]{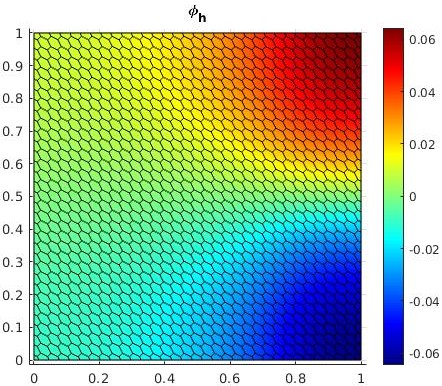}
   \includegraphics[width=0.19\textwidth]{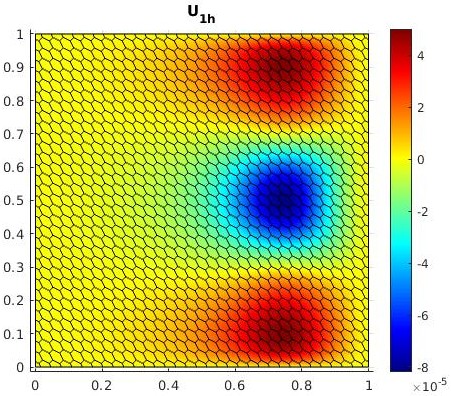}
    \includegraphics[width=0.19\textwidth]{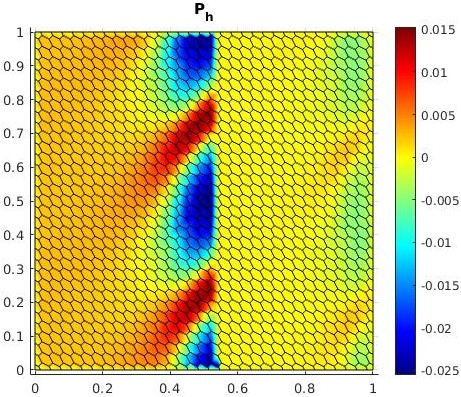}\\
   \includegraphics[width=0.19\textwidth]{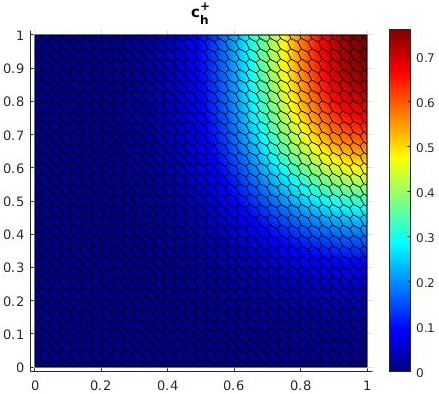}
    \includegraphics[width=0.19\textwidth]{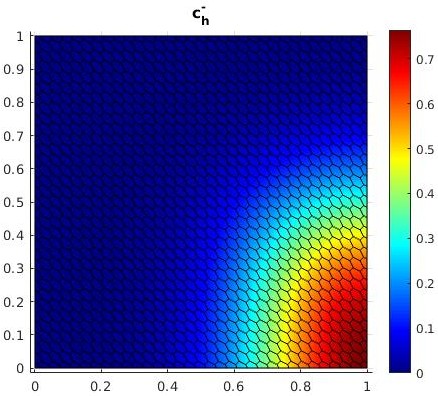}
    \includegraphics[width=0.19\textwidth]{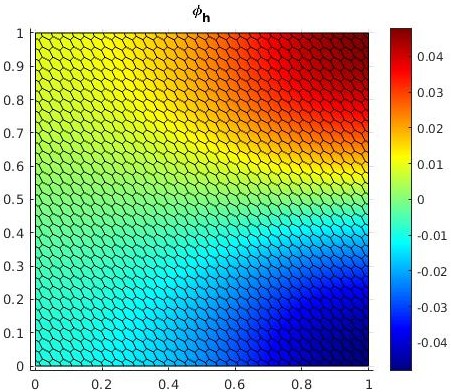}
   \includegraphics[width=0.19\textwidth]{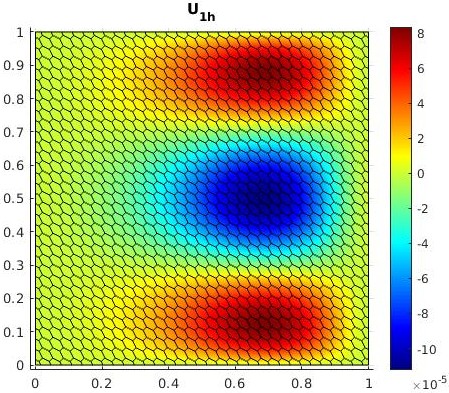}
   \includegraphics[width=0.19\textwidth]{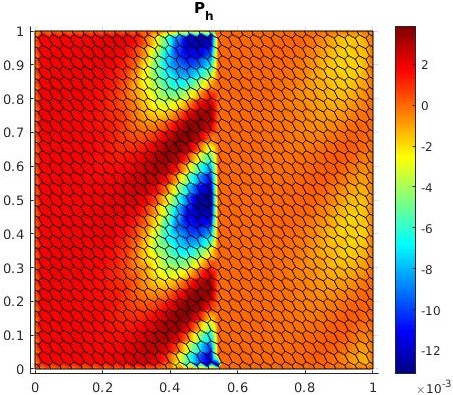}\\
    \includegraphics[width=0.19\textwidth]{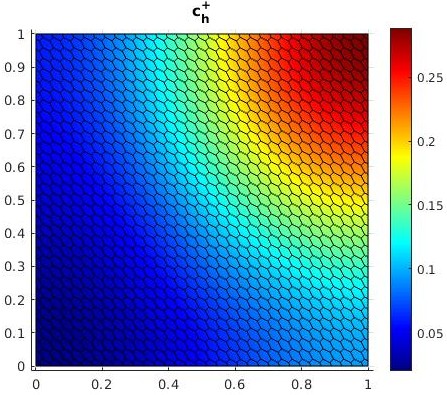}
   \includegraphics[width=0.19\textwidth]{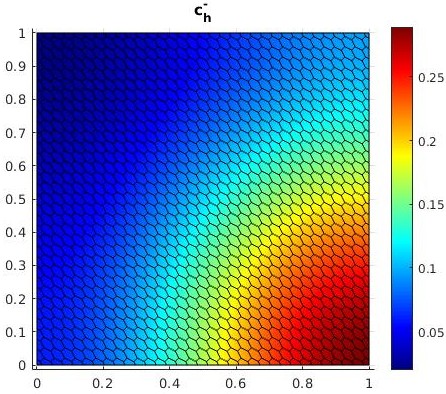}
    \includegraphics[width=0.19\textwidth]{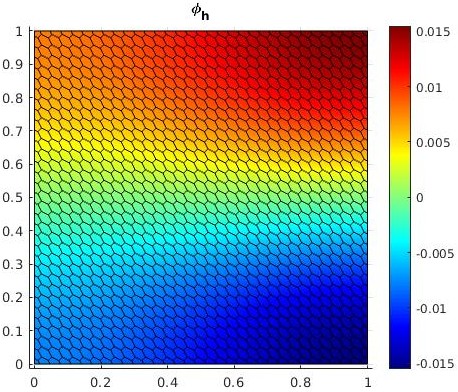}
   \includegraphics[width=0.19\textwidth]{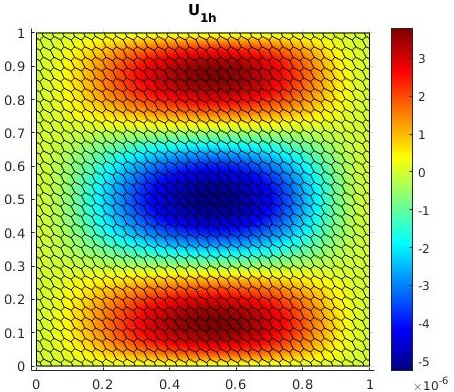}
    \includegraphics[width=0.19\textwidth]{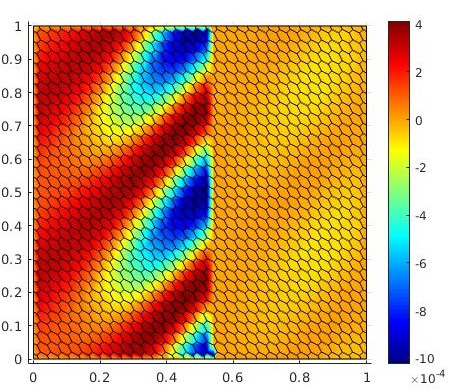}
    \end{center}
	\caption{Example 2. Snapshots of the approximate solutions $\{c_{1,h}^{n},c_{2,h}^{n},\phi_{h}^{n}, \textbf{u}_{h}^{n},p_{h}^{n}\}$ obtained with the proposed VEM, and shown at times $t_{F}=$2e-03 (top row), $t_{F}=$2e-02 (middle) and $t_{F}=$1e-01  (bottom). }\label{fig2}
\end{figure}
%----------------------------

\subsection{Example 3: Application to water desalination}
The desalination of alternative waters, such as brackish and seawater, municipal and industrial wastewater, has become an increasingly important strategy for addressing water shortages and expanding traditional water supplies. Electrodialysis (ED) is a membrane desalination technology that uses semi-permeable ion-exchange membranes (IEMs) to selectively separate salt ions in water under the influence of an electric field \cite{Xu13}. An ED structure consists of pairs of cation-exchange membranes (CEMs) and anion-exchange membranes (ARMs), alternately arranged between a cathode and an anode (Figure \ref{fig00-01}, left). The driving force of ion transfer in the electrodialysis process is the electrical potential difference's applied between an anode and a cathode which causes ions to be transferred out of the aquatic environment and water purification. When an electric field is applied by the electrodes, the appearing charge at the anode surface becomes positive (and at the cathode surface becomes negative). The applied electric field causes positive ions (cations) to migrate to the cathode and negative ions (anions) to the anode. During the migration process, anions pass through anion-selective membranes but are returned by cation-selective membranes. A similar process occurs for cations in the presence of cationic and anionic membranes. As a result of these events, the ion concentration in different parts intermittently decreases and increases. Finally, an ion-free dilute solution and a concentrated solution as saline or concentrated water are out of the system.
In what follows we investigate the effects of the applied voltage and salt concentration on electrokinetic instability appearing  in ED processes. For this purpose, simulations of a binary electrolyte solution near a CEM are conducted. Since CEMs and AEMs have similar hydrodynamics and ion transport, the present findings can be applied to AEMs.

\begin{figure}[t!]
	\centering
\includegraphics[height=0.32\textwidth]{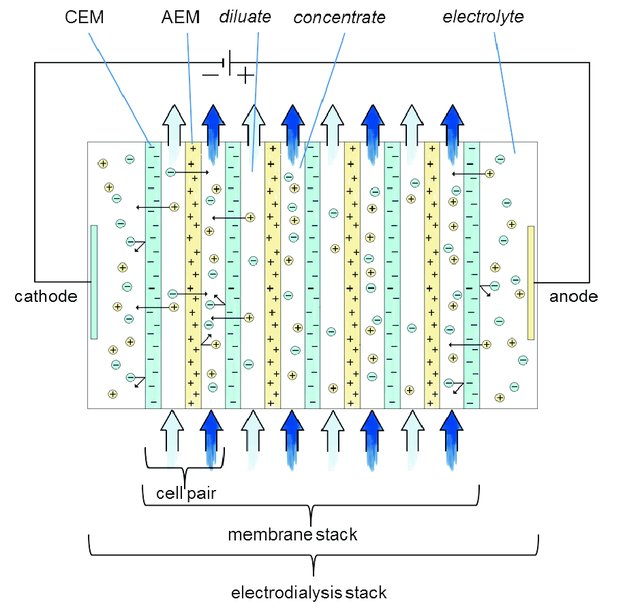}
\includegraphics[height=0.32\textwidth]{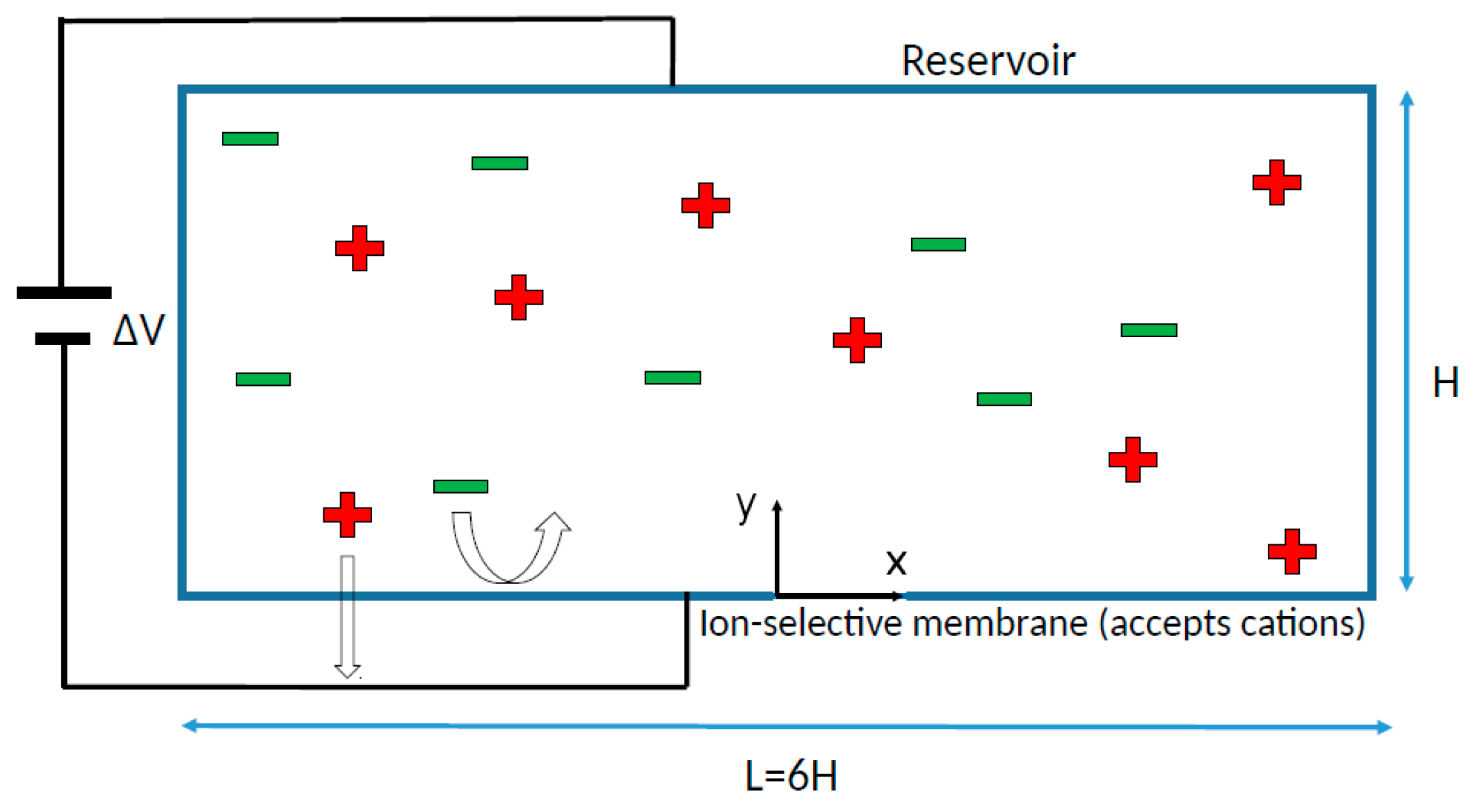}
	\caption{Example 3. Schematic of an electrodialysis stack \cite{Galama} (left) and simplified configuration of a 2D problem with ion-selective membrane from \cite{Mani13} (right). }\label{fig00-01}
\end{figure}

The simulations presented here are based on the 2D configuration used in  \cite{Mani13} (see also \cite{Karatay15,wang17}), consisting of a reservoir on top and a CEM at the bottom that allows cationic species to pass-through (Fig. \ref{fig00-01}, right). An electric field, i.e., $E=\frac{\Delta V}{H}$, is applied in the orientation perpendicular to the membrane and the reservoir. Here, we set $\Omega =[0,4]\times [0,1]$ and consider the  NS momentum balance equation using the following non-dimensionalization 
\[
\dfrac{1}{S_{c}}\left(\textbf{u}_{t} + (\textbf{u}\cdot\nabla)\textbf{u}\right)-\Delta \textbf{u}+\nabla p+\dfrac{\kappa}{\epsilon}(c_{1}-c_{2})\nabla \phi =\textbf{0}.
\]
The model parameters common to all considered cases are the Schmidt number 
$S_{c}=$1e-03, the rescaled Debye length $\epsilon =$2e-03, and  the electrodynamics coupling constant $ \kappa =0.5$. 
The initial velocity is zero, and the initial concentrations are determined by the randomly perturbed fields, that is:
\[
c_{1}(x,y,0)=\alpha\text{ rand}(x,y)(2-y),\quad\quad c_{2}(x,y,0)=\alpha\text{ rand}(x,y)y,
\]
where $\text{rand}(x,y)$ is a uniform random perturbation between $0.98$ and $1$. Mixed boundary conditions are set at the top $\partial\Omega_{top}$ and bottom $\Omega_{bot}$ segments of the boundary, and periodic boundary conditions on the vertical walls  $\partial\Omega_{lr}$ 
\begin{equation*}
\left\{\begin{array}{ll}
c_{1}=\alpha,\quad c_{2}=\alpha,\quad \phi =\beta,\quad \textbf{u}=\textbf{0},&\text{on}~\partial\Omega_{top},\\[1mm]
c_{1}=2\alpha,\quad \nabla c_{2}\cdot \textbf{n}=0,\quad \phi =0,\quad \textbf{u}=\textbf{0},& \text{on}~\partial\Omega_{bot},\\[1mm]
u(4,y,t)=u(0,y,t),\quad \forall u\in \{c_{1},c_{2},\phi,\textbf{u} \},& \text{on}~\partial \Omega_{lr}.
\end{array}\right.
\end{equation*}
where $\alpha$ and $\beta$ assume different values in the different simulation cases (see Table \ref{Tab1}). We utilize triangular meshes which are sufficiently refined towards the ion-selective membrane (i.e., $y=0$). The number of cells and the computational time step are listed in Table \ref{Tab1}, right columns.

\begin{table}
\centering
	{\footnotesize\begin{tabular}{l|ll|ll}
		\hline\hline
		{$\text{Case}$}&\hspace{.5cm}{$\alpha$}&\hspace{.5cm}{$\beta$}&\hspace{.5cm}{$\text{Number of
elements}$}&\hspace{.5cm}{$\text{Time step}$}\\
		\hline\hline
		 {3A$: \text{Baseline}$}&\hspace{.5cm}{$1$}&\hspace{.5cm}{$30,40,120$}&\hspace{.5cm}\footnotesize{$32\times 32$}&\hspace{.5cm}\footnotesize{1e-06}\\
		 {3B$: \text{Low}$}&\hspace{.5cm}{$10$}&\hspace{.5cm}{$120$}&\hspace{.5cm}\footnotesize{$400\times 100$}&\hspace{.5cm}\footnotesize{1e-07}\\
		 {3C$: \text{Medium}$}&\hspace{.5cm}{$100$}&\hspace{.5cm}{$120$}&\hspace{.5cm}\footnotesize{$400\times 100$}&\hspace{.5cm}\footnotesize{1e-08}\\
		\hline\hline
	\end{tabular}}
	\caption{Example 3. Model and discretization parameters to be varied according to each simulated case.}\label{Tab1}
\end{table}

\medskip
\noindent\textbf{Example 3A: Effect of the applied voltage.}  
Figs. \ref{fig3A1} and \ref{fig3A2} show images of the anion concentration, velocity, and electric potential for $V=30$ and $V=40$, representative of the 2D  baseline simulation. One can see, in the beginning, at times $t=$3e-03 for $V=30$ (and $t=$8e-04 for $V=40$), the solutions are still quite similar to the initial condition. As time progresses, electrokinetic instabilities (EKI) appear near the surface of the membrane. As a consequence of the EKI, the contours of vertical velocity show that disturbances are increasing. Higher voltages cause the instability to set in earlier. A periodic structure above the membrane can be observed after the disturbance amplitudes are high enough. Structures are seen at more anion concentrations than electrical potentials. The disturbances at times $t=$2e-02 ($V=30$) and 7e-03 ($v=40$) are strong enough, which cause a significant distortion in the electrical potential. The merging of neighboring structures leads to the formation of larger structures, as evidenced in the snapshot at 5e-02 for $V=40$.

\begin{figure}[t!]
	\begin{center}
   \includegraphics[width=0.325\textwidth]{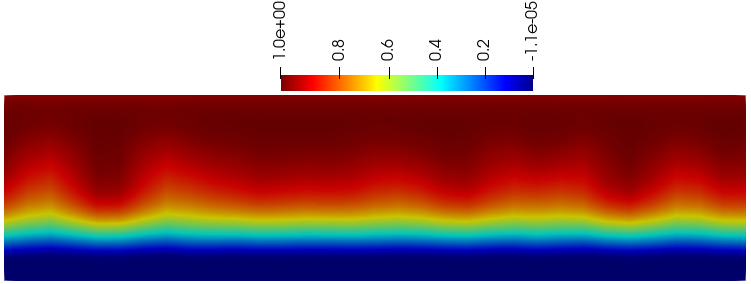}
     \includegraphics[width=0.325\textwidth]{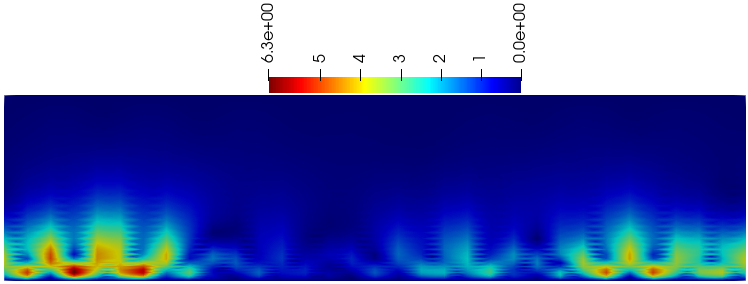}
     \includegraphics[width=0.325\textwidth]{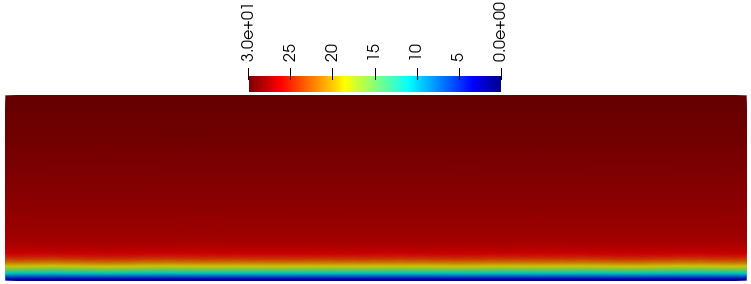}\\
   \includegraphics[width=0.325\textwidth]{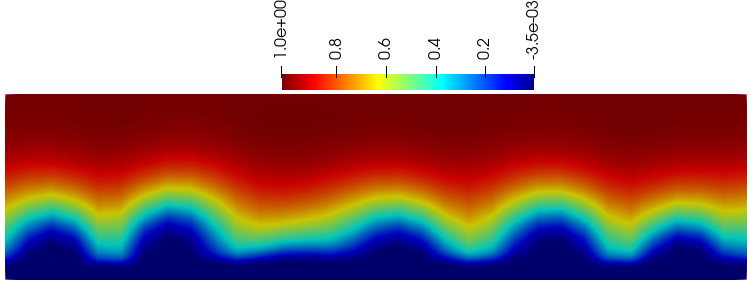}
     \includegraphics[width=0.325\textwidth]{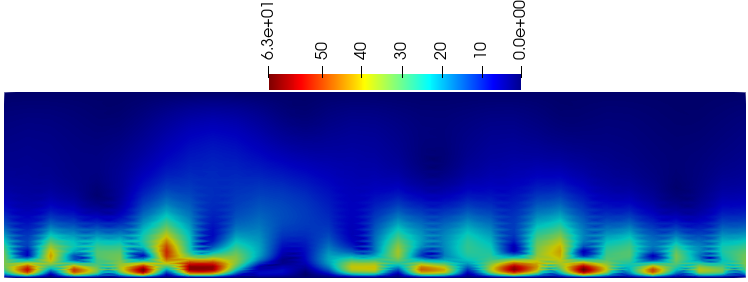}
     \includegraphics[width=0.325\textwidth]{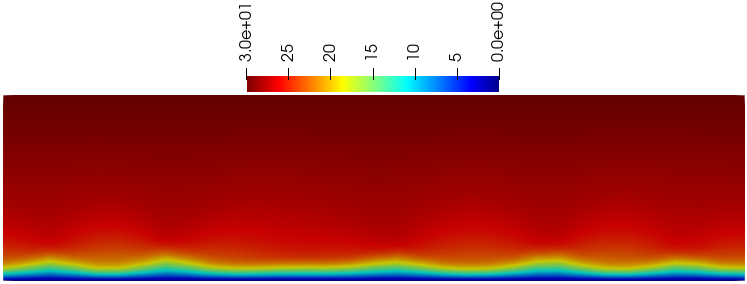}\\
    \includegraphics[width=0.325\textwidth]{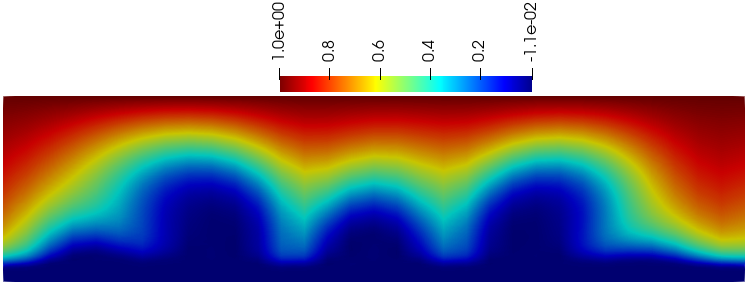}
   \includegraphics[width=0.325\textwidth]{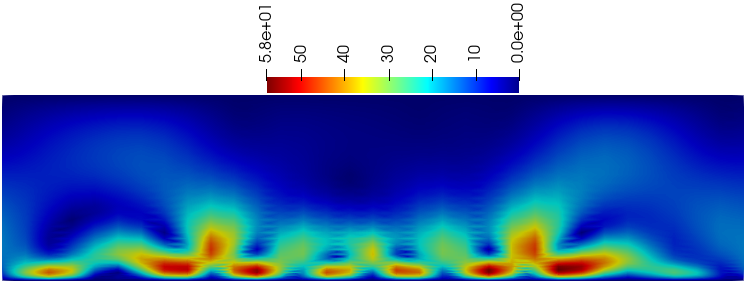}
     \includegraphics[width=0.325\textwidth]{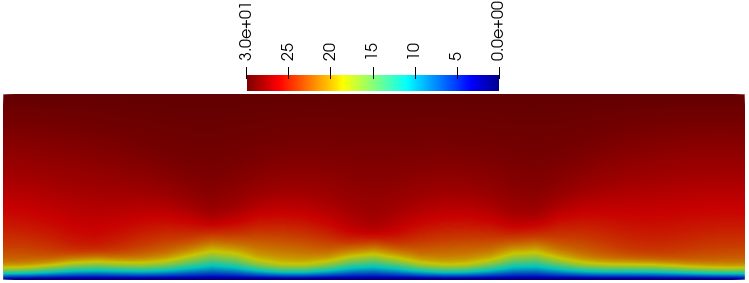}
\end{center}	

\vspace{-4mm}
\caption{Example 3A. Snapshots of numerical solutions $c_{2,h}$ (left), $\textbf{u}_{h}$ (middle) and $\phi_{h}$ (right) using the  \rev{proposed} VEM at times $t_{F}=$3e-03, $t_{F}=$2e-02, %$t_{F}=$5e-02
  and $t_{F}=$8e-02  with voltage $V=30$. }\label{fig3A1}
\end{figure}
%-----------------------------------
%----------------------------------
\begin{figure}[t!]
	\begin{center}
\includegraphics[width=0.325\textwidth]{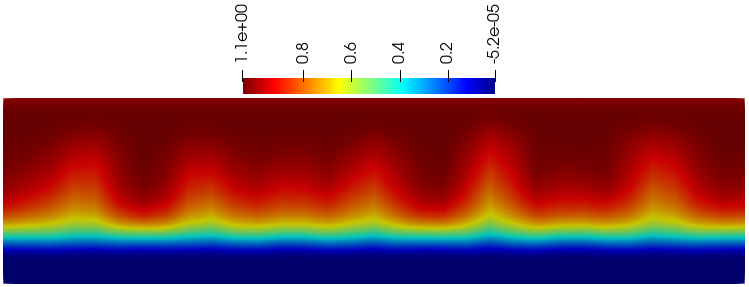}
     \includegraphics[width=0.325\textwidth]{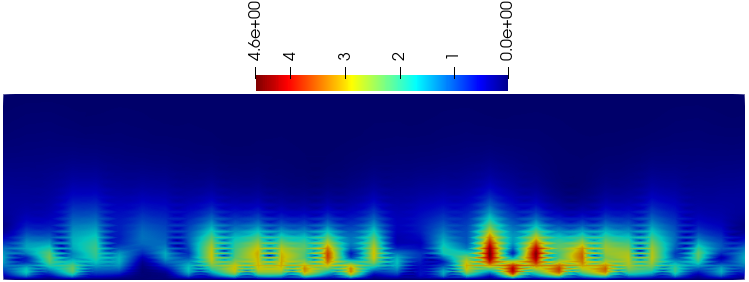}
     \includegraphics[width=0.325\textwidth]{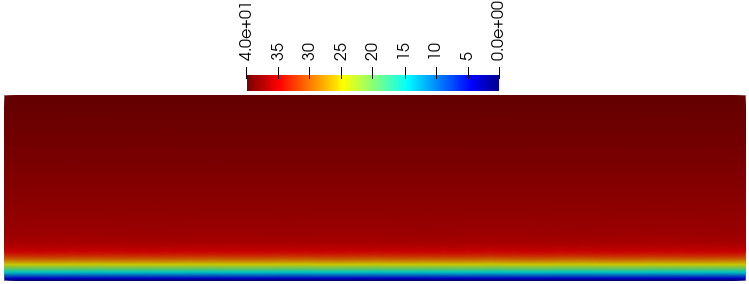}\\
\includegraphics[width=0.325\textwidth]{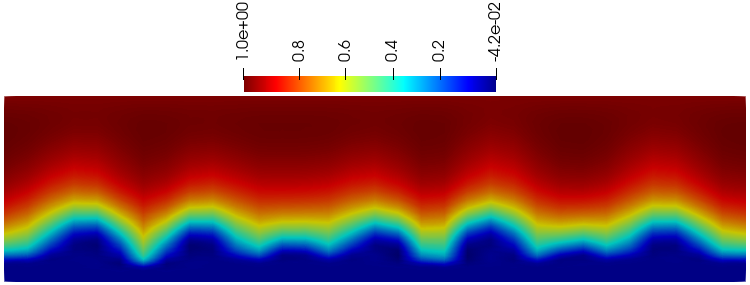}
     \includegraphics[width=0.325\textwidth]{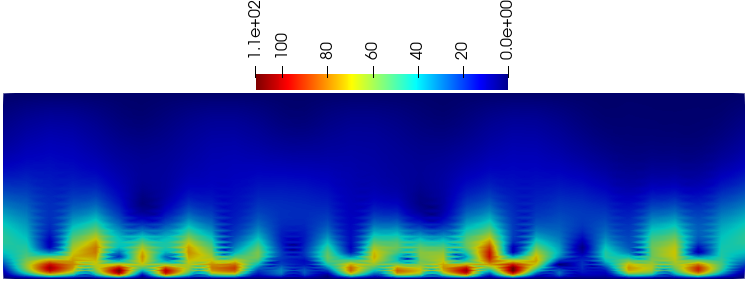}
     \includegraphics[width=0.325\textwidth]{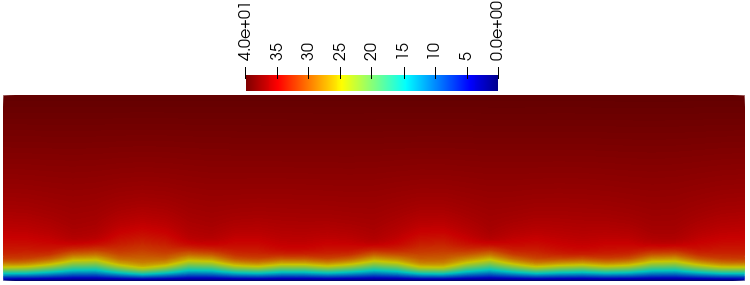}\\
\includegraphics[width=0.325\textwidth]{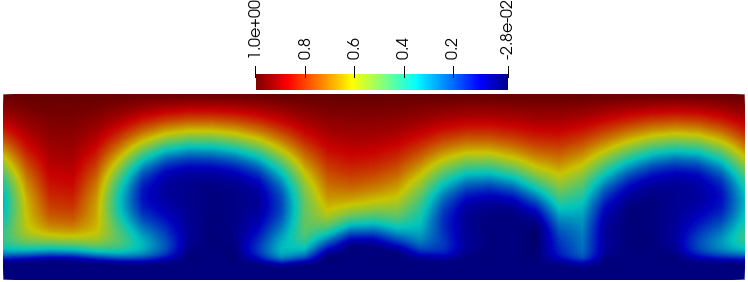}
\includegraphics[width=0.325\textwidth]{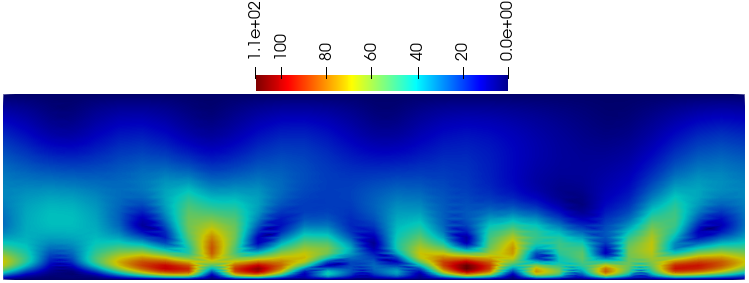}
\includegraphics[width=0.325\textwidth]{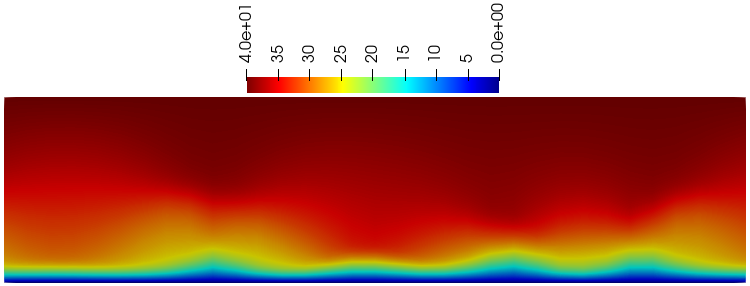}
\end{center}
	
\vspace{-4mm}
\caption{Example 3A. Snapshots of numerical solutions $c_{2,h}$ (left), $\textbf{u}_{h}$ (middle) and $\phi_{h}$ (right) using the  \rev{proposed} VEM at times $t_{F}=$3e-03, $t_{F}=$2e-02, %$t_{F}=$5e-02
  and $t_{F}=$8e-02  with voltage $V=40$. }\label{fig3A2}
\end{figure}
%-----------------------------------
%----------------------------------
\begin{figure}[t!]
	\begin{center}
\includegraphics[width=0.325\textwidth]{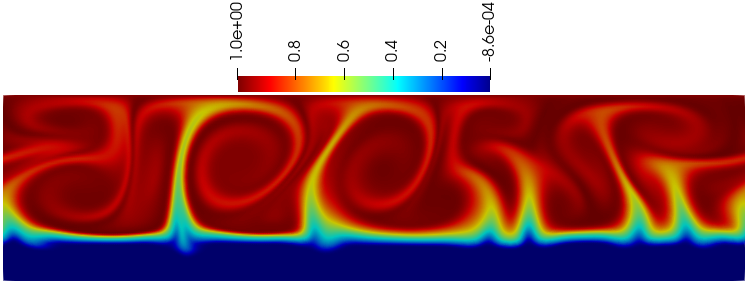}
     \includegraphics[width=0.325\textwidth]{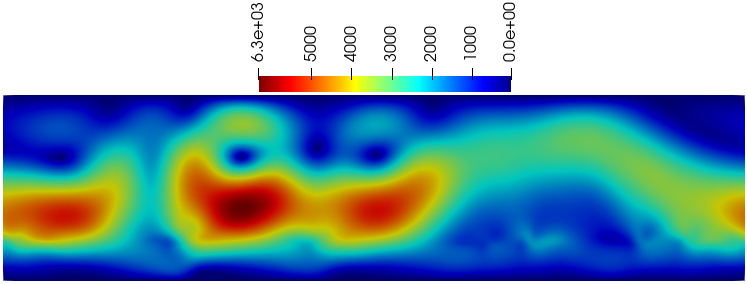}
     \includegraphics[width=0.325\textwidth]{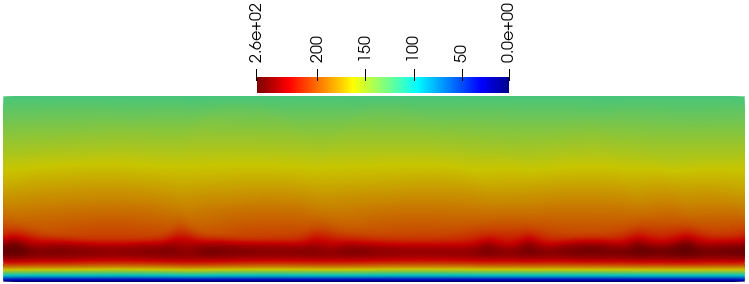}\\
\includegraphics[width=0.325\textwidth]{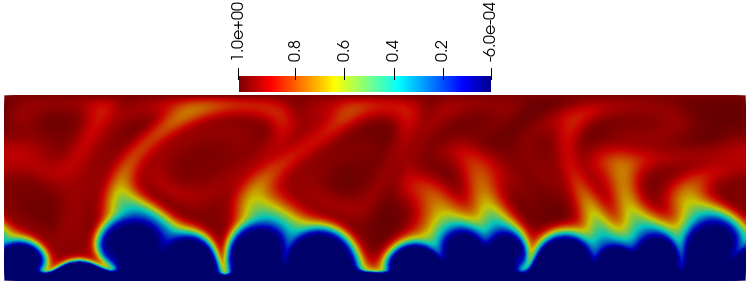}
     \includegraphics[width=0.325\textwidth]{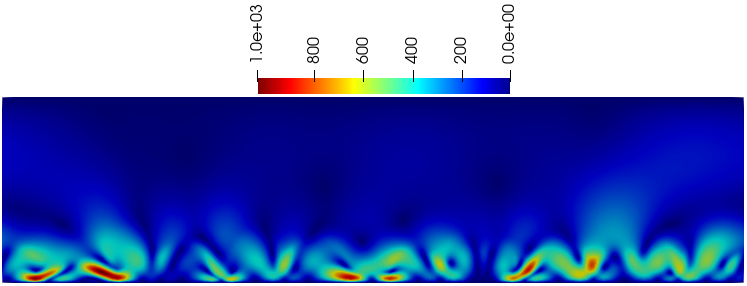}
     \includegraphics[width=0.325\textwidth]{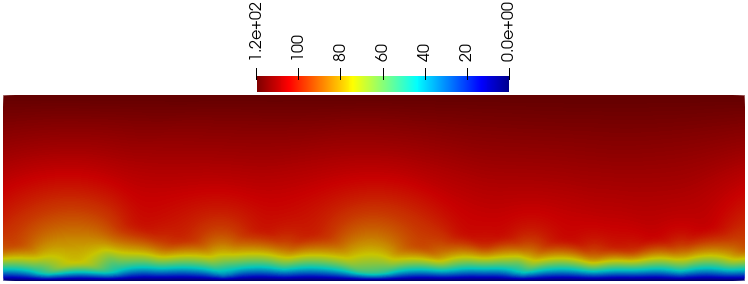}\\
\includegraphics[width=0.325\textwidth]{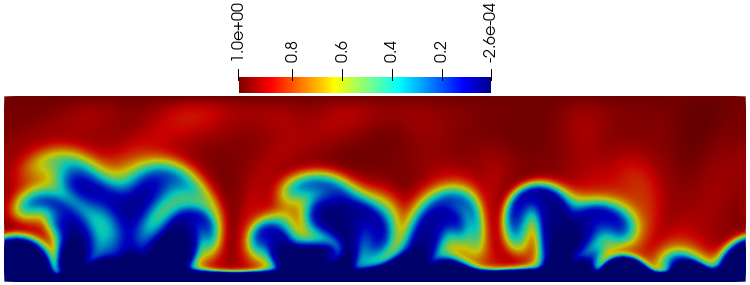}
   \includegraphics[width=0.325\textwidth]{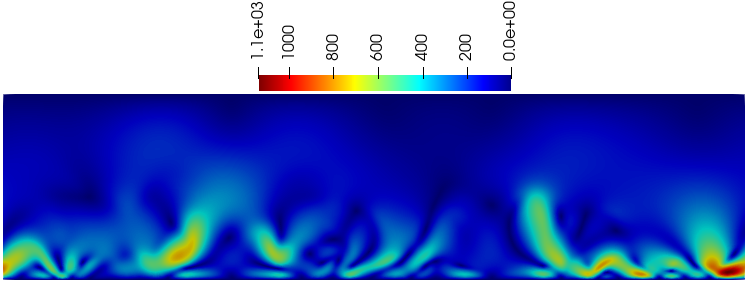}
     \includegraphics[width=0.325\textwidth]{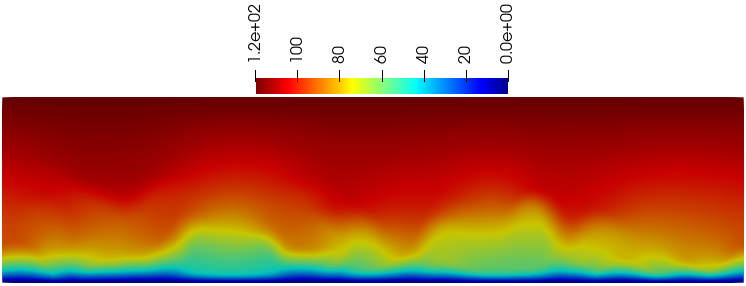}
	\end{center}
	
	\vspace{-4mm}
	\caption{Example 3A. Snapshots of numerical solutions $c_{2,h}$ (left), $\textbf{u}_{h}$ (middle) and $\phi_{h}$ (right) using the \rev{proposed} VEM at times $t_{F}=$3e-03, $t_{F}=$2e-02, %$t_{F}=$5e-02
          and $t_{F}=$8e-02  with voltage $V=120$. }\label{fig3A3}
\end{figure}

As it can be seen from Fig. \ref{fig3A3}, by increasing the voltage to $V=120$ the instability becomes stronger, the disturbances grow faster, and the structures appear earlier. Smaller structures have coalesced into bigger ones at time $t=$3.3e-03. Such a behavior is consistent with the results in \cite{Karatay15,Kim21} and it is fact similar to the encountered in fluid mechanics vortex fusion.

\medskip
\noindent\textbf{Example  3B: Effect of salt concentration.} 
Finally, we considered a fixed applied voltage  of $V=120$. A NaCl concentration of 10  was simulated for slightly brackish water and also increasing that concentration to 100   for moderately brackish water. By increasing the concentration, the structures reveal themselves earlier and their size decreases (see Fig. \ref{fig3B}, left). In the second case, structures appeared much sooner (and were much smaller). \rev{For the case of concentration 100, similar findings can be obtained (see Fig. \ref{fig3B}, right column). Based on this, it can be concluded that, in addition to voltage, the start of the instability depends also on the ion concentration.} 

\begin{figure}[t!]
	\begin{center}
\includegraphics[width=0.495\textwidth]{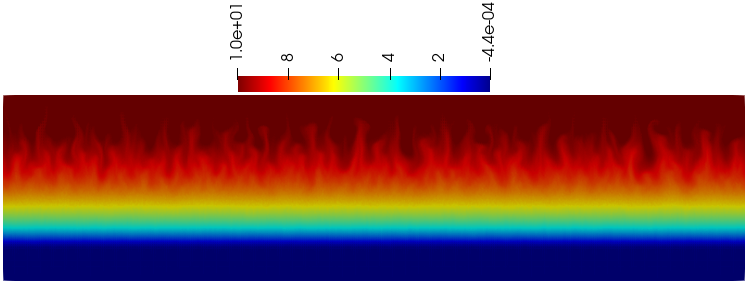}
     \includegraphics[width=0.495\textwidth]{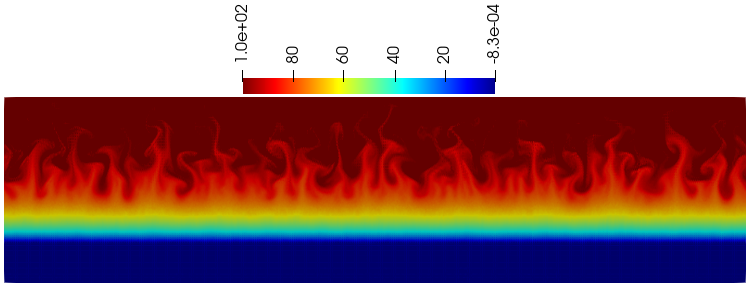}\\
\includegraphics[width=0.495\textwidth]{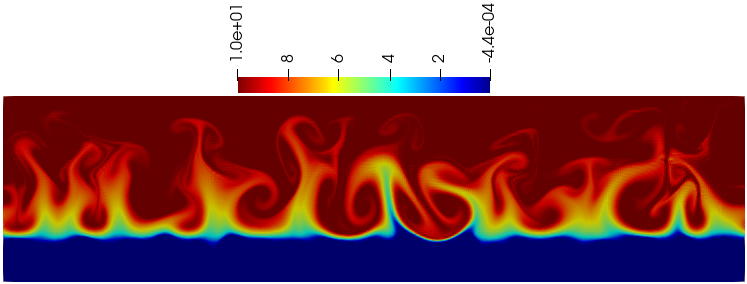}
     \includegraphics[width=0.495\textwidth]{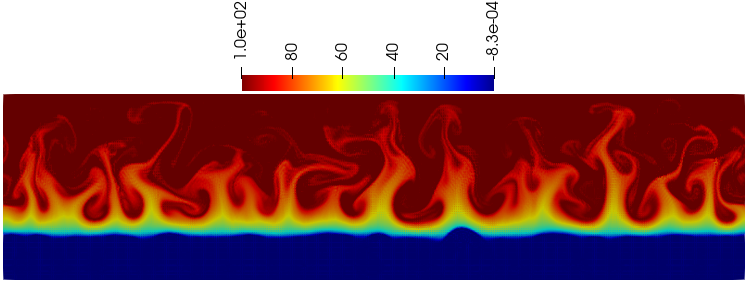}\\
\includegraphics[width=0.495\textwidth]{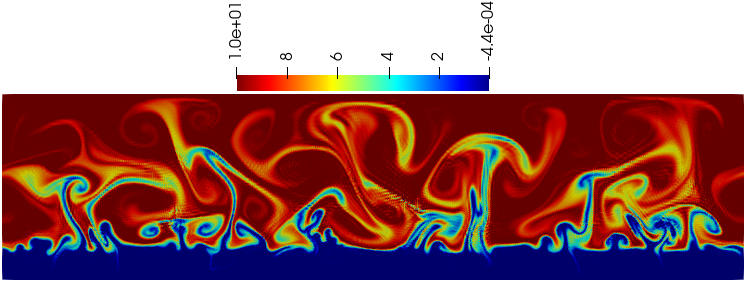}
     \includegraphics[width=0.495\textwidth]{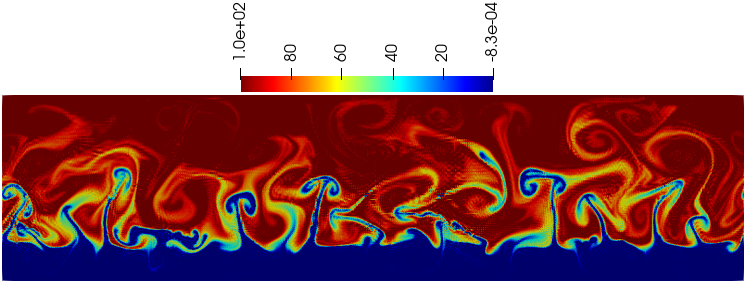}
	\end{center}
	
	\vspace{-4mm}
	\caption{Example 3B. Snapshots of numerical solutions $c_{2,h}$ with voltage $V=120$, for NaCl=10 
	  at times $t_{F}=$5e-07, $t_{F}=$2e-06, %$t_{F}=$3e-06
          and $t_{F}=$5e-06  (left); and for 
          NaCl = 100  	$t_{F}=$5e-07, $t_{F}=$1e-06, %$t_{F}=$1.5e-06
          and $t_{F}=$2e-06  (right). }\label{fig3B}
\end{figure}

%\newpage
%%%%%%%%%%%%%%%%%%%%%%%%%%%%

\end{document}